\theoremstyle{plain}
\newtheorem{thm}{Theorem}[section]
\newtheorem{lem}[thm]{Lemma}
\newtheorem{defin}[thm]{Defintion}
\newtheorem{prop}[thm]{Proposition}
\theoremstyle{definition}
\newtheorem{cor}[thm]{Corollary}
\newtheorem{ques}[thm]{Question}
\newcommand{\spanD}{\text{span}_{\Delta}}
\newcommand{\future}[1]{{}}  
\newcommand{\bb}{\mathbb}
\newcommand{\om}{\omega}
\newcommand{\Hdim}{\operatorname{Hdim}}
\newcommand{\nue}{\operatorname{NUE}}
\newcommand{\hh}{\mathcal H}
\newcommand{\constFL}{(2k+k_0)^6-(k+k_0)^4+(k+k_0)^{2.3}}
\newcommand{\radlower}{10^{-(k+k_0)^2}}
\title{The set of non-uniquely ergodic $d$-IETs has Hausdorff codimension 1/2}
\author{Jon Chaika}
\author{Howard Masur}
\email{chaika@math.utah.edu}
\email{masur@math.uchicago.edu}
\address{Department of Mathematics, University of Utah, 155 S 1400 E Room 233, Salt Lake City, UT 84112}
\address{Department of Mathematics, University of Chicago, 5734 S. University Avenue, Room 208C, Chicago, IL 60637, USA}
\begin{document}
\maketitle
\begin{center} \textit{Dedicated to the memories  of William Veech and Jean-Christophe Yoccoz}
\end{center}
\tableofcontents
\begin{abstract}
We show that the set of not uniquely ergodic $d$-IETs has Hausdorff dimension $d-\frac 3 2 $ (in the $(d-1)$-dimension space of $d$-IETs) for $d\geq 5$. For $d=4$ this was shown by Athreya-Chaika and for $d\in\{2,3\}$ the set is known to have dimension $d-2$. 
\end{abstract}

\section{Introduction}

\begin{defin}  Given $x=(x_1,x_2,\ldots,x_d)\in \bf{R}^d$ where $x_i > 0$, form the  $d$ sub-intervals of the
interval $[0,\sum_i x_i)$: $$I_1=[0,x_1) ,
I_2=[x_1,x_1+x_2),\ldots,I_d=[x_1+\ldots x_{d-1}, x_1+\ldots+x_{d-1}+x_d).$$ Given
 a permutation $\pi$ on  the set $\{1,2,\ldots,d\}$, we obtain a d-\emph{Interval Exchange Transformation} (IET)  $ T \colon [0,\underset{i=1}{\overset{d}{\sum}} x_i) \to
 [0,\underset{i=1}{\overset{d}{\sum}} x_i)$ which exchanges the intervals $I_i$ according to $\pi$. That is, if $x \in I_j$ then $$T(x)= x - \underset{k<j}{\sum} x_k +\underset{\pi(k')<\pi(j)}{\sum} x_{k'}.$$
 \end{defin}

Lebesgue measure is invariant under the action of $T$.
\begin{defin} $T$ is uniquely ergodic if up to scalar multiple Lebesgue is the only invariant measure
\end{defin}

The purpose of this paper is to prove the following Theorem. Let $\Delta\subset \bf{R}^d$  the standard simplex of dimension $d-1$.   Let  $\pi_s$ be the hyperelliptic permutation on 
 $d\geq 4$  letters (defined below) and let 
  $\mathcal{R}_d$ be the Rauzy class of $\pi_s$. Let $\pi\in\mathcal{R}_d$. 
Let $\nue(\pi)$ denote the set of $x\in\Delta$ such that $T(x)$ is not uniquely ergodic. 
\begin{thm}\label{thm:main}
For $\pi\in \mathcal{R}_d$
 the Hausdorff dimension of $\nue(\pi)$ is $d-\frac{3}{2}$.
\end{thm}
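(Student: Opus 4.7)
The plan is to establish matching upper and lower bounds $\Hdim(\nue(\pi))\leq d-3/2$ and $\Hdim(\nue(\pi))\geq d-3/2$ for $d\geq 5$ (the case $d=4$ being the Athreya--Chaika theorem, which could also serve as the base of an inductive argument). Throughout, I work with Rauzy--Veech induction and its length cocycle on the Rauzy class $\mathcal{R}_d$. For a finite Rauzy--Veech word $\gamma$ ending at $\pi$, write $B_\gamma$ for the accumulated matrix and $\Delta(\gamma)=B_\gamma\Delta(\pi_\gamma)\subset\Delta$ for the associated sub-simplex; the family $\{\Delta(\gamma)\}$ is the basic covering family and both bounds reduce to estimates on how these sub-simplices meet $\nue(\pi)$.

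For the upper bound I would invoke Masur's criterion: if $T(x)$ is not uniquely ergodic then the Rauzy--Veech orbit of $x$ leaves every compact part of the space of normalized IETs. Quantitatively this should manifest as a \emph{balanced splitting} condition on $B_\gamma$: the cone $B_\gamma(\mathbb{R}^d_{>0})$ contains two well-separated rays whose $\ell^1$-norms are comparable. I would cover $\nue(\pi)$ by the sub-simplices $\Delta(\gamma)$ whose itineraries exhibit such splittings at arbitrarily long scales, and then control $\sum_\gamma \operatorname{diam}(\Delta(\gamma))^{d-3/2+\epsilon}$. The key analytic input is a polynomial decay with exponent $1/2$ for the Lebesgue measure of IETs whose Rauzy--Veech itinerary produces a balanced splitting at scale $n$; combined with a Borel--Cantelli / covering argument this yields the codimension-$1/2$ Hausdorff dimension bound.

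For the lower bound I would construct a Cantor set $\mathcal{K}\subset\nue(\pi)$ of Hausdorff dimension $d-3/2$ by an inductive Rauzy--Veech scheme. At stage $k$ I select a finite family of words $w_{k,j}$ in $\mathcal{R}_d$ whose induced matrices have two approximately balanced dominant directions. Iterating these gives nested sub-simplices $\Delta(w_{k,j})$ whose intersection is $\mathcal{K}$; the two-direction spectral structure of each $B_{w_{k,j}}$ forces each $x\in\mathcal{K}$ to support an invariant measure distinct from Lebesgue, while the multiplicities and Jacobians are tuned so that a Moran-style count, together with a Bernoulli measure on $\mathcal{K}$ and the mass distribution principle, gives Hausdorff dimension exactly $d-3/2$.

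The main obstacle, and the feature that distinguishes $d\geq 5$ from the case $d=4$, is the combinatorial design of the words $w_{k,j}$ in $\mathcal{R}_d$. One needs a uniform family with precise ``two-direction'' spectral properties and controlled Jacobians that simultaneously yields (i) genuine non-unique ergodicity of the limiting IETs and (ii) the sharp dimension $d-3/2$. The hyperelliptic Rauzy class grows nontrivially with $d$, so explicit words used for $d=4$ do not transport mechanically; one likely needs either an inductive passage from $\mathcal{R}_{d-1}$ to $\mathcal{R}_d$ or a uniform combinatorial scheme adapted to the hyperelliptic permutation. Verifying (i) rigorously (as opposed to mere minimality with a unique invariant measure) requires distinguishing the candidate second measure from Lebesgue via careful control of relative tower heights, and this is the piece I expect to absorb most of the technical work in the paper.
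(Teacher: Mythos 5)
Your two bounds are organized correctly, but the lower bound proposal is missing the paper's central device, and the upper bound proposal duplicates work that the paper simply cites. For the upper bound, there is no need to reprove it: the paper takes $\Hdim(\nue)\leq d-\frac{3}{2}$ directly from Masur's earlier result in \cite{Mhdim} (see also Section~6 of \cite{ath-chai}), so that half of the theorem requires no new argument.

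For the lower bound, the Cantor-set-plus-Frostman strategy is the right general shape, but your sketch omits the key reduction. The paper does \emph{not} build a Moran set in $\Delta$ and apply the mass distribution principle there directly. Instead it fixes a $(d-3)$-parameter family $\mathcal{P}$ of parallel $2$-planes (chosen in Section~\ref{sec:plane choice} using the symplectic structure and the singular directions of the accumulated matrices), proves that for a positive-measure set of $P\in\mathcal{P}$ the intersection of the nested construction with $P$ has Hausdorff dimension $\geq \frac{3}{2}$ (Theorem~\ref{thm:planes} and Proposition~\ref{prop:good planes}), and then lifts this to $\Hdim(\nue)\geq d-\frac{3}{2}$ via Proposition~6.6 of Mattila~\cite{mattila}. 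This slicing is not a convenience: the nested simplices that survive the ``restriction'' phases have wildly varying shapes and descendant counts, so a uniform Moran count in $(d-1)$ dimensions would be very hard to run; restricting to a $2$-plane flattens the problem to one where the diameter and area estimates (Theorem~\ref{thm:diameter}, Proposition~\ref{prop:concave}) are manageable. Relatedly, the measure in Proposition~\ref{prop:good planes} is not Bernoulli but a rescaled Lebesgue measure assembled level-by-level, and condition~\ref{A:small loss} there allows for a small exceptional set of ``bad'' polygons at each stage, which a clean Bernoulli structure would not accommodate.

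The second gap is that your proposal does not explain where the exponent $\frac{1}{2}$ actually comes from. In the paper it is a consequence of a precise competition between ``freedom'' phases (where Rauzy induction on the first $d-2$ letters, resp.\ the last two, is essentially unconstrained, producing many descendants) and ``restriction'' phases (where one symbol is forced to keep losing, which is what manufactures the two invariant measures but costs measure at rate roughly $t_k=10^{-[(2k+1+k_0)^6+\cdots]}$ on the left and $s_k^{-1}$ on the right). The matrix-norm schedule in Section~\ref{sec:paths} is tuned so that, after slicing, $\prod_j (t_j/s_j)$ compares to $\hat r_k^{1/2+\epsilon}$ (conditions \ref{A:close size} and \ref{countchildren} of Theorem~\ref{thm:planes}), and that balance is exactly what yields $\frac{3}{2}$ in the plane. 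Your phrase ``multiplicities and Jacobians are tuned so that a Moran-style count gives $d-3/2$'' glosses over this; without the LHS/RHS freedom/restriction alternation and the explicit growth exponents, there is no mechanism forcing the dimension to land at $d-\frac{3}{2}$ rather than some other value. Finally, you are right that non-unique ergodicity needs a quantitative argument, but the paper's mechanism is simpler than tower-height bookkeeping: Theorem~\ref{thm:nue} shows the first $d-2$ columns converge in angle to $\operatorname{span}(e_1,\ldots,e_{d-2})$ and the last two to $\operatorname{span}(e_{d-1},e_d)$, so the limiting cone is a genuine $1$-simplex of invariant measures.
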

\noindent
 Note that the space is $(d-1)$-dimensional and so the Hausdorff codimension of this set is $\frac 1 2 $. 
 It is easy to show that the set of IET that are  not minimal (orbits are not dense) has Hausdorff codimension $1$. So the main theorem says that for $d\geq 4$ the minimal non-uniquely  ergodic IET have smaller codimension.
 In the case $d=2$ the classical Kronecker-Weyl Theorem says that 
 $T$ is not uniquely ergodic if and only if $x_1,x_2\in\mathbb{Q}$. (Here $x_1+x_2=1$). Since $3$-IETs are first return maps of 2-IETs to intervals, minimality implies unique ergodicity for 3-IETs as well.

 The upper bound $HDim(\nue)\leq d-\frac{3}{2}$ follows from Masur \cite{Mhdim}. (See Section 6 of \cite{ath-chai}). In the case of $d=4$ there is only one Rauzy class and Athreya-Chaika \cite{ath-chai} proved the Theorem in that case. This paper is devoted to the proof of the lower bound  for the given permutations $\pi$. 

 \begin{thm} For $g\geq 2$, let $\mathcal{H}_{hyp}(2g-2)$ and $\mathcal{H}_{hyp}(g-1,g-1)$ be the hyperelliptic components of the  strata $\mathcal{H}(2g-2)$ and $\mathcal{H}(g-1,g-1)$.  Then the set  of $(X,\omega)$ in these strata such that the vertical flow is not uniquely ergodic has Hausdorff codimension $1/2$.
 \end{thm}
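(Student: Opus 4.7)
The plan is to derive this corollary from Theorem \ref{thm:main} via Veech's zippered rectangles construction. Recall that the hyperelliptic (symmetric) permutation $\pi_s$ on $d$ letters realizes translation surfaces in $\mathcal{H}_{hyp}(2g-2)$ when $d=2g+1$ and in $\mathcal{H}_{hyp}(g-1,g-1)$ when $d=2g$, so the condition $d\geq 4$ of Theorem \ref{thm:main} matches exactly the range $g\geq 2$. Zippered rectangles provide a local parameterization of each hyperelliptic stratum: an open subset is identified with $\Delta_\pi\times S_\pi$, where $\Delta_\pi$ is the IET simplex for some $\pi\in\mathcal{R}_d$ and $S_\pi$ is a smooth open cone of admissible suspension data (heights and twists), of complementary real dimension $\dim_{\mathbb{R}}\mathcal{H}_{hyp}-(d-1)$.

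The key dynamical link is that for $(X,\omega)$ with chart coordinates $(x,s)$, the vertical flow is the suspension of the $d$-IET $T_{\pi,x}$ with (strictly positive) roof heights determined by $s$. Thus invariant measures of the flow correspond bijectively with invariant measures of $T_{\pi,x}$, so the vertical flow is uniquely ergodic if and only if $T_{\pi,x}$ is. Crucially, this is a condition on the length data $x$ alone, so the non-uniquely ergodic locus inside the chart is precisely the product $\nue(\pi)\times S_\pi$.

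For the lower bound in the corollary, I would invoke Theorem \ref{thm:main} to conclude that $\nue(\pi)$ has Hausdorff dimension $d-\tfrac{3}{2}$. Applying the product formula for Hausdorff dimension (which is an equality when one factor is a smooth open subset of Euclidean space), $\nue(\pi)\times S_\pi$ has Hausdorff dimension $\dim_{\mathbb{R}}\mathcal{H}_{hyp}-\tfrac{1}{2}$. Since this is a subset of the non-uniquely ergodic locus in the stratum, the global non-uniquely ergodic locus has Hausdorff dimension at least $\dim_{\mathbb{R}}\mathcal{H}_{hyp}-\tfrac{1}{2}$. The matching upper bound (codimension at least $\tfrac{1}{2}$), already proved at the level of strata, is Masur's \cite{Mhdim}, as noted in the discussion following Theorem \ref{thm:main}.

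The hard part here is essentially bookkeeping: correctly identifying the zippered-rectangles chart dimensions for both hyperelliptic strata (single zero vs. two zeros), checking that the Hausdorff product formula yields equality against the smooth factor $S_\pi$, and matching the IET and vertical-flow non-uniquely ergodic loci on the nose via the suspension correspondence. No further dynamical input is needed beyond Theorem \ref{thm:main} and Masur's upper bound.
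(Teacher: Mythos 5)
Your proof matches the standard and almost certainly intended derivation: Veech's zippered rectangles over the symmetric permutation give affine charts on the hyperelliptic stratum that split as a product of length data (the IET simplex, up to scaling) with a fixed open suspension cone $S_\pi$ that is independent of the lengths, non-unique ergodicity of the vertical flow of the suspension corresponds exactly to that of the base IET, and the Hausdorff dimension product inequality against the smooth factor reduces the lower bound to Theorem~\ref{thm:main}. Two points to tighten. First, your dictionary between $d$ and the strata is reversed: $d=2g$ gives $\mathcal{H}_{hyp}(2g-2)$ (so $d=4$ is the Athreya--Chaika case $\mathcal{H}(2)$) and $d=2g+1$ gives $\mathcal{H}_{hyp}(g-1,g-1)$; this does not change the range $d\geq 4 \leftrightarrow g\geq 2$ or the dimension count, but it should be corrected. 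Second, Masur \cite{Mhdim} directly gives $\Hdim(\nue(X,\omega))\leq \tfrac12$ for each fixed surface; promoting that to a codimension-$\tfrac12$ upper bound on the stratum-level NUE locus needs one more step, either a Fubini/slicing argument in the spirit of Proposition~\ref{prop:Matilla}, or the reverse product inequality $\Hdim(A\times B)\leq \Hdim(A)+\overline{\dim}_B(B)$ in the same zippered-rectangles chart, together with the fact that countably many such charts (over the Rauzy class) cover the stratum up to a lower-dimensional set.
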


 Given a translation surface $(X,\omega)$ denote by $\nue(X,\omega)$ the set of directions $\theta\in [0,2\pi)$ such that the vertical  flow  of $e^{i\theta} \omega$
 is not uniquely ergodic.

 \begin{thm}
 \label{ thm: most:surfaces}
 For almost every $(X,\omega)\in  \mathcal{H}_{hyp}(2g-2)$ or $\mathcal{H}_{hyp}(g-1,g-1)$ 
 we have $HDim(\nue(X,\omega))=\frac{1}{2}$
 \end{thm}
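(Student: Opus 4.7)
The plan is to combine Masur's universal upper bound with a slicing argument applied to the immediately preceding theorem on the codimension of the set $N = \{(X,\omega) \in \mathcal{H}_{hyp} : \text{vertical flow NUE}\}$. The upper bound $\Hdim(\nue(X,\omega)) \leq 1/2$ holds for every $(X,\omega)$ and is the direct surface analog of the IET upper bound cited after Theorem \ref{thm:main}: Masur's criterion converts non-unique ergodicity of the vertical flow into non-recurrence of the corresponding Teichmüller orbit, and Masur's bound \cite{Mhdim} on the Hausdorff dimension of the non-recurrent set supplies the inequality.

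For the lower bound, I would first upgrade the preceding theorem to the existence of a Frostman measure. The proof of Theorem \ref{thm:main} should produce, for each hyperelliptic permutation $\pi$, a branching Cantor subset of $\nue(\pi) \subset \Delta$ whose natural probability measure $\nu_s$ satisfies $\nu_s(B(x,r)) \lesssim r^s$ uniformly for every $s < d - \tfrac{3}{2}$. Combining $\nu_s$ with Lebesgue measure on suspension parameters via Veech's suspension construction yields a compactly supported Frostman measure $\mu_s$ on $N$ of exponent arbitrarily close to $\dim \mathcal{H}_{hyp} - \tfrac{1}{2}$. I would then slice along the $SO(2)$-foliation: in a local chart where the orbits $r_\theta \omega$ form a smooth one-dimensional foliation transverse to a codimension-one submanifold $\Sigma$, Mattila's potential-theoretic slicing theorem applied to $\mu_s$ gives, for Lebesgue-a.e. $\sigma \in \Sigma$ (equivalently Masur--Veech-a.e. $\omega$), a conditional Frostman measure on the orbit $SO(2)\cdot \omega$ of exponent $s - \dim \Sigma$. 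Since $\dim \Sigma = \dim \mathcal{H}_{hyp} - 1$, this exponent approaches $\tfrac{1}{2}$ as $s \to \dim \mathcal{H}_{hyp} - \tfrac{1}{2}$, and since the support of the conditional measure is exactly $\nue(X,\omega)$, we conclude $\Hdim(\nue(X,\omega)) \geq \tfrac{1}{2}$ for a.e. $\omega$.

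The decisive step is extracting a genuine Frostman measure from the dimension lower bound in Theorem \ref{thm:main}. Potential-theoretic methods in principle produce such a measure, but because the Cantor set of NUE IETs is built through an intricate multi-scale branching procedure, verifying a \emph{uniform} $s$-Frostman bound at every scale---not merely at the discrete sequence of scales built into the construction---requires careful accounting of both the combinatorial branching ratios and the metric spreading of the nested sub-simplices. The adaptation of Mattila's slicing theorem to the smooth (nonlinear) $SO(2)$-foliation and the transfer of the Lebesgue-a.e. statement on the transversal to a Masur--Veech-a.e. statement on the stratum are routine by comparison.
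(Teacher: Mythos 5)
Your proposal has a genuine gap, and it also departs from the route the paper actually uses. The paper's proof of this theorem is a one-line reduction to the argument of~\cite{ath-chai} in the $\hh(2)$ case. That argument stays in the IET simplex $\Delta$ as long as possible: one shows that $\nue(\pi)$ has full dimension $d-\tfrac32$ in \emph{every} open subset of $\Delta$ (Rauzy induction gives a bi-Lipschitz self-map of $\Delta$ into any prescribed ball, and $\nue$ is Rauzy-invariant), applies a Marstrand--Mattila slicing theorem to produce a positive-measure family of lines \emph{with varying direction} whose intersections with $\nue$ have Hausdorff dimension at least $\tfrac12-\epsilon$, identifies these one-parameter IET families with horocycle orbits in the stratum (Minsky--Weiss), notes that the set of non-uniquely ergodic parameters along such a horocycle is a bi-Lipschitz reparametrization of $\nue(X,\omega)$, and finally invokes $SL_2(\mathbb{R})$-invariance of $\Hdim(\nue(X,\omega))$ together with $SL_2(\mathbb{R})$-ergodicity of the Masur--Veech measure to promote a positive-measure set of surfaces to almost every surface.

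Your version instead slices directly in the stratum along the fixed $SO(2)$-foliation, and this is where the argument breaks. The Marstrand-type and potential-theoretic slicing theorems in~\cite{mattila} draw their conclusions for Lebesgue-a.e.\ \emph{slicing direction} $V$ in the Grassmannian, or for $\mu$-a.e.\ point of the measure being sliced; they say nothing about a single fixed one-dimensional foliation. For a fixed direction, the statement you assert---that for Lebesgue-a.e.\ transversal point the conditional measure on the leaf is nontrivial and Frostman of exponent $s-\dim\Sigma$---is equivalent to absolute continuity of the projection of $\mu_s$ to the transversal, and this is simply false in general (take $\mu_s$ concentrated on an affine slab parallel to the foliation). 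The $SO(2)$-direction has no a priori genericity relative to the intricate Cantor construction underlying $\mu_s$, so the required transversality would have to be proved; you have not done so, and I see no reason it should hold. Moreover, even if you repaired this, slicing alone would only yield a positive Masur--Veech measure set of surfaces with $\Hdim(\nue(X,\omega))\geq\tfrac12-\epsilon$; the passage to ``almost every'' requires the $SL_2(\mathbb{R})$-ergodicity step, which is absent from your proposal. By contrast, the step you single out as decisive---extracting a Frostman measure from the dimension bound in Theorem~\ref{thm:main}---is automatic from Frostman's lemma (quoted just before Corollary~\ref{cor:Frostman}) and is in any case constructed explicitly in the proof of Proposition~\ref{prop:good planes}; that is not where the difficulty lies.
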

 
 This Theorem follows from Theorem~\ref{thm:main}  with the exactly same proof as in the paper 
\cite{ath-chai}  in the case of $\hh(2)$.

 It is worth noting that for $d>4$ there are Rauzy classes other than $\pi_s$ so an  open question is if the Theorem holds for all classes.

\begin{ques}What is the Hausdorff dimension of the set of non-weakly mixing IETs.
\end{ques}
 Avila and Leguil proved that it has positive Hausdorff codimension \cite{AM}. Our result shows its Hausdorff codimension is at most $\frac 1 2$. 
 Boshernitzan and Nogueira \cite{BN} showed that for interval exchanges with \emph{type W} permutations having that the abelian differentials define Teichm\"uller geodesics that are recurrent in the stratum implies that the interval exchange is weak mixing.
  (Note this is a result about the IET, not the vertical flow on the corresponding surface.) Al-Saqban, Apisa, Erchenko, Khalil, Mirzadeh and Uyanik \cite{AAEKMU} showed that the Hausdorff codimension of Teichm\"uller geodesics which are not recurrent in its stratum is at least one half, strengthening a result of \cite{Mhdim}. (They prove more, treating the larger set of trajectories that are \emph{divergent on average}.) Since when $d$ is odd the permutation $(d,d-1,....,2,1)$ is type W, combining our results with \cite{BN} and \cite{AAEKMU} one obtains that the set of non-weakly mixing IETs with permutation $(d,d-1,...,2,1)$ when $d$ is odd has Hausdorff codimension $\frac 12$. It is natural to wonder if this bound is uniform in all strata. 

In the construction in this paper  the non uniquely ergodic IET have exactly $2$ ergodic 
measures. One can therefore ask
\color{black}

\begin{ques}What is the Hausdorff dimension of $d$-IETs with $1<k\leq \frac{d}2$ ergodic measures for each $k$?
\end{ques}

\subsection{History}

Constructions of minimal non-uniquely ergodic IETs are due to Katok-Stepin \cite{KS}, Sataev \cite{Sataev}, Keane~\cite{Keane2} and Keynes-Newton~\cite{KeynesNewton}, based on examples of  Veech~\cite{Veech69}. 
Masur \cite{Masur1} and Veech \cite{Veech} independently proved the Keane conjecture that with respect to Lebesgue measure on $\Delta$, and any irreducible permutation $\pi$,  almost every IET is uniquely ergodic. 
There is a strong connection 
 between the theory of IETs and {\em translation surfaces}.
A genus $g$ \emph{translation surface} $(X, \omega)$ is a compact, genus $g$ Riemann surface together with a nonzero holomorphic one-form $\omega$. This gives the structure of a flat metric away from a finite number of singular points, as integrating the one-form $\omega$ gives charts (away from zeros of $\omega$) to $\bf C$ where the transition functions between charts are translations. The zeros of $\om$ are singular points of the metric, and have cone angles $2\pi(k+1)$ at a zero of order $k$.  Kerckhoff-Masur-Smillie \cite{KMS} showed that the Lebesgue measure of $\nue(X,\omega)$ is $0$. Masur~\cite{Mhdim} showed that $HDim\nue(X,\omega)\leq \frac{1}{2}$.


Moduli spaces of translation surfaces are stratified by their genus $g$ and the combinatorics of their singularities. We say a singularity has order $k$ if the angle is $2\pi(k+1)$. The sum of orders of singularities on a genus $g$ surface is $2g-2$. Given a partition $\alpha = (\alpha_1, \ldots, \alpha_m) \in \bb{N}^m$, $\sum \alpha_i = 2g-2$, define the stratum $\hh = \hh(\alpha)$ to be the moduli space of (unit-area) translation surfaces with singularity pattern $\alpha$. On each stratum $\hh$, there are coordinate charts to an appropriate Euclidean space, and pulling back Lebesgue measure yields a natural Lebesgue measure $\mu_{MV}$ on $\hh$.
For each translation surface, there is a countable set of directions where the flow is not minimal (that is, there are non-dense infinite trajectories).

Masur-Smillie \cite{MS} showed that for    every stratum of translation surfaces $\hh(\alpha)$ of surfaces of genus at least 2 there is a constant $c = c(\alpha)>0$ such that for $\mu_{MV}$-almost every flat surface $(X,\omega) \in \hh$, $$\Hdim(\nue(X,\omega)) = c.$$  In terms of the simplex $\Delta$ of IET, this implies the codimension of non uniquely ergodic IETs is less than 1.
The  result in \cite{Mhdim} referenced above said that $c(\alpha) \le 1/2$ for all $\alpha$.

Theorem~\ref{ thm: most:surfaces} says that for almost every surface the set of non-uniquely ergodic directions has Hausdorff dimension  exactly $\frac{1}{2}$. On the other hand for so-called lattice or Veech surfaces  the dimension is $0$. 
 It is likely that Boshernitzan's argument in the appendix of \cite{yit thesis} can be applied to strata and show that a residual set of surfaces have this property. 
 
This leads to a natural question:
\begin{ques} Is there a translation surface where the the set of non-uniquely ergodic directions has Hausdorff dimension $c \notin \{0,\frac 1 2 \}$?
\end{ques}

\textbf{Acknowledgements}
H. Masur was supported in part by NSF grant DMS-1607512.
J. Chaika thanks Jayadev Athreya, with whom this project began, for many helpful ideas and conversations. J. Chaika was supported in part by NSF grants DMS-135500 and DMS-1452762, the Sloan foundation, a Warnock chair and a Poincar\'e chair. 

William Veech and Jean-Christophe Yoccoz tragically died within a week of each other in the summer of 2016. They were visionaries who introduced and developed many of the ideas which allowed the field to grow. This paper is built on their work and perspective. We dedicate it to their memory.

\section{Plan of Paper and background material and notation \color{black}}\label{sec:background}

Our plan is to  build a subset of $\Delta$ consisting of  minimal and non-uniquely ergodic IETs that has Hausdorff codimension $\frac 1 2 $. 
The proof  (and indeed many results on ergodic properties of IETs) uses in a crucial fashion the Rauzy induction renormalization procedures for IETs, involving induced maps on certain subintervals, and closely related to the Teichm\"uller geodesic flow. Our treatment of Rauzy induction will be the same as in \cite{MMY}. For further details of the procedure (and much more on IETs) we refer the interested reader, to, e.g.~\cite{Yoccoz}, for an excellent survey. 
 
The idea in using  Rauzy induction to build non-uniquely ergodic IET is that one produces sequences $M_k=A_1A_2\ldots A_k$  of nonnegative matrices from Rauzy induction such   that under the (projective) action the nested sequence of simplices  $M_k\Delta$ converges to a positive dimensional simplex.  The limiting points are non-uniquely ergodic IET.   
The desired set of codimension $\frac{1}{2}$ is a limit of a procedure where we produce `good' descendants $M_k\Delta$ of ancestor simplices $M_{k-1}\Delta$.  We do this by showing that the intersection of the sequence of these sets with a positive measure set of a family of parallel planes has Hausdorff dimension $\frac 3 2 $. 

We describe our paths and matrix sizes in Section 3 and indicate some properties these matrices need to satisfy.  
Our paths break up into stages. 
Each stage essentially breaks into parts where we do Rauzy induction on $d-2$ intervals which we call the left hand side (LHS) and a Rauzy induction on $2$ intervals; the right hand side (RHS).
Each side itself breaks up into a freedom part and a restricted part. In freedom on the left Rauzy induction is essentially arbitrary in that any of the first $d-2$ intervals can win, and when one competes with one of the last two intervals it always wins.   During restriction the first $d-2$ intervals  never compete with the last two intervals,  and when the first interval competes with the other $d-3$ it always loses. This cuts down the measure so that infinitely many iterations, the result has measure $0$ as it must. However we keep enough simplices for the desired Hausdorff dimension.   We analogously  have freedom and restriction on the right.  

The very beginning will be restriction on the left and end with restriction on the right. The second stage begins then with freedom on the left and ends with restriction on the right.  In general, stage $k>1$ begins with freedom on LHS and ends with restriction on RHS.

The proof of Hausdorff dimension  has two parts: the abstract geometric framework, which is stated and proved to be sufficient in Section 4, and the much more involved part showing that the sets we build  satisfy the abstract geometric framework. We now outline some of the basic issues with this latter  argument. At its heart the argument is probabilistic. For this we use probabilistic results of Rauzy induction due to Kerckhoff and Veech and elementary probability theory to prove large deviations results (Section 5). However we will consistently need to use these probabilistic results to discuss the typical behavior of points lying in codimension 2, 3 and dimension 2 `faces' of a simplex (Sections 6, 10 and 11) and indicate how these results on subsimplices apply to the entire simplex.  Section 7 sets up the geometry of subsimplices intersecting planes, and is essentially a section on linear algebra.

Sections 8-11 are the most technically difficult part of the paper. The goal in Sections 8 and 9 is Theorem~\ref{thm:big shadow}  which says that during each stage in freedom on LHS, if we throw out a small set of points, the intersection of our planes with the remaining simplices intersect a fixed designated face.  
 During restriction,  we lose lots of measure, but we need to control this to prove a lower bound for the Hausdorff dimension. We do this by showing that even when we lose measure, we keep most of the measure in a small neighborhood of a (codimension 1) face of our simplex. This is done in Section 10 (for the left hand side).  On the right hand side we have a similar, if not easier picture since there are only $2$ intervals.  This is  done in section 11. In section 12 we put these estimates together to prove that the abstract setting of section 4 holds.

\subsection{Preliminaries    on   Rauzy induction}\label{sec:rauzy}

We follow the description of interval exchange transformations  introduced in \cite{MMY} and also explicated in \cite{AGY}.  We have the set  $\mathcal{A}$ which consists of the first $d$ positive integers. 
Break an interval $I=[0,x)$ into intervals 
$\{I_i; 1\leq i\leq d\}$
and rearrange in a new order by translations. 
 Thus the interval
exchange transformation is entirely defined by the following data:
\begin{enumerate}
\item The lengths of the intervals
\item Their orders before and after rearranging
\end{enumerate}

The first are called length data, and are given by a vector
$x\in \mathbb{R}^d$ .
The second are called combinatorial data, and are given by a pair  of bijections
$\pi=(\pi_t,\pi_b)$ from
$\mathcal{A}\to\mathcal{A}$. 

 The bijections 
can be viewed as a pair of rows, the top corresponding to
$\pi_t$
and the bottom corresponding to
$\pi_b$. 

 Given an interval exchange   $T$ defined by  $(x,\pi)$ let $i,j\in \mathcal{A}$ the last elements in the top and bottom.  The operation of  Rauzy induction is applied  when $x_i\neq x_j$ to give a new IET  $T'$ defined by $(x',\pi')$ where $x',\pi'$ are as follows.   If $x_i>x_j$  then $\pi'$  keeps the top row unchanged, and it changes the bottom row by moving $j$ to the position immediately to the right of the position occupied by $i$.     We say $i$ {\em  wins} and $j$ {\em loses}. For all $k\neq i$ define   $x'_k=x_k$ and define $$x'_i=x_i-x_j.$$   

If  $x_j>x_i$ then to define $\pi'$ we keep the bottom row the same and the top row is changed by moving $i$ to the position to the right of the position occupied by $j$.  Then $x'_k=x_k$ for all $k\neq j$ and $x'_j=x_j-x_i$.  We say $j$ wins and $i$ loses.  

In  either case one has a  new interval exchange $T'$ determined by  $(x',\pi')$
and defined on an interval $I'=[0,|x'|)$ where $$|x'|=\sum_i x'_i.$$

The map $T':I'\to I'$ is the first return map 
to a subinterval of
$I$
obtained by
cutting from
$I$
a subinterval with the same right endpoint and of length
$x_k$
where
$k$
is the loser of
the  process described above.

Let $\Delta$ be the standard simplex in $\mathbb{R}^d$ and let $\mathcal{P}$ be the set of permuations on $n$ letters.  We can normalize so that all IET are defined on the unit interval.  
Let  $${R}:\Delta\times \mathcal{P}\to \Delta\times \mathcal{P}$$ by $R(x,\pi)=(\frac{x'}{|x'|},\pi')$  denote (renormalized) Rauzy induction.

The set of permutations on $d$ letters form a collection of connected directed  graphs. We have a   directed edge joining $\pi$ to $\pi'$ if one of the two possibilities for Rauzy induction at $\pi$ yields $\pi'$. We say they are in the same \emph{Rauzy class} if they are in the same connected component.

 There is a corresponding visitation matrix  $M=M(T)$.  Let $\{e_i\}$ be the standard basis.  If $i$ is the winner and $j$ the loser, then $M(e_k)=e_k$ for $k\neq j$ and $M(e_j)=e_i+e_j$. We can view $M$ as simply arising  from the identity matrix by adding the $i$ column to the $j$ column. 
We can projectivize the matrix $M$ and consider it as $M:\Delta\to \Delta$.

When the interval exchange $T$ is understood, and we perform Rauzy induction $n$ times then define $M(1)=M(T)$ and inductively $$M(n)=M(n-1)M(R^{n-1}T).$$
That is, the matrix $M(n)$ comes from multiplying $M(n-1)$ on the right by the matrix of Rauzy induction applied to the IET after we have done Rauzy $n-1$ times. 
We will also use the following notation.
A vector  $x\in \Delta$ and permutation $\pi$ determines an IET $T$.  The corresponding matrix after performing Rauzy induction $n$ times and suppressing the permutations is denoted $M(x,n)$.
For $y\in M(x,n)\Delta$ denote by  $R^ny$ the $x'\in\Delta$ such that $M(x,n)x'=y$. 

\noindent Observe that if $x, \eta$ satisfy  $ \eta \in M(x,k){\Delta}$, then $$M(\eta, k) = M(x,k).$$ That is, the IETs determined by $x$ and $\eta $ have the same first $k$ steps of Rauzy induction.

  \noindent Given a matrix $M$, we write $$M{\Delta}=M\mathbb{R}_d^+ \cap\Delta=\left\{\frac{Mv}{|Mv|}:v\in \Delta\right\}.$$ 
\noindent

We also have the following notation.

\begin{itemize}

\item If $x^1,\ldots,x^j \in \mathbb{R}^d$, let $\spanD(x^1,\ldots ,x^j)=\{\sum a_i x^i:a_i\geq 0 \text{ and }\sum a_i x^i \in \Delta\}$. In a mild abuse of notation we sometimes  put subsets of $\mathbb{R}^d$ in the argument as well. 
\item For $M$ any matrix of Rauzy induction let $C_{\max}(M)$ be the column  $C_j(M)$ that maximizes $|C_j(M)|$ over $1\leq j\leq d$. If there are two or such columns choose the one with the smallest index. Similarly with $C_{\min}(M)$.

\item $\lambda_s$ refers to Lebesgue measure on a $s$ dimensional simplex. We will use this for $s\in\{1,2,d-4,d-3, d-2,d-1\}$.
\item For $0\leq c\leq 1$ let $\Delta_c=\{x\in \Delta:x_{d-1}+x_d=c\}$. \color{black}
\item If $v, w \in \mathbb{R}^k$ let $\Theta(v,w)$ denote the angle between $v$ and $w$. In a mild abuse of notation, if $V,W \subset\mathbb{R}^k$ then $\Theta(V,W)=\min\{\Theta(v,w):v\in V,\, w \in W\}.$
\item If $M$ is a matrix let $\|M\|$ denote the $L^1$ operator norm of $M$. If all the entries of $M$ are non-negative, this is $|C_{\max}(M)|$. 
\item $V(M)=\spanD(C_i(M_1), \ldots C_{d-2}(M))$  
\item $W(M)=\spanD(C_{d-1}(M),C_d(M))$.
\item $F_i(M)$ the $i^{th}$ face is the span of all but $C_i(M)$. 
\item If $\mathcal{M}$ is a set of matrices so that $\mathcal{M}\Delta$ is a simplex with labeled extreme points $p_1,...,p_d$, let $F_i(\mathcal{M})$ be the convex hull of $\{p_\ell\}_{\ell \neq i}$. In this setting we let $V(\mathcal{M})=\spanD(p_1,...,p_{d-2})$ and $W(\mathcal{M})=\spanD(p_{d-1},p_d)$. 
\color{black}
\end{itemize}

A note on constants.  On  numerous  occasions we will make use of a constant $C$ in upper bounds. It is a local constant in that it will not depend on matrices or $k$.  Similarly we will use $c>0$ for lower bounds.  We will also frequently have a constant $\rho$ 
that appears in  probablistic statements.

\section{Paths and matrices}\label{sec:paths}
Again let 
 $$\pi_s=\begin{pmatrix}1 & \dots &d\\
d&  \dots&1 
\end{pmatrix}$$
be the hyperelliptic permutation and $\mathcal{R}_d$ be its   Rauzy class.
Our set of non-uniquely ergodic IETs is obtained by producing large families of special paths in the graph. Our paths break into segments with five different types.
\begin{itemize}
\item Freedom on the left hand side.
\item Restriction on the left hand side.
\item Transition from the left side to the right side
\item Freedom on the right hand side.
\item Restriction on the right hand side.

\end{itemize}
\noindent
We now describe these types. 

$$\pi_L=\begin{pmatrix}1& d-1 & d& 2 & 3&  \dots &d-2\\
d & d-1 & d-2 &d-3 &\dots &  &1
\end{pmatrix}$$

$$\pi_R=\begin{pmatrix} 1 & 2  & \dots & & d-1 &d \\
d & d-2 & d-3 & \dots & 1 & d-1.
\end{pmatrix}.$$

Let us make some observations:
We reach $\pi_L$ from $\pi_s$ in two steps with $1$ beating $d$ and then $d-1$.  
  We are now on the left hand side (LHS) at $\pi_L$. 
 Starting at $\pi_L$ suppose we have a permutation in which   $1$ wins, and after an arbitrary sequence of permutations, (including possibly $1$ losing, but $d-1,d$ always losing whenever they are compared) eventually $1$ beats $d-3,\ldots, 2$ and  we reach $\pi_s$ before $d$ and $d-1$ are compared again. We call  such a path a path of  {\em freedom on the left hand side}.

On the other hand  starting at $\pi_L$,  for as long as $1$ loses when matched with $i;  1<i\leq d-2$, then  $d$ and $d-1$ will not be compared to anything. So they will neither be added to a column nor have a column added to them. We call this {\em restriction}  on LHS since $1$ is losing. The first row of the corresponding matrix is $(1,0,\ldots 0)$ and the first column reflects that other columns are added to the first. Also the graph formed from anything that can be reached from $\pi_L$ without $1$ winning is a copy of  $\mathcal{R}_{d-3}$ where the symbols are $2,...,d-2$. \color{black}  Note there is an extra vertex  at $\pi_L$ \color{black} where $1$ is compared to $d-2$ (and loses). In the set we are describing this vertex has exactly one incoming edge and one outgoing edge and so we ``collapse" this vertex and these two edges to obtain $\mathcal{R}_{d-3}$.
We return to $\pi_L$.

 Now we take any path from   $\pi_L$ to $\pi_s$ without returning to $\pi_L$ and where $\pi_s$ is reached only at the end and call this the  {\em transition from left side to right side}.

Suppose starting at $\pi_s$, $d$ successively beats $1,\ldots, d-2$ to reach $\pi_R$. We are now on the right hand side RHS performing Rauzy induction.  Then starting at $\pi_R$ there is a loop which consists of an arbitrary sequence  of  $d-1$ beating  $d$ followed by $d$ beating $d-1$, followed  by $d$ beating $1,\ldots, d-2$ returning to $\pi_R$. We can repeat this loop an arbitrary number of times.  This is {\em freedom}  on RHS.
On the other hand suppose  $d$ loses at $\pi_R$ a number of times before beating $d-1$.  Then  the permutation returns to $\pi_s$. We call this   {\em restriction} on RHS.   As long as $d$ keeps losing to $d-1$ the letters  $1,...,d-2$ are not compared.

\subsection{Choice of matrices}

 The letter $A$ denotes matrices for Rauzy induction on the LHS and $B$ denotes the matrices on the RHS. The matrices $T$ correspond to transition from left to right.
  We add $'$ for matrices $A,B$ during  restriction and no prime denotes freedom.  We will return infinitely often to each side and to freedom and restriction on each side. 
 
If we are at freedom on LHS via a path with corresponding matrix $M(x,r)$, 
  let $A(R^r(y),m)$ the matrix of Rauzy induction done $m$ times at $R^ry$.

Fix $k_0$ to be determined later. (We will have a (finite) collection of conditions on $k_0$ but they will all hold for all $k_0$ large enough.) \color{black}  For the  $k^{th}$ return and $k\geq 1$, we will build matrices  $A_k,A_k',B_k,B_k',T_k$.
Our matrices will  be products of these matrices and will be denoted with letters $M,\hat M$.

We will  start with $A_1'$ and end with any of $A_k,A_k',T_k,B_k,B_k'$. For example after ending 
after freedom on LHS ($k\geq 2$) we will have the matrix  $M=A_1'T_1B_1B_1'A_2A_2'\ldots A_{k-1}A_{k-1}'T_{k-1}B_{k-1}B_{k-1}'A_k$.

  We choose  $$10^{(3+k_0)^6} \leq \|A'_1\|\leq 2\cdot 10^{(3+k_0)^6}.$$
For $k\geq 2$  $$\|A_k\|\in [10^{(2k+k_0)^6-(k+k_0)^4},
10^{\constFL\color{black}}]$$ 
$$\|A_k'\|\in [10^{(2k+1+k_0)^6+(k+k_0)^4}, 10^{(2k+1+k_0)^6+(k+k_0)^4+(k+k_0)^2} ],$$ 
For $k\geq 1$, $$\|B_k\|\in [10^{(2k+1+k_0)^6-(k+k_0)^4 },10^{(2k+1+k_0)^6-(k+k_0)^4+(k+k_0)^2}],$$
 $$\|B_k'\|\in [10^{(2k+2+k_0)^6+(k+k_0)^4},2 \cdot 10^{(2k+2+k_0)^6+(k+k_0)^4}].$$
$$\|T_k\|\leq 10^{(k+k_0)^2}.$$
\color{black}
We will impose  the following conditions on these matrices, and in the course of the paper prove that there exists $\rho<1$ so that  they can be satisfied  at each stage $k$ except for \color{black} a set of  proportion $\rho^{(k+k_0)^2}$  in each simplex.  This will be sufficient for our purposes.

\noindent

\textbf{Conditions *} There exists $\zeta$ so that
\begin{enumerate}
\item $\frac{|C_{i}(A_k)|}{|C_{i'}(A_k)|}<\zeta$  for all $i,\, i'\leq d-2$ and $k$. 
\item $\frac{|C_i(A'_kT_k)|}{|C_{i'}(A'_kT_k)|}<10^{2(k+k_0)^{2\color{black}}}$ 
for all $i, \, i'\leq d-2$ and  $k$.
\item $\frac{|C_{j}(B_k)|}{|C_{j'}(B_k)|}\leq  \zeta $ for all $j,j'\geq d-1$ and $k$.
\item $\frac{|C_{j}(B'_k)|}{|C_{j'}(B'_k)|}\leq 2$ for all $j,j'\geq d-1$ and $k$.
\end{enumerate}
\noindent

The last condition is automatic.   The third condition follows from the bounds we  will  put on the matrices in $\mathcal{B}_k$ (see Section \ref{sec:rhs}).  The first condition will be  established in Corollary~\ref{cor:ready} condition 3 (up to renaming $\zeta'$ as $\zeta$). 
The second condition follows by combining Theorem \ref{thm:neighborhood} and  
Lemma~\ref{lem:second}.

\noindent

\textbf{Condition **} There exists $c>0$ so that:
\begin{enumerate} 
\item if $M$ is a matrix at the end of freedom on LHS at stage $k$ 
\begin{equation}
\Theta(C_i(M),C_{i'}(M)<10^{-c(2k+k_0)^6}\end{equation}  for $i,i'\leq d-2$ and 
\item if $M$ is a matrix at the end of restriction on RHS at stage $k$
\begin{equation}
\Theta(C_{d-1}(M),C_d(M))<10^{-(2k+1+k_0)^6}.
\end{equation}
\end{enumerate}
This is proven in Lemma \ref{lem:angle to sing}.

Now we wish to find bounds on the  size of columns of products  of  matrices.
Let  $$U_k=\max_{i\leq d-2} |C_i(A'_1.....A_k)|$$  $$u_k=\min_{i\leq d-2}|C_i(A'_1...A'_k)|$$ $$V_k=\max_{i>d-2}|C_i(A'_1...B_k)|$$  $$v_k=\min_{i> d-2}|C_i(A'_1...B'_k)|.$$

Note $U_k$ is the maximum size of the first $d-2$ columns after freedom on LHS,   $u_k$ the minimum size after restriction on LHS and so forth.

\begin{prop}
\label{prop:sizes} Under Conditions *, if $k_0$ is large enough we have
\begin{enumerate}
\item \begin{multline*} \frac{1}{\zeta^{2(k-2)}}10^{-(k+k_0)^4}10^{-\sum_{i=1}^{k-1}4(i+k_0)^{2}}
10^{\sum_{i=3}^{2k}(i+k_0)^6}\leq U_k\\ \leq 2^k 10^{-(k+k_0)^4}10^{\sum_{i=3}^{2k} (i+k_0)^6+  2\sum_{i=2}^k(i+k_0)^2+\sum_{i=2}^k(i+k_0)^{2.3}}
\end{multline*}
\item  
\begin{multline*}\zeta^{-2(k-1)}10^{-\sum_{i=1}^{k-1}4(i+k_0)^{2}}10^{\sum_{i=3}^{2k+1}(i+k_0)^6}\leq    u_k \\
\leq 2^k 10^{\sum_{i=3}^{2k+1} (i+k_0)^6+2\sum_{i=2}^k(i+k_0)^2+\sum_{i=1}^k(i+k_0)^{2.3}}
\end{multline*}
   \item $ \frac 1 {(2\zeta)^k} 10^{-(k+k_0)^4}10^{\sum_{i=3}^{2k+1}(i+k_0)^6}\leq V_k\leq 2^k 10^{-(k+k_0)^4}10^{\sum_{i=3}^{2k+1}(i+k_0)^6+2\sum_{i=1}^k(i+k_0)^2}$
\item   $\frac 1 {(2\zeta)^k}10^{\sum_{i=3}^{2k+2}(i+k_0)^6} 
 \leq v_k \leq 2^k10^{\sum_{i=3}^{2k+2}(i+k_0)^6+2\sum_{i=1}^k(i+k_0)^2}$   

\end{enumerate}

\end{prop}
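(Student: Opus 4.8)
The plan is to prove all four estimates simultaneously by induction on $k$, tracking the four quantities $U_k, u_k, V_k, v_k$ through the passage from stage $k-1$ to stage $k$. The elementary input is this: whenever $M' = M N$ with $M, N$ nonnegative matrices, the $j$-th column of $M'$ is $\sum_\ell N_{\ell j} C_\ell(M)$, so $|C_j(M')| = \sum_\ell N_{\ell j} |C_\ell(M)|$. Thus $|C_j(M')|$ lies between $(\min_\ell |C_\ell(M)|)\cdot |C_j(N)|$ and $(\max_\ell |C_\ell(M)|)\cdot |C_j(N)|$, where the minima/maxima range over those $\ell$ with $N_{\ell j} \neq 0$. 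The whole computation is bookkeeping with these inequalities, using the prescribed size bounds on $\|A_k\|, \|A_k'\|, \|B_k\|, \|B_k'\|, \|T_k\|$ from Section~\ref{sec:paths} and the ratio bounds of Conditions~*.

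The key structural facts I would use about which columns feed into which are exactly those recorded in the descriptions of freedom and restriction in Section~\ref{sec:paths}. During freedom and restriction on the LHS only the first $d-2$ columns interact with each other (and the last two columns are not touched during restriction on the LHS), so to estimate $U_k$ and $u_k$ one multiplies the $(d-2)$-dimensional block: $U_k$ is obtained from $u_{k-1}$ (the min of the first $d-2$ columns after restriction at stage $k-1$) times $\|A_k\|$, up to the ratio factor $\zeta$ from Condition~*(1) to pass between max and min; and $u_k$ from $U_k$ times $\|A_k'T_k\|$, up to the factor $10^{2(k+k_0)^2}$ of Condition~*(2). Similarly, on the RHS freedom and restriction mix only the last two columns with each other, but — crucially — the transition matrix $T_k$ and the first step of $B_k$ (where $d$ beats $1, \dots, d-2$ to reach $\pi_R$) fold the LHS columns into the RHS columns, so $V_k$ is governed by $u_k$ (or $U_k$) times $\|B_k\|$ and $v_k$ by $V_k$ times $\|B_k'\|$, again up to the $\zeta$ or $2$ ratio factors of Conditions~*(3),(4). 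I would write out each of the four recursions with explicit upper and lower multiplicative constants, then unwind the telescoping products: the $\sum_{i=3}^{2k}(i+k_0)^6$ and $\sum_{i=3}^{2k+1}(i+k_0)^6$ exponents accumulate the main $\|A_i\|, \|A_i'\|, \|B_i\|$ contributions, the $10^{-(k+k_0)^4}$ survives from the current stage's $\|A_k\|$ (resp.\ $\|B_k\|$) lower exponent, the $2\sum (i+k_0)^2$ and $\sum(i+k_0)^{2.3}$ terms collect the secondary exponents in the size ranges, the $10^{-\sum 4(i+k_0)^2}$ in (1),(2) comes from the repeated application of Condition~*(2)'s $10^{2(k+k_0)^2}$ factor (squared because it is used once going up to $U$ and again — or via the max/min conversion), and the $\zeta^{\pm 2(k-1)}$ / $(2\zeta)^{\pm k}$ factors are the compounded ratio losses, one per stage per side.

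The step I expect to be the main obstacle is getting the transition bookkeeping exactly right: verifying precisely which columns of $A_k'T_k$ (and of the first block of $B_k$, and of $T_k$ itself) are nonzero in which rows, so that the min/max over the \emph{support} of a column is what I claim — in particular confirming that after $T_k$ and the onset of $B_k$ the last two columns of the product are genuine mixtures of the first $d-2$ columns (so the lower bound for $V_k$ can be driven by $u_k$, not by something smaller), and conversely that during LHS restriction the last two columns are untouched so they do not contaminate the $U_k, u_k$ recursion. This requires carefully reading off the combinatorics of $\pi_L, \pi_R, \pi_s$ and the explicit multiplication $M = A_1' T_1 B_1 B_1' A_2 \cdots$; once the support pattern is pinned down, the rest is a routine (if lengthy) manipulation of geometric sums in the exponents, and the hypothesis "$k_0$ large enough" is used only to absorb the lower-order $2^k$, $\zeta^k$, and $(k+k_0)^{2.3}$ terms into the leading $(i+k_0)^6$ sums so the stated two-sided bounds close up consistently at stage $k+1$.
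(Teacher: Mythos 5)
Your overall strategy --- induction tracking $U_k, u_k, V_k, v_k$ with per-stage multiplicative recursions --- is the same one the paper uses, but two of the structural claims underlying your recursions are false, and one of them (the $V_k$ recursion) produces an answer that is off by a huge factor. The key point you get backward is the direction of column addition on the right-hand side: when $d$ beats $i$ (for $i\le d-2$), the column $C_d$ is added to $C_i$, so the RHS column is folded \emph{into} the LHS columns, not vice versa. Consequently the columns $C_{d-1}(B_k), C_d(B_k)$ of a freedom-RHS matrix $B_k$ have nonzero entries \emph{only} in rows $d-1,d$, and $C_j(MB_k)$ for $j\ge d-1$ is a nonnegative combination of $C_{d-1}(M), C_d(M)$ alone --- never of the first $d-2$ columns. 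Just before $B_k$, the sizes of $C_{d-1}(M), C_d(M)$ are of order $v_{k-1}$, not $u_k$ (restriction on LHS does not touch them, transition contributes only a factor $\le 10^{(k+k_0)^2}$, and the LHS columns added during $A_k$-freedom are of order $U_k\ll v_{k-1}$). So the correct recursion is $V_k\approx v_{k-1}\|B_k\|$, not $u_k\|B_k\|$ as you claim; the two differ by roughly a factor of $10^{(2k+1+k_0)^6}$. Your stated intention --- ``confirming that after $T_k$ and the onset of $B_k$ the last two columns of the product are genuine mixtures of the first $d-2$ columns (so the lower bound for $V_k$ can be driven by $u_k$)'' --- is precisely what is false and cannot be rescued.

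A second, related gap: you assert that ``on the RHS freedom and restriction mix only the last two columns with each other,'' but as just noted, during freedom on RHS the column $C_d$ \emph{is} added to $C_1,\dots,C_{d-2}$, so the first $d-2$ columns pick up an additive contribution of order $V_{k-1}$ in passing from stage $k-1$ to stage $k$. This is exactly the $+V_{k-1}$ cross-term in the paper's recursion \eqref{eq:upper}, \eqref{eq:lower}, and it cannot be reproduced by a pure chain-of-norms bound $U_k\approx u_{k-1}\|A_k\|$. The paper's proof is therefore organized differently from your four separate chains: it writes a two-variable recursion expressing $U_k$ in terms of \emph{both} $U_{k-1}$ and $V_{k-1}$, closes the loop with the separate two-sided bound $U_{k-1}10^{(2(k-1)+1+k_0)^6}<V_{k-1}<U_{k-1}10^{(2(k-1)+1+k_0)^6+(k-1+k_0)^2}$, and deduces all four inequalities from this. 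To salvage your argument you would need to reverse the folding direction, drive $V_k$ by $v_{k-1}$ rather than $u_k$, and reorganize the $U_k$ induction around the $V_{k-1}$ cross-term.
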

 
 \color{black}
 \begin{proof}
We will find the upper and lower bounds for $U_k$. The proofs of the other inequalities are similar. 	 It is straightforward to check that our conclusions are satisfied for $u_1,U_1,v_1$ and $V_1$. We now prove the claim by induction, assuming the claim on $u_{k-1},U_{k-1},v_{k-1}$ and $V_{k-1}$ and then establish it for $U_k$. We claim first that \begin{equation}
 \label{eq:upper}
 U_k\leq \left(U_{k-1}10^{(2(k-1)+1+k_0)^6+(k-1+k_0)^4+2(k-1+k_0)^2}+V_{k-1}\right)10^{(2k+k_0)^6-(k+k_0)^4+(k+k_0)^{2.3\color{black}}}.
 \end{equation}  
 \begin{equation}
\label{eq:lower}
 U_k\geq \left({10^{-2(k+k_0)^{2}}}U_{k-1}10^{(2(k-1)+1+k_0)^6+(k-1+k_0)^4}+\frac{V_{k-1}}{\zeta}\right)10^{-2(k+k_0)^{2}}10^{(2k+k_0)^6-(k+k_0)^4}.
\end{equation}
 
 To justify these estimates note first  that going from the end of freedom at stage $k-1$ to the end of freedom at stage $k$ we first have restriction on the left followed by transition from left to right.
 During restriction  we add a column $C_i$ to a column $C_j$ where $i,j\leq d-2$.  
 The total will increase the   size of the first $d-2$ columns by  at most $\|A_{k-1}'\|$. The upper bound on $\|A_{k-1}'\|$  and  the upper bound on $\|T_{k-1}\|$ give \color{black}
the first term in the parentheses.  
The fact that the first $d-2$ columns of these \color{black} matrices are  $10^{2(k+k_0)^{2}}$ balanced (by Condition * (2))  means that each column is increased by a multiplicative factor  which is the lower bound of $\|A_{k-1}\|$ divided by  $10^{2(k+k_0)^{2}}$. This gives the first term in the lower bound, \eqref{eq:lower}. 

Then we enter freedom on RHS.   Now  the first $d-2$ columns are changed by adding the last two columns to the first $d-2$. Thus the effect of freedom on RHS is that we add  at most $V_{k-1}$ and at least $\frac{V_{k-1}}{\zeta}$.  The first $d-2$ columns are not changed during restriction on RHS.   
Then finally at level $k$ we have freedom on LHS and for an upper bound we multiply by an upper bound for $\|A_k\|$ and for a lower bound  we multiply by a lower bound for  $10^{-2(k+k_0)^{2}}\|A_k\|$ (because the previous matrix had that the first $d-2$ columns were $10^{(2k+k_0)^{2}}$ balanced). \color{black}   This proves (\ref{eq:upper}) and the corresponding lower bound (\ref{eq:lower}).

 Notice during restriction on LHS the last two columns do not change,
and so \color{black} in going from $U_{k-1}$ to $V_{k-1}$ we have a similar analysis to find
 $$U_{k-1}10^{(2(k-1)+1+k_0)^6}<V_{k-1}<U_{k-1}10^{(2(k-1)+1+k_0)^6+\color{black}(k-1+k_0)^2}$$ and so plugging this into (\ref{eq:upper})  and (\ref{eq:lower}) and using 
 \color{black}
 the induction hypothesis for the upper and lower bounds for $U_{k-1}$ to get the desired  bounds for $U_k$.


\end{proof}
 \color{black}
 \subsection{Non-unique ergodicity}
 \begin{thm}
 \label{thm:nue}
Under Conditions *, for all $\epsilon>0$, for $k_0$ large enough,   then for all $k$ we have  
$$\Theta\left(C_i(M_k),span (e_1,\ldots, e_{d-2}) \right)\leq  \epsilon $$  
for all $i\leq d-2$ and  $$\Theta\left(C_j(M_k),span (e_{d-1}, e_d)\right)<\epsilon\color{black}$$ for $j=d-1,d$.  
\end{thm}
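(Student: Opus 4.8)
The plan is to prove the two angle estimates by induction on $k$, tracking the "shape" of the first $d-2$ columns and the last two columns of $M_k$ separately and showing that Rauzy induction on each side drags the corresponding columns towards the coordinate subspace they are supposed to approximate. First I would set up the bookkeeping: write $M_k$ as a product of the blocks $A_i, A_i', T_i, B_i, B_i'$ as in the proposition, and record the elementary but crucial observations already made in Section~3 — that during restriction and freedom on LHS the last two columns $C_{d-1}, C_d$ are only ever added \emph{to} the first $d-2$ columns (never modified themselves), while on the RHS the first $d-2$ columns are unchanged and only the last two interact. Thus $C_{d-1}(M_k)$ and $C_d(M_k)$ are fixed once we finish the RHS at stage $k-1$ (actually once we finish restriction on RHS, by Condition ** (2) they are nearly parallel), and the job is to show they point nearly into $\mathrm{span}(e_{d-1},e_d)$.

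For the last two columns: the only way $C_{d-1}$ or $C_d$ acquires mass in the first $d-2$ coordinates is at a "transition" step $T_i$ (where the LHS and RHS letters finally get compared) — but the size bound $\|T_k\|\le 10^{(k+k_0)^2}$ is tiny compared to the sizes $V_k, v_k$ of those columns coming out of the RHS freedom/restriction (which are of order $10^{\sum (i+k_0)^6}$, Proposition~\ref{prop:sizes}(3)-(4)). So the component of $C_j(M_k)$ in $\mathrm{span}(e_1,\dots,e_{d-2})$ is bounded by the accumulated transition contributions, each scaled down by the subsequent enormous multiplicative growth from later $B$'s, and the geometric-type sum of the ratios $\|T_i\| / v_i$ (or the appropriate later norm) is $\le \epsilon$ once $k_0$ is large. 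This gives the $j = d-1, d$ estimate. Symmetrically, for $i \le d-2$: the first $d-2$ columns pick up mass in coordinates $d-1, d$ only through the transitions, and again these contributions are negligible against the LHS growth (Proposition~\ref{prop:sizes}(1)-(2)), so a telescoping estimate bounds $\Theta(C_i(M_k), \mathrm{span}(e_1,\dots,e_{d-2})) \le \epsilon$. Both estimates use Conditions * to control the relative sizes of columns \emph{within} each block so that "the big part stays big."

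More carefully, I would make the induction hypothesis quantitative: at the end of each of the five phases at stage $k$, the component of $C_i(M)$ ($i\le d-2$) orthogonal to $\mathrm{span}(e_1,\dots,e_{d-2})$ has size at most (some explicit decaying function) times $|C_i(M)|$, and symmetrically for $C_j$, $j = d-1, d$. Then one checks how each of the five operations — restriction on LHS (adds $C_i$ to $C_j$, $i,j\le d-2$: harmless, keeps the bad component small relative to the now-larger good one), transition (the one place bad mass is injected, but $\|T_k\|$ is small), freedom on RHS (adds $C_{d-1}, C_d$ to the first $d-2$: moves a bounded amount of "RHS-subspace" mass into columns that are growing much faster, so relative error shrinks), restriction on RHS (doesn't touch first $d-2$; by Condition ** makes $C_{d-1}, C_d$ nearly parallel), freedom on LHS (multiplies first $d-2$ columns by $\|A_k\|$, improving their relative error) — transforms the hypothesis, picking up at worst a factor that is summable over $k$ against the size gaps. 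The main obstacle, and where I'd spend the real work, is the transition step: one must verify that the total "leakage" $\sum_i \|T_i\|\big/(\text{size of the columns after stage } i)$ across all stages is genuinely $O(\epsilon)$ uniformly — i.e. that the size hierarchy in Proposition~\ref{prop:sizes} beats the (slowly growing) transition norms — and that Condition * keeps every individual column within a bounded factor of the maximum so that "small relative to $U_k$" really means "small relative to $|C_i|$" for each $i$. Everything else is routine linear algebra of adding columns and estimating angles via $\Theta(v, W) \le |P_{W^\perp} v| / |P_W v|$.
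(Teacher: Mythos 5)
Your bookkeeping of which columns get added to which is backwards in a way that propagates a genuine error into the estimate. During freedom on LHS, $d-1$ and $d$ \emph{lose} whenever they are compared, and since the winner's column is added to the loser's column, this means $C_1,\dots,C_{d-2}$ get added \emph{to} $C_{d-1}$ and $C_d$ — so the last two columns are modified on LHS (not, as you write, "never modified themselves"), and they do not get added to the first $d-2$ columns there. This, not the transition $T_i$, is the mechanism by which $C_{d-1},C_d$ acquire mass in $\operatorname{span}(e_1,\dots,e_{d-2})$. The paper's proof estimates exactly this: during freedom on LHS one adds a vector of size at most $U_k$ to a vector of size at least $v_{k-1}$, and Proposition~\ref{prop:sizes} gives $U_k/v_{k-1} < C\cdot 10^{-(k+k_0)^3}$, whose sum over $k$ is $<\epsilon$ for $k_0$ large. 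Your proposed bound via $\sum_i \|T_i\|/v_i$ is controlling the wrong quantity; since $U_k$ is astronomically larger than $\|T_k\|\le 10^{(k+k_0)^2}$, the term you would need to dominate is being entirely left out of your sum. (Fortunately the true ratio $U_k/v_{k-1}$ is still summably small because of the careful size scheduling in Proposition~\ref{prop:sizes}, so the theorem is true — but your proof does not establish it.)

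The rest of your plan is in the right spirit and matches the paper's: the cone spanned by $C_1,\dots,C_{d-2}$ can only shrink during LHS phases and the cone spanned by $C_{d-1},C_d$ can only shrink during RHS phases, so the only "bad" contributions happen during the cross-side additions, which one then sums. You correctly identify the freedom-on-RHS leakage for the first $d-2$ columns (ratio $V_k/u_k$). To repair the argument you should drop the focus on $T_i$ as the leakage source, symmetrize your observation about freedom on RHS, and write the bad component of $C_j$ ($j\ge d-1$) as $\sum_k U_k/v_{k-1}$ and of $C_i$ ($i\le d-2$) as $\sum_k V_k/u_k$, then invoke Proposition~\ref{prop:sizes} to see both sums are $<\epsilon$ for $k_0$ large. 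Your proposal's induction-per-phase framing is fine, but the monotonicity observation above makes the induction unnecessary — a direct telescoping sum suffices, which is how the paper proceeds.
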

\begin{proof}
Notice that during freedom and restriction on LHS the only columns added to the first $d-2$ columns are themselves. This implies that $\spanD(C_1(M),\ldots, C_{d-2}(M))$ is a subset of what it was at the start of these phases. So  $\underset{i\leq d-2}{\max} \Theta(C_i(M),span (e_1,\ldots  e_{d-2})$ can only decrease when we are on LHS.  Similarly, $\underset{j\geq d-1}{\max}\Theta(C_j(M),e_{d-1}\oplus e_d)$  can only decrease on RHS.  The theorem therefore will follow by estimating how much the angles of the first $d-2$ columns can change during RHS, and how much the angles of the last two columns can change on LHS. \color{black}

During freedom on the RHS we add a vector of norm at most $V_k$ to a vector of norm at least $u_k$. Proposition \ref{prop:sizes} says that  
$$V_k<\zeta^{2(k-1)}2^{2k}10^{-[(k_0+k)^4-2\sum_{i=2}^k (i+k_0)^2+4(i+k_0)^2]}u_k$$
  and so if $k_0$ is large enough we have $\frac{V_k}{u_k}<10^{-(k+k_0)^3}$. 
Similarly, during freedom on the LHS we add a vector of norm at most $U_k$ to a vector of norm at least $v_{k-1}$. We have $\frac{U_k}{v_{k-1}}<C10^{-(k+k_0)^3}$ for a constant $C$. These are the only times vectors in $C_1,...,C_{d-2}$ are added to $C_{d-1}$ and $C_d$ and vice-versa. 
Notice we started with $A_1'$ so that the first $d-2$ columns are not added to the $d-1$ and $d$ columns. Thus after $A_1'$ by taking $k_0$ sufficiently large the first $d-2$ colums are arbitrarily close to the span of $e_1,\ldots e_{d-2}$ and the last two columns are exactly $e_{d-1}$ and $e_d$.  Similarly considering $B_1$ and $B_1'$, for any $\epsilon>0$, by choosing  $k_0$ large enough we can ensure that $\sum_{i=1}^\infty \frac{V_i}{u_i}<\epsilon$ and $\sum_{i=2}^\infty \frac{U_i}{v_{i-1}}<\epsilon$.

\end{proof}

\section{Hausdorff dimension}\label{sec:hdim}

In our construction 
we will have  a parallel family $\mathcal P$ of $2$ planes,   parametrized by points in a $d-3$ dimensional orthogonal  subspace  intersected with $\Delta$\color{black}.  Using Lebesgue measure on the orthogonal complement gives a measure on the set of $2$-planes.

 The majority of the paper will be devoted to proving the following theorem.

\begin{thm}
\label{thm:planes} There exists 
\begin{itemize}
\item a positive measure set $\hat{\mathcal{P}}$  of parallel $2$-planes $P$, 
\item  for each $k\in \mathbb{N}$   a set  $\mathcal{C}_k$ of disjoint  simplices,  \color{black}
 $\Delta_k^j\subset \Delta$  and 
 \item for each  $P\in \hat{\mathcal{P}}$ and $k \in \mathbb{N}$, a set $\mathcal{C}_k(P) \subset \{\Delta_k^j\cap P: \Delta_k^j \in \mathcal{C}_k\}$ of polygons
 \end{itemize} 
   so that for each $P\in\hat{\mathcal{P}}$, and  $J\in\mathcal{C}_k(P) $, 
when  
  $r_k(J)$  is the diameter of this polygon,  we have 
 \begin{enumerate}[label=(\alph*)]
\item\label{A:nest} each polygon in $\mathcal{C}_{k+1}(P)$  is a subset of some element of $\mathcal{C}_{k}(P)$
 and is called a descendant.
\item\label{A:nue} each point  in an infinite nested sequence of polygons is not uniquely ergodic 
\item\label{A:close size} if we set $\bar r_k=\max_{P\in\hat{\mathcal{P}}} \max_{J\in \mathcal{C}_k(P)} r_k(J)$ and $\hat r_k=\min_{P\in\hat{\mathcal{P}}} \min_{J\in \mathcal{C}_k(P)} r_k(J)$, \color{black} then for each $\epsilon>0$ we have $\lim_{k\to\infty}\frac{\bar r_k^{1+\epsilon}}{\hat r_{k+1}}= 0$.
 \item \label{A:small loss} There exists $a_k$ so that letting $\mathcal{D}_{k+1}(J)$ be the set of all descendants $J'$ at level $k+1$ of a polygon $J$ at level $k$  then 
 except for a set of polygons  $\mathcal{B}_k(P)\subset\mathcal{C}_k(P)$ satisfying  
$$\sum_{J\in\mathcal{B}_k(P)}\lambda_2(J)< \frac {1}{9^{k}}\sum_{J\in\mathcal{C}_k(P)} \lambda_2(J)
$$ 
 we have 
 $$\sum_{J'\in\mathcal{D}_{k+1}(J)}\lambda_2(J')>a_k\lambda_2(J)$$ 
 \item \label{countchildren}The $a_k$ satisfy that for all $\epsilon>0$, 
 $\underset{k \to \infty}{\lim} \hat{r}_k^{\frac 1 2 +\epsilon}(\prod_{j=1}^k a_j)^{-1}=0.$  
  \item \label{A:sep} For each $P\in \hat{\mathcal{P}}$,  $J\in \mathcal{C}_k(P)$ and $J'\in\mathcal{C}_{k+2}(P)$, where $J'\subset J$,  then $\mathcal{N}(J',\bar r_{k+2}) \cap P\subset J.$ \color{black}
 \end{enumerate}
 \end{thm}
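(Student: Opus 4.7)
The overall plan is a large inductive construction where at each stage $k$ we refine a collection of simplices using the Rauzy paths of Section~\ref{sec:paths} and intersect them with a positive-measure family of $2$-planes. I would parametrize $\hat{\mathcal P}$ as a family of $2$-planes parallel to the $e_{d-1},e_d$ coordinate plane, indexed by the first $d-2$ coordinates, and restrict to a subset on which the initial matrix $A_1'$ sends $\Delta$ deep into the interior of $\Delta$ so that each plane in $\hat{\mathcal P}$ meets $A_1'\Delta$ transversely in a definite $2$-polygon. The elements of $\mathcal C_k$ are then the images $M\Delta$ where $M$ is a product of freedom/restriction/transition matrices of the prescribed sizes satisfying Conditions~$*$ and~$**$, truncated after freedom on LHS at stage $k$; descendants are built by post-multiplying by $A_k'T_kB_kB_k'A_{k+1}$. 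The polygons in $\mathcal C_k(P)$ are $\{(M\Delta)\cap P\}$ further cut down to those whose intersection is ``large enough'', a condition codified in Theorem~\ref{thm:big shadow}.

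Properties~\ref{A:nest} and~\ref{A:nue} I would handle first as they are essentially automatic. Nesting is immediate from the matrix recipe. For non-unique ergodicity I invoke Theorem~\ref{thm:nue}: the columns of $M_k$ split into two groups concentrating on the spans of $\{e_1,\ldots,e_{d-2}\}$ and $\{e_{d-1},e_d\}$, so the limit of $M_k\Delta$ is a nondegenerate $1$-simplex, and any IET for which $M_k\Delta$ converges to a simplex of positive dimension has at least that many linearly independent ergodic invariant measures. Properties~\ref{A:close size} and~\ref{A:sep} are geometric: $\hat r_k$ and $\bar r_k$ are comparable, up to subpolynomial factors in $k$, to $\|M_k\|^{-1}$ times the sine of the angle at which $P$ meets $M_k\Delta$, and the ratios between diameters at successive levels are controlled by Proposition~\ref{prop:sizes}, whose bounds are tuned so that the dominant $(2k+k_0)^6$ exponents in $\hat r_{k+1}$ dominate any sub-polynomial power of $\bar r_k$.

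The real work lies in~\ref{A:small loss} and~\ref{countchildren}. I would prove~\ref{A:small loss} by walking through the five pieces of the step from $M$ to $MA_k'T_kB_kB_k'A_{k+1}$ and estimating what fraction of Lebesgue mass on $(M\Delta)\cap P$ survives each piece. During freedom on LHS, Kerckhoff--Veech style estimates show that outside a set of proportion $\rho^{(k+k_0)^2}$ the next good matrix $A_{k+1}$ exists and pushes a definite fraction of the polygon into the designated face; this is exactly the content of Theorem~\ref{thm:big shadow}. During the two restriction phases much of the measure is lost, but one shows that almost all of the surviving measure clusters in a thin neighborhood of a codimension-$1$ face, so the next freedom phase recovers a controlled fraction. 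Multiplying these fractions over the five pieces gives the $a_k$. Property~\ref{countchildren} then reduces to a bookkeeping check that $\prod_{j\le k} a_j \gtrsim \hat r_k^{1/2+\epsilon}$, which is exactly what the bounds in Proposition~\ref{prop:sizes} are designed to give --- the measure loss at each stage is essentially the square root of the geometric contraction, reflecting the target dimension $1/2$ inside each plane.

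The main obstacle, by a wide margin, will be the probabilistic control required for~\ref{A:small loss}: one needs Kerckhoff--Veech style estimates applied not to whole simplices but to $2$-plane slices, uniformly across a positive-measure family of planes. This forces the codimension-$2$ and dimension-$2$ ``face probability'' machinery that Section~\ref{sec:background} outlines as being developed in later sections, and it is there that the quantitative $\rho^{(k+k_0)^2}$ shape of Conditions~$*$ and~$**$ becomes indispensable: it is summable over $k$, yet still allows one to maintain enough Lebesgue mass on the slices $\mathcal C_k(P)$ to satisfy~\ref{countchildren}.
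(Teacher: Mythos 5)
Your high-level outline matches the paper's architecture: simplices $\mathcal{C}_k$ are matrix-image simplices measured at the start of restriction on LHS, descendants come from post-multiplication by the prescribed stage-$k$ matrices, \ref{A:nest} and \ref{A:nue} are essentially automatic given Theorem~\ref{thm:nue}, and \ref{A:small loss} is the main burden, handled by chaining Theorem~\ref{thm:big shadow}, Theorem~\ref{thm:neighborhood} and Theorem~\ref{thm:RHS}. Your identification of the ``Kerckhoff--Veech on 2-plane slices, uniform over a positive-measure family'' issue as the central technical obstacle is exactly right.

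However, there is a genuine gap in your choice of the plane family, and it is not cosmetic. You propose taking $\hat{\mathcal{P}}$ to consist of $2$-planes parallel to the $e_{d-1},e_d$ coordinate plane, indexed by the first $d-2$ coordinates. Inside the affine span of $\Delta$ (where $\sum x_i=1$), the directions parallel to the $e_{d-1},e_d$ coordinate plane form only the $1$-dimensional line $(e_{d-1}-e_d)\mathbb{R}$, so this recipe produces a family of \emph{lines}, not $2$-planes. Even after repairing the dimension count by adjoining an arbitrary horizontal direction, the construction would still fail, because the quantitative diameter bounds needed for \ref{A:close size}, \ref{countchildren} and \ref{A:sep} (Theorem~\ref{thm:diameter}) require the plane direction to be compatible with the second-smallest singular output direction of $M$ at the end of each freedom phase. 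The paper enforces this by taking $P_0=u\oplus v$ with $u$ and $v$ built from $\Omega_{\pi_L}^{-1}C_1(A_1'B_1)$ and $\Omega_{\pi_L}^{-1}C_d(A_1'B_1)$ projected into $\Delta_c$ (Section~\ref{sec:plane choice}); these specific directions feed Lemma~\ref{lem:output}, which gives $|M^T\Omega_{\pi_L}w|\gtrsim |C_{\min}(M)|\,|w|$ for $w$ in the plane direction and in turn the two-sided diameter control of Theorem~\ref{thm:diameter}. The same vector $u$ is also the illumination direction $\phi$ in Theorem~\ref{thm:big shadow}, so the plane choice is entangled with the argument for \ref{A:small loss} as well. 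With a generic or coordinate-plane choice, the intersection $P\cap M_k\Delta$ would be governed by the wrong singular values and the lower bound $\hat{r}_k\gtrsim\omega'(M)/|C_{\max}(M)|$ would not be available, collapsing \ref{A:close size}, \ref{countchildren} and \ref{A:sep}.

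A second, smaller omission: your sketch of \ref{A:small loss} does not mention the two inductive area hypotheses $(Y_{k-1})$, $(Z_{k-1})$ of Theorem~\ref{thm:plane estimates}, which are what make the chain of exceptional-set estimates across the five phases actually sum to something less than $9^{-k}$ of the current mass while remaining uniform over the surviving planes $\mathcal{P}_{k-1}$. The scalar area bookkeeping of Proposition~\ref{prop:sizes} alone does not close the induction; you need the comparison between $\lambda_{d-1}(\mathcal{S}_k)$ and $\lambda_2(P\cap\mathcal{S}_k)$ supplied by $(Y_{k-1})$ and $(Z_{k-1})$ to convert $(d-1)$-measure exceptional-set bounds (which are what Theorems~\ref{thm:neighborhood}, \ref{thm:RHS}, \ref{thm:big shadow} deliver) into $2$-measure bounds on the slices.
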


Assuming Theorem~\ref{thm:planes}  we show how  Theorem \ref{thm:main} follows. 
We first recall Frostman's Lemma and prove a useful Corollary.

\begin{lem}(Frostman) Let $A\subset \mathbb{R}^k$ be Borel. The following are equivalent: 
\begin{itemize}
\item $\mathcal{H}^s(A)>0$ where $\mathcal{H}^s$ denotes $s-$dimensional Hausdorff measure.
\item  There exists a Borel measure $\mu$  satisfying $\mu(A)>0$ and $\mu(B(x,r))\leq r^s$ for all $x \in \mathbb{R}$ and $r>0$.
\end{itemize}
\end{lem}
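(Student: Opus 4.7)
The plan is to handle the two implications separately, with the easy direction essentially a one-line application of countable subadditivity and the hard direction a dyadic stopping-time construction.

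For the implication from the existence of $\mu$ to $\mathcal{H}^s(A)>0$ (the mass distribution principle), I would proceed as follows. Fix any countable cover $\{B_i\}$ of $A$ by balls with $\operatorname{diam}(B_i)<\delta$. Each $B_i$ is contained in a ball of radius equal to its diameter, so the measure hypothesis gives $\mu(B_i)\leq \operatorname{diam}(B_i)^s$. Summing, $\mu(A)\leq \sum_i \mu(B_i)\leq \sum_i \operatorname{diam}(B_i)^s$. Taking the infimum over such covers and then letting $\delta\to 0$ yields $\mathcal{H}^s(A)\geq \mu(A)>0$. (One may absorb the usual constant coming from the definition of Hausdorff measure via balls versus arbitrary sets into $\mu$.)

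For the converse I would reduce first to the case where $A$ is compact. This uses the inner regularity of Hausdorff measure on Borel sets: if $\mathcal{H}^s(A)>0$ we can find a compact $K\subset A$ with $\mathcal{H}^s(K)>0$, and it suffices to construct $\mu$ supported on $K$. The main step is a recursive construction on the dyadic lattice. For each $n$, distribute a unit of mass uniformly on every dyadic cube $Q$ of side $2^{-n}$ that meets $K$, scaled so that the mass on $Q$ equals $2^{-ns}$. Then propagate this mass upward through the dyadic hierarchy by the rule: the mass of a parent cube $Q'$ of side $2^{-k}$ is the minimum of $2^{-ks}$ and the sum of the masses assigned to its children. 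This produces, for each $n$, a measure $\mu_n$ supported on a neighborhood of $K$ satisfying $\mu_n(Q)\leq (\operatorname{diam} Q)^s$ for every dyadic $Q$ of side $\geq 2^{-n}$. The key quantitative lemma is that the total mass $\mu_n(\mathbb{R}^k)$ is bounded below by a constant multiple of $\mathcal{H}^s_\infty(K)$, which one proves by observing that any cover of $K$ by dyadic cubes at which the stopping rule fires forms an efficient $s$-cover with total $s$-content comparable to $\mu_n(\mathbb{R}^k)$.

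Finally, extract a weak-$*$ limit $\mu$ of $\mu_n$ along a subsequence by Prokhorov's theorem (all $\mu_n$ are supported on a common compact set with uniformly bounded mass). The ball condition $\mu(B(x,r))\leq C r^s$ passes to the limit for dyadic balls by weak-$*$ convergence and extends to arbitrary balls by comparing a ball of radius $r$ with the $O(1)$ dyadic cubes of side $\asymp r$ that cover it, introducing only a dimensional constant which can be absorbed by rescaling. The main obstacle is the lower bound on the total mass in the construction, but this follows from a standard extraction of a minimal dyadic cover from the stopping-time tree, so the proof is straightforward modulo this combinatorial accounting.
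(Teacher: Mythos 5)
The paper states this as ``(Frostman)'' and gives no proof at all; it is invoked as a known classical result. Your proposal therefore cannot be ``the same approach as the paper,'' but it is the standard proof of Frostman's lemma (as in Mattila's book, which the paper cites for Proposition~\ref{prop:Matilla}), and the outline is essentially correct: the mass distribution principle for the easy direction, and the dyadic stopping-time construction with weak-$*$ limit for the converse.

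One point you gloss over is the reduction to a compact set. When $0<\mathcal{H}^s(A)<\infty$, inner regularity of $\mathcal{H}^s$ restricted to $A$ gives a compact $K\subset A$ of positive measure readily. But when $\mathcal{H}^s(A)=\infty$, the existence of a compact (indeed, closed with positive finite measure) subset is the Besicovitch--Davies theorem, which is a genuine theorem and not just ``inner regularity.'' Since the lemma is stated for arbitrary Borel $A$ with $\mathcal{H}^s(A)>0$, you should either cite Besicovitch--Davies or note that what the dyadic construction actually shows is the cleaner statement $\mathcal{H}^s_\infty(K)>0\Rightarrow\exists\mu$, and that the reduction to that case for a general Borel set is the separate regularity step. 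Your remark about absorbing the dimensional constant $C$ into $\mu$ (to match the clean bound $\mu(B(x,r))\leq r^s$ in the statement) is fine, since one can just replace $\mu$ by $\mu/C$.
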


We will use the following.
\begin{cor} 
\label{cor:Frostman}
Suppose $A \subset \Delta_{d-1} \subset \mathbb{R}^d$  is Borel  and there exists a Borel measure $\mu$ so that $\mu(A)>0$ and for all $\epsilon>0$ there exists $r_0$ so that for all $0<r<r_0$ and $x\in \mathbb{R}^d$ one has $\mu(B(x,r))<r^{s-\epsilon}$.  Then $Hdim(A)\geq s$. 
\end{cor}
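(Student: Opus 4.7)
The plan is to reduce the Corollary to a direct application of Frostman's Lemma after two routine modifications of $\mu$. The hypothesis differs from Frostman's in two ways: the ball bound only holds for small $r$, and the exponent carries an $\epsilon$-loss. Both issues are easy to thread through.

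First I would fix an arbitrary $\epsilon>0$ and aim to show $Hdim(A)\geq s-\epsilon$; letting $\epsilon\to 0$ at the end yields the stated bound. By hypothesis there exists $r_0=r_0(\epsilon)>0$ with $\mu(B(x,r))<r^{s-\epsilon}$ for every $x\in\mathbb{R}^d$ and every $0<r<r_0$. I may assume $s-\epsilon>0$ (otherwise the conclusion is vacuous).

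Next I would replace $\mu$ by its restriction $\mu|_A$. This is still a Borel measure with $(\mu|_A)(A)=\mu(A)>0$, and since $(\mu|_A)(B(x,r))\leq \mu(B(x,r))$, the small-radius bound is preserved. Because $A\subset\Delta_{d-1}$ is bounded, $\mu|_A$ is a finite measure of total mass $M:=\mu(A)<\infty$ with compact support. Now I would upgrade the ball estimate to all radii: for $r\geq r_0$ one has trivially
\[
(\mu|_A)(B(x,r))\leq M \leq \frac{M}{r_0^{\,s-\epsilon}}\, r^{s-\epsilon},
\]
so with $C:=\max\{1,\,M r_0^{-(s-\epsilon)}\}$ the uniform bound $(\mu|_A)(B(x,r))\leq C r^{s-\epsilon}$ holds for every $x\in\mathbb{R}^d$ and every $r>0$. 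Setting $\nu:=\mu|_A/C$ produces a nonzero finite Borel measure with $\nu(A)>0$ and $\nu(B(x,r))\leq r^{s-\epsilon}$ for all $x,r$. Frostman's Lemma then gives $\mathcal{H}^{s-\epsilon}(A)>0$, hence $Hdim(A)\geq s-\epsilon$.

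There is no substantive obstacle here; the argument is pure bookkeeping on top of Frostman. The only point to be careful about is that the $\epsilon$-loss appears in the hypothesis itself, so one must fix a single $\epsilon$, pass through Frostman with exponent $s-\epsilon$, and only then let $\epsilon\to 0$; one cannot invoke Frostman directly with the sharp exponent $s$.
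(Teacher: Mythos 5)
Your proof is correct. The route is very close to the paper's, with one small operational difference worth flagging. You make the ball estimate hold at all radii by restricting $\mu$ to the bounded set $A$ (finiteness of $\mu|_A$ coming from $A\subset\Delta_{d-1}$ plus the hypothesis, which forces local finiteness) and then dividing by a constant $C$ to absorb the trivial bound at radii $r\geq r_0$. The paper instead restricts $\nu$ to a single ball $B(x,r_1)$ with $r_1<r_0$ chosen so that $\nu(B(x,r_1)\cap A)>0$; then the total mass is automatically $<r_1^{s-\epsilon}\le r^{s-\epsilon}$ for every $r\geq r_0$, so no rescaling is needed and one never has to appeal to boundedness of $A$. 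Both are pure bookkeeping in front of Frostman's Lemma and both correctly fix a single $\epsilon$, pass through Frostman at exponent $s-\epsilon$, and take $\epsilon\to 0$ at the end; the paper's variant is marginally leaner (one restriction, no constant), while yours is marginally more self-contained in that it does not require locating a good ball.
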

\begin{proof}First observe that if $\mathcal{H}^t(A)>0$ for all $0 \leq t<s$ then $Hdim(A)\geq s$. Now observe that if there exists a measure $\nu$ with $\nu(A)>0$ and there exists $r_0$ so that $\nu(B(x,r))<r^{s-\epsilon}$ for all $0<r<r_0$ and $x\in \mathbb{R}^d$ then there exists a measure $\mu$ so that $\mu(A)>0$ and $\mu(B(x,r))<r^{s-\epsilon}$ for all $r$. Indeed, choose $r<r_0$ and $x$ so that $\nu(B(x,r)\cap A)>0$ and choose $\mu$ to be $\nu$ restricted to $B(x,r)$. So it is clear that $\mathcal{H}^{s-\epsilon}(A)>0$ for all $\epsilon>0$ and by our previous observation we have the corollary.
\end{proof}

\begin{prop} \label{prop:good planes} Each $2$-plane $P$ occurring in  Theorem~\ref{thm:planes} satisfies 
$H_{dim}(\cap_{k=1}^{\infty}\mathcal{C}_k(P))\geq \frac 3 2 $. 
\end{prop}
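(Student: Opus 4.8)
The plan is to apply Corollary~\ref{cor:Frostman} on the $2$-plane $P$, so it suffices to build a Borel measure $\mu_P$ supported on $\bigcap_{k=1}^\infty \mathcal{C}_k(P)$ with $\mu_P(\bigcap_k \mathcal{C}_k(P))>0$ and, for every $\epsilon>0$, a threshold $r_0$ with $\mu_P(B(x,r))< r^{3/2-\epsilon}$ for all $x$ and $0<r<r_0$. First I would construct $\mu_P$ by the standard Kolmogorov/martingale procedure along the nested tree: on each polygon $J\in \mathcal{C}_k(P)$ distribute the mass currently assigned to $J$ among its descendants $J'\in\mathcal{D}_{k+1}(J)$ in proportion to $\lambda_2(J')$. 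This is where \ref{A:small loss} enters: outside a bad set $\mathcal{B}_k(P)$ of total $\lambda_2$-measure at most $9^{-k}\sum_{J}\lambda_2(J)$, each $J$ passes on at least an $a_k$-fraction of its Lebesgue mass, so a Borel--Cantelli argument shows that the total mass lost to bad polygons over all levels is summable and $<1$, hence the limiting measure is nontrivial; renormalize so that $\mu_P(\bigcap_k \mathcal{C}_k(P))=1$. By \ref{A:nue} this measure is supported on non-uniquely ergodic IETs.

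Next I would prove the upper Frostman bound on $\mu_P$-measures of balls. The key point is to relate the $\mu_P$-mass of a polygon $J$ at level $k$ to its diameter $r_k(J)$. From the construction, $\mu_P(J)$ is, up to the cumulative bad-mass corrections (bounded below by a universal constant since $\sum 9^{-k}<\infty$), at most a constant times $\big(\prod_{j=1}^{k}a_j\big)^{-1}$ — because at each level a surviving polygon retains at most all of the mass it was allotted, which after normalizing by the total passed-on Lebesgue mass $\ge a_j\lambda_2(\text{parent})$ is $\le (\prod a_j)^{-1}$ of the whole. Meanwhile \ref{countchildren} says exactly that $\hat r_k^{1/2+\epsilon}\big(\prod_{j=1}^k a_j\big)^{-1}\to 0$, so $\mu_P(J)\le C\,\hat r_k^{1/2+\epsilon}$ for $k$ large, and since $\hat r_k\le r_k(J)$ for every $J\in\mathcal{C}_k(P)$ this gives $\mu_P(J)\le C\, r_k(J)^{1/2+\epsilon}$. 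Actually, to reach exponent $3/2-\epsilon$ rather than $1/2-\epsilon$ I need to combine this with the fact that $P$ is a $2$-plane: here I would instead estimate the $\mu_P$-mass of $J$ directly as at most $(\prod a_j)^{-1}$ times the fraction of level-$k$ Lebesgue mass in $J$, and note that descendants have comparable diameters across a level by \ref{A:close size}, so roughly $\mu_P(J)\lesssim r_k(J)^{2}\cdot(\prod a_j)^{-1}\cdot(\hat r_k)^{-2}\lesssim r_k(J)^2 \hat r_k^{-1/2-\epsilon}$, and combined with $r_k(J)\ge \hat r_k$ this is $\lesssim r_k(J)^{3/2-\epsilon}$.

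Then I would pass from polygons to arbitrary balls $B(x,r)$. Given $r$, choose $k$ with $\hat r_{k+1}\le r< \hat r_k$ (or with $\bar r_{k+1}$ in place of $\hat r_k$, using \ref{A:close size} to guarantee the ranges overlap for $k$ large), and observe that, by the separation property \ref{A:sep}, $B(x,r)\cap P$ can meet at most boundedly many polygons of $\mathcal{C}_{k}(P)$: if it met two distinct level-$(k+2)$ polygons lying in different level-$k$ ancestors, their $\bar r_{k+2}$-neighborhoods would be disjoint, contradicting $r$ being comparable to $\bar r_{k+2}\le \bar r_k$. Hence $\mu_P(B(x,r))\le C\max_{J\in\mathcal{C}_k(P)}\mu_P(J)\le C\,\bar r_k^{3/2-\epsilon}$, and by \ref{A:close size} $\bar r_k\le \hat r_{k+1}^{1/(1+\epsilon')}\le r^{1/(1+\epsilon')}$, so after adjusting $\epsilon$ we get $\mu_P(B(x,r))< r^{3/2-\epsilon}$ for $r$ small. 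Corollary~\ref{cor:Frostman} then yields $H_{\dim}(\bigcap_k \mathcal{C}_k(P))\ge \tfrac32$.

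The main obstacle I expect is the bookkeeping in the second step: making precise the claim that a surviving polygon carries at most $(\prod_{j\le k}a_j)^{-1}$ of the normalized mass while simultaneously tracking the multiplicative errors coming from the bad sets $\mathcal{B}_j(P)$ and from the fact that \ref{A:small loss} only controls the total Lebesgue measure of descendants, not each polygon's allotment individually — so one must argue that the per-polygon mass ratios stay uniformly bounded, which is where \ref{A:close size} and the comparability of sibling diameters do the real work. The separation step \ref{A:sep} and the choice of the correct level $k$ as a function of $r$ are technical but routine once the two diameter-comparison inequalities in \ref{A:close size} and \ref{countchildren} are in hand.
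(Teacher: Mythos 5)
Your proof takes the same route as the paper: build the Frostman measure by distributing mass to descendants in proportion to $\lambda_2$, use \ref{A:small loss} and the summability of $9^{-k}$ to get positivity, then estimate the $\mu_P$-mass of a ball through the mass of the level-$k$ polygon it lies in, combining \ref{A:sep}, \ref{A:close size} and \ref{countchildren}. However, the central exponent step, as written, is run in the wrong direction.

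You correctly arrive at $\mu_P(J)\lesssim r_k(J)^2\hat r_k^{-1/2-\epsilon}$ (the detour through ``fraction of level-$k$ Lebesgue mass'' and the stray factor $\hat r_k^{-2}$ are unnecessary; the clean input is $\mu_P(J)\le\lambda_2(J)\bigl(\prod_{j<k}a_j\bigr)^{-1}\le C\,r_k(J)^2\hat r_k^{-1/2-\epsilon}$, the last inequality by \ref{countchildren}). You then claim this is $\lesssim r_k(J)^{3/2-\epsilon}$ ``combined with $r_k(J)\ge\hat r_k$.'' But $r_k(J)\ge\hat r_k$ gives $\hat r_k^{-1/2-\epsilon}\ge r_k(J)^{-1/2-\epsilon}$, hence $r_k(J)^2\hat r_k^{-1/2-\epsilon}\ge r_k(J)^{3/2-\epsilon}$, i.e.\ the \emph{opposite} of the bound you need. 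To make the step go through you must bound $\hat r_k$ from \emph{below} against $r_k(J)$, and this is exactly where \ref{A:close size} does its work: since $\bar r_{k-1}^{1+\epsilon'}/\hat r_k\to 0$ and $r_k(J)\le\bar r_k\le\bar r_{k-1}$, for $k$ large one has $\hat r_k\ge\bar r_{k-1}^{1+\epsilon'}\ge r_k(J)^{1+\epsilon'}$, whence $r_k(J)^2\hat r_k^{-1/2-\epsilon}\le r_k(J)^{2-(1+\epsilon')(1/2+\epsilon)}=r_k(J)^{3/2-\epsilon-O(\epsilon')}$, and one renames $\epsilon$. This is the same use of \ref{A:close size} the paper makes when converting the $\bar r_k,\hat r_k$ estimate into $r^{3/2-\epsilon}$. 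A smaller but real gap: the level choice ``$\hat r_{k+1}\le r<\hat r_k$'' may not be well-defined (the $\hat r_k$ need not be monotone), and even when it is, it does not hand you the hypothesis \ref{A:sep} requires. The paper's $k=\max\{i:\hat r_{i+2}/2>r\}$ gives $r<\hat r_{k+2}/2\le\bar r_{k+2}/2$, which is precisely what \ref{A:sep} needs to conclude that $B(x,r)\cap P$ sits inside a \emph{single} $J\in\mathcal{C}_k(P)$ (a stronger statement than ``boundedly many,'' and in fact the one your last line uses).
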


\begin{proof}
We will  put a non-zero Borel measure $\mu$ on $\cap_{k=1}^{\infty}\mathcal{C}_k(P)$ with the property that for all $\epsilon>0$ there exists $r_0>0$ so that 
$\mu(B(x,r))< r^{\frac 3 2 -\epsilon}$  for all $r<r_0$. By Corollary~\ref{cor:Frostman} this will prove the proposition. 

The  measure $\mu$ will be  the weak-$*$ limit of measures defined inductively. Let $\mu_1$ be Lebesgue measure restricted to $\mathcal{C}_1(P)$. Given $\mu_k$ defined on $\mathcal{C}_k(P)$, we inductively define $\mu_{k+1} $ in the following way.
Define $\mu_{k+1}$ so that 
\begin{itemize}
\item $\mu_{k+1}$ is supported on the set of $J\in \mathcal{C}_k( P)$ that satisfy \ref{A:small loss}. 
\item $\frac{\mu_{k+1}(J'_1)}{\mu_{k+1}(J'_2)}=\frac{\lambda_2(J'_1)}{\lambda_2(J_2')}$ for all $J_1',J_2'\in \mathcal{C}_{k+1}(P)$, with $J'_1,J'_2\subset J$. 
\item  if $\mu_{k+1}(J)>0$ and $J\in \mathcal{C}_k(P)$ then $\mu_{k+1}(J)=\mu_{k}(J)$,
\item $\mu_{k+1}$ is a multiple of Lebesgue on each $J'\in \mathcal{C}_{k+1}(P)$ (the constant can depend on its immediate ancestor) and 
\item $\mu_{k+1}(J \setminus \mathcal{C}_{k+1})=0.$  
\end{itemize}
(Informally, we rescale $\lambda_2$ restricted to $\mathcal{C}_{k+1}(P)\cap J$ so that $\mu_{k+1}(J)=\mu_k(J)$.)
\color{black}
Let $\mu_{k+1}$ be the zero measure everywhere else. Let $\mu$ be a weak-* limit of the $\mu_k$.
Now the second bullet implies $\frac{\mu_{k+1}(J')}{\lambda_2(J')}$ is independent of the descendents $J'$ of $J$.   The third bullet says if we sum up $\mu_{k+1}(J')$ over these descendents we get $\mu_{k+1}(J)=\mu_k(J)$. Combined with Conclusion \ref{A:small loss} of Theorem~\ref{thm:planes} 
we get \begin{equation}
\label{eq:constant}
\frac{\mu(J')}{\lambda_2(J')}\leq \frac{\mu_k(J)}{a_k\lambda_2(J)}
\end{equation}
 for all  $J'$ descendants  of 
$J\in \mathcal{C}_{k+1}(P)$. 
We claim  
\begin{equation}\label{eq:more than zero}\mu(\cap_{i=1}^\infty \mathcal{C}_i(P))>0.
\end{equation}

By Conclusion \ref{A:small loss} of Theorem~\ref{thm:planes} we have 
$$\mu_{k+1}(\cap_{i=1}^{k+1} \mathcal{C}_i(P))\geq (1-\frac{1}{9^k})\mu_k(\cap_{i=1}^{k+1} \mathcal{C}_i(P)).$$ It follows that 
$$\mu(\cap_{i=1}^\infty \mathcal{C}_i(P))\geq \underset{k \to \infty}{\liminf} \mu_k(\cap_{i=1}^k \mathcal{C}_i(P))\geq (1-\sum_{j=1}^\infty \frac{1}{9 ^j})\mu_1(P)>0,$$
 establishing Inequality \eqref{eq:more than zero}.

Next we check that for each $k$, if  $r<\frac {\bar r_{k+2}} 2 $, 
then for all $x$,  
\begin{equation}\label{eq:ball meas} \mu(B(x,r))\leq \lambda_2(B(x,r_k))(\prod_{j=1}^ka_j)^{-1}.
\end{equation}
By the construction of the measures we claim that if $J\in \mathcal{C}_k(P)$, then $\mu(J)\leq \lambda_2(J) (\prod_{j=1}^ka_j)^{-1}$. Indeed, by (\ref{eq:constant}) and induction,
$$\mu_k(J)\leq \lambda_2(J)(\prod_{j=1}^ka_j)^{-1}\leq \lambda_2(B(x,\bar{r}_k))(\prod_{j=1}^ka_j)^{-1}$$ and also by construction $\mu_\ell(J)\leq \mu_k(J)$ for any $\ell>k$. 

If $r<\frac {\bar r_{k+2}}2$ and $\mu(B(x,r))>0$ then there exists $J' =\Delta_{k+2}^i \cap P$ so that $B(x,r) \cap J' \neq \emptyset$. Let $J$ be the (unique) element of $\mathcal{C}_k(P)$ so that $J'\subset J$. By Conclusion  \ref{A:sep} of Theorem \ref{thm:planes} we have that $B(x,r) \subset \mathcal{N}(J',r) \subset J$ and so $\mu(B(x,r))\leq \mu(J)$, establishing Inequality \eqref{eq:ball meas}.

We now finish the proof of Proposition~\ref{prop:good planes} by showing that 
for all $\epsilon>0$ sufficiently small there exists $r'$ so that $\mu(B(x,r))<r^{\frac 3 2 -\epsilon}$ for all $x$ and $0<r<r'$. 

By Conclusion \ref{countchildren}  of Theorem ~\ref{thm:planes}, for all $0<\epsilon<\frac 1 2 $ there exists $k'$ so that for all $k>k'$  $$(\prod_{i=1}^ka_i)>\hat r_k^{\frac 1 2 +\frac \epsilon 4}.$$  Given $r>0$ let $$k=\max\{i:\frac{\hat r_{i+2}}2>r\}.$$  By Conclusion \ref{A:close size}   of Theorem~\ref{thm:planes},  we have that for all $\epsilon>0$ sufficiently small there exists $r'$ so that for $r<r'$ the $k$ defined above satisfies  $$\hat r_k<r^{1-\frac \epsilon 4}.$$  
We can also assume $r'$ small enough so 
  $k>k'$.  Putting this together, we now have our claim: 
$$\mu(B(x,r))<\lambda_2(B(x,r_k))(\prod_{i=1}^ka_i)^{-1}<\pi r_k^2\hat r_k^{-(\frac 1 2 +\frac \epsilon 4)}<r^{(1-\frac{\epsilon}4)(\frac 3 2 -\frac \epsilon 4)}<r^{\frac 3 2 -\epsilon}.$$ We now apply Corollary~\ref{cor:Frostman}. This completes the proof of the Proposition.
\end{proof}
\color{black}
The Main Theorem will now  follow from the following standard result:
\begin{prop}
\label{prop:Matilla}
(\cite[Proposition 6.6]{mattila}) Let $A \subset \mathbb{R}^d$. If $\mathcal{H}^s(A)<\infty$ then for all $m$ planes $V\subset \mathbb{R}^d$,  for  a.e. $a \in V$  $\mathcal{H}^{s-m}(A\cap (V^\perp+a))<\infty$ 
\end{prop}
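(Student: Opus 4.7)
The plan is to reduce the proposition to a Fubini-type inequality for Hausdorff measures, namely
\begin{equation}\label{eq:fubini-hdim}
\int_V \mathcal{H}^{s-m}\bigl(A \cap (V^\perp + a)\bigr)\, d\lambda_m(a) \;\le\; C(s,m,d)\cdot \mathcal{H}^s(A),
\end{equation}
where $\lambda_m$ denotes $m$-dimensional Lebesgue measure on the $m$-plane $V$. Once \eqref{eq:fubini-hdim} is in hand, the hypothesis $\mathcal{H}^s(A)<\infty$ forces the integrand to be finite for $\lambda_m$-a.e.\ $a\in V$, which is exactly the conclusion of the proposition.

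To prove \eqref{eq:fubini-hdim}, I would work first with the pre-measures $\mathcal{H}^s_\delta$. Fix $\delta>0$ and, for any $\varepsilon>0$, choose a cover $\{E_i\}_{i\in\mathbb{N}}$ of $A$ with $\operatorname{diam}(E_i)\le \delta$ and $\sum_i(\operatorname{diam} E_i)^s \le \mathcal{H}^s_\delta(A) + \varepsilon$. Let $\pi_V\colon\mathbb{R}^d\to V$ denote orthogonal projection. The set of slice parameters $a$ for which the slice of $E_i$ is nonempty is exactly $\pi_V(E_i)$, whose $\lambda_m$-measure is at most $c_m(\operatorname{diam} E_i)^m$ (bounded by the $\lambda_m$-measure of a ball of radius $\operatorname{diam} E_i$ in $V$). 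For each such $a$, the diameter of $E_i\cap (V^\perp+a)$ is at most $\operatorname{diam}(E_i)\le \delta$, so $\{E_i\cap(V^\perp+a)\}_i$ is an allowable cover of $A\cap(V^\perp+a)$ for $\mathcal{H}^{s-m}_\delta$. Therefore
\begin{equation*}
\mathcal{H}^{s-m}_\delta\bigl(A \cap (V^\perp + a)\bigr) \;\le\; \sum_i \mathbf{1}_{\pi_V(E_i)}(a)\cdot(\operatorname{diam} E_i)^{s-m}.
\end{equation*}
Integrating over $V$ and using Tonelli's theorem gives
\begin{equation*}
\int_V \mathcal{H}^{s-m}_\delta\bigl(A\cap(V^\perp+a)\bigr)\, d\lambda_m(a) \;\le\; c_m\sum_i(\operatorname{diam} E_i)^s \;\le\; c_m\bigl(\mathcal{H}^s_\delta(A)+\varepsilon\bigr).
\end{equation*}

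Sending $\varepsilon\to 0$ and then $\delta\to 0$, the right side tends to $c_m\mathcal{H}^s(A)$. On the left, the integrands $\mathcal{H}^{s-m}_\delta$ are monotone increasing as $\delta\downarrow 0$ and converge pointwise to $\mathcal{H}^{s-m}\bigl(A\cap(V^\perp+a)\bigr)$, so monotone convergence yields \eqref{eq:fubini-hdim}. The main technical obstacle is measurability: one must ensure that $a\mapsto \mathcal{H}^{s-m}_\delta(A\cap(V^\perp+a))$ is $\lambda_m$-measurable so that Tonelli applies. I would handle this by first reducing to the case where $A$ is compact (inner-regularity of $\mathcal{H}^s$ on Borel sets via Suslin/analytic-set arguments, then exhaust by compact subsets; since $\mathcal{H}^s(A)$ is finite this is standard) and then noting that for compact $A$ the map $a\mapsto \mathcal{H}^{s-m}_\delta(A\cap(V^\perp+a))$ is upper semicontinuous (a covering of a slice by open sets of diameter $<\delta$ persists as a valid covering for nearby slices), hence Borel measurable. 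A passage through the Borel hierarchy extends \eqref{eq:fubini-hdim} to general Borel $A$, which is the setting asserted in the proposition.
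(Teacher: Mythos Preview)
The paper does not supply its own proof of this proposition; it simply quotes it from Mattila's book \cite[Proposition 6.6]{mattila} and uses it as a black box. Your argument is a correct reconstruction of the standard proof (indeed, essentially the one Mattila gives): bound $\mathcal{H}^{s-m}_\delta$ of each slice by the cover-sum restricted to the sets meeting that slice, integrate over $V$ via Tonelli, and pass to the limit with monotone convergence. The measurability remark is apt; one clean way to sidestep it is to note that the pointwise bound by the measurable function $\sum_i \mathbf{1}_{\pi_V(E_i)}(a)(\operatorname{diam} E_i)^{s-m}$ already controls the \emph{upper} integral of $a\mapsto \mathcal{H}^{s-m}_\delta(A\cap(V^\perp+a))$, and finiteness of the upper integral of the limit suffices to conclude a.e.\ finiteness without ever invoking measurability of the slice function.
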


We now prove the Main Theorem. 
\begin{proof}[ Proof of Theorem \ref{thm:main} assuming Theorem \ref{thm:planes}] We prove the contrapositive. Let $V$ be the orthogonal subspace to $P$.  Assume $$ Hdim(NUE)<d-1-\frac 1 2$$ so there exists $s<d-1-\frac 1 2 $ so that $\mathcal{H}^s((NUE))<\infty$. So by 
Proposition~\ref{prop:Matilla} for almost $\tau\in V$, $$\mathcal{H}^{s-(d-3)}(P+\tau\cap NUE))<\infty.$$ Since $s<d-1-\frac 1 2$   this contradicts Proposition~\ref{prop:good planes}.
\end{proof}

\section{Distortion and Probabilistic results} \label{sec:prob}

In this section we prove that certain bad sets have measure that decay exponentially. 
We start by recalling some known results due to Kerckhoff  \cite{ker} and Veech \cite{Veech78}
\begin{defin}
We say a matrix is $\zeta$-balanced if the ratio of the sizes of any two columns is bounded by $\zeta$.
\end{defin}
    
  The following Lemma says that given any matrix of Rauzy induction, a definite proportion of points with that matrix of Rauzy induction will have a balanced matrix of Rauzy induction before the norm increases by more than a fixed factor. 
  
  \begin{lem}(\cite[Corollary 1.7]{ker}) \label{lem:bal often} There exists $\zeta, K',\rho'>0$ so that if $M=M(x,r)$ is a matrix of Rauzy induction then 
 \begin{multline*}\lambda_{d-1}\Big(\big\{y \in M\Delta: \exists n \text{ so that } MA(R^ry,n) \text{ is 
$\zeta$-balanced and } \\
|C_{\max}(MA(R^ry,n))|\in [|C_{\max}(M)|,K'|C_{\max}(M)|]\big\}\Big)>\rho' \lambda_{d-1}(M\Delta).
\end{multline*}
 \end{lem}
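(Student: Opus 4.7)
The plan is to reduce to the ergodic theory of renormalized Rauzy--Veech induction on the full simplex, following Kerckhoff. Let $\pi_r$ denote the permutation reached after $r$ Rauzy steps, and consider the projective change of variables
$$\Phi_M \colon \Delta \to M\Delta, \qquad z \mapsto \frac{Mz}{|Mz|_1},$$
where the source copy of $\Delta$ carries combinatorics $\pi_r$. Under $\Phi_M$, Rauzy induction on $M\Delta$ starting at step $r$ is conjugate to Rauzy induction on $\Delta$ starting at step $0$; in particular, $A(R^r\Phi_M(z),n) = A(z,n)$. The task thus becomes: exhibit a positive $\lambda_{d-1}$-proportion of $z \in \Delta$ for which some $A(z,n_0)$ is $\zeta$-balanced with $\|A(z,n_0)\|\leq K_0$, and for which the product $MA(z,n_0)$ satisfies the conclusions of the lemma.

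On $\Delta$ itself, the key input is the Kerckhoff--Veech theorem: the renormalized Rauzy--Veech map admits an absolutely continuous invariant probability measure whose density is bounded above and below on the $\zeta$-balanced locus. Combined with standard recurrence, this furnishes a positive Lebesgue proportion of $z$ for which some early matrix $A(z,n_0)$ is itself $\zeta$-balanced with $\|A(z,n_0)\|\leq K_0$. For such $z$, the columns of $MA(z,n_0)$ are $\zeta$-balanced non-negative integer combinations of the columns of $M$ of bounded total weight, and a direct computation then gives $|C_{\max}(MA(z,n_0))|\in [|C_{\max}(M)|, K'|C_{\max}(M)|]$ together with $\zeta'$-balance of $MA(z,n_0)$ for some new $\zeta'$ depending only on $\zeta, K_0, d$.

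The main obstacle is transferring the positive-measure statement on $\Delta$ through $\Phi_M$ back to $M\Delta$: the Jacobian of $\Phi_M$ with respect to normalized Lebesgue is proportional to $|Mz|_1^{-d}$, whose oscillation over $\Delta$ is governed by $(|C_{\max}(M)|/|C_{\min}(M)|)^d$ and can be unbounded when $M$ is very unbalanced. Kerckhoff's resolution, and the technical heart of his Corollary 1.7, is to not demand balance in a single shot. Instead one first waits --- still within a bounded number of Rauzy--Veech steps, hence with bounded norm growth --- for $z$ to enter a cylinder of the renormalized induction on which the pullback Jacobian of $\Phi_M$ has bounded oscillation, and on that sub-cylinder one applies the balanced-case reasoning from the previous paragraph. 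Iterating this reduction finitely many times, with $\zeta'$ and $K'$ adjusted at each step, yields the desired uniform lower bound $\rho'>0$ independent of the starting matrix $M$.
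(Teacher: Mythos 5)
The paper cites this statement from Kerckhoff without reproducing a proof, so there is no in-paper argument to compare against; evaluating your sketch on its own merits reveals a circularity at the heart of the reduction. You correctly identify that the pullback Jacobian $z\mapsto |Mz|_1^{-d}$ of $\Phi_M$ has oscillation of order $\bigl(|C_{\max}(M)|/|C_{\min}(M)|\bigr)^d$ over $\Delta$, which is unbounded in $M$, so a positive-measure event on $\Delta$ need not transfer to one on $M\Delta$ with uniform constant. Your proposed fix is to first descend to a cylinder $A\Delta\subset\Delta$ on which this Jacobian has bounded oscillation. But bounded oscillation of $z\mapsto |Mz|_1$ over $A\Delta$ is, up to dimensional constants, exactly the condition that $MA$ be $\zeta$-balanced: the sup and inf of $|Mz|_1$ over $A\Delta$ are comparable to $|C_{\max}(MA)|$ and $|C_{\min}(MA)|$ respectively. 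So the cylinder you want to ``wait for'' is one already witnessing the conclusion of the lemma, and there is no a priori bound --- uniform in $M$ --- on how deep in the Rauzy tree (hence how large $\|A\|$) one must go before such a rebalancing cylinder appears with definite probability. Producing that bound is the actual content of Kerckhoff's Corollary 1.7, not a preliminary you can assume.

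Kerckhoff's argument works directly on $M\Delta$ rather than pulling back through $\Phi_M$, and it is elementary and combinatorial rather than ergodic-theoretic. One tracks the column norms of the running product and shows that at each Rauzy step, with a probability bounded below by a constant $c_d>0$ depending only on $d$, an ``improving'' event occurs (roughly, the column of maximal norm loses, so the ratio of largest to smallest column norm does not grow and eventually shrinks); a uniform per-step distortion bound then converts positive probability of a bounded-length improving run into the uniform $\rho'$ and $K'$. No invariant measure is invoked --- and as a side remark, the Rauzy--Veech invariant measure on $\Delta\times\mathcal{P}$ is infinite, so appealing to an ``absolutely continuous invariant probability measure'' is itself not quite right, though its density is bounded on the balanced locus. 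If you want to repair your proposal, the missing lemma is precisely a per-step improvement probability uniform over all $M$; once you have it, the detour through $\Phi_M$ and ergodicity is unnecessary.
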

 \begin{lem}(\cite[Corollary 1.2]{ker})\label{lem:bal distort} Let $M(x,r)$ be a $\zeta$-balanced matrix of Rauzy induction. Let $U \subset \Delta$ be measurable. 
 Then $\zeta^d \lambda_{d-1}(U)>\frac{\lambda_{d-1}(MU)}{\lambda_{d-1}(M\Delta)}>\zeta^{-d}
\lambda_{d-1}(U)$.
 \end{lem}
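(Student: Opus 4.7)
The plan is to compute the Jacobian of the projective action of $M$ on $\Delta$ and observe that the $\zeta$-balanced hypothesis forces this Jacobian to vary by at most a factor $\zeta^d$ across $\Delta$, from which the two-sided distortion bound is immediate. Write the projective map as $\hat M \colon v \mapsto Mv/(\mathbf 1^T M v)$, where $\mathbf 1 = (1,\dots,1)$; since $M$ is a matrix of Rauzy induction (nonnegative, injective) and $v \in \Delta$, the denominator is positive. A standard computation in local coordinates on $\Delta$ (for instance, parametrize $\Delta$ by $(v_1,\dots,v_{d-1})$ with $v_d = 1-\sum_{i<d} v_i$ and differentiate $\hat M$ componentwise) shows that the Jacobian of $\hat M$ with respect to $(d-1)$-dimensional Lebesgue measure equals, up to a global factor of $|\det M|$, the density
$$J(v) = \frac{1}{(\mathbf 1^T M v)^d}.$$

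The second step is to bound $J$. For any $v \in \Delta$,
$$\mathbf 1^T M v = \sum_{i=1}^d v_i \, |C_i(M)|$$
is a convex combination of the column $1$-norms and therefore lies in $[|C_{\min}(M)|,\, |C_{\max}(M)|]$. By the $\zeta$-balanced hypothesis $|C_{\max}(M)|/|C_{\min}(M)| \le \zeta$, so $J(v)$ ranges over an interval of the form $[m, \zeta^d m]$ for some $m>0$ depending on $M$.

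For the conclusion, substitute into the change-of-variables formulas
$$\lambda_{d-1}(MU) = |\det M| \int_U J(v)\, d\lambda_{d-1}(v), \qquad \lambda_{d-1}(M\Delta) = |\det M| \int_\Delta J(v)\, d\lambda_{d-1}(v).$$
The factor $|\det M|$ cancels in the ratio, and using the pinching $m \le J \le \zeta^d m$ on both the numerator and the denominator yields
$$\zeta^{-d}\,\frac{\lambda_{d-1}(U)}{\lambda_{d-1}(\Delta)} \;\le\; \frac{\lambda_{d-1}(MU)}{\lambda_{d-1}(M\Delta)} \;\le\; \zeta^{d}\,\frac{\lambda_{d-1}(U)}{\lambda_{d-1}(\Delta)},$$
which is the stated inequality under the paper's convention that $\lambda_{d-1}$ is normalized so that $\lambda_{d-1}(\Delta)=1$.

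The only non-routine ingredient is the Jacobian formula $J(v) = 1/(\mathbf 1^T M v)^d$, which is the one place where a careful (but entirely standard) calculation is required; everything else is the convex-combination observation plus bookkeeping. I expect no real obstacle beyond that, since once $J$ has bounded oscillation the distortion bound is automatic.
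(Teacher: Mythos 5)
Your argument is correct, and it is the standard one: the paper quotes this as Kerckhoff's Corollary 1.2 without reproducing a proof, but the Jacobian density $1/(\sum_i v_i|C_i(M)|)^d$ that you compute is exactly Veech's formula, which the paper itself records as Lemma~\ref{lem:jacobian}, and the $\zeta$-pinching of that density over $\Delta$ immediately gives the two-sided distortion bound. One small simplification: each Rauzy step matrix is a shear with determinant $1$, so $\det M = 1$ and the factor you carry along ``up to'' is identically one rather than merely cancelling in the ratio; and as you note, the conclusion matches the lemma's statement under the normalization $\lambda_{d-1}(\Delta)=1$ (the strictness of the inequalities in the paper's phrasing is a slight abuse, as both sides vanish when $\lambda_{d-1}(U)=0$).
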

 \begin{lem}(\cite[Proposition 5.2]{Veech78}) \label{lem:jacobian} Given a matrix of Rauzy induction $M(x,n)$ and $W\subset \Delta$ then
 $$\lambda_{d-1}(\{y \in M(x,n)\Delta:R^ny\in W\})=\int_{W}\frac 1 {(\sum_{i=1}^d|C_i(M(x,n))|z_i)^d}dz.$$
 \end{lem}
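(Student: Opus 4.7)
The plan is a change-of-variables computation. First I would identify the set $\{y \in M\Delta : R^n y \in W\}$ with the image $\phi(W)$, where $\phi: \Delta \to M\Delta$ is the projective map $\phi(z) = Mz/|Mz|$ induced by $M = M(x,n)$. Indeed, if $y = \phi(z)$ then $Mz = |Mz|\,y$, which exhibits $z$ as the unique point of $\Delta$ mapping projectively to $y$ under $M$, i.e.\ $z = R^n y$. Expanding the denominator gives the identity
\[
|Mz| \;=\; \sum_i (Mz)_i \;=\; \sum_{i,j} M_{ij} z_j \;=\; \sum_j |C_j(M)|\,z_j,
\]
which is exactly the expression appearing in the integrand.

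Next I would invoke the area formula,
\[
\lambda_{d-1}\bigl(\phi(W)\bigr) \;=\; \int_W J\phi(z)\,dz,
\]
where $J\phi$ denotes the Jacobian determinant of the restriction of $\phi$ to $\Delta$, viewed as a $(d-1)$-dimensional submanifold of $\mathbb{R}^d$. The remaining task is to prove the formula $J\phi(z) = 1/\bigl(\sum_j |C_j(M)| z_j\bigr)^d$.

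This rests on two standard facts. First, Rauzy induction matrices are products of elementary matrices of the form $I + E_{ji}$ (adding one column of the identity to another), hence $\det M = 1$. Second, for any invertible matrix $A$ with nonnegative entries acting on $\Delta$ via $z \mapsto Az/|Az|$, one has the projective Jacobian identity
\[
J\phi(z) \;=\; \frac{|\det A|}{|Az|^d}.
\]
To derive this identity I would parametrize $\Delta$ by the first $d-1$ barycentric coordinates (with $z_d = 1 - \sum_{i<d} z_i$), compute the $(d-1)\times(d-1)$ Jacobian matrix of $(z_1,\ldots,z_{d-1}) \mapsto \bigl((Mz)_i/|Mz|\bigr)_{i<d}$ by the quotient rule, and simplify via column operations, using $\sum_i M_{ij} = |C_j(M)|$ to reduce to $|\det M|/|Mz|^d$.

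The main obstacle is the Jacobian bookkeeping; the cleanest conceptual alternative would be to pass to the cone: extend $\phi$ homogeneously to $\tilde\phi(z) = Mz$ on $\mathbb{R}^d_+$, observe that $\tilde\phi$ preserves $d$-dimensional Lebesgue measure since $|\det M| = 1$, and then slice by level sets of $|z|$ via a polar/co-area argument to recover the factor $|Mz|^{-d}$ from the rescaling of the $(d-1)$-dimensional slices. Combining this with $\det M = 1$ yields the stated formula.
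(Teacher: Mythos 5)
The paper states this lemma by citation to Veech (\cite[Proposition 5.2]{Veech78}) and gives no proof, so there is no in-paper argument to compare against; you are supplying a self-contained derivation. Your proof is correct. The identification of $\{y\in M\Delta: R^ny\in W\}$ with $\phi(W)$ for $\phi(z)=Mz/|Mz|$ is exactly right given the paper's definition of $R^n$, and the identity $|Mz|=\sum_j |C_j(M)|\,z_j$ for nonnegative $M$ and $z\in\Delta$ is immediate. The two routes you sketch for the Jacobian formula $J\phi(z)=|\det M|/|Mz|^d$ are both valid, and with $\det M=1$ (Rauzy matrices are elementary column-addition matrices, hence unimodular) this gives the integrand. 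Of the two, the cone argument is the one worth spelling out: writing $x=rz$ with $r=|x|$, $z\in\Delta$, so $dx = c\,r^{d-1}\,dr\,d\lambda_{d-1}(z)$, and noting that $x\mapsto Mx$ preserves $d$-dimensional Lebesgue measure while sending $(r,z)$ to $(r|Mz|,\phi(z))$, one gets $|Mz|^d\, d\lambda_{d-1}(\phi(z)) = d\lambda_{d-1}(z)$ directly, with no $(d-1)\times(d-1)$ determinant bookkeeping. The barycentric-coordinate route also works (the constant Jacobians of the orthogonal projections from the hyperplane $\sum z_i=1$ to the coordinate plane cancel between source and target), but the algebra is considerably heavier. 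This is essentially Veech's argument, and the paper implicitly reuses it in the proof of Lemma~\ref{lem:face jacobian}, where the formula $J(u)=1/(\sum u_i|C_i(M)|)^d$ is taken as known.
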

 
 We also have the following version on faces:
  Let $M$ be a matrix of Rauzy induction in $\mathcal{R}_d$ and
 $J_{i_1,\ldots i_k}$ be the Jacobian of the projective action of $M$ restricted to $\spanD\{e_{i_1},...,e_{i_k})$

\begin{lem}\label{lem:face jacobian}  For  $u\in \spanD(e_{i_1},...,e_{i_k})$  
then $$J_{i_1,\ldots i_k}(M)(u)=\frac{c_M}{(\sum_{j=1}^k u_{i_j}|C_{i_j}(M)|)^k},$$
where $c_M$ is a constant depending on $M$ and $i_1,..,i_k$. 
\end{lem}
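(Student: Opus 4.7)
The plan is to reduce the statement to the $k$-dimensional analogue of Veech's Jacobian formula (Lemma~\ref{lem:jacobian}). First, parameterize the face $\spanD(e_{i_1},\ldots,e_{i_k})$ by barycentric coordinates $a\in\Delta_{k-1}$ so that $u=\sum_{j=1}^k a_j e_{i_j}$ (in particular $u_{i_j}=a_j$). Then
\[
\frac{Mu}{|Mu|} \;=\; \frac{\sum_j a_j C_{i_j}(M)}{\sum_j a_j|C_{i_j}(M)|} \;=\; \sum_{j=1}^k t_j\,\frac{C_{i_j}(M)}{|C_{i_j}(M)|}, \qquad t_j \;:=\; \frac{a_j|C_{i_j}(M)|}{\sum_l a_l|C_{i_l}(M)|},
\]
so the projective action of $M$ sends this face onto the $(k-1)$-simplex spanned by the unit vectors $C_{i_j}(M)/|C_{i_j}(M)|$. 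In barycentric coordinates, the induced map $a\mapsto t$ is the projective linear action on $\mathbb{RP}^{k-1}$ of the diagonal matrix $D=\mathrm{diag}(|C_{i_1}(M)|,\ldots,|C_{i_k}(M)|)$.

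I would then rerun the computation that proves Lemma~\ref{lem:jacobian}, now in dimension $k$ with $D$ in place of $M$. Concretely: pass to the cone $\mathbb{R}^k_{>0}$ in polar-type coordinates $(r,\tilde a)$ with $r=\sum_j a_j$ and $\tilde a = a/r$, so that Lebesgue volume decomposes as $d^k a = r^{k-1}\,dr\,d\tilde a$, and observe that under $D$ the radial coordinate scales as $r\mapsto r\cdot\sum_j\tilde a_j|C_{i_j}(M)|$. Cancelling the radial Jacobian against $|\det D|\,r^{k-1}$ leaves the Jacobian of $a\mapsto t$ with respect to the barycentric Lebesgue measures equal to
\[
\frac{|\det D|}{\left(\sum_j a_j|C_{i_j}(M)|\right)^k} \;=\; \frac{\prod_j|C_{i_j}(M)|}{\left(\sum_j u_{i_j}|C_{i_j}(M)|\right)^k}.
\]

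Finally, to translate from barycentric coordinates $(a,t)$ into the $(k-1)$-dimensional Lebesgue measures actually inherited from $\mathbb{R}^d$ by $\spanD(e_{i_1},\ldots,e_{i_k})$ and by its image simplex, one multiplies by two geometric constants: one for the source face (a universal constant depending only on $i_1,\ldots,i_k$) and one for the image face (depending on $M$ and the $i_j$'s, but crucially not on $u$). Absorbing these two constants, together with the factor $\prod_j|C_{i_j}(M)|$, into a single constant $c_M$ produces the claimed formula. The only real bookkeeping is the correct identification of the two conversion constants; since $c_M$ is never used quantitatively elsewhere—only the $u$-dependence $\bigl(\sum_j u_{i_j}|C_{i_j}(M)|\bigr)^{-k}$ matters—this does not require any delicate estimate, so I do not anticipate any substantive obstacle.
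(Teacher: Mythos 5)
Your proof is correct, and it takes a genuinely different route from the paper's. The paper proves the case $k=d-1$ by a geometric ``thin slab'' comparison: it takes a small neighborhood $W$ of $u$ in the face, forms the prism $W_\epsilon$ by coning slightly off the face, and computes $\lambda_{d-1}(MW_\epsilon)$ two ways --- once by plugging into the known full-dimensional Jacobian (Lemma~\ref{lem:jacobian}), and once as (face measure)$\times$(orthogonal segment length); letting $\epsilon\to 0$ and $W\downarrow\{u\}$ and solving gives $J_{d-1}$, with the general $k$ obtained by ``repeating the proof $d-k$ times.'' Your argument instead passes to barycentric coordinates on the face, observes that the restricted projective action is exactly the projective action of the diagonal matrix $D=\mathrm{diag}(|C_{i_1}(M)|,\dots,|C_{i_k}(M)|)$, and then computes that Jacobian from scratch via the radial/angular splitting $d^k x = c_k\, r^{k-1}\,dr\,d\tilde a$: since $(r,\tilde a)\mapsto(r\cdot s(\tilde a),\Phi(\tilde a))$ is triangular, $|\det D|\,r^{k-1}=r^{k-1}s(\tilde a)^{k}J_\Phi(\tilde a)$, giving $J_\Phi = |\det D|/s(\tilde a)^k$ directly. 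The paper's method is cheap given Lemma~\ref{lem:jacobian} but involves some delicate limiting geometry and an implicit induction on codimension; yours is self-contained (it does not invoke Lemma~\ref{lem:jacobian} at all), handles all $k$ in one shot, and isolates the only point that actually matters --- that the $u$-dependence is $\bigl(\sum_j u_{i_j}|C_{i_j}(M)|\bigr)^{-k}$ --- while correctly noting that the two barycentric-to-ambient conversion factors (one universal, one depending on $M$ through the image simplex) are $u$-independent and may be absorbed into $c_M$.
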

We include a proof for completeness. 

\begin{proof} We treat the case of $k=d-1$.  The   general situation follows by repeating the proof below $d-k$ times.
 For simplicity  of notation  assume that $i_\ell=\ell$ so the Jacobian of interest is $J_{1,...,d-1}$ which we denote $J_{d-1}$. 
 
 We have $$J_{d-1}(u)=\frac 1 {(\sum_{i=1}^{d}u_i|C_i(M)|)^d}=\frac 1 {(\sum_{i=1}^{d-1}u_i|C_i(M)|)^d},$$ the Jacobian of $M$ acting on the entire simplex $\Delta$ evaluated at $u$.

 Let $W$ be a small neighborhood of $u \in \spanD(e_1,..,e_{d-1})$ restricted to $\spanD(e_1,...,e_{d-1})$. Let $W_\epsilon=\{v \in \Delta:v=(1-s)w+sv' \text{ with }w\in W, \, v' \in \Delta \text{ and }s\leq \epsilon \}.$

For small $\epsilon$ and small $W$, we  now approximate $\lambda_{d-1}(MW_\epsilon)$ in two different ways.
First, since $W$ is a small neighborhood of $u$ and $\epsilon$ small,   
$$\lambda_{d-1}(MW_\epsilon)\sim J(u)\lambda_{d-1}(W_\epsilon)\sim \epsilon \lambda_{d-2}(W)\frac{1}{(\sum_{i=1}^{d-1}u_i|C_i(M)|)^d}.$$
The notation here $\sim$ is that the ratio goes to  $1$  as $\epsilon$ goes to $0$ and the neighborhood shrinks to $u$.

On the other hand let $\gamma(u)$ be the line segment in $MW_\epsilon$ orthogonal to $MW$.  Then  $$\lambda_{d-1}(MW_\epsilon)\sim  \lambda_{d-2}(MW)\cdot |\gamma(u)|\sim $$
$$ J_{d-1}(u)\lambda_{d-2}(W)\frac{\epsilon|C_d(M)|}{\sum_{i=1}^{d-1}u_i|C_i(M)|}d\Big(\frac{C_{d}(M)}{|C_d(M)|},\spanD(C_1(M),...,C_{d-1}(M)\Big).$$

Taking the ratio of these two expressions for $\lambda_{d-1}(MW_\epsilon)$ and letting $\epsilon\to 0$ and the neighborhood $W$ converging down to $u$,
we get 

$$J_{d-1}(u)\frac{|C_d(M)|}{\sum_{i=1}^{d-1}u_i|C_i(M)|}d\Big(\frac{C_{d}(M)}{|C_d(M)|},\spanD(C_1(M),...,C_{d-1}(M)\Big)=\frac{1}{(\sum_{i=1}^{d-1}u_i|C_i(M)|)^d},$$
which solving for $J_{d-1}$ gives the desired expression  for $J_{d-1}$.
 
\end{proof}
\color{black}

\begin{lem}\label{lem:volume}(\cite[Equation 5.5]{Veech78}) There exists a constant $c_d$ depending only on dimension, so that  $\lambda_{d-1}(M\Delta)=c_d\prod |C_i(M)|^{-1}.$
\end{lem}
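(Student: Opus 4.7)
The plan is to realize $M\Delta$ as the image of $\Delta$ under a single linear map whose determinant is easily computed, and then to read off the $(d-1)$-dimensional Jacobian from elementary linear algebra together with the fact that Rauzy induction matrices have determinant $\pm 1$.

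Let $a_i = |C_i(M)|$, $w_i = C_i(M)/a_i$, and let $W$ be the $d \times d$ matrix with columns $w_i$. Unpacking $M\Delta = \{Mv/|Mv| : v \in \Delta\}$ shows that for $v \in \Delta$,
$$\frac{Mv}{|Mv|} = \sum_i \frac{v_i}{\sum_j v_j a_j}\, C_i(M),$$
and as $v$ ranges over $\Delta$ the coefficients $s_i = v_i / \sum_j v_j a_j$ trace out bijectively the set $\{s \in \mathbb{R}_{\ge 0}^d : \sum_i s_i a_i = 1\}$. Hence $M\Delta = W(\Delta)$ is the convex hull of $w_1,\ldots,w_d$, a $(d-1)$-simplex lying in the affine hyperplane $H = \{\sum x_j = 1\}$. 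Since $W = M D^{-1}$ with $D = \mathrm{diag}(a_1,\ldots,a_d)$, and every Rauzy induction matrix is a product of column-addition elementary matrices with $|\det M| = 1$, we get $|\det W| = 1/\prod_i a_i$.

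Each column $w_i$ has entry-sum $1$, so $\mathbf{1}^T W = \mathbf{1}^T$, meaning $W$ preserves the linear hyperplane $H_0 = \{\sum x_j = 0\}$ and sends $\mathbf{1} = (1,\ldots,1)^T$ to $\mathbf{1} + y$ for some $y \in H_0$. In the direct sum decomposition $\mathbb{R}^d = H_0 \oplus \langle \mathbf{1} \rangle$, the matrix $W$ is block upper triangular with diagonal blocks $W|_{H_0}$ and $1$, so $\det(W|_{H_0}) = \det W$. Applying the change of variables formula to the affine map $W|_H : H \to H$, whose linearization on the parallel linear subspace $H_0$ is $W|_{H_0}$, gives
$$\lambda_{d-1}(M\Delta) = |\det(W|_{H_0})| \cdot \lambda_{d-1}(\Delta) = \frac{\lambda_{d-1}(\Delta)}{\prod_i |C_i(M)|},$$
so the lemma holds with $c_d = \lambda_{d-1}(\Delta)$.

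No serious obstacle is anticipated: the whole argument is routine once one spots the bijection identifying $M\Delta$ with $W(\Delta)$ and observes that the column-sum constraint $\mathbf{1}^T W = \mathbf{1}^T$ forces $W$ to be block triangular in the decomposition above. As an alternative, one could take $W = \Delta$ in Lemma~\ref{lem:jacobian} and evaluate $\int_\Delta (\sum_i a_i z_i)^{-d}\,dz$ directly by induction on $d$; iterated integration in a single variable produces a telescoping identity reducing $I_d(a_1,\ldots,a_d)$ to two instances of $I_{d-1}$ on codimension-one faces, from which the same constant $c_d = c_{d-1}/(d-1)$ emerges.
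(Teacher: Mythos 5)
The paper does not actually prove this lemma; it cites Veech \cite[Equation 5.5]{Veech78} and moves on, and the analogous statement for faces (Lemma~\ref{lem:volume face}) is likewise attributed to Veech's argument. So there is no in-paper proof to compare against, only the citation.

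Your self-contained derivation is correct. Writing $M\Delta$ as the convex hull of the normalized columns $w_i = C_i(M)/|C_i(M)|$ is exactly right: every point of $M\Delta$ is $\sum_i \frac{v_i a_i}{\sum_j v_j a_j}\, w_i$, a convex combination of the $w_i$, and conversely any convex combination is realized (take $v_i \propto t_i/a_i$). The factorization $W = MD^{-1}$, the fact that column-addition matrices have determinant $1$, and the column-sum constraint $\mathbf{1}^T W = \mathbf{1}^T$ (which forces $W$ to be block triangular in the decomposition $H_0 \oplus \langle \mathbf{1}\rangle$, with a $1$ in the $\mathbf{1}$-block) together give $|\det(W|_{H_0})| = |\det W| = 1/\prod a_i$, and the change-of-variables step is standard. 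This is a cleaner route than unwinding the Jacobian integral $\int_\Delta (\sum a_i z_i)^{-d}\,dz$ from Lemma~\ref{lem:jacobian} (your alternative, which is closer in spirit to how such identities are usually extracted in Veech's framework); the linear-algebra argument has the added benefit of specializing immediately to faces, which is exactly what is needed for Lemma~\ref{lem:volume face}. No gaps; the only microscopic quibble is that "block upper triangular" versus "lower" depends on basis ordering, which does not affect the determinant computation.
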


We have a version for faces,   with the same proof as the previous result in \cite{Veech78}\color{black}.
\begin{lem}\label{lem:volume face}
Let $M$ be a matrix of $\mathcal{R}_d$ and $A_1,A_2$ be a matrices of freedom on LHS then 
$\frac{\lambda_{d-3}(V(MA_1))}{\lambda_{d-3}(V(MA_2))}=\frac{\prod_{i=1}^{d-2}|C_i(MA)|^{-1}}{\prod_{i=1}^{d-2}|C_i(MA_2)|^{-1}}$. 
\end{lem}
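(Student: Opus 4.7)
The plan is to establish a $(d-3)$-dimensional face analog of Veech's volume formula (Lemma~\ref{lem:volume}):
\[\lambda_{d-3}(V(MA)) = \frac{\prod_{j=1}^{d-2}|C_j(M)|}{\prod_{i=1}^{d-2}|C_i(MA)|}\,\lambda_{d-3}(\Sigma),\]
where $\Sigma$ is a fixed $(d-3)$-simplex that depends only on $M$. The stated ratio identity then follows immediately, since the factor $\prod_j|C_j(M)|\,\lambda_{d-3}(\Sigma)$ cancels when one divides the formula for $A_1$ by that for $A_2$.

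First I would unpack the combinatorics of freedom on LHS. Since $d-1$ and $d$ never win against letters in $\{1,\ldots,d-2\}$ along such a path, no column from $\{d-1,d\}$ is ever added to a column from $\{1,\ldots,d-2\}$ during the Rauzy steps that compose $A$. Hence for $i\leq d-2$ one has $A_{ji}=0$ whenever $j\in\{d-1,d\}$, so
\[C_i(MA)=\sum_{j=1}^{d-2} A_{ji}\,C_j(M).\]
In particular every vertex of $V(MA)$ lies in the fixed $(d-3)$-simplex $\Sigma := \spanD(u_1,\ldots,u_{d-2})$ with $u_j := C_j(M)/|C_j(M)|$.

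Next I would read off the barycentric coordinates of each vertex of $V(MA)$ inside $\Sigma$. Nonnegativity gives $|C_i(MA)|=\sum_{j=1}^{d-2}A_{ji}\,|C_j(M)|$, so the $i$-th vertex equals $\sum_j\beta_{ij}u_j$ with $\beta_{ij}:=A_{ji}|C_j(M)|/|C_i(MA)|$ and $\sum_j\beta_{ij}=1$. The $(d-3)$-volume of a sub-simplex of $\Sigma$ with barycentric matrix $\beta$ equals $|\det\beta|\cdot\lambda_{d-3}(\Sigma)$. Writing $\beta=D_1^{-1}(A')^T D_2$, where $A'$ is the top-left $(d-2)\times(d-2)$ block of $A$ and $D_1,D_2$ are the diagonal matrices of column $L^1$-norms of $MA$ and $M$ respectively, yields
\[\det\beta=\det(A')\,\frac{\prod_{j=1}^{d-2}|C_j(M)|}{\prod_{i=1}^{d-2}|C_i(MA)|}.\]

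Finally I would verify $|\det A'|=1$. Rauzy steps in which some $i\leq d-2$ beats $d-1$ or $d$ add an LHS column to an RHS column and leave the top-left block $A'$ unchanged; steps where letters in $\{1,\ldots,d-2\}$ compete among themselves contribute $(d-2)\times(d-2)$ elementary matrices (identity plus a single off-diagonal $1$, determinant $1$) to $A'$. Hence $A'$ is a product of such elementary matrices and $\det A'=1$. The only substantive step is the combinatorial observation that the top-left block of $A$ factors cleanly through the elementary Rauzy matrices; the rest is a short linear-algebra computation directly modeled on the proof of Lemma~\ref{lem:volume}.
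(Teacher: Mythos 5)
Your proof is correct, and it is essentially the paper's own argument made explicit: the paper simply asserts the face version of Veech's volume formula (Lemma~\ref{lem:volume}) holds ``with the same proof,'' and you carry out exactly that computation on the face $V(M)$, reducing to a barycentric determinant and using the block-triangular structure of LHS-freedom matrices to get $\det A'=1$. The one substantive fact you correctly isolate---that the bottom-left $2\times(d-2)$ block of $A$ vanishes, so $V(MA)\subset V(M)$ and the top-left block multiplies---is precisely what makes Veech's argument descend to the face.
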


We will prove 
 at the end of the section: 
 \begin{prop}\label{prop:balanced} There exists  $K>1$ and $\sigma<1$ so that for all large enough $\zeta$,  if $M=M(x,r)$ is a matrix of Rauzy induction then for all $m$,
 \begin{multline}\lambda_{d-1}\Big(\big\{y \in M\Delta: \exists n \text{ so that } 
MA(R^ry,n) \text{ is $\zeta$-balanced and } \\
 |C_{\max}(MA(R^ry,n))|\in [|C_{\max}(M)|,K^m|C_{\max}(M)|]\big\}\Big)>(1-\sigma^m) \lambda_{d-1}(M'\Delta)
 \end{multline}
 \end{prop}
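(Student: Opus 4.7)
The plan is to iterate Lemma~\ref{lem:bal often} along a geometric sequence of scale thresholds $K^j|C_{\max}(M)|$. Since the lower bound $|C_{\max}(MA(R^ry,n))| \geq |C_{\max}(M)|$ is automatic (Rauzy induction is column-norm-nondecreasing) and enlarging $\zeta$ only enlarges the success set, I would fix $\zeta$ at least as large as in Lemma~\ref{lem:bal often}, set $K = 2K'$ and $\sigma = 1 - \rho'$, and prove the complementary statement that the failure set
\[
B_j := \{y \in M\Delta : \text{no } MA(R^ry,n) \text{ is } \zeta\text{-balanced with } |C_{\max}(MA(R^ry,n))|\leq K^j|C_{\max}(M)|\}
\]
satisfies $\lambda_{d-1}(B_m) \leq \sigma^m \lambda_{d-1}(M\Delta)$.

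First I would set up stopping times. For $y \in M\Delta$, write $M^{(k)}(y) := MA(R^ry,k)$ and define $\tau_j(y) := \min\{k \geq 0 : |C_{\max}(M^{(k)}(y))| > K^{j-1}|C_{\max}(M)|\}$. Each Rauzy step replaces some column $C_\ell$ by $C_\ell + C_i$ (winner added to loser), so column norms at most double per step; hence $|C_{\max}(M^{(\tau_j(y))}(y))| \leq 2K^{j-1}|C_{\max}(M)|$. The map $y \mapsto M^{(\tau_j(y))}(y)$ is constant on Rauzy cylinders, giving a mod-zero partition $M\Delta = \bigsqcup_{N \in \mathcal{N}_j} N\Delta$ into sub-simplices.

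The key structural observation is that $B_{j-1}$ is itself a union of atoms of $\mathcal{N}_j$. Indeed, the defining condition of $B_{j-1}$ rules out $\zeta$-balanced matrices among $M^{(0)},\ldots,M^{(\tau_j(y)-1)}$, which is a condition on the Rauzy path up to the stopping time, and therefore depends only on the cylinder $N(y)\Delta$ containing $y$. So $B_{j-1} = \bigsqcup_{N \in \mathcal{N}_j^{\mathrm{bad}}} N\Delta$ for some subfamily $\mathcal{N}_j^{\mathrm{bad}} \subset \mathcal{N}_j$. For each such $N$, I would apply Lemma~\ref{lem:bal often} to $N$: a subset of $N\Delta$ of Lebesgue measure at least $\rho' \lambda_{d-1}(N\Delta)$ admits a $\zeta$-balanced Rauzy extension of column-norm at most $K'|C_{\max}(N)| \leq 2K' \cdot K^{j-1}|C_{\max}(M)| = K^j|C_{\max}(M)|$; each such point lies outside $B_j$. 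Summing over $\mathcal{N}_j^{\mathrm{bad}}$ gives $\lambda_{d-1}(B_{j-1} \setminus B_j) \geq \rho' \lambda_{d-1}(B_{j-1})$, hence $\lambda_{d-1}(B_j) \leq (1-\rho') \lambda_{d-1}(B_{j-1})$, and the result follows by induction from $B_0 = M\Delta$.

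The main obstacle I anticipate is purely bookkeeping: verifying that $B_{j-1}$ really is a clean union of $\mathcal{N}_j$-cylinders. This reduces to observing that $\tau_j$ is a stopping time in the Rauzy induction filtration, so any event determined by the path up to $\tau_j$ is $\mathcal{N}_j$-measurable. Notably, no distortion estimate (Lemma~\ref{lem:bal distort}) is required, since Lemma~\ref{lem:bal often} is applied inside each cylinder $N\Delta$ as its own simplex and success is measured there directly; Lebesgue additivity across the partition does the rest.
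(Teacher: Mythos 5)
Your proof is correct and takes essentially the same approach as the paper: both iterate Lemma~\ref{lem:bal often} along a geometric progression of column-norm scales, using a stopping-time/cylinder decomposition at each scale; the paper simply packages the final iteration through the (elementary) first bullet of Proposition~\ref{prop:prob decay}, whereas you carry out that iteration directly. As a small side note, your conclusion $\sigma = 1-\rho'$ is the right one, and the paper's ``$\sigma = \rho'$'' in its final sentence is a typo.
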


In fact we will prove a stronger result:

   \begin{prop}\label{prop:determined}  If $M''=M(w,s)$ is a fixed matrix of Rauzy induction, then there exists $K''>1,\delta''<1$  so that if  $M'=M'(x,r)$   then for all $m$,
 \begin{multline}\lambda_{d-1}\Big(\big\{y \in M'\Delta: \exists n\    \text{so that }\ R^{r}y\in M''\Delta\  \text{with} \\
 |C_{\max}( M'A(R^ry,n))|\in [|C_{\max}(M')|,K''^m|C_{\max}(M')|]\big\}\Big)>(1-\delta''^{m}) \lambda_{d-1}(M'\Delta)
 \end{multline}
 \end{prop}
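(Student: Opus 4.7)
The plan is to iterate Proposition~\ref{prop:balanced} in tandem with the bounded-distortion estimate of Lemma~\ref{lem:bal distort}, producing in each norm-growth window a uniformly positive conditional probability that the Rauzy path follows $M''$.

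First I would fix $K,\sigma,\zeta$ from Proposition~\ref{prop:balanced} and set $K'' := K\|M''\|$. Decompose the allowed norm growth into $m$ windows $W_j := [K^j,K^{j+1}]\cdot|C_{\max}(M')|$, $0\leq j\leq m-1$. In window $W_j$, Proposition~\ref{prop:balanced} says that on all but a $\sigma$-fraction (of the measure conditional on failure in prior windows) the extended Rauzy matrix $M'A_j$ first becomes $\zeta$-balanced with $|C_{\max}(M'A_j)|\in W_j$. Conditional on this event, Lemma~\ref{lem:bal distort} applied with $U=M''\Delta$ yields
\[
\frac{\lambda_{d-1}\bigl((M'A_j\,M'')\Delta\bigr)}{\lambda_{d-1}\bigl((M'A_j)\Delta\bigr)}
\;\geq\; \zeta^{-d}\,\lambda_{d-1}(M''\Delta) \;=:\; \eta_0 \;>\; 0,
\]
so with conditional probability at least $\eta_0$ the next $|M''|$ Rauzy steps above $M'A_j$ follow $M''$. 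At that moment the path witnesses $R^{r+|A_j|}y\in M''\Delta$, and setting $n:=|A_j|+|M''|$ the total norm $|C_{\max}(M'A(R^{r}y,n))|$ is at most $K^{j+1}\|M''\|\cdot|C_{\max}(M')|\leq K''^m\cdot|C_{\max}(M')|$.

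Chaining the $m$ windows, which are disjoint in the Rauzy filtration so the failure events are successively conditionally independent, gives an overall failure probability at most $(1-(1-\sigma)\eta_0)^m$, and the proposition follows with $\delta'' := 1-(1-\sigma)\eta_0\in(0,1)$.

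The hardest part will be the bookkeeping for the iterative conditioning: after failure in windows $0,\dots,j-1$ one must produce a partition of the surviving set into atoms on which a ``current'' Rauzy matrix is well-defined, and on each atom reapply Proposition~\ref{prop:balanced} (to get the next balance event) together with Lemma~\ref{lem:bal distort} (to force the $M''$-extension), with the constant $\eta_0=\zeta^{-d}\lambda_{d-1}(M''\Delta)$ genuinely uniform over every balanced matrix produced by the iteration. This uniformity is exactly the content of Lemma~\ref{lem:bal distort}, and the extra norm factor $\|M''\|$ from the $M''$-concatenation is absorbed once and for all into $K''=K\|M''\|$.
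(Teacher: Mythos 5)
Your approach matches the paper's: in each norm-growth window, become $\zeta$-balanced with uniformly positive conditional probability, then use Lemma~\ref{lem:bal distort} to force an $M''$-extension with probability at least $\zeta^{-d}\lambda_{d-1}(M''\Delta)$, and chain across windows. Two small notes: the per-window conditional balance estimate you need is Lemma~\ref{lem:bal often} (Proposition~\ref{prop:balanced} is its cumulative consequence, not a per-window conditional statement), and the ``iterative conditioning'' you defer as the hardest part is exactly what Proposition~\ref{prop:prob decay} was set up to discharge --- the paper closes the argument by invoking it directly.
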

 This implies the following useful, weaker result: \color{black} 
   \begin{prop}\label{prop:positive} There exists $K',\sigma'$ with $0<\sigma'<1$  so that if $M=M(x,r)$ is a matrix of Rauzy induction, then for all $m$,
 \begin{multline}\lambda_{d-1}\Big(\big\{y\in M(\Delta): \exists n \text{ so that } 
A(R^ry,n)\text{ is positive and } \\
 |C_{\max}(MA(R^ry,n))|\in [|C_{\max}(M)|,K'^{m}|C_{\max}(M)|]\big\}\Big)>(1-\sigma'^m) \lambda_{d-1}(M\Delta)
 \end{multline}
 \end{prop}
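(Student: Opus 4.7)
The plan is to derive this from Proposition~\ref{prop:determined} by taking $M''$ to be a fixed positive matrix of Rauzy induction in the class $\mathcal{R}_d$. The proposition then becomes essentially a renaming of constants.

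The first step is to exhibit a matrix $M^\ast = M(w,s)$ of Rauzy induction, for some $w \in \Delta$ and some finite $s$, whose entries are all strictly positive. This is a standard fact: the Rauzy class $\mathcal{R}_d$ is strongly connected, and by composing sufficiently many Rauzy moves one can arrange that every letter has served as winner against every other letter at least once. Equivalently, one chooses any loop in the Rauzy diagram (based at $\pi_s$, say) long enough that its visitation matrix is primitive in the strong sense; applying the loop once more then yields a positive matrix. For the hyperelliptic permutation this can be verified by an explicit short loop, but for the present proof any positive matrix coming from iterated Rauzy induction will do.

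Now fix such a positive $M^\ast$ once and for all, and let $K'',\delta''$ be the constants supplied by Proposition~\ref{prop:determined} applied with $M''=M^\ast$. For any matrix $M = M(x,r)$ of Rauzy induction, Proposition~\ref{prop:determined} gives a set $G \subset M\Delta$ with $\lambda_{d-1}(M\Delta \setminus G) \leq \delta''^{\,m}\lambda_{d-1}(M\Delta)$, such that every $y\in G$ admits some $n$ with $R^r y \in M^\ast \Delta$ and $|C_{\max}(MA(R^r y,n))|\in [|C_{\max}(M)|, K''^{\,m}|C_{\max}(M)|]$. The condition $R^r y \in M^\ast\Delta$ means exactly that the first $s$ steps of Rauzy induction applied to the IET $R^r y$ have visitation matrix $M^\ast$; that is, $A(R^r y,n) = M^\ast$ for the appropriate $n=s$. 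Since $M^\ast$ was chosen positive, $A(R^r y, n)$ is then positive, so $y$ lies in the set described in Proposition~\ref{prop:positive}. Setting $K' = K''$ and $\sigma' = \delta''$ completes the argument.

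The only genuine input beyond Proposition~\ref{prop:determined} is the existence of a positive matrix of Rauzy induction in $\mathcal{R}_d$; this is routine and is where I expect to spend the most care if one wants a self-contained treatment, since it requires identifying an explicit sequence of Rauzy moves in the hyperelliptic class whose composition is positive. Once that matrix is in hand, Proposition~\ref{prop:positive} is an immediate corollary of Proposition~\ref{prop:determined}.
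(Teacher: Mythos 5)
Your proof takes exactly the route the paper takes: the paper's entire proof of Proposition~\ref{prop:positive} is the single sentence ``In Proposition~\ref{prop:determined} choose $M''$ to be a positive matrix,'' and you have simply unpacked that sentence, including the (correct and standard) observation that a positive Rauzy matrix exists in $\mathcal{R}_d$.

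One small point worth flagging. You read the hypothesis ``$R^{r}y\in M''\Delta$'' in Proposition~\ref{prop:determined} literally and conclude $A(R^r y,s)=M^\ast$. But under that literal reading the condition $R^r y\in M^\ast\Delta$ is independent of $n$, so the set of such $y$ is the fixed fraction $M M^\ast\Delta$ of $M\Delta$, whose proportion cannot tend to $1$ as $m\to\infty$; the statement as written only makes sense if the intended condition is that the Rauzy orbit of $y$ eventually passes through a copy of $M^\ast\Delta$, i.e.\ $R^{r+n}y\in M^\ast\Delta$ for the $n$ appearing in the norm bound. Under that (clearly intended) reading, what you actually obtain is that $A(R^r y, n+s) = A(R^r y, n)\,M^\ast$ is positive -- not that $A(R^r y, s)$ equals $M^\ast$ -- and the norm bound picks up a harmless extra factor of $\|M^\ast\|$, so one should take $K' = K''\|M^\ast\|$ (or absorb the factor into $\sigma'$) rather than $K'=K''$. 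None of this changes the conclusion; the deduction from Proposition~\ref{prop:determined} to Proposition~\ref{prop:positive} is exactly as you say.
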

 \begin{proof}
  In Proposition \ref{prop:determined} choose $M''$ to be a positive matrix. 
 \end{proof}
We also have:

 \begin{prop}\label{prop:contraction} There exist constants $\tau<1,\alpha<1$, and $K$ 
 so that for any matrix of Rauzy induction $M=M(x,r)$ and $j\in \mathbb{N}$  we have
 \begin{multline*}
 \lambda_{d-1}\{y\in M\Delta: \exists m \text{ so that }|C_{\max}(MA(R^ry,m))|<K^j|C_{\max}(M)| \text{ and }\\
  diam(V(MA(R^ry,m))<\tau^j diam(V(M)\}>(1-\alpha^j)\lambda_{d-1}M\Delta
 \end{multline*}
 \end{prop}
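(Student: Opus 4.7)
The plan is to iterate Proposition~\ref{prop:determined} with a fixed positive Rauzy matrix $M''$ and combine Birkhoff's projective contraction theorem with a Chernoff-style large-deviations estimate. Fix any positive matrix $M''$ arising as a finite product of Rauzy matrices in $\mathcal{R}_d$, and let $\tau_0 := \tau(M'') \in (0,1)$ denote its Birkhoff contraction coefficient for the Hilbert metric on $\Delta$; let $K''$ and $\delta'' < 1$ be the constants from Proposition~\ref{prop:determined} for this choice of $M''$. Starting from $M_0 := M$, apply Proposition~\ref{prop:determined} inductively to produce $M_i = M_{i-1} A_i$, where each $A_i$ has $M''$ as an initial factor. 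Let $T_i$ be the minimal $m$ for which the $i$-th descent succeeds with norm growth at most $(K'')^m$; Proposition~\ref{prop:determined} applied to $M_{i-1}$ gives $P(T_i > m \mid \mathcal{F}_{i-1}) \le (\delta'')^m$ uniformly in the history $\mathcal{F}_{i-1}$. The $T_i$ are therefore conditionally stochastically dominated by i.i.d.\ geometric variables, and a standard Chernoff bound produces constants $L$ and $\alpha < 1$ with $P(T_1 + \cdots + T_j > Lj) \le \alpha^j$ for all $j$. On the complementary event the total norm growth is at most $(K'')^{Lj}$, yielding the first inequality with $K := (K'')^L$.

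For the diameter estimate, note that $A_1 A_2 \cdots A_j$ contains $j$ copies of $M''$ interleaved with nonnegative Rauzy matrices. The latter act as projective maps $\Delta \to \Delta$ and are therefore non-expansive on the Hilbert metric of $\Delta$, while each $M''$ strictly contracts by factor $\tau_0$, so the composition contracts by at least $\tau_0^{\,j}$. Applying this to $\spanD(e_1,\dots,e_{d-2})$: after the first $M''$ the image lies in $M''\Delta$ with bounded Hilbert diameter $H_0$ depending only on $M''$, and each subsequent positive factor contracts by $\tau_0$. Since $M$ itself is projective and hence non-expansive on the Hilbert metric of $\Delta$, it follows that the Hilbert diameter of $V(M_j) = M_j(\spanD(e_1,\dots,e_{d-2}))$ inside $M\Delta$ (equivalently, inside $\Delta$) is at most $\tau_0^{\,j-1} H_0$.

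The main technical obstacle is converting this Hilbert contraction into the Euclidean bound $diam(V(M_j)) < \tau^j \, diam(V(M))$. The key observation is that both $V(M)$ and $V(M_j)$ lie inside the subsimplex $M\Delta$, and that after the first application of $M''$ the iterate $V(M_j)$ lies in a compact subset of the interior of $MM''\Delta \subset M\Delta$ on which the Hilbert and Euclidean metrics are bi-Lipschitz equivalent with constants depending only on $M''$. The Euclidean diameter of $V(M_j)$ is therefore bounded by a constant (depending only on $M''$) times $\tau_0^{\,j-1}$ times the effective Euclidean scale of $M\Delta$, and this scale is in turn comparable (up to a bounded factor) to the Euclidean diameter of the face $V(M)\subset M\Delta$. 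Absorbing the bounded multiplicative constants by taking $\tau \in (\tau_0, 1)$ slightly larger than $\tau_0$, and enlarging $K$ if necessary to handle the finitely many small $j$ for which the estimate needs adjustment, yields the required inequality for every $j$ on an event of measure at least $(1-\alpha^j)\lambda_{d-1}(M\Delta)$.
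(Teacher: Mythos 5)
Your proof takes essentially the same route as the paper: iterate Proposition~\ref{prop:determined} with a fixed positive $M''$, use the large-deviations estimate (your geometric-domination/Chernoff step is exactly what Proposition~\ref{prop:prob decay} delivers) to guarantee $\gtrsim j$ occurrences of $M''$ per norm growth of $K^j$, and convert each occurrence of $M''$ into a definite projective contraction via Birkhoff. Making the Birkhoff step explicit with the Hilbert metric is a reasonable way to unpack the paper's one-line assertion that ``each occurrence of a fixed positive matrix contracts the simplex by a definite amount.''

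There is, however, one step in your Euclidean conversion that does not hold in the generality in which you assert it. Your Hilbert-diameter bound gives $\mathrm{diam}_{\mathrm{Euc}}(V(M_j)) \lesssim \tau_0^{\,j}\,\mathrm{diam}(M\Delta)$, and you then claim that $\mathrm{diam}(M\Delta)$ is ``comparable (up to a bounded factor) to the Euclidean diameter of the face $V(M)$.'' This comparison is false for a general Rauzy matrix $M$: the first $d-2$ columns can be tightly clustered while $C_{d-1}(M),C_d(M)$ spread out, so $V(M)$ can have Euclidean diameter arbitrarily small compared with $M\Delta$, and then your estimate gives $\mathrm{diam}(V(M_j))\le C\tau_0^j\,\mathrm{diam}(M\Delta)$, which does not imply $\mathrm{diam}(V(M_j))\le\tau^j\,\mathrm{diam}(V(M))$ with constants uniform in $M$. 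Two remarks reconcile this: first, the only place the paper invokes Proposition~\ref{prop:contraction} is Lemma~\ref{lem:angle to sing}, where it is applied to the induced cocycle on $\mathcal{R}_{d-2}$, so that $V(M)$ \emph{is} the whole simplex $M\Delta$ and the comparison is trivial; second, if one insists on the statement as written, the clean fix is to observe that the matrices of freedom on LHS map the face $F=\spanD(e_1,\dots,e_{d-2})$ into itself, so $V(M_j)\subset V(M)$, and one should track the Hilbert metric of the $(d-3)$-dimensional simplex $V(M)$ rather than of $M\Delta$, obtaining $\mathrm{diam}(V(M_j))\le C\tau_0^j\,\mathrm{diam}(V(M))$ directly. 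The paper's own proof is equally terse on this point, so your gap is a faithful reflection of the original's, but it is worth being aware of it.
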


 In order to prove these propositions 
 we first prove

\begin{prop}\label{prop:prob decay} Let  $(\Omega,\mu)$ be a measure space and 
$F_i:(\Omega,\mu) \to \{0,1\}$ be a sequence of random variables such  that there exists $0<\rho<\frac{1}{2}$ so that for any $j$, the conditional probability that $F_j$ is $1$ given $F_1,...,F_{j-1}$ is at least $\rho$. Then  
\begin{itemize}
\item For all $j>i$, $\mu(\{\omega:F_\ell(\omega)=0 \text{ for all }i\leq \ell\leq j\})\leq (1-\rho)^{j-i}$. 

\item 
 For all $\epsilon>0$ there exists $C$ and $\tau<1$ depending on $\epsilon$ and $\rho$ so that for all $N$,
$$\mu(\{\omega: \sum_{i=1}^NF_i(\omega)<N(1-\epsilon)\rho\})<C\tau^N\mu(\Omega).$$
\end{itemize}
\end{prop}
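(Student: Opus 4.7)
The plan is standard: the first bullet follows from iterated conditioning, while the second is a Chernoff-type large deviation bound adapted to the non-independent setting, where the only probabilistic input is the pointwise lower bound $P(F_j=1\mid F_1,\ldots,F_{j-1})\geq\rho$. For the first bullet, set $E_\ell=\{F_\ell=0\}$; the hypothesis translates into $\mu(E_\ell\cap A)\leq(1-\rho)\mu(A)$ for every $A$ in $\sigma(F_1,\ldots,F_{\ell-1})$. Applying this successively for $\ell=j,j-1,\ldots,i+1$ with $A=E_i\cap\cdots\cap E_{\ell-1}$ yields $\mu(E_i\cap\cdots\cap E_j)\leq (1-\rho)^{j-i}\mu(E_i)$, which (after absorbing $\mu(E_i)\leq\mu(\Omega)$) gives the claim.

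For the second bullet, set $Y_j=1-F_j$, so $Y_j\in\{0,1\}$ and $E[Y_j\mid F_1,\ldots,F_{j-1}]\leq 1-\rho$. The target event $\{\sum_{i=1}^N F_i<N(1-\epsilon)\rho\}$ equals $\{\sum_{i=1}^N Y_i>Na\}$, where $a:=1-(1-\epsilon)\rho>1-\rho$. The main technical step is the conditional moment generating function bound, valid for every $t>0$:
\[ E[e^{tY_j}\mid F_1,\ldots,F_{j-1}]=1+(e^t-1)P(Y_j=1\mid F_1,\ldots,F_{j-1})\leq 1+(e^t-1)(1-\rho). \]
Iterating this via the tower property of conditional expectation, exactly as in the classical i.i.d.\ Chernoff proof, gives
\[ E\bigl[e^{t\sum_{i=1}^N Y_i}\bigr]\leq \bigl(1+(e^t-1)(1-\rho)\bigr)^N\mu(\Omega). \]

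Markov's inequality now yields $\mu(\sum Y_i>Na)\leq\exp\!\bigl(N[\log(1+(e^t-1)(1-\rho))-ta]\bigr)\mu(\Omega)$. Since $a>1-\rho$, differentiating the bracketed expression in $t$ at $t=0$ gives derivative $1-\rho-a<0$, so for all sufficiently small $t>0$ the exponent is strictly negative, and setting $\tau=\exp(\log(1+(e^t-1)(1-\rho))-ta)<1$ together with $C=1$ completes the proof. The only conceptual subtlety is recognizing that the standard Chernoff argument never actually uses independence beyond the conditional MGF bound; the pointwise hypothesis on $P(F_j=1\mid\cdot)$ supplies exactly this ingredient, so no martingale machinery beyond the tower property is needed.
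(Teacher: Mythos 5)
Your proof is correct, but it takes a genuinely different route from the paper's. The paper proves the second bullet by an explicit coupling argument: it constructs a measure $\nu$ on a product space and a map $\Phi$ whose pushforward realizes a sequence of i.i.d.\ Bernoulli($\rho$) variables $G_i$ in such a way that $\sum F_i$ stochastically dominates $\sum G_i$; it then invokes a black-box Chernoff bound for i.i.d.\ Bernoulli sums (stated as a separate lemma with proof omitted). You instead bypass the coupling entirely: setting $Y_j=1-F_j$, you bound the conditional moment generating function $E[e^{tY_j}\mid F_1,\ldots,F_{j-1}]\leq 1+(e^t-1)(1-\rho)$, iterate via the tower property to control $E[e^{t\sum Y_j}]$, and conclude by Markov. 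Both arguments hinge on the same underlying observation --- that only the conditional lower bound on $P(F_j=1\mid\cdot)$ is needed --- but you make this observation operational inside the Chernoff proof, whereas the paper packages it as stochastic domination and then cites a classical i.i.d.\ tail bound. Your version is shorter and self-contained (no auxiliary space, no verification that $\Phi_*\nu$ has the right conditionals); the paper's version is more modular, reducing to a result that can be quoted directly. Your handling of the first bullet (iterated conditioning on the $\sigma$-algebras $\sigma(F_1,\ldots,F_{\ell-1})$) is the same as what the paper implicitly uses, and the inequality $a=1-(1-\epsilon)\rho>1-\rho$ that makes the exponent negative is correctly verified.
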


To prove Proposition~\ref{prop:prob decay} we make a comparison to a case of independent random variables:
\begin{lem}Let  $(\Omega,\mu)$ be a measure space and 
$F_i:(\Omega,\mu) \to \{0,1\}$ be a sequence of random variables such  that there exists $0<\rho<1$ so that for any $j$, the conditional probability that $F_j$ is $1$ given $F_1,...,F_{j-1}$ is at least $\rho$. Let $G_i:(\Omega,\mu) \to \{0,1\}$ be independent and distributed according to $\mu(G_i^{-1}(1))=\rho$. Then  for all $\ell$ and $r$,
$$\mu(\{\omega:\sum_{i=1}^\ell F_i(\omega)\leq r\})\leq \mu(\{\omega:\sum_{i=1}^\ell G_i(\omega)\leq r\}).$$

\end{lem}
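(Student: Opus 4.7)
The plan is to prove this by coupling. I would construct, on a possibly enlarged probability space, random variables $\widetilde F_i$ and $\widetilde G_i$ so that $(\widetilde F_i)$ has the same joint law as $(F_i)$, $(\widetilde G_i)$ is i.i.d.\ Bernoulli$(\rho)$, and $\widetilde F_i \ge \widetilde G_i$ pointwise. Once this is in place, $\sum_{i=1}^\ell \widetilde F_i \ge \sum_{i=1}^\ell \widetilde G_i$ pointwise, so $\{\sum \widetilde F_i \le r\} \subseteq \{\sum \widetilde G_i \le r\}$, and since each sum has the same distribution as the corresponding sum without tildes, the inequality of measures in the statement follows.

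To build the coupling, I would adjoin to $\Omega$ an independent sequence $U_1, U_2, \ldots$ of i.i.d.\ uniform $[0,1]$ random variables. Let $p_i$ be a regular version of the conditional probability $P(F_i = 1 \mid F_1,\ldots,F_{i-1})$, which by hypothesis satisfies $p_i \ge \rho$ almost surely. Define $\widetilde F_i$ inductively by $\widetilde F_i = \mathbf{1}[U_i \le p_i(\widetilde F_1,\ldots,\widetilde F_{i-1})]$ and set $\widetilde G_i = \mathbf{1}[U_i \le \rho]$. By the chain rule of conditional probabilities $(\widetilde F_i)$ has the correct joint distribution, while the $\widetilde G_i$ are manifestly independent Bernoulli$(\rho)$. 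The pointwise bound $p_i \ge \rho$ forces $\widetilde F_i \ge \widetilde G_i$.

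An alternative, perhaps cleaner within the pdf's framework, is induction on $\ell$. Let $h(s) = \mu(G_\ell \le s)$, a non-decreasing step function. Conditioning on $(F_1,\ldots,F_{\ell-1})$ and using $P(F_\ell = 1 \mid F_1,\ldots,F_{\ell-1}) \ge \rho$ yields $P(F_\ell \le s \mid F_1,\ldots,F_{\ell-1}) \le h(s)$, so
\[
\mu\!\left(\sum_{i=1}^\ell F_i \le r\right) \le E\!\left[h\!\left(r - \sum_{i=1}^{\ell-1} F_i\right)\right].
\]
The map $s \mapsto h(r-s)$ is non-increasing; combining this with the inductive hypothesis (first-order stochastic dominance of $\sum_{i=1}^{\ell-1} G_i$ by $\sum_{i=1}^{\ell-1} F_i$ in the sense of the lemma) bounds the right-hand side by $E[h(r - \sum_{i=1}^{\ell-1} G_i)]$, which equals $\mu(\sum_{i=1}^\ell G_i \le r)$ by independence of $G_\ell$.

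The main obstacle is only bookkeeping: one must check measurability issues needed to define the coupling (existence of a regular conditional distribution) or, in the inductive version, to apply Fubini's theorem to iterate conditional expectations. The underlying \emph{idea}—that a uniform lower bound $\rho$ on conditional success probabilities produces first-order stochastic dominance by the i.i.d.\ Bernoulli$(\rho)$ sum—is classical, and either of the two routes above carries it through cleanly in a few lines.
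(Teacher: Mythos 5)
Your first (coupling) argument is essentially the paper's proof: the paper also adjoins i.i.d.\ uniform variables (via the product space $\{0,1\}^\ell\times[0,1]^\ell$) and uses them to produce a process with conditional success probability exactly $\rho$ that is dominated coordinatewise by $F$; your phrasing via inverse-transform sampling is a cleaner, more standard version of the same idea. Your measurability concern is a non-issue in this setting, since the conditioning $\sigma$-algebra is generated by finitely many $\{0,1\}$-valued variables, so the regular conditional probabilities are just ratios $\mu(F_1^{-1}(a_1)\cap\cdots\cap F_i^{-1}(a_i))/\mu(F_1^{-1}(a_1)\cap\cdots\cap F_{i-1}^{-1}(a_{i-1}))$ of measures of finitely many measurable sets. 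Your second (inductive) argument is correct and is a genuinely different route from the paper's, trading the explicit coupling for a one-step conditioning plus monotonicity; it is perhaps shorter to write down rigorously, at the cost of making the stochastic-dominance mechanism less visible.
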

\begin{proof} Let $X=\{0,1\}^\ell \times[0,1]^\ell$. Let $\nu$ be a measure defined on $X$ by 
$$\nu((a_1,...,a_\ell),A_1 \times...\times A_\ell))=\mu(F_1^{-1}(a_1)\cap ...\cap F_\ell^{-1}(a_\ell))\lambda_\ell(A_1\times...\times A_\ell).\footnote{$\lambda_\ell$ is Lebesgue measure on $[0,1]^\ell$.}$$ 
Notice that $$\mu(\{\omega: \sum_{i=1}^\ell F_i(\omega)=r\})=\nu(\{(\vec{v},\vec{t}):\sum_{i=1}^\ell v_i=r\})$$ for all $r$. 
Let $\Phi:X\to \{0,1,*\}^\ell \times [0,1]^\ell$ by  $\Phi((\vec{v},\vec{t}))=(\vec{w},\vec{t})$ where
$$w_i=\begin{cases}0 &  \text{ if }v_i=0\\
1& \text{ if }v_i=1 \text{ and }\mu(F_1^{-1}(v_1)\cap...\cap F_{i-1}^{-1}(v_{i-1}))\rho\geq t_i\mu(F_1^{-1}(v_1)\cap...\cap F_i^{-1}(v_i))\\
* & \text{ else}
\end{cases}.$$ 
By construction we have that the $(\Phi_*\nu)$ conditional probability that $w_i=1$ given $w_1,...,w_{i-1}$ is exactly $\rho$. To see this, first let $\Phi_{\vec{s}}(\vec{v})=\vec{w}$ where $\Phi(\vec{v},\vec{s})=(\vec{w},\vec{s})$. Notice that for any $a_1,...,a_{i-1}\in \{0,1\}^{i-1}.$
\begin{multline*}
\nu(\{(\vec{v},\vec{s}):(v_1,...,v_{i-1})=(a_1,...,a_{i-1}) \text{ and } \Phi_{\vec{s}}(\vec{v})_i=1\})=\\
\rho\nu(\{(\vec{v},\vec{s}):(v_1,...,v_{i-1})=(a_1,...,a_{i-1})\}).
\end{multline*}
Also, because $\Phi_{\vec{s}}(\vec{v})$ does not depend on $s_1,...,s_{i-1}$,  we have that the tuples  $(a_1,...,a_{i-1}),(b_1,..,b_{i-1})\in \{0,1,*\}^{i-1}$ have the property \color{black} that $a_j=0$ iff $b_j=0$ implies 
\begin{multline*}
\frac{\Phi_*\nu(\{(\vec{w},\vec{t}): w_1=a_1,..,w_{i-1}=a_{i-1} \text{ and }w_i=1\})}{\Phi_*\nu(\{(\vec{w},\vec{t}): w_1=a_1,..,w_{i-1}=a_{i-1}\})}=\\
\frac{\Phi_*\nu(\{(\vec{w},\vec{t}): w_1=b_1,..,w_{i-1}=b_{i-1} \text{ and }w_i=*\})}{\Phi_*\nu(\{(\vec{w},\vec{t}): w_1=b_1,..,w_{i-1}=b_{i-1}\})}
=\rho.
\end{multline*}

\color{black}

That is to say, changing $*$ to $1$ or vice-versa does not affect conditional probabilities. 
Therefore we have that 
$$(\Phi_*\nu)(\{(\vec{w},\vec{t}):|\{i\leq \ell:w_i=1\}|=r\})=\mu(\{\omega:\sum_{i=1}^\ell G_i(\omega)=r\})$$ for all $r$. However we also clearly have that 
$$(\Phi_*\nu)(\{(\vec{w},\vec{t}):|\{i\leq \ell:w_i=1\}|\leq r\})\leq \nu(\{(\vec{v},\vec{t}):\sum_{i=1}^\ell v_i\leq r\})=\mu(\{\omega: \sum_{i=1}^\ell F_i(\omega\leq r\}).$$
This establishes the lemma.
\end{proof}

Proposition \ref{prop:prob decay} now follows from  the following standard large deviations estimate whose proof is omitted. 
\begin{lem} Let $F_i:(\Omega,\mu) \to \{0,1\}$ be independent and distributed according to $\mu(F_i^{-1}(1))=\rho$. Then for any $\epsilon>0$ there exists $c_1$ and $c_2<1$ so that for all $\ell$,$$\mu(\{\omega:\sum_{i=1}^\ell F_i(\omega)\leq (\rho-\epsilon)\ell\})\leq c_1c_2^{\ell}\mu(\Omega).$$
\end{lem}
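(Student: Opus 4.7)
The plan is to prove this by the standard Chernoff bound (exponential Markov) technique. After possibly normalizing so that $\mu$ is a probability measure (and absorbing $\mu(\Omega)$ into $c_1$), what we need to show is: for independent Bernoulli $F_i$ with $P(F_i=1)=\rho$ and $S_\ell := \sum_{i=1}^\ell F_i$, we have $P(S_\ell \leq (\rho-\epsilon)\ell) \leq c_2^\ell$ for some $c_2<1$ depending only on $\rho$ and $\epsilon$.

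The key step is to fix an auxiliary parameter $t>0$ and observe that by Markov's inequality applied to the positive random variable $e^{-tS_\ell}$,
\begin{equation*}
P(S_\ell \leq (\rho-\epsilon)\ell) \;=\; P\bigl(e^{-tS_\ell} \geq e^{-t(\rho-\epsilon)\ell}\bigr) \;\leq\; e^{t(\rho-\epsilon)\ell}\,E[e^{-tS_\ell}].
\end{equation*}
By independence and identical distribution of the $F_i$,
\begin{equation*}
E[e^{-tS_\ell}] \;=\; \bigl(E[e^{-tF_1}]\bigr)^\ell \;=\; \bigl(\rho e^{-t}+(1-\rho)\bigr)^\ell,
\end{equation*}
so the bound becomes $\phi(t)^\ell$ where $\phi(t) := e^{t(\rho-\epsilon)}\bigl(\rho e^{-t}+(1-\rho)\bigr)$.

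It remains to choose $t>0$ so that $\phi(t)<1$. At $t=0$ we have $\phi(0)=1$, and differentiating gives $\phi'(0)=(\rho-\epsilon)-\rho=-\epsilon<0$, so $\phi$ is strictly decreasing at $0$. Hence for all sufficiently small $t>0$ we get $\phi(t)<1$; fix any such $t_0$ and set $c_2=\phi(t_0)<1$. Then $P(S_\ell \leq (\rho-\epsilon)\ell) \leq c_2^\ell$, and reinstating the normalization factor $\mu(\Omega)$ gives the claimed bound with $c_1=1$.

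There is no real obstacle here: this is textbook Chernoff and the only content is verifying that the exponential moment generating function of a Bernoulli lies strictly below $1$ after the appropriate exponential tilt, which follows from a one-line derivative check. This is presumably why the authors simply omit the proof as standard.
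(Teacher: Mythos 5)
Your proof is correct. The paper explicitly omits the proof, calling it a ``standard large deviations estimate,'' and the Chernoff/exponential-tilting argument you give is exactly the standard one being referenced; the derivative check $\phi'(0)=-\epsilon<0$ correctly supplies the needed $c_2<1$.
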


\begin{proof}[Proof of Proposition \ref{prop:balanced}] Whenever $\zeta_1\geq \zeta_2$, we have that $\zeta_2$-balanced implies $\zeta_1$-balanced and so it suffices to prove the proposition for $\zeta$ as in Lemma \ref{lem:bal often}.  Let $K'$ be the constant given by Lemma \ref{lem:bal often}.   Define  $F_j:\Delta \to \{0,1\}$ by 
$$F_j(y)=\begin{cases}1 &\text{ if there exists }n \text{ so that } |C_{max}(M(R^ry,n)|\color{black} \in [2^jK'^j,2^{j+1}K'^{j+1}] 
\text{ and }M(R^ry,n) \text{ is }\zeta\text{-balanced}\\0& \text{ else}\end{cases}.$$
We claim that by Lemma \ref{lem:bal often}, the  $F_i$ satisfy the assumption of Proposition \ref{prop:prob decay}.  This is because if $M$ is a matrix of Rauzy induction so that $C_{\max}(M)\in [2^jK'^j,2^{j+1}K'^j]$ we have 
\begin{equation}\label{eq:each mat indep}
\mu(\{x\in M\Delta:F_j(x)=1\})>\rho' \mu(M\Delta).
\end{equation} Indeed, the chance $M$ becomes balanced before its norm increases by $K'$ is at least $\rho'$. This will give a matrix of norm at least $2^jK'^j$ and most $2^{j+1}K'^jK'$. This matrix will cause $F_j$ to be $1$,  establishing \eqref{eq:each mat indep}. Since this is true for every matrix, and the outcome of previous $F_i$ is about matrices with norm less than $2^jK'^j$ we have the assumption of Proposition \ref{prop:prob decay}. Indeed we apply \eqref{eq:each mat indep} to the  matrices $M(x,n)$ so that $|C_{\max}(M(x,n))|\in[2^jK'^j,2^{j+1}K'^j]$ and 
$|C_{\max}(M(x,n-1))|<2^{j}K'^j$. 
 The proposition follows with $K=2K'$ and $\sigma=\rho'$. 
\end{proof}

\begin{proof}[Proof of Proposition \ref{prop:determined}] This is similar to the previous proof. Let $K=2^{s+1}K'$ (where $s$ is as in the definition of $M''$).
Define  $F_j:\Delta \to \{0,1\}$ by $F_j(y)=1$ if there exists $n$ 
such that 
$R^ry \in M''\Delta$
 and $|C_{\max}(M'(R^ry,n))| \in [2^jK^j,2^{j}K^{j+1}]$ and $0$ otherwise. 
We claim that the conditional probability $F_j=1$ given $F_1,...,F_{j-1}=0$ is at least $\rho\zeta^{-d}\lambda(M'(w,r)\Delta)$.
 Indeed the probability that there exists $n$ so that $ M'(R^ry,n)$ is $\zeta$-balanced and $|C_{\max}( M'(R^ry,n)| \in [2^jK^j,2^{j}K^{j+1}]$ is at least $\rho$.
 By Lemma \ref{lem:bal distort} once this occurs, the conditional probability  that 
 $z\in  M'(y,r+n)\Delta$ 
 satisfies $z\in M''\Delta$ is at least $\zeta^{-d}\lambda_{d-1}(M''\Delta).$ 
We may now apply Proposition \ref{prop:prob decay}. 
\end{proof}

\begin{proof}[Proof of Propostion \ref{prop:contraction}] Choose $M''$ to be a positive matrix and note that each occurrence of a fixed positive matrix contracts the simplex by a definite amount. Repeat the proof of Proposition \ref{prop:determined} for this $M''$ and apply the second conclusion of Proposition \ref{prop:prob decay} to obtain that off of a set of exponentially small measure in $n$ we have at least $\frac {\rho'} 2n$ occurrences of $M''$ by the time the matrix norm increases by a factor of $(2K)^n$. By the first sentence of the proof this establishes the proposition.  
\end{proof}

\section{Remaining on Left Hand Side, Remaining on Right Hand Side} 

The object of this section is to establish  the two Theorems below and Lemma \ref{lem:angle to sing} at the end of this section. The first Theorem says if we are on the left hand side, a property that holds for most points on
the face $V(M)$  of a simplex $M\Delta$ leads to saying the same about most points in the simplex $M\Delta$ itself.  The second  makes the same statement on right hand side using the face $W(M)$.  We prove the first Theorem; the proof of the second is identical. 
We will need these statements because for example we wish to make statements about matrices  for Rauzy induction just on LHS and have the estimates hold for the entire simplex.   

We say that  a matrix of Rauzy induction, $M(x,n)$ is {\em maximal} for given $N$, if $\|M(x,n)\|\geq \frac{N}2$, and $\|M(x,n-1)\|<\frac N 2$. 
The point of this definition is the set of simplices  $A\Delta$ for $A$ maximal for given $N$   will cover $\Delta$ (minus the codimension $1$ set  where some power of Rauzy induction is not defined)  and there is no redundancy.

\begin{thm}
\label{thm:most lhs} Given $\zeta$ there exists $C$ so that if $M:=M(x,n)$ is a matrix of Rauzy induction,  $N \in \mathbb{R}_+,$ and  $0<\epsilon<1,  0<\delta<1$  are  constants  such that
\begin{enumerate}
\item $\pi (R^nx)=\pi_L$
\item \label{cond:last 2 bigger}$\underset{j \in \{d-1,d\}}{\min}|C_j(M)|>\frac{N}{\epsilon^2}\underset{i \in \{1,...,d-2\}}{\max}|C_i(M)|$
\item\label{cond:bal in pieces} $\frac{\underset{j \in\{ d-1,d\}}{\max}\, |C_j(M)|}{ \underset{j \in\{ d-1,d\}}{\min}\, |C_j(M)|}<\zeta$ and $\frac{\underset{i \leq d-2}{\max}\, |C_i(M)|}{ \underset{i \leq d-2}{\min}\, |C_i(M)|}<\zeta.$
\end{enumerate}
\begin{itemize}
 \item if    $A_1,...,A_r$ are a set of matrices on LHS such that
$$\lambda_{d-3}\bigl(V(M)\setminus \cup_{i=1}^r V(MA_i)\bigr)>\delta \lambda_{d-3}(V(M)) \text{ and } \|A_i\|<N$$  then  $$\lambda_{d-1}(M\Delta \setminus \cup_{i=1}^r MA_i\Delta)>\frac{\delta}{C}\lambda_{d-1}(M\Delta).$$
\item If  $A_1,...,A_r$  satisfy $$\lambda_{d-3}\bigl(V(M)\setminus \cup_{i=1}^r V(MA_i)\bigr)<\delta \lambda_{d-3}(V(M)) \text{ and } \|A_i\|<N$$  then  
$$\lambda_{d-1}(M\Delta \setminus \cup_{i=1}^r MA_i\Delta)<C(\delta+\epsilon)\lambda_{d-1}(M\Delta).$$

\end{itemize}

\end{thm}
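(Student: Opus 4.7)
My plan is to reduce the $(d-1)$-dimensional volume comparison on $M\Delta$ to a $(d-3)$-dimensional one on the face $V(M)$, exploiting that condition \ref{cond:last 2 bigger} forces the mass of $M\Delta$ to concentrate in a thin slab near $V(M)$. Since $A_i$ corresponds to a Rauzy path on the left hand side, during which letters $d-1,d$ never win and are never compared to each other, the matrix $A_i$ has the block form
$$A_i=\begin{pmatrix}\tilde A_i & B_i\\ 0 & I_2\end{pmatrix}$$
with $\tilde A_i\in\mathbb{Z}_{\ge 0}^{(d-2)\times(d-2)}$ and $B_i\in\mathbb{Z}_{\ge 0}^{(d-2)\times 2}$; in particular $\|B_i\|\le \|A_i\|\le N$. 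Reading off the columns of $MA_i$ one finds that for $j\le d-2$, $C_j(MA_i)$ is a non-negative integer combination of $C_1(M),\dots,C_{d-2}(M)$, while for $j\in\{d-1,d\}$
$$|C_j(MA_i)|=|C_j(M)|+\sum_{k\le d-2}(B_i)_{k,j-(d-2)}|C_k(M)|,$$
and condition \ref{cond:last 2 bigger} gives the key estimate $1\le |C_j(MA_i)|/|C_j(M)|\le 1+(d-2)\epsilon^2$.

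Applying Lemma \ref{lem:volume} and its face analog Lemma \ref{lem:volume face} yields the clean identity
$$\frac{\lambda_{d-1}(MA_i\Delta)/\lambda_{d-1}(M\Delta)}{\lambda_{d-3}(V(MA_i))/\lambda_{d-3}(V(M))}=\prod_{j\ge d-1}\frac{|C_j(M)|}{|C_j(MA_i)|}\in\bigl[(1+(d-2)\epsilon^2)^{-2},1\bigr],$$
so the simplex-volume fraction and face-volume fraction of each individual $A_i$ agree up to a multiplicative $1-O(\epsilon^2)$.

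To pass from the termwise comparison to the unions, I use that distinct Rauzy paths give disjoint cylinders $MA_i\Delta$, while on the face two $V(MA_i),V(MA_j)$ coincide if and only if $\tilde A_i=\tilde A_j$ and are disjoint otherwise. Grouping the $A_i$'s by $\tilde A_i$, within each group the cylinders share a common face $V(M\tilde A)$. Using that the integrand $J_M(z)\asymp (\sum z_j|C_j(M)|)^{-d}$ is concentrated where $z_{d-1}+z_d=O(\epsilon^2/N)$ (again by condition \ref{cond:last 2 bigger}), one shows that the cylinders in each group essentially fill a tube around $\pi^{-1}(V(M\tilde A))$, where $\pi$ is the normalized projection to the first $d-2$ coordinates; summing across groups gives
$$\frac{\lambda_{d-1}(\bigcup_i MA_i\Delta)}{\lambda_{d-1}(M\Delta)}=(1\pm O(\epsilon^2))\,\frac{\lambda_{d-3}(\bigcup_i V(MA_i))}{\lambda_{d-3}(V(M))}.$$
Taking complements, the first bullet follows essentially with $C=1$ and the second absorbs the $O(\epsilon^2)$ error into the $\epsilon$ slack, using $\epsilon<1$ so that $\epsilon^2\le\epsilon$.

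The main obstacle is the group-level estimate in the last step: two $A_i,A_j$ in the same group produce disjoint cylinders $MA_i\Delta,MA_j\Delta$, both sharing the face $V(M\tilde A)$ but shifted from one another by their $B_i,B_j$ contributions. Bounding their total volume by the expected single-face contribution with only $O(\epsilon^2)$ slack is where the size hypothesis does its real work, via careful tracking of the projection shift against the concentration scale $O(\epsilon^2/N)$.
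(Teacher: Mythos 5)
Your termwise identity is correct and in fact sharpens the paper's inequality \eqref{eq:comparison}: writing $f_i=\lambda_{d-1}(MA_i\Delta)/\lambda_{d-1}(M\Delta)$ and $g_i=\lambda_{d-3}(V(MA_i))/\lambda_{d-3}(V(M))$, Lemmas \ref{lem:volume} and \ref{lem:volume face} give $f_i/g_i=\prod_{j\ge d-1}|C_j(M)|/|C_j(MA_i)|\in[(1+\epsilon^2)^{-2},1]$ under hypothesis \ref{cond:last 2 bigger} (your factor $(d-2)$ is an overestimate: the entries of each column of $B_i$ sum to at most $N$, so the whole correction is bounded by $\epsilon^2|C_j(M)|$, not $(d-2)\epsilon^2|C_j(M)|$). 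Where you go astray is the ``obstacle'' paragraph. You worry that several $A_i$'s with disjoint cylinders could share a face $V(M\tilde A)$, forcing a genuinely new group-level estimate. This situation cannot arise. Since the $A_i$ are LHS matrices, $C_k(MA_i)\in\text{span}(C_1(M),\dots,C_{d-2}(M))$ for $k\le d-2$ while $C_{d-1}(MA_i),C_d(MA_i)$ have nonzero $C_{d-1}(M),C_d(M)$ components, so $V(MA_i)=MA_i\Delta\cap V(M)$ exactly. Hence if $MA_i\Delta\cap MA_j\Delta=\emptyset$ then $V(MA_i)\cap V(MA_j)=\emptyset$ as well; in particular they cannot coincide, so every ``group'' is a singleton. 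After discarding any $A_i$ whose cylinder is nested inside another (which changes neither union), you therefore have $\sum_i g_i=\lambda_{d-3}(\cup_iV(MA_i))/\lambda_{d-3}(V(M))$ exactly, and summing the termwise identity immediately gives both bullets: $\sum f_i\le\sum g_i<1-\delta$ for the first, and $\sum f_i\ge(1+\epsilon^2)^{-2}\sum g_i>(1-2\epsilon^2)(1-\delta)$ for the second, with $C=2$ say. No tube-filling or concentration-of-mass argument is needed.

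This is a genuinely different (and shorter) route than the paper's. The paper proves only the weaker ratio comparison \eqref{eq:comparison} (that the quantities $g_A/f_A$ agree across $A\in\mathcal{M}_N$ up to a bounded multiplicative factor, not that they are all close to $1$), and therefore needs an independent absolute calibration. It supplies this via the covering estimate \eqref{eq:most}, proven through Propositions~\ref{lem:prob 2 short} and~\ref{lem:lhs future}, which track the measure of $y\in M\Delta$ landing near the face $V(M)$ and the measure of points where $d-1$ or $d$ wins before the Rauzy matrix reaches size $N$. Your observation that Lemmas~\ref{lem:volume} and~\ref{lem:volume face} already pin $f_i/g_i$ to within $O(\epsilon^2)$ of $1$ makes that machinery unnecessary for this particular theorem. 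One small caveat: Lemma~\ref{lem:volume face} is stated only for freedom matrices, but its derivation (all $V(MA)$ lie in the same affine hyperplane as $V(M)$, and the $\tilde A_i$ have determinant one) applies verbatim to any LHS matrix, which is what the theorem requires.
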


We also have a similar result about the right hand side. 

\begin{thm}
\label{thm:most rhs} Given $\zeta$ there exists $C$ so that if $M:=M(x,n)$ is a matrix of Rauzy induction,  $N \in \mathbb{R}_+,$ and  $0<\epsilon<1,  0<\delta<1$  are  constants  such that
\begin{enumerate}
\item $\pi (R^nx)=\pi_R$
\item \label{cond:last 2 bigger}$\underset{i \in \{1,...,d-2\}}{\min}|C_i(M)|>\frac{N}{\epsilon^2}\underset{j \in \{d-1,d\}}{\max}|C_j(M)|$
\item\label{cond:bal in pieces} $\frac{\underset{j \in\{ d-1,d\}}{\max}\, |C_j(M)|}{ \underset{j \in\{ d-1,d\}}{\min}\, |C_j(M)|}<\zeta$ and $\frac{\underset{i\leq d-2}{\max}\, |C_i(M)|}{ \underset{i \leq d-2}{\min}\, |C_i(M)|}<\zeta.$
\end{enumerate}
\begin{itemize}
 \item if    $B_1,...,B_r$ are a set of matrices on right hand side such that
$$\lambda_{1}(W(M)\setminus \cup_{i=1}^r W(MB_i))>\delta \lambda_1(W(M)) \text{ and } \|B_i\|<N$$  then  $$\lambda_{d-1}(M\Delta \setminus \cup_{i=1}^r MB_i\Delta)>\frac{\delta}{C}\lambda_{d-1}(M\Delta).$$
\item If  $B_1,...,B_r$  satisfy $$\lambda_1(W(M)\setminus \cup_{i=1}^r W(MB_i))<\delta \lambda_{1}(W(M)) \text{ and } \|B_i\|<N$$  then  
$$\lambda_{d-1}(M\Delta \setminus \cup_{i=1}^r MB_i\Delta)<C(\delta+\epsilon)\lambda_{d-1}(M\Delta).$$

\end{itemize}

\end{thm}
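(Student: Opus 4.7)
The plan is to exhibit a projection $\Pi : M\Delta \to W(M)$ compatible with the simplices $MB_i\Delta$ and then to show that its fiber measure is nearly constant on $W(M)$. The structural input is that any Rauzy matrix $B$ on the RHS has block form $B = \operatorname{diag}(I_{d-2}, B')$ with $B'$ a $2 \times 2$ Rauzy matrix, so $B$ fixes $V(\Delta)$ pointwise and acts projectively on $W(\Delta)$ via $B'$. Writing every $z \in \Delta$ in join coordinates $z = (1-s)v + sw$ with $v \in V(\Delta)$, $w \in W(\Delta)$, $s \in [0,1]$, the image $Mz/|Mz|$ has a corresponding join decomposition $(1-\tilde s)\bar v + \tilde s \bar w$ with $\bar v = Mv/|Mv| \in V(M)$ and $\bar w = Mw/|Mw| \in W(M)$. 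The map $\Pi : u \mapsto \bar w$ is a well-defined projection, and $\Pi^{-1}(W(MB_i)) = MB_i\Delta$ precisely because $B'_i$ acts only on the $W$-part. Consequently $M\Delta \setminus \bigcup_i MB_i\Delta = \Pi^{-1}(W(M) \setminus \bigcup_i W(MB_i))$.

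For the fiber measure, by Lemma \ref{lem:jacobian} applied via the join parametrization (whose Jacobian on $\Delta$ is $(1-s)^{d-3}s$ up to a universal constant) and the projective Jacobian of $W(\Delta) \to W(M)$, which by the $2$-column case of Lemma \ref{lem:face jacobian} equals $|C_{d-1}(M)|\,|C_d(M)|/L_W(w)^2$, the fiber of $M\Delta$ over $\bar w = Mw_0/|Mw_0|$ has $(d-2)$-dimensional Lebesgue measure
$$\kappa(\bar w) \;=\; \frac{L_W(w_0)^2}{|C_{d-1}(M)|\,|C_d(M)|}\int_{V(\Delta)}\!\int_0^1 \frac{(1-s)^{d-3}\,s}{\bigl((1-s)L_V(v) + sL_W(w_0)\bigr)^d}\,ds\,dv,$$
where $L_V(v) = \sum_{i \leq d-2}|C_i(M)|v_i$ and $L_W(w) = |C_{d-1}(M)|w_{d-1} + |C_d(M)|w_d$. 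Hypothesis (2) gives $L_W(w)/L_V(v) \leq \epsilon^2/N$ pointwise; splitting the inner $s$-integral at the crossover $1-s \sim L_W(w_0)/L_V(v)$, both regimes (points of the fiber near $W(M)$ and near $V(M)$) contribute on the order of $L_V(v)^{-(d-2)}\,L_W(w_0)^{-2}$. The $L_W(w_0)^2$ prefactor then cancels the $L_W(w_0)^{-2}$ coming from the integral, leaving
$$\kappa(\bar w) \;=\; (1+O(\epsilon))\,\frac{\mathrm{const}(d)}{|C_{d-1}(M)|\,|C_d(M)|}\int_{V(\Delta)} L_V(v)^{-(d-2)}\,dv,$$
which is independent of $\bar w$ to within a multiplicative factor $1+O(\epsilon)$. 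The $\zeta$-balance (3) controls $L_V$ and $L_W$ on their simplices, and together with $\|B_i\| < N$ and condition (2) this makes the error uniform; hence $\kappa$ is $\zeta'$-balanced on $W(M)$ for some $\zeta' = \zeta'(\zeta)$.

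Integrating $\kappa$ against $\lambda_1$ on $W(M)$ recovers $\lambda_{d-1}(M\Delta)$, and the fibration gives $\lambda_{d-1}(M\Delta \setminus \bigcup_i MB_i\Delta) = \int_{W(M) \setminus \bigcup_i W(MB_i)} \kappa\,d\lambda_1$. The $\zeta'$-balance of $\kappa$ then pinches the two ratios $\lambda_{d-1}(M\Delta \setminus \bigcup MB_i\Delta)/\lambda_{d-1}(M\Delta)$ and $\lambda_1(W(M) \setminus \bigcup W(MB_i))/\lambda_1(W(M))$ to within a constant factor $C = C(\zeta)$ of each other, plus an additive $O(\epsilon)$ term arising from the $(1+O(\epsilon))$ slack in the fiber estimate. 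Both stated inequalities follow.

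The main obstacle is the cancellation behind the fiber computation: the two parts of the inner $s$-integral describe geometrically very different portions of the fiber — a thin tube along $W(M)$ and the bulk near $V(M)$ — yet both scale as $L_W(w_0)^{-2}$, and it is exactly this coincidence that makes $\kappa$ almost independent of $\bar w$. The size separation $\min|C_i(M)| > (N/\epsilon^2)\max|C_j(M)|$ together with $\|B_i\| < N$ are what guarantee this cancellation uniformly; without condition (2) the crossover point $1-s \sim L_W/L_V$ could shift significantly as $w_0$ varies across $W(\Delta)$, destroying the balance.
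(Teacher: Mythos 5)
The argument hinges on a structural claim that is false. You assert that any RHS Rauzy matrix $B$ has the block form $\operatorname{diag}(I_{d-2},B')$, so that $B$ fixes $\spanD(e_1,\ldots,e_{d-2})$ pointwise and the fibration $\Pi$ satisfies $\Pi^{-1}(W(MB_i))=MB_i\Delta$. But during freedom on the RHS the letter $d$ beats each of $1,\ldots,d-2$, and each such step adds the column $C_d$ to the column $C_j$ for $j\le d-2$. Concretely, after one loop at $\pi_R$ (with $d-1$ beating $d$ a total of $m$ times) one has $C_j(B)=e_j+m\,e_{d-1}+e_d$ for every $j\le d-2$, which is not $e_j$. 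So the first $d-2$ columns of $B$ are not fixed, $B$ is not block-diagonal, and the identity $\Pi^{-1}(W(MB_i))=MB_i\Delta$ fails: the vertex $C_j(MB_i)/|C_j(MB_i)|$ of $MB_i\Delta$ (for $j\le d-2$) projects under $\Pi$ to a nontrivial point of $W(M)$ depending on the $e_{d-1}$-to-$e_d$ ratio accumulated in $C_j(B_i)$, rather than lying in the fiber over a fixed $\bar w\in W(MB_i)$. Because your entire fiber-measure computation and the final inequalities $\lambda_{d-1}(M\Delta\setminus\cup MB_i\Delta)=\int_{W(M)\setminus\cup W(MB_i)}\kappa$ are premised on this exact compatibility, the argument as written does not go through, and in particular the lower bound in the first bullet is genuinely at risk: the ``leakage'' of $MB_i\Delta$ outside $\Pi^{-1}(W(MB_i))$ could in principle cover fiber mass you are counting on being uncovered.

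It is worth noting that hypothesis (2) together with $\|B_i\|<N$ does force the leakage to be small — each addition of $C_d(M)$ to $C_j(M)$ for $j\le d-2$ changes $C_j(M)$ by at most a relative $\epsilon^2$ — so a corrected version of your fibration idea could plausibly be carried through with an explicit $O(\epsilon)$ error budget for the mismatch. But you would have to prove, not assume, that $MB_i\Delta$ lies in an $O(\epsilon)$-thickening of $\Pi^{-1}(W(MB_i))$ and control the effect on both inequalities; right now that analysis is absent. By contrast, the paper's route is quite different: it reduces to the LHS version and proves that theorem by (i) showing via Veech's volume formula (Lemmas~\ref{lem:volume} and~\ref{lem:volume face}) that $\lambda_{d-3}(V(MA))/\lambda_{d-1}(MA\Delta)$ is pinched up to a factor $\zeta^2$ across maximal $A$, giving the first bullet, and (ii) establishing the covering estimate \eqref{eq:most} via the probabilistic Propositions~\ref{lem:prob 2 short} and~\ref{lem:lhs future}, which is where the $O(\epsilon)$ in the second bullet actually comes from. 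Neither of these steps asserts an exact fibration.
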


\color{black}
We prove Theorem~\ref{thm:most lhs}. The proof of Theorem~\ref{thm:most rhs} is essentially identical.

\begin{proof}
[Proof of Theorem \ref{thm:most lhs}  modulo \eqref{eq:most} below]
Given $M$
let $\mathcal{M}_N$ be  the set of all maximal matrices $A$ of size $N$  where $d-1$ and $d$ have not won. 
The corresponding  simplices  $MA\Delta$ cover   $V(M)$. 
 The first  conclusion says  it 
{\em cannot} happen that a subset of $V(M)$ whose complement has definite proportion of the measure of $V(M)$ can produce matrices in $\mathcal{M}_N$ that almost cover $M\Delta$.
To show this it is  enough to show for all $A,A'\in {\mathcal{M}}_N$ that 
\begin{equation}\label{eq:comparison}
\frac{\lambda_{d-3}(V(MA))}{\lambda_{d-3}(V(MA'))}<\zeta^2\color{black} \frac{\lambda_{d-1}(MA\Delta)}{\lambda_{d-1}(MA'\Delta)}
\end{equation}

We prove (\ref{eq:comparison}). First, notice that applying Lemma  \ref{lem:volume face}  we have 

$$\frac{\lambda_{d-3}(V(MA))}{\lambda_{d-3}(V(MA'))}=\frac{\prod_{i=1}^{d-2}|C_i(MA)|^{-1}}{\prod_{i=1}^{d-2}|C_i(MA')|^{-1}}.$$
\color{black} Now, 
for $j \in \{d-1,d\}$ and $A\in \mathcal{M}_N$  
$$|C_j(MA)|<|C_j(M)|+N\max\{|C_i(M)|:i\leq d-2\}<2|C_j(M)|$$  by Assumption (\ref{cond:last 2 bigger}) of the Theorem.  Therefore, for any $A,A'\in \mathcal{M}_N$ we have 
$$\frac{\prod_{i=1}^{d-2}|C_i(MA)|}{\prod_{i=1}^{d-2}|C_i(MA')|} =\frac{|C_{d-1}(MA')|\cdot |C_d(MA')|}{|C_{d-1}(MA)|\cdot |C_d(MA)|}\frac{\prod_{i=1}^d|C_i(MA)|}{\prod_{i=1}^d|C_i(MA')|}\leq 4\frac{\prod_{i=1}^d|C_i(MA)|}{\prod_{i=1}^d|C_i(MA')|} .$$
So by Lemma \ref{lem:volume} we have Inequality (\ref{eq:comparison}).   


The heart of the proof of the second conclusion is  the following inequality which we will prove {\em after} we use it.  There exists $C$ such that for all  $\epsilon<1$,  if we cover {\em all} of $V(M)$ with simplices corresponding to matrices in $\mathcal{M}_N$ then 
\begin{equation}
\label{eq:most}\lambda_{d-1}(\cup_{A\in \mathcal{M}_N}MA\Delta)>(1-C\epsilon)\lambda_{d-1}( M\Delta).
\end{equation}
The above inequality (which we have not proven) and \eqref{eq:comparison}, which we have, establishes the the Theorem. Indeed, we treat $M\Delta$ as $\cup_A MA\Delta \cup (M\setminus \cup_AMA\Delta)$ and apply \eqref{eq:comparison} on the first part and invoke \eqref{eq:most} to show the second part is small. 
\end{proof}

Now (\ref{eq:most}) follows from the next two Propositions. 
\begin{prop}\label{lem:prob 2 short} There exists $C'$ such that for $\epsilon<1$ if $M:=M(x,n)$ is a matrix of Rauzy induction satisfying hypotheses (2) and (3) of Theorem~\ref{thm:most lhs}  and the permutation of $R^nx$ is $\pi_L$, 
then 
$$\lambda_{d-1}(\{y \in M\Delta:\underset{j \in \{d-1,d\}}{\max}(R^ny)_j>\frac \epsilon N\})<C'\epsilon \lambda_{d-1}(M\Delta).$$
\end{prop}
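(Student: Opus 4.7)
The plan is to use Lemma \ref{lem:jacobian} together with Lemma \ref{lem:volume} to reduce the measure estimate to an integral comparison on the standard simplex $\Delta$. Writing $a_i=|C_i(M)|$, $A=\max_{i\leq d-2} a_i$, and $B=\min_{j\in\{d-1,d\}} a_j$, Lemma \ref{lem:volume} gives $\lambda_{d-1}(M\Delta)=c_d\prod_{i=1}^d a_i^{-1}$, and Lemma \ref{lem:jacobian} gives
\[
\lambda_{d-1}\bigl(\{y\in M\Delta: R^ny\in W\}\bigr)=\int_W\frac{dz}{(\sum_i a_i z_i)^d},
\]
where $W=\{z\in\Delta:\max(z_{d-1},z_d)>\epsilon/N\}$. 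Hypothesis (3) of the theorem gives $a_i\in[A/\zeta,A]$ for $i\leq d-2$ and $a_j\in[B,\zeta B]$ for $j\in\{d-1,d\}$, so in particular $\prod a_i\leq \zeta^2 A^{d-2}B^2$; hypothesis (2) is $B\geq NA/\epsilon^2$.

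For the integral upper bound, fiber $\Delta$ over the value $t=z_{d-1}+z_d\in[0,1]$. Using the parametrization $(z_{d-1},z_d)=t(\sigma,1-\sigma)$ with $\sigma\in[0,1]$ and $(z_1,\ldots,z_{d-2})=(1-t)w$ with $w\in\Delta_{d-3}$, the Lebesgue measure decomposes as a constant multiple of $t(1-t)^{d-3}\,dt\,d\sigma\,d\nu(w)$. On $W\subset\{t>\epsilon/N\}$ one has $\sum_i a_i z_i\geq a_{d-1}z_{d-1}+a_d z_d\geq Bt$, so
\[
\int_W\frac{dz}{(\sum_i a_i z_i)^d}\ \leq\ c_0\int_{\epsilon/N}^{1}\frac{t(1-t)^{d-3}}{(Bt)^d}\,dt\ \leq\ \frac{c_1(d)}{B^d}\left(\frac{N}{\epsilon}\right)^{d-2}.
\]
Combined with $\lambda_{d-1}(M\Delta)\geq c_d/(\zeta^2 A^{d-2}B^2)$, this yields
\[
\frac{\lambda_{d-1}(\{y\in M\Delta:R^ny\in W\})}{\lambda_{d-1}(M\Delta)}\ \leq\ \frac{c_1\zeta^2}{c_d}\left(\frac{NA}{\epsilon B}\right)^{d-2}\ \leq\ C'\epsilon^{d-2},
\]
where in the last step I use hypothesis (2) in the form $NA/B<\epsilon^2$. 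Since $d\geq 4$ and $\epsilon<1$, $\epsilon^{d-2}\leq\epsilon$, giving the required bound with $C'=C'(d,\zeta)$.

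The one subtle point is making sure the dominance $Bt\gtrsim \sum a_iz_i$ really holds throughout $W$ and not just away from the tiny crossover region where $(1-t)f(w)\approx tg(\sigma)$. This crossover occurs at $t^*\asymp A/B<\epsilon^2/N$, which is an order of magnitude smaller than the cutoff $\epsilon/N$ defining $W$, so the estimate $\sum a_iz_i\geq Bt$ on $W$ is safely valid. This comparison of $t^*$ with $\epsilon/N$, forced by hypothesis (2), is the only place where the strong size assumption on $B$ is actually used, and it is the only possible obstacle in the argument: without the gap of a full factor of $\epsilon$ between $A/B$ and $\epsilon/N$, one would merely get ratio bounded by a constant times $1$ rather than by $\epsilon^{d-2}$.
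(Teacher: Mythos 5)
Your proof is correct, and it takes a somewhat different route than the paper's. The paper uses a dyadic decomposition in the variable $t=z_{d-1}+z_d$: it compares the Jacobian on each slab $\hat{S}_j=\{2^j\epsilon/N\leq t<2^{j+1}\epsilon/N\}$ against the Jacobian on the fixed set $S_0(\epsilon^2/N)=\{t<\epsilon^2/N\}$, obtains a factor $2^{(2-d)j}\epsilon^{d-2}$ per scale, and sums the geometric series, at the very end bounding the preimage of $S_0(\epsilon^2/N)$ trivially by $\lambda_{d-1}(M\Delta)$. You instead do a single direct integral in the cone coordinates $(t,\sigma,w)$ and use Veech's volume formula (Lemma \ref{lem:volume}) to evaluate the denominator explicitly. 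The ingredients — the Jacobian formula, the lower bound $\sum a_iz_i\geq Bt$, hypothesis (2) in the form $NA/B<\epsilon^2$, hypothesis (3) for the balance, and $d\geq 4$ — are identical, but your version avoids the dyadic bookkeeping and the need for a two-sided estimate on $\sum a_iy_i$ over $S_0$, so it is a bit cleaner. (Your exponent $\epsilon^{d-2}$ even matches or betters the paper's stated $\epsilon^2$.)

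One small remark on your closing paragraph: the inequality $\sum a_iz_i\geq Bt$ holds everywhere on $\Delta$, since it simply discards the non-negative terms $\sum_{i\le d-2}a_iz_i$; there is no crossover issue to worry about for the upper bound, and hypothesis (2) is not needed there. As you correctly note in the main computation, hypothesis (2) enters only in the final step $NA/B<\epsilon^2$, which converts $\bigl(NA/(\epsilon B)\bigr)^{d-2}$ into $\epsilon^{d-2}$. The "subtle point" paragraph is therefore a red herring, though it does not affect the validity of the argument.
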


\begin{prop}\label{lem:lhs future} There exists $C$ such that  for all small enough $\epsilon$ and all $N$ and permutation $\pi_L$ 
\begin{multline*}\lambda_{d-1}(\{x\in \Delta: x_{d-1}+x_d<\frac {\epsilon}{N} \text{and d-1  or d win and the corresponding matrix }\|A(x,m)\|<N\})\\ <C\epsilon\lambda_{d-1}(\{x:x_{d-1}+x_d<\frac{\epsilon}{N}\}).
\end{multline*}
\end{prop}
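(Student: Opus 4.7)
The plan is to partition the set in question, call it $B$, into pieces indexed by the Rauzy matrix $M = A(x, m^*)$ at the first step $m^*$ at which $d-1$ or $d$ is the winner. On each piece $M\Delta \cap B$, I will estimate the $\lambda_{d-1}$-measure via the Veech Jacobian formula (Lemma~\ref{lem:jacobian}), and then sum over $M$, exploiting a face-volume identity to control the total. Writing $M = M' A_s$ with $M' = A(x, m^*-1)$, I will assume without loss of generality that at step $m^*$, $d-1$ beats some opponent $k$; the case that $d$ wins is symmetric.

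The first key ingredient is a row-evolution computation: since no winner in $M'$ is $d-1$ or $d$, induction on the Rauzy steps shows that rows $d-1$ and $d$ of $M'$ are exactly $e_{d-1}^T$ and $e_d^T$. Applying the final elementary step then gives row $d-1$ of $M$ equal to $e_{d-1}^T + e_k^T$ and row $d$ equal to $e_d^T$, so that for $x = Mz/|Mz|$ with $z = R^{m^*} x$ one has the clean identity $x_{d-1}+x_d = (z_{d-1}+z_k+z_d)/|Mz|$. Setting $\delta = \epsilon/N$, I will then estimate $\lambda_{d-1}(M\Delta \cap \{x_{d-1}+x_d<\delta\})$ by parametrizing $z$ as $(u,v,\hat y)$ where $u = z_{d-1}+z_k+z_d$, $v$ lies in the $2$-simplex of the normalized triple $(z_{d-1},z_k,z_d)/u$, and $\hat y$ lies in the $(d-4)$-simplex of the remaining coordinates. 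Writing $\tilde a = (|C_{d-1}(M)|,|C_k(M)|,|C_d(M)|)$ and $\hat a = (|C_j(M)|)_{j\notin\{d-1,k,d\}}$, the Jacobian carries a factor $u^2(1-u)^{d-4}$ and the denominator is $u(\tilde a\cdot v) + (1-u)(\hat a\cdot\hat y)$. Since $\delta|\tilde a|_\infty \leq \delta N = \epsilon$ is small, the constraint $u < \delta|Mz|$ reduces uniformly in $v$ to $u \leq 2\delta(\hat a\cdot\hat y)$; integrating in $u$ and then invoking the face-analog of Veech's formula (Lemma~\ref{lem:face jacobian}) yields
\[
\lambda_{d-1}\bigl(M\Delta \cap \{x_{d-1}+x_d < \delta\}\bigr) \;\leq\; \frac{C\delta^3}{\prod_{j\notin\{d-1,k,d\}}|C_j(M)|}.
\]

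Finally, I will sum over $M$ by grouping by $M'$ (with only $O(1)$ choices of opponent $k$ given the final permutation of $M'$). Using $|C_j(M)|=|C_j(M')|$ for $j\neq k$, the bound $|C_{k(M')}(M')|\leq \|M'\|\leq \|M\|<N$, and the face volume formula $\lambda_{d-3}(V(M')) = c_{d-3}\prod_{j\leq d-2}|C_j(M')|^{-1}$ coming from Lemma~\ref{lem:volume face}, the total is bounded by $CN\delta^3\sum_{M'}\lambda_{d-3}(V(M'))$. The crucial geometric point is that the $V(M')$'s are pairwise interior-disjoint subsets of the face $F = \{x\in\Delta : x_{d-1}=x_d=0\}$: since no winner in $M'$ is $d-1$ or $d$, each $C_j(M')$ with $j\leq d-2$ stays in $\spanD(e_1,\ldots,e_{d-2})=F$, and distinct Rauzy trajectories yield interior-disjoint simplices $M'\Delta$ and hence interior-disjoint intersections with $F$. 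Thus $\sum_{M'}\lambda_{d-3}(V(M'))\leq\lambda_{d-3}(F) = O(1)$, giving $\lambda_{d-1}(B)\leq CN\delta^3 = C\epsilon^3/N^2$. Comparing with $\lambda_{d-1}(\{x_{d-1}+x_d<\delta\})\sim\delta^2 = \epsilon^2/N^2$ yields the desired ratio $\leq C'\epsilon$.

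The main obstacle will be making the Jacobian integral estimate in the second paragraph rigorous: the reduction $u\leq 2\delta(\hat a\cdot\hat y)$ must hold uniformly across all regimes of $\tilde a\cdot v$ versus $\hat a\cdot\hat y$, and the face-restricted Veech formula on the $(d-4)$-simplex must be applied with the correct constant. The disjointness argument in the final step is then a short combinatorial consequence of the row-structure computation, and the comparison with $\lambda_{d-1}(\{x_{d-1}+x_d<\delta\})\sim\delta^2$ is a routine integration on the simplex.
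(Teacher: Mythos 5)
Your Jacobian computation for a single cylinder $M\Delta$ is essentially correct: the row-structure argument showing $(Mz)_{d-1}+(Mz)_d = z_{d-1}+z_k+z_d$, the change of variables $(u,v,\hat y)$, and the reduction via the face Jacobian all yield the clean $\delta^3$ bound. The genuine gap is in the final summation step: the simplices $M'\Delta$, and hence their faces $V(M')=M'\Delta\cap F$, are \emph{not} pairwise interior-disjoint. Your $M$'s --- cylinders ending at the first $d-1,d$-win --- do form an antichain in the Rauzy tree, so the $M\Delta$'s are disjoint; but their parents $M'$ need not be. If $M'_1$ is a short path with a $d-1,d$-winning child $M_1=M'_1A_s$, and $M'_2$ is a strictly longer path extending $M'_1$ (taking a non-$d-1,d$ winner at step $|M'_1|+1$) with its own $d-1,d$-winning child $M_2$, then $M'_1\Delta\supsetneq M'_2\Delta$, hence $V(M'_1)\supsetneq V(M'_2)$, even though $M_1\Delta\cap M_2\Delta=\emptyset$. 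So $\sum_{M'}\lambda_{d-3}(V(M'))$ runs over a \emph{nested} family and is not bounded by $\lambda_{d-3}(F)$. The brute-force repair --- using disjointness of the genuine cylinders $M\Delta$ via $\prod_{j\notin\{d-1,k,d\}}|C_j(M)|^{-1}=|C_{d-1}(M)|\,|C_k(M)|\,|C_d(M)|\,\lambda_{d-1}(M\Delta)/c_d$ --- costs a factor $N^3$ in place of your $N$, leaving an extra $N^2$ in the final ratio, which is fatal.

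The paper sidesteps this by cutting the Rauzy tree at a fixed norm level rather than at the first $d-1,d$-win: it chooses $\tilde{\mathcal A}$ to be a pairwise disjoint family of cylinders with no $d-1,d$ winner and $\|A\|\in[N,2N]$ (``maximal for $2N$''), so that the $V(A)$'s genuinely tile $\Lambda_{d-2}=F$. It observes that the bad set lies in the complement of $\bigcup_{A\in\tilde{\mathcal A}}A\Delta$ within the slab (a $d-1,d$-win with norm $<N$ is incompatible with lying in any such $A\Delta$), and estimates that complement slice-by-slice: each $A\Delta$ fills at least a $(1-2(t+s)N)^{d-3}$ proportion of the codimension-2 slice $\{x_{d-1}=t,x_d=s\}$, and integrating over $t+s<\epsilon/N$ gives the $O(\epsilon)$ bound. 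To rescue your argument along its current lines you would need to replace the first-hitting decomposition by a decomposition at a fixed norm threshold (so the relevant $(d-3)$-simplices in $F$ are truly disjoint) and then account separately for the $d-1,d$-win events inside each such cylinder.
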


\noindent
\textit{Proof of (\ref{eq:most}) assuming Propositions \ref{lem:prob 2 short} and \ref{lem:lhs future}.} \color{black} 
  We choose $C$ large enough so that (\ref{eq:most}) is vacuously true if $\epsilon$ is so large that the set defined by Proposition \ref{lem:lhs future} is empty.  Let $M$ in \eqref{eq:most} be of form $M(z,n)$. \color{black}
By Proposition~\ref{lem:prob 2 short} (and taking complements) there is $C'$ such  that $$\lambda_{d-1}(\{y\in M\Delta: R^n(y)_{d-1}+R^n(y)_d<\frac{\epsilon}{N}\})>
(1-C'\epsilon)\lambda_{d-1}(M\Delta).$$ 
To prove (\ref{eq:most}) it then suffices to show 
that for  $C$ large enough,  
\begin{multline}
\label{eq:enough}\lambda_{d-1}(\{y \in\cup_{A\in \mathcal{M}_N} MA\Delta:R^n(y)_{d-1}+R^n(y)_d <\frac{\epsilon}N \}) \geq\\
(1-C\epsilon)\lambda_{d-1}(\{y \in M\Delta:R^n(y)_{d-1}+R^n(y)_d <\frac{\epsilon}N\}).
\end{multline} For then we would combine these last two inequalities.
We now prove (\ref{eq:enough}).
Denote by $T_0(\frac{\epsilon}{N})$ the set on the left.   Set $x=R^ny$ so $x_{d-1}+x_d<\frac{\epsilon}{N}$.  We now show that the set of $x$ such that $x_{d-1}+x_d <\frac{\epsilon}{N}$ and  {\em  $d-1$ or $d$ wins} (so we leave the left side) with a matrix of size at most $N$ is $O(\epsilon)$.  Then we will apply Jacobian 
estimates to conclude the same thing about the set of $y$.    
Then we take complements to establish \eqref{eq:enough}\color{black}.

To that end, define  $$S_0(\frac{\epsilon}{N})=R^n(T_0(\frac{\epsilon}{N}))=\{x:x_{d-1}+x_d<\frac{\epsilon}{N}\}$$ where as always  $R$ denotes (normalized)  Rauzy induction. For $j\in {\mathbb N}$
set 
$$S_j(\frac{\epsilon}{N})=\{x\in\Delta:\frac {\epsilon} {N 2^{j+1}}\leq x_{d-1}+x_d<
\frac{\epsilon}{N 2^{j}}\}$$
so $S_0(\frac{\epsilon}{N})$  is a disjoint union of  $S_j(\frac{\epsilon}{N})$ There are corresponding sets $$T_j(\frac{\epsilon}{N})=R^{-n}(S_j(\frac{\epsilon}{N})) \cap M\Delta \color{black}$$ whose union over $j$ is $T_0(\frac{\epsilon}{N})$.  

Now for each $j$  apply Proposition~\ref{lem:lhs future}  to $\cup_{i\geq j} S_i$ to find    
$$       \lambda_{d-1}(\{x \in \cup_{i\geq j}S_i(\frac{\epsilon}{N}):d-1 \text{ or }d  \text{ wins within } n \text{ steps and }\|A(x,n)\|<2^jN\})<C\epsilon\lambda_{d-1}(\cup_{i\geq j}S_i(\frac{\epsilon\color{black}}{N})).$$ 
Now 
$\{x \in S_j(\frac{\epsilon}{N}):d-1 \text{ or }d  \text{ wins within } n \text{ steps and }\|A(x,n)\|<2^jN\}$ is contained in the set on the left and the measure of the set on the right is proportional to  the measure of $S_j$. 
We conclude for a new constant $\tilde{C}$, 
$$  \lambda_{d-1}(\{x \in S_j(\frac{\epsilon}{N}):d-1 \text{ or }d  \text{ wins within } n \text{ steps and }\|A(x,n)\|<2^jN\})\leq \tilde{C}\epsilon\lambda_{d-1}(S_j(\frac{\epsilon\color{black}}{N})).$$

The measure where $\|A\|\leq N$ is even smaller. 
\color{black}
By Lemma~\ref{lem:jacobian}, and Assumption (\ref{cond:bal in pieces}) of Theorem \ref{thm:most lhs} there exists $\hat{C}$ depending on $\zeta$ such that for any $U,V \subset S_j(\frac{\epsilon}{N})$ we have 
$$\frac{\lambda_{d-1}(R^{-n}U \cap M\Delta)}{\lambda_{d-1}(R^{-n}V\cap M\Delta)}<\hat{C}\frac{\lambda_{d-1}(U)}{\lambda_{d-1}(V)}.$$
This says that inside $T_j(\frac{\epsilon}{N})$ the proportion of $y$ such that $d-1$ or $d$ wins is at most $\hat{C}\color{black}\epsilon\lambda_{d-1}(T_j(\frac{1}{N}))$.      Summing over  $j$ and then taking complements 
we  have proven
 (\ref{eq:most}).

\begin{proof}[Proof of Proposition~\ref{lem:prob 2 short}]

 As before,  set $$S_0(\frac{\epsilon^2}{N})=\{x\in\Delta:x_{d-1}+x_d<\frac{\epsilon^2}{N}\}$$
and now for $j\in\mathbb{N}$, we define  $$\hat{S}_j(\frac{\epsilon}{N})=\{x\in\Delta:  \frac{2^j\epsilon}{N}\leq x_{d-1}+x_d<\frac{2^{j+1}\epsilon}{N}\}.$$

For some $c'>0$ depending on $\zeta$, the Assumption (3)  of Theorem~\ref{thm:most lhs}  implies
$$\inf_{z \in \hat{S}_j(\frac{\epsilon}{N})}|C_{d-1}(M)|z_{d-1}+|C_d(M)|z_d>c'\frac{2^{j}}{\epsilon}\sup_{y \in S_0(\frac{\epsilon^2}{N})}|C_{d-1}(M)|y_{d-1}+|C_d(M)|y_d,$$


This inequality  together with Assumption  (\ref{cond:last 2 bigger}) \color{black} on the size of the columns which says   
  
 $$\frac 1 {\underset{i \in \{d-1,d\}}{\min} |C_j(M)|}\sum_{i=1}^{d-2} |C_i(M)|z_m  \leq \frac 1 {\underset{i \in \{d-1,d\}}{\min} |C_i(M)|}\sum_{i=1}^{d-2} |C_i(M) <d\frac{\epsilon^2}{N}$$  

  implies there is a constant $C$ such that 
 \begin{equation} \label{eq:bound}
\sup_{z \in \hat{S}_j(\frac{\epsilon}{N})}\frac{1}{(|C_1(M)|z_1+...+|C_d(M)|z_d)^d}<C2^{-dj} \epsilon^d\inf_{y \in S_0(\frac{\epsilon^2}{N})} \frac{1}{(|C_1(M)|y_1+..+|C_{d}(M)|y_d|)^d}.
\end{equation}

Moreover  for some constant $C'$,  
\begin{equation}\label{eq:next bound}\lambda_{d-1}(S_j(\frac{\epsilon}{N}))\leq C'\frac{2^{2j}}{\epsilon^2}\lambda_{d-1}(S_0(\frac{\epsilon^2}{N})),
\end{equation}
Recalling Veech's Jacobian formula, Lemma \ref{lem:jacobian}, we see from (\ref{eq:bound}) and \eqref{eq:next bound} that there exists $c''>0$ so  that for each $j$, 
\begin{multline*}\lambda_{d-1}(\{y\in M\Delta: R^ny \in S_0(\frac{\epsilon^2}{N})\})
\geq c''2^{dj-2j}\epsilon^{2-d}\lambda_{d-1}(\{y\in M\Delta: R^ny\in \hat{S}_j(\frac{\epsilon}{N})\})\geq \\
c''2^j\epsilon^{-2}\lambda_{d-1}(\{y\in M\Delta: R^ny\in \hat{S}_j(\frac{\epsilon}N)\}).
\end{multline*} This uses that $d\geq 4$. \color{black}
Summing over  $j$ from $1$ to $[\log_2 \frac{N}{\epsilon}]$ we see for some $C'$ 
 that  $$\lambda_{d-1}(\{y\in M\Delta:\max_{i\in \{d-1,d\}} (R^ny)_i)
 \geq  \frac{\epsilon}{N}\}\leq  C'\epsilon^2\lambda_{d-1}(\{y\in M\Delta: R^ny\in S_0(\frac{\epsilon^2}{N})\})\leq C' \epsilon^2  \lambda_{d-1}(M\Delta).$$
\end{proof}

\subsection{Proof of Proposition \ref{lem:lhs future}}  Let $\Lambda_{d-2}=\spanD(e_1,...,e_{d-2})$. Let $\tilde{\mathcal{A}}$ be a set of matrices where $d$ and $ d-1$ have not won and 
\begin{itemize}
\item $\|A\|\in [N,2N]$ for all $A \in \tilde{\mathcal{A}}$
\item $A\Delta \cap A'\Delta=\emptyset$ for $A,A'\in \tilde{\mathcal{A}}$ with $A\neq A'$. 
\end{itemize}
For example we could choose $A$ maximal for $2N$ with the additional property that $d$ and $d-1$ have not won. 

The assumption that $d-1$ and $d$ have not won implies $\Lambda_{d-2}\subset \cup_{A \in \tilde{\mathcal{A}}}A\Delta$.
We need the following lemma in the proof. In this lemma let $\Lambda_{d-2}$ denote $\spanD(e_1,...,e_{d-2})$.

\begin{lem} For all $s,t\in [0,1]$ and $A \in \tilde{A}$ we have
$$\lambda_{d-3}(A\Delta \cap \{x\in \Delta:x_{d-1}=t,x_d=s\})\geq \lambda_{d-3}(V(A))(1-2(t+s)N)^{d-3}.$$
\end{lem}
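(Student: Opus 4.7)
The plan is to exploit the constrained structure of $A$ imposed by the hypothesis that $d-1$ and $d$ have not won, in order to give an affine parameterization of the slice $A\Delta\cap\{x_{d-1}=t,x_d=s\}$ in terms of the face $V(A)$.

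First I would establish the structural fact that $(C_j(A))_{d-1}=0$ for every $j\ne d-1$ and $(C_j(A))_d=0$ for every $j\ne d$. This is proved by induction on the number of Rauzy steps using the column update rule (when $k$ wins and $l$ loses, $C_l$ is replaced by $C_k+C_l$): since $d-1$ and $d$ are never the winner $k$, no new nonzero entry in row $d-1$ or row $d$ of a column $C_l$ with $l\notin\{d-1,d\}$ is ever created. In particular $(C_{d-1}(A))_{d-1}=1$ and $(C_d(A))_d=1$, and the norms $L:=|C_{d-1}(A)|$ and $L':=|C_d(A)|$ are bounded by $\|A\|\le 2N$.

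Next I would parameterize points of $A\Delta$ as $x=Av/|Av|$ with $v\in\Delta$. By the previous step, $(Av)_{d-1}=v_{d-1}$ and $(Av)_d=v_d$, so the conditions $x_{d-1}=t$ and $x_d=s$ become $v_{d-1}=t|Av|$, $v_d=s|Av|$. Setting $u_i=v_i/(1-(t+s)|Av|)$ for $i\le d-2$, so that $u$ lies in the standard $(d-3)$-simplex, and writing $\tilde L(u)=\sum_{i=1}^{d-2}u_i|C_i(A)|$, a short computation from $\sum_i v_i=1$ and the definition of $|Av|$ yields
$$|Av|=\frac{\tilde L(u)}{1-tL-sL'+(t+s)\tilde L(u)}.$$
Substituting back into $x=Av/|Av|$ produces the clean identity
$$x(u)=(1-tL-sL')\,y(u)+t\,C_{d-1}(A)+s\,C_d(A),$$
where $y(u)=\sum_{i=1}^{d-2}u_iC_i(A)/\tilde L(u)$ is the standard projective parameterization of $V(A)$ by the standard $(d-3)$-simplex.

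Hence the slice is obtained from $V(A)$ by an isotropic dilation with scalar factor $1-tL-sL'$ followed by a translation, and consequently
$$\lambda_{d-3}\bigl(A\Delta\cap\{x_{d-1}=t,x_d=s\}\bigr)=(1-tL-sL')^{d-3}\lambda_{d-3}(V(A)).$$
Using $L,L'\le 2N$ gives $1-tL-sL'\ge 1-2(t+s)N$, and the claimed inequality follows (trivially when the right-hand side is negative). The only real obstacle is the structural observation in the first step; once one sees that the $(d-1)$-th and $d$-th rows of the first $d-2$ columns of $A$ vanish, the rest is a direct identification of the slice with an affine image of the face $V(A)$.
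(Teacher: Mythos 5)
Your proof is correct and takes essentially the same approach as the paper's: both recognize that the slice $\{x_{d-1}=t,\,x_d=s\}\cap A\Delta$ is a scaled translate of $V(A)$ with scale factor $1-t|C_{d-1}(A)|-s|C_d(A)|$, and then bound $|C_{d-1}(A)|,|C_d(A)|\le\|A\|\le 2N$. The paper reaches this by writing points of the slice as affine combinations $(1-a-b)w+ap_{d-1}(A)+bp_d(A)$ with $a,b$ determined by $t,s$, whereas you arrive at the same scaling factor by a direct projective computation $x=Av/|Av|$. One modest virtue of your version is that you make explicit the structural fact the paper uses implicitly — that the $(d-1)$-th and $d$-th rows of $A$ are the standard basis vectors $e_{d-1}^T$, $e_d^T$ (equivalently, $(C_j(A))_{d-1}=\delta_{j,d-1}$, $(C_j(A))_d=\delta_{j,d}$), which is what forces $(Av)_{d-1}=v_{d-1}$, $(Av)_d=v_d$ and makes the slices genuinely parallel similar copies of $V(A)$. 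So: correct, same route, with the underlying matrix observation spelled out.
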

\begin{proof} Consider the simplex $A \Delta$ as being made of codimension 2 slices parallel to $V(A)$ (which is $A\Delta \cap \Lambda_{d-2}$). 
Let $p_{d-1}(A)=\frac{C_{d-1}(A)}{|C_{d-1}(A)|}, \, p_d(A)=\frac{C_d(A)}{|C_d(A)|}$  denote the two extreme points, of $A\Delta$ that are disjoint from $V(A)$. Every  point of a  slice parallel to $V(A)$ has the form $w+ap_{d-1}+bp_d$, where $a+b\leq 1$ are fixed and determine the slice and $\frac{w}{|w|}\in V(A)$.  Every side has length $1-a-b$ times what it had in $V(A)$. It follows that the volume of this slice is $\lambda_{d-3}(V(A))(1-a-b)^{d-3}$.

Now the $d-1$ and $d$ entries of $p_{d-1}$ and $p_d$  are respectively at least {$\|A\|^{-1}=(2N)^{-1}$}. Thus  $A\Delta \cap \{x\in \Delta:x_{d-1}=t,x_d=s\})$ is the set of points in $A\Delta$ that have the form $(1-a-b)w+ap_{d-1}+bp_d$, where $a\leq 2tN$, $b\leq 2sN$ and $w \in V(A)$.  The lemma follows. 

\end{proof}

\begin{proof}[Proof of Proposition \ref{lem:lhs future}]
The set in the proposition $$\{x\in \Delta: x_{d-1}+x_d<\frac {\epsilon}{N} \text{ and }d-1 \text{ or }d \text{ win and the corresponding matrix }\|A(x,m)\|<N\}$$  that we would like to show has small measure is contained in the complement of $\tilde {\mathcal{A}}\Delta  \cap (\cup_{c\leq \frac\epsilon N}\Delta_c)$. We view $\tilde {\mathcal{A}}\Delta  \cap (\cup_{c\leq \frac\epsilon N}\Delta_c)$  as being cut by codimension 2 planes parallel to $\Lambda_{d-2}$. By the previous lemma $\tilde{\mathcal{A}} \Delta$ intersected with any such slice has measure at least $(1-2N\frac\epsilon N)^{d-3}\lambda_{d-3}(\Lambda_{d-2})$. 
Since the volume of a slice in $\Delta$ parallel to $\Lambda_{d-2}$ is at most the volume of $\Lambda_{d-2}$, we have that $\tilde{\mathcal{A}}\Delta$ occupies at least a $(1-2\epsilon)^{d-3}$ proportion of this set. Because for small $s$ we have  $(1-s)^r=1-rs+O(s^2)$ the proposition follows with $C=2(d-3)$ (since we may choose $\epsilon$ small enough).
\end{proof}

 Later in the paper,  we will need one additional result.

\begin{lem}\label{lem:angle to sing} There exist $c>0$ so that for $k_0$ large enough, given a matrix $M=M(x,r)$ at beginning  of freedom on LHS, for all $y$ except for a subset of $M\Delta$ of measure at most $10^{-c_1(k+k_0)^4}\lambda_1(M\Delta)$, there exists   $A(R^ry,m)$, a matrix of freedom on LHS such that 

\begin{equation}\label{eq:ang LHS small 1}\Theta(C_i(MA(R^ry),C_1(MA(R^ry,m)))<10^{-c(2k+k_0)^6}\end{equation}  for $i\leq d-2$ and 
\begin{equation}\label{eq:ang LHS small 2}\Theta(C_{d-1}(M),C_d(M))<10^{-(2k+1+k_0)^6}.
\end{equation}
\end{lem}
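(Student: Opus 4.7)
I would prove the two inequalities separately.

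Inequality \eqref{eq:ang LHS small 2} concerns only $M$ itself and is inherited from the preceding restriction-on-RHS block $B'$, whose size is prescribed in Section \ref{sec:paths}. During restriction on RHS, $d-1$ repeatedly beats $d$ at $\pi_R$, each win adding $C_{d-1}$ into $C_d$. After $n\approx\|B'\|$ such additions a direct computation gives
\[
\Theta(C_{d-1}(M),\, C_d(M)) \;\lesssim\; \frac{|C_d^{\mathrm{old}}|}{n\,|C_{d-1}|},
\]
where $C_d^{\mathrm{old}}$ is the $d$-th column of the product matrix just before $B'$ is applied. Together with the RHS balancing conditions (Condition *(3)--(4)), which keep $|C_d^{\mathrm{old}}|/|C_{d-1}|$ uniformly bounded, and the lower bound $\|B'\|\geq 10^{(2k+2+k_0)^6}$ from Section \ref{sec:paths}, this quantity is well below $10^{-(2k+1+k_0)^6}$. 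This inequality is deterministic in $M$; no bad set of $y$ need be discarded.

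For \eqref{eq:ang LHS small 1}, the plan is to apply the contraction estimate, Proposition \ref{prop:contraction}, adapted to paths of freedom on LHS. The probabilistic machinery of Section \ref{sec:prob} carries over to the sub-graph of the Rauzy diagram corresponding to freedom on LHS, because that sub-graph is connected and admits both $\zeta$-balanced and positive matrices (any of the first $d-2$ letters can win against any other during freedom on LHS, supplying the positive matrices needed to invoke Proposition \ref{prop:prob decay}). The outcome is: off a subset of $V(M)$ of measure at most $\alpha^j \lambda_{d-3}(V(M))$, there is a freedom-on-LHS matrix $A$ with $\|A\|\leq K^j$ and $\mathrm{diam}(V(MA)) \leq \tau^j\,\mathrm{diam}(V(M))$. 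I would choose $j$ of order $(2k+k_0)^6$ so that $K^j$ still fits within the size budget and $\tau^j \leq 10^{-c(2k+k_0)^6}$. Since for $i\leq d-2$ the angle $\Theta(C_i(MA), C_1(MA))$ is controlled, up to a dimensional constant, by $\mathrm{diam}(V(MA))$ viewed inside the standard simplex, the bound in \eqref{eq:ang LHS small 1} follows.

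The bad set on $V(M)$ has measure $\alpha^j = 10^{-\Omega((2k+k_0)^6)}$, which is much smaller than the required $10^{-c_1(k+k_0)^4}$. Transferring from ``most $y\in V(M)$'' to ``most $y\in M\Delta$'' is precisely the content of Theorem \ref{thm:most lhs} (second bullet), dilating the bad set only by a constant factor. If the contracting path ends at a permutation other than $\pi_s$, a bounded deterministic freedom-on-LHS return path can be appended; this adds at most an $O(1)$ factor to the size and only a constant factor to the diameter. The main obstacle is verifying the freedom-on-LHS adaptation of Proposition \ref{prop:contraction}: one must check that Kerckhoff's balanced-matrix argument and the existence of positive matrices go through under restriction to the LHS sub-graph of the Rauzy diagram, with constants depending only on $d$. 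Once that is in hand, the remaining steps are routine applications of the framework developed in Section \ref{sec:prob} together with Theorem \ref{thm:most lhs}.
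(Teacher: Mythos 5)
Your proposal follows essentially the same route as the paper's proof: inequality \eqref{eq:ang LHS small 1} is obtained by applying the contraction estimate Proposition \ref{prop:contraction} to the Rauzy dynamics on the first $d-2$ letters (the paper phrases this as ``Proposition \ref{prop:contraction} applied to $\mathcal{R}_{d-2}$''), converting the diameter bound on $V(MA)$ into an angle bound, and then transferring from $V(M)$ to $M\Delta$ via the second bullet of Theorem \ref{thm:most lhs}; inequality \eqref{eq:ang LHS small 2} is likewise deduced in the paper by tracking how $C_{d-1}$ and $C_d$ evolve through the preceding restriction-on-RHS block, as you do. The only slight imprecision is describing the Theorem \ref{thm:most lhs} transfer as ``dilating the bad set only by a constant factor'': the theorem gives a bound $C(\delta+\epsilon)$ with $\epsilon$ constrained by its hypothesis (2), and the paper's choice of $\epsilon$ is what produces the $(k+k_0)^4$ exponent in the stated measure bound, though with the column-size estimates available a smaller $\epsilon$ would also be admissible, so your conclusion stands.
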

\begin{proof}  By Proposition \ref{prop:contraction} applied to $\mathcal{R}_{d-2}$ we have that there exists $\tau<1$ and $c>0$ so that for any positive matrix $M$,
 for all but a set of measure $10^{-c(k+k_0)^6}\lambda_{d-3}(V(M))$ set of points $y \in V(M)$ we have that there exists $A(R^ry,m)$ with 
 ${\|A(R^ry,m)\|<10^{(k+k_0)^6-(k+k_0)^4+(k+k_0)^2}}$ such that 
 $$diam\big(V(MA(R^ry,m))\big)<\tau^{c[((k+k_0)^6-((k+k_0)^4]}diam(V(M))<\tau^{c'(k+k_0)^6}diam\big((V(M)\big),$$ 
 with the last inequality holding for some $c'>0$. 
 This implies that for all but a $10^{-c(k+k_0)^6}$ proportion of $V(M)$ we have that the matrix given by freedom on the left hand side $A$ has 
$$\max\{\Theta(C_i(MA),C_{i'}(MA)):i,i'\leq d-2\}<\tau^{c'(k+k_0)^6}\max\{\Theta(C_i(M),C_{i'}(M)):i,i'\leq d-2\}.$$
We now apply Theorem \ref{thm:most lhs} with $N=10^{(k+k_0)^6-(k+k_0)^4+(k+k_0)^2}$ and $\frac{1}{\epsilon^2}=10^{(k+k_0)^4+(k+k_0)^2}$.  We obtain inequality \eqref{eq:ang LHS small 1}   for matrices of freedom that cover all but a proportion $10^{c'(k+k_0)^4}$ of $M\Delta$. 
 During restriction on the LHS, the angle between these columns can only get smaller and so we obtain inequality \eqref{eq:ang LHS small 1} for the remainder of LHS.\color{black} 

We now prove the bound on $\Theta(C_{d-1}(M),C_d(M))$. Let $M'$ be the ancestor of $M$ at the end of freedom on the right hand side.  We have $C_{d-1}(M)=C_d(M')+(b+1)C_{d-1}(M')$ and $C_d(M)=C_{d}(M')+bC_{d-1}(M')$ where $b \geq10^{(2k+1+k_0)^6+(k+k_0)^4}$. So by Lemma \ref{lem:add vectors} we have  \eqref{eq:ang LHS small 2}.
\end{proof}

\section{Input and Output singular direction}
We will need to control  the size of singular values and directions  for the matrices arising from Rauzy induction. This will be necessary to control the geometry of simplices. This section is devoted to this endeavor.  Bounds on large singular values give bounds on small singular values, because our matrix preserves a (possibly degenerate) symplectic form. Before we begin our estimates we briefly describe this.

\subsection{Symplectic} \label{sec:symplectic}

Let $\Omega_\pi$ denote the (possibly degenerate) symplectic form, preserved by matrices of Rauzy induction from $\pi$ to $\pi$.  The preserved subspace of the symplectic form is its image and is the orthogonal complement of its kernel.  (See for example \cite[Section 1.9]{Viana}.) Let $Im(\Omega_\pi)$ be $\Omega_\pi \mathbb{R}^k$, the preserved subspace of the symplectic form.
When we speak of $\Omega_\pi^{-1}$ it is defined on the image of $\Omega_\pi$.

We use the following formula. For any path of Rauzy induction joining $\pi$ to $\pi'$ with matrix $M$ 
\begin{equation}
\label{eq:Viana}
M^T\Omega_\pi M=\Omega_{\pi'},
\end{equation}
where $M^T$ denotes $M$ transpose.

We now discuss the polar decomposition   $M=UP$ where $U$ is unitary and $P$ is positive definite. Since $P=U_1DU_1^{-1}$ for some unitary $U_1$ and diagonal matrix $D$ we have that $M=U'DV'$ with $U'$ and $V'$ unitary. Note that if $(v,a)$ is a pair consisting of a singular input direction and value of $M$, then $(v,\frac 1 a)$ is a pair consisting of singular output direction and value of $M^{-1}$, and vice-versa.  Also the singular values of $M$ and $M^T$ are the same.
By the symplectic property we can relate singular input or output directions and values of $M^{-1}$ to corresponding directions and values of $M^T$. We obtain:
\begin{lem}\label{lem:singular relations}
\begin{enumerate}[label=(\roman*)]
\item\label{sing rel 1} If $(v,a)$ is a pair of singular input direction and  singular value of $M^{-1}$   and $v\in Im(\Omega_\pi)$ 
then  $(\Omega_\pi  v,a)$ is such a pair for $M^T$.
\item\label{sing rel 2} If $(\Omega_\pi v,a)$ is a pair of singular input direction and value of $M^T$, then $(v,\frac 1 a)$ is a pair of singular output direction and value of $M$. 
\item\label{sing rel 3} The singular values of $M$ in the invariant subspace preserved by the symplectic form come in pairs $a$ and  $\frac 1 a$
\end{enumerate}
\end{lem}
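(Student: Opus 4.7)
The plan is to derive all three statements from the identity $M^T\Omega_\pi M=\Omega_{\pi'}$ of \eqref{eq:Viana}. Because $\Omega_\pi$ and $\Omega_{\pi'}$ are represented in the standard basis by skew-symmetric $\{0,\pm 1\}$-matrices of a particularly simple shape, they act as isometries from $\mathrm{Im}(\Omega_\pi)$ (respectively $\mathrm{Im}(\Omega_{\pi'})$) onto itself. Restricting \eqref{eq:Viana} to these subspaces yields the useful rewrite $M^T\Omega_\pi = \Omega_{\pi'}M^{-1}$, which will be essentially the only ingredient.

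For (i) I will suppose $(v,a)$ is a singular input direction/value pair for $M^{-1}$, with $v\in\mathrm{Im}(\Omega_\pi)$, so that $M^{-1}v=aw$ for some unit vector $w$. Applying $M^T\Omega_\pi = \Omega_{\pi'}M^{-1}$ to $v$ gives $M^T(\Omega_\pi v) = a\,\Omega_{\pi'}w$, and the isometry property makes $\Omega_\pi v$ and $\Omega_{\pi'}w$ unit vectors, so $(\Omega_\pi v, a)$ is indeed a singular input/value pair for $M^T$.

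For (ii) I reverse the calculation: starting from $M^T(\Omega_\pi v)=aw$ with $\Omega_\pi v$ and $w$ unit, the rewrite yields $\Omega_{\pi'}M^{-1}v=aw$, hence $M^{-1}v = a\,\Omega_{\pi'}^{-1}w$, i.e.\ $M(\Omega_{\pi'}^{-1}w)=v/a$. Setting $x:=\Omega_{\pi'}^{-1}w$, which is a unit vector by the isometry property, this reads $Mx=(1/a)v$ with $v$ unit, so $v$ is a singular output direction of $M$ with value $1/a$, as claimed.

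Finally, (iii) is obtained by chaining (i) and (ii). If $a$ is a singular value of $M$ in the preserved subspace with input direction $v$ and output direction $u$, then $(u,1/a)$ is a singular input/value pair of $M^{-1}$; applying (i) promotes it to a pair $(\Omega_{\pi'}u,1/a)$ for $M^T$; and because $M$ and $M^T$ share singular values, $1/a$ is itself a singular value of $M$. Thus the spectrum on the invariant subspace pairs up as $a$ and $1/a$. The only non-formal step I foresee is the preliminary verification that $\Omega_\pi$ acts as an isometry on its image and that $\mathrm{Im}(\Omega_\pi)$ and $\mathrm{Im}(\Omega_{\pi'})$ transform compatibly under $M$, $M^{-1}$, $M^T$; this is a combinatorial fact about the symplectic form attached to $\mathcal{R}_d$ and will be the only place the argument is not a direct manipulation of \eqref{eq:Viana}.
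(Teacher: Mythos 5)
The instinct to derive all three items from the identity $M^T\Omega_\pi M=\Omega_{\pi'}$, rewritten as $M^T\Omega_\pi=\Omega_{\pi'}M^{-1}$, is the right one — the paper gives no proof and this is the only relation available. However, the step you single out as "the only place the argument is not a direct manipulation of \eqref{eq:Viana}" is in fact where the argument fails: $\Omega_\pi$ does \emph{not} act as an isometry of $\mathrm{Im}(\Omega_\pi)$ for the permutations in $\mathcal R_d$ once $d\geq 4$. What your argument actually needs is that $\Omega_\pi$ be conformal on its image (an isometry up to a scalar); only then does $\Omega_\pi$ carry the orthonormal singular input frame of $M^{-1}$ to an orthogonal frame, which is what is required to recognize $\Omega_\pi v$ as a singular input direction of $M^T$. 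A skew-symmetric matrix is conformal on its image if and only if all of its nonzero eigenvalues have equal modulus. This is automatic when the rank is $2$, which covers $d\in\{2,3\}$, but already for the hyperelliptic permutation on $d=4$ letters, $\Omega_{\pi_s}$ has full rank and
\[
\Omega_{\pi_s}^T\Omega_{\pi_s}=-\Omega_{\pi_s}^2
=\begin{pmatrix}3&2&0&-2\\ 2&3&2&0\\ 0&2&3&2\\ -2&0&2&3\end{pmatrix},
\]
which is not a scalar multiple of the identity, so $\Omega_{\pi_s}$ is not conformal on its image. The same persists for $d\geq 4$ throughout $\mathcal R_d$.

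Once conformality is lost, $M^T\Omega_\pi=\Omega_{\pi'}M^{-1}$ applied to $M^{-1}v=aw$ only gives $M^T(\Omega_\pi v)=a\,\Omega_{\pi'}w$. The vectors $\Omega_\pi v$, as $v$ runs over the orthonormal singular input frame of $M^{-1}$, need not be mutually orthogonal, so they do not form the singular input frame of $M^T$; and the quantity appearing as a singular value is $a\,|\Omega_{\pi'}w|/|\Omega_\pi v|$, not $a$. Your derivation of (ii) and (iii) inherits the same problem: chaining (i) and (ii) passes the error along, and the alternative reading of (iii) -- that $M^T$ and $M^{-1}$ are similar via $\Omega_\pi$ -- only controls eigenvalues, not singular values, since similarity by a non-orthogonal matrix does not preserve singular values. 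So the proof as written does not establish the lemma; a correct treatment would have to use the specific structure of $\Omega_\pi$ quantitatively (singular values and directions of $M$, $M^T$, $M^{-1}$ agree up to multiplicative and angular errors controlled by the condition number of $\Omega_\pi$ restricted to its image), rather than asserting the exact identification that conformality would give.
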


\color{black}
\subsection{Largest singular input and output vectors  of $M^T$}
 By the previous section this will also tell us about the singular directions of $M$. \color{black} In this section we consider the singular decomposition of $M^T=UP$. 
By largest (second largest) singular input vectors $w,w'$ we mean those vectors expanded most (second most) by $P$ and by output vector we mean their images under $U$.   
Let $W$ be the ortho-complement of $w$, so $w'\in W$ and $Proj_W$ the  orthogonal projection onto $W$.
\color{black}
\begin{prop} 
\label{prop:closeangle}
At the end of freedom on LHS, the largest and second largest  singular input  vectors $w,w'$ of $M^T$ satisfy
\begin{enumerate}
\item For all $\epsilon>0$,  for $k_0$ large enough, then  at stage $k$, 
 $\Theta(C_d(M),w)<10^{-(\frac 3 2 -\epsilon)(k+k_0)^4}$
  
 \item For all $\epsilon>0$ for $k_0$ sufficiently large, $\Theta(Proj_WC_1(M),w')<10^{-(\frac 1 2 -\epsilon)(k+k_0)^4}$.


\end{enumerate}
\end{prop}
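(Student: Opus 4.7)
The plan is to study the symmetric operator $MM^T = \sum_{i=1}^d C_i(M) C_i(M)^T$, whose eigenvectors are the right singular vectors of $M^T$, and to pin down $w$ and $w'$ by matrix perturbation theory.

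First I would collect the sizes and angles at the end of freedom on LHS at stage $k$. Since the last two columns do not change during LHS phases, Proposition~\ref{prop:sizes} gives $|C_{d-1}(M)|, |C_d(M)| \sim v_{k-1}$ and $|C_i(M)| \leq U_k$ for $i \leq d-2$, with $U_k/v_{k-1} \leq 10^{-(1-o(1))(k+k_0)^4}$, the $o(1)$ coming from the $(k+k_0)^{2.3}$ and lower order terms in that proposition. Lemma~\ref{lem:angle to sing} gives $\Theta(C_{d-1}(M),C_d(M)) \leq 10^{-(2k+1+k_0)^6}$ and pairwise angles among $C_1(M),\ldots,C_{d-2}(M)$ of at most $10^{-c(2k+k_0)^6}$. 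By Theorem~\ref{thm:nue}, for $k_0$ large the first $d-2$ columns are also nearly orthogonal to the last two.

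For part (1), split $MM^T = A + E$ with $A = C_d C_d^T + C_{d-1}C_{d-1}^T$ and $E = \sum_{i\leq d-2} C_i C_i^T$. A direct $2\times 2$ computation in the plane spanned by $C_{d-1},C_d$ shows $A$ has top eigenvalue $\sim 2|C_d|^2$, top eigenvector within angle $10^{-(2k+1+k_0)^6}$ of $C_d/|C_d|$, and second eigenvalue of order $|C_d|^2\cdot 10^{-2(2k+1+k_0)^6}$. Since $\|E\| \leq (d-2)U_k^2$ and the spectral gap of $A$ is of order $|C_d|^2$, standard eigenvector perturbation (Davis--Kahan) yields
\[
\Theta(w,\text{top eigenvector of }A) \;\lesssim\; \frac{\|E\|}{|C_d|^2} \;\leq\; \left(\frac{U_k}{v_{k-1}}\right)^{2} \leq 10^{-(2-o(1))(k+k_0)^4},
\]
and the triangle inequality then produces (1) with strict margin.

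For part (2), restrict $MM^T$ to $W = w^\perp$. The projections $Proj_W C_d$ and $Proj_W C_{d-1}$ have norm at most $|C_d|\,\Theta(w,C_d)$ and $|C_{d-1}|(\Theta(w,C_d)+\Theta(C_d,C_{d-1}))$, so their combined contribution to $Proj_W MM^T Proj_W$ is at most of order $v_{k-1}^2 \cdot 10^{-(3-2\epsilon)(k+k_0)^4}$. For $i \leq d-2$, since $C_i$ is nearly orthogonal to $w$ by Theorem~\ref{thm:nue}, $Proj_W C_i \approx C_i$ up to exponentially small error, and hence
\[
Proj_W MM^T Proj_W \;\approx\; \sum_{i\leq d-2} C_i C_i^T,
\]
which by Condition~**(1) is a near rank-one matrix with top eigenvalue $\sim (d-2)U_k^2$ and top eigenvector within angle $10^{-c(2k+k_0)^6}$ of $C_1/|C_1|$, and therefore also close to $Proj_W C_1/|Proj_W C_1|$. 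A second Davis--Kahan step, with perturbation of order $v_{k-1}^2 \cdot 10^{-(3-2\epsilon)(k+k_0)^4}$ and spectral gap $\sim U_k^2$, produces ratio at most $(v_{k-1}/U_k)^2 \cdot 10^{-(3-2\epsilon)(k+k_0)^4} \leq 10^{-(1-2\epsilon-o(1))(k+k_0)^4}$, comfortably stronger than $10^{-(1/2-\epsilon)(k+k_0)^4}$.

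The main obstacle is the bookkeeping: verifying at each perturbation step that the ratio perturbation-over-gap beats the exponent claimed, and in particular that Proposition~\ref{prop:sizes} delivers the sharp ratio $v_{k-1}/U_k \geq 10^{(1-o(1))(k+k_0)^4}$. All other inputs---near-parallelism of the last two columns (Lemma~\ref{lem:angle to sing}), near-parallelism among the first $d-2$ columns (Condition~**), and their near orthogonality to the last two (Theorem~\ref{thm:nue})---are in hand.
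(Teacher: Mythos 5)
Your proof is correct and establishes the proposition, but via a route that differs from the paper's in mechanism. Both arguments study the eigenstructure of $MM^T$ and draw on the same ingredients — the column-size ratio $U_k/v_{k-1}\sim 10^{-(1-o(1))(k+k_0)^4}$ from Proposition~\ref{prop:sizes}, the column-angle bounds from Lemma~\ref{lem:angle to sing} and Condition~**, and the near-orthogonality of the two column blocks from Theorem~\ref{thm:nue}. The paper's argument is a direct variational comparison: it writes $|M^Tv|^2=\sum_i(C_i\cdot v)^2$, shows this is strictly smaller at any $v$ in the positive quadrant making angle $10^{-(3/2-3\epsilon)(k+k_0)^4}$ with $C_d$ than at $C_d/|C_d|$ (balancing a quadratic-in-angle loss weighted by $|C_j|^2$, $j\geq d-1$, against a linear-in-angle gain weighted by $|C_i|^2$, $i\leq d-2$), deduces a local maximum at smaller angle, and promotes local to global via the tangency of the ellipsoidal level sets with the sphere in the positive quadrant. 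For part (2) it passes to $W=w^\perp$, establishes the projected column ratio $|\mathrm{Proj}_WC_j|/|\mathrm{Proj}_WC_i|\lesssim 10^{-(1/2-\epsilon)(k+k_0)^4}$, and runs the analogous argument. Your Davis--Kahan decomposition $MM^T=A+E$ replaces the bare-hands comparison with the sin-theta bound $\|E\|/\operatorname{gap}(A)$, and your restriction to $W=w^\perp$ for the second vector is the same reduction as the paper's. The Davis--Kahan framing is more modular and sidesteps the need to argue that a local maximum in the positive quadrant is global — the perturbation bound tracks the correct eigenvector automatically. Your computed exponents ($\sim 10^{-2(k+k_0)^4}$ and $\sim 10^{-(1-2\epsilon)(k+k_0)^4}$) are in fact sharper than the proposition asserts, which is also implicit in the paper's calculation.

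Two small imprecisions that do not affect validity. First, Theorem~\ref{thm:nue} only puts the first $d-2$ columns within a fixed small angle $\epsilon$ (not exponentially small in $k$) of $\operatorname{span}(e_1,\ldots,e_{d-2})$, so ``$\mathrm{Proj}_WC_i\approx C_i$ up to exponentially small error'' overstates it — the projection changes norms and directions only by $O(\epsilon)$ factors, which is still enough for the near rank-one claim and the gap estimate of order $U_k^2$. Second, the phrase ``top eigenvector within angle $10^{-c(2k+k_0)^6}$ of $C_1/|C_1|$'' should read $\mathrm{Proj}_WC_1/|\mathrm{Proj}_WC_1|$, since the eigenvector of $\sum_{i\leq d-2}\mathrm{Proj}_WC_i(\mathrm{Proj}_WC_i)^T$ lives in $W$ while $C_1$ does not; you in fact use the corrected version in the very next clause.
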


\begin{proof}
We first note that the estimates on $U_k$ and $v_k$ in Proposition \ref{prop:sizes}
imply that for all $\epsilon>0$, for  $k_0$ large enough, then for any matrix $M$ during freedom on LHS at stage $k$ we have that for all $i\leq d-2$
\begin{equation}
\label{eq:columnbounds}10^{-(1+\epsilon)(k+k_0)^4}<\frac{|C_i(M)|}{|C_d(M)|}<10^{-(1-\epsilon)(k+k_0)^4}.
\end{equation}

\color{black}
We prove the first conclusion.
It suffices to show that if $v$ is a unit vector so that $\Theta(v,C_d(M))=10^{-(\frac 3 2-3\epsilon) (k+k_0)^4}$ then

\begin{equation}
\label{eq:suffice}|M^Tv|_2<|M^T\frac{C_d(M)}{|C_d(M)|_2}|_2
\end{equation}

Indeed, this establishes that there is 
a local 
 maximum of the function $f:S^{d-1}\subset \mathbb{R}^d \to \mathbb{R}_+$ defined by $f(v)=|M^Tv|$ within angle  $10^{-(\frac 3 2-3\epsilon) (k+k_0)^4}$  of $ \frac{C_d(M)}{|C_d(M)|_2}$.  However a local maximum in the positive quadrant is a global maximum. The reason is that at  a local maximum the level set of $f$ defined on all of $\mathbb{R}^d$ which  is an ellipsoid must be tangent to the sphere. There can only be one such point in the  positive quadrant.

Showing (\ref{eq:suffice}) 
 is equivalent to showing that  
\begin{equation}\label{eq:angle suff est}\sum_{i=1}^{d}(C_i(M)\cdot v)^2<\sum_{i=1}^d(C_i(M) \cdot \frac{C_d(M)}{|C_d(M)|_2})^2.
\end{equation} 
Now  from  the assumed equation for $\Theta(v,C_d(M))$ and  the bound 
${\Theta(C_{d-1}(M),C_d(M))\leq 10^{-(k+k_0)^6}}$ (Lemma \ref{lem:angle to sing}), we get,  writing inner products in terms of $\cos$ and using Taylor's expansion for $\cos\theta$ that   for $j=d-1,d$ and $k_0$ large enough  
$$(C_j(M) \cdot v)-(C_j(M) \cdot \frac{C_d(M)}{|C_d(M)|_2})<-|C_{j}(M)|\frac{1}{3} (10^{- (\frac 
3 2- 3\epsilon)(k+k_0)^4})^2.$$

Then for  $j\geq d-1$  and $i\leq d-2$
 \begin{multline}\label{eq:bigger}(C_j(M) \cdot v)^2-(C_j(M) \cdot \frac{C_d(M)}{|C_d(M)|_2})^2< 
 -|C_j(M)|\cdot |C_{j}(M)|\frac{ 1}{9}(10^{- (\frac 3 2 -3\epsilon)(k+k_0)^4})^2<
 \\ -\frac{1}{9}(10^{2(1-\epsilon)(k+k_0)^4}|C_i(M)|)^2 (10^{- ( 3  -6\epsilon)(k+k_0)^4})=-\frac{1}{9} \color{black}10^{(-1+4\epsilon)(k+k_0)^4)}|C_i(M)|^2
 \end{multline}
 The first inequality uses that $a^2-b^2=(a-b)(a+b)$ and  
 ${C_j(M) \cdot \frac{C_d(M)}{|C_d(M)|_2}}>\frac{ 1}{3} |C_d(M)|$\color{black}.
\color{black}  The second inequality uses (\ref{eq:columnbounds}) to relate $|C_j|$ and $|C_i|$. 
  
We also have that for $i\leq d-2$ 
$$C_i(M) \cdot v-C_i(M) \cdot \frac{C_d(M)}{|C_d(M)|_2}<|C_{i}(M)|10^{-  (\frac 3 2 -3\epsilon)(k+k_0)^4} .$$ 
Thus for all $i \leq d-2$
\begin{equation}\label{eq:smaller}(C_i(M) \cdot v)^2-(C_i(M) \cdot \frac{C_d(M)}{|C_d(M)|_2})^2<2|C_{i}(M)|^210^{-  (\frac 3 2 -3\epsilon)(k+k_0)^4}.\color{black}
\end{equation}
Seeing that the sum of the upper bound  of (\ref{eq:bigger}) and  the upper bound  of (\ref{eq:smaller}) is negative for $k_0$ large enough, 
we have that our sufficient condition, Inequality (\ref{eq:angle suff est}), is satisfied establishing (1).

Now we outline (2). 
From the angle bound just proven, the length bound (\ref{eq:columnbounds}), 
and the fact the angle between $C_i(M)$ and $C_j(M)$ is bounded away from $0$ for $i\leq d-2$ and $j\geq d-1$ (Theorem \ref{thm:nue}), 
we have that for  all $\epsilon>0$ if $k_0$ is big enough, then for all $i\leq d-2, j\geq d-1$ 
\begin{equation}
\label{eq:smallbig}
|Proj_W(C_j(M))|<10^{(\frac 1 2 -\epsilon) (k+k_0)^4}|Proj_W(C_i(M))|.
\end{equation}

From this the second conclusion follows by an analogous argument.
\end{proof}

\begin{lem}\label{cor:second place}  Again let $w'$ the  second largest input vector of $M^T$,  a matrix at the end of freedom on LHS\color{black}. Then  for $k_0$ sufficiently large, $M^T(w')$ makes angle less than $10^{-(k+k_0)}$ with 
$(|C_1(M)|,...,|C_{d-2}(M)|,0,0)$.
\end{lem}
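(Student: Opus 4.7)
The plan is to compute $M^Tw'$ coordinate-by-coordinate via $(M^T v)_i = \langle C_i(M), v\rangle$, approximate $w'$ by the unit vector $v_0 := \mathrm{Proj}_W C_1(M)/|\mathrm{Proj}_W C_1(M)|$ using Proposition~\ref{prop:closeangle}(2), and show that $M^Tw'$ differs from a uniform rescaling of the target vector only by quartic-exponentially small error.

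For $j\in\{d-1,d\}$, since $w'\in W$ one has $(M^Tw')_j = \langle \mathrm{Proj}_W C_j(M), w'\rangle$, which is bounded in absolute value by $|\mathrm{Proj}_W C_j(M)| = |C_j(M)|\sin\Theta(C_j(M),w)$. Proposition~\ref{prop:closeangle}(1) handles $j=d$, and the triangle inequality together with Lemma~\ref{lem:angle to sing} handles $j=d-1$; both give $|\mathrm{Proj}_W C_j(M)| \leq |C_j(M)|\cdot 10^{-(3/2-\epsilon)(k+k_0)^4}$. Invoking \eqref{eq:columnbounds} to trade $|C_j(M)|$ for $|C_i(M)|$ with $i\leq d-2$, one obtains $|(M^Tw')_j| \leq |C_i(M)|\cdot 10^{-(1/2-O(\epsilon))(k+k_0)^4}$.

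For $i\leq d-2$, write $w' = v_0 + \eta$ with $|\eta| \leq 10^{-(1/2-\epsilon)(k+k_0)^4}$, by Proposition~\ref{prop:closeangle}(2). Set $\gamma_\ell := \langle C_\ell(M)/|C_\ell(M)|, w\rangle$ for $\ell\in\{1,i\}$. A direct expansion of $\langle \mathrm{Proj}_W C_i(M), \mathrm{Proj}_W C_1(M)\rangle/|\mathrm{Proj}_W C_1(M)|$ using $\mathrm{Proj}_W x = x - \langle x,w\rangle w$ shows that
\[
\langle \mathrm{Proj}_W C_i(M), v_0\rangle \;=\; c_0\,|C_i(M)|\,\bigl(1 + O(10^{-c(2k+k_0)^6})\bigr), \qquad c_0 := \sqrt{1-\gamma_1^2},
\]
where the crucial observation is that although $\gamma_1$ itself is only controlled by a $k_0$-dependent constant via Theorem~\ref{thm:nue}, the \emph{difference} $|\gamma_i - \gamma_1| \leq \Theta(C_i(M),C_1(M)) \leq 10^{-c(2k+k_0)^6}$ by Lemma~\ref{lem:angle to sing}, so the ``constant'' factor $\sqrt{1-\gamma_1^2}$ is identical across all $i\leq d-2$ up to super-exponentially small corrections. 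Adding in the contribution from $\eta$ gives $(M^Tw')_i = c_0\,|C_i(M)|\,(1 + O(10^{-(1/2-\epsilon)(k+k_0)^4}))$.

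Combining the two coordinate estimates, $M^Tw'$ is within Euclidean distance $C\cdot\max_{i\leq d-2}|C_i(M)|\cdot 10^{-(1/2-\epsilon)(k+k_0)^4}$ of $c_0\cdot(|C_1(M)|,\ldots,|C_{d-2}(M)|,0,0)$, while by the balance condition on the first $d-2$ columns (Conditions~$\ast$(1)) the target has norm at least $c\cdot\max_{i\leq d-2}|C_i(M)|$. Hence the angle between $M^Tw'$ and the target is at most $C'\cdot 10^{-(1/2-\epsilon)(k+k_0)^4}$, which is far smaller than $10^{-(k+k_0)}$ once $k_0$ is large. The main subtlety is tracking the scalar $c_0$ and recognizing that it does not vary with $i$; this cancellation is what promotes the merely $k_0$-dependent cross-angle estimate from Theorem~\ref{thm:nue} into a genuinely $k$-dependent relative error, after which the required bound is an easy consequence of the far stronger estimates in Proposition~\ref{prop:closeangle}.
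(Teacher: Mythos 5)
Your proof is correct and takes essentially the same approach as the paper: both identify $w'$ with the projection of $C_1(M)$ onto a codimension-one subspace close to $C_d(M)^{\perp}$ (you use $w^{\perp}$ directly, the paper uses $C_d(M)^{\perp}$ and then approximates), both use the strong cross-angle bound among $C_1(M),\dots,C_{d-2}(M)$ from Lemma~\ref{lem:angle to sing} to show that the common scalar factor $\sqrt{1-\gamma_1^2}$ cancels in the angle comparison, and both dispose of the last two coordinates via the bound $\Theta(C_d(M),w)\lesssim 10^{-(3/2-\epsilon)(k+k_0)^4}$ together with $\Theta(C_{d-1},C_d)$ being tiny and \eqref{eq:columnbounds}. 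Your variant is marginally cleaner in that $(M^T w')_j=\langle \mathrm{Proj}_W C_j(M),w'\rangle$ is an exact identity (since $w'\in W$) rather than an approximation, and you bypass the paper's intermediate vector $\hat w=(w_1\cdot w_1,\dots,w_1\cdot w_{d-2},0,0)$ by absorbing the normalization directly into $c_0$, but the key lemmas invoked and the structure of the estimate are the same.
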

\begin{proof}  Let $W_d$ be the the orthocomplement of $C_d(M)$.  Let  $w_i$ be  the vector which is the orthogonal projection of $C_i(M)$ to $W_d$.  In particular $w_d$ is the zero vector.  Let $\hat w$ be the vector 
$$\hat w=(w_1\cdot w_1, \ldots, w_1\cdot w_{d-2},0,0).$$
 By Inequality \eqref{eq:ang LHS small 2} of Lemma \ref{lem:angle to sing} 
\begin{equation}\label{eq:small proj}
|w_{d-1}|\leq  10^{-(2k+k_0)^6}|C_{d-1}|.
\end{equation}
 Because $w'\in W^\perp$, \color{black} by the first  conclusion of Proposition~\ref{prop:closeangle}, for $\epsilon>0$, for $k_0$  large enough, 
 \begin{equation}\label{eq:close to proj}
 \Theta(w', W_d)\leq 10^{-(\frac 3 2-\epsilon)(k+k_0)^4}.
 \end{equation}


By the second conclusion of Proposition~\ref{prop:closeangle}, for all $\epsilon>0$, for $k_0$ large enough,
\begin{equation}\label{eq:close to w1}
\Theta(w',w_1)\leq 10^{-(\frac 1 2-{\epsilon})(k+k_0)^4}.
\end{equation}

These two  angle bounds, the bounds  $\frac{|C_j(M)|}{|C_i(M)|}\leq 10^{(1+\epsilon)\color{black}(k+k_0)^4}$ for $j\geq d-1$ and $i\leq d-2$  (inequality \eqref{eq:columnbounds}) \color{black} and the bound on $|w_{d-1}|$ imply that  
$$|w_i\cdot w_1|\geq 10^{2(k+k_0)}|w_j \cdot w_1|$$ for all $i\leq d-2$, $j\geq d-1$. Now $$M^Tw'=(C_1(M)\cdot w',\ldots, C_d(M)\cdot w').$$ Putting this together we see that $|M^Tw'-\hat{w}|\leq 10^{-2(1-\epsilon)(k+k_0)}|\hat{w}|.$ 
 This implies \begin{equation}
\label{eq:M^T}\Theta(M^Tw',\hat w)\leq 10^{-(1-\epsilon)(k+k_0)}.
\end{equation}
\color{black}
  Using  Lemma \ref{lem:angle to sing} which says the $C_i(M)$ are exponentially close in angle to each other, we see that  for all $i,i'\leq d-2$,  
  $$\left |\frac{|w_i\cdot w_1|}{|w_{i'}\cdot w_1|}-\frac{|C_i(M)|}{|C_{i'}(M)|	}\right|\leq 10^{-\tilde{c}(k+k_0)^6}.$$ 
This estimate together with (\ref{eq:M^T})  implies the statement of  the lemma.
\end{proof}

In the next Lemma, by proportional  we mean the ratio of two terms is bounded above and below by uniform constants.

\begin{lem}\label{lem:second freedom}  At the end of freedom on LHS,  the second largest singular value of $M^T$ is proportional to $|C_{\min}(M)|$.
\end{lem}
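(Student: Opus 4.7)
The plan is to identify $\sigma_2^2$ with the second-largest eigenvalue of the Gram matrix $G := M^T M$, whose entries are $G_{ij} = C_i(M) \cdot C_j(M)$, and to exploit its approximate block structure. By Theorem~\ref{thm:nue} each column decomposes as $C_i = C_i^{(1)} + C_i^{(2)}$ with $C_i^{(1)} \in \text{span}(e_1, \ldots, e_{d-2})$, $C_i^{(2)} \in \text{span}(e_{d-1}, e_d)$, and $|C_i^{(2)}| \leq \epsilon_0 |C_i|$ for $i \leq d-2$ (symmetrically for $i \geq d-1$), where $\epsilon_0$ can be made as small as desired by choosing $k_0$ large. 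I would write $G = \tilde G + E$ with $\tilde G$ block-diagonal in $\{1, \ldots, d-2\} \sqcup \{d-1, d\}$; the $(d-2) \times (d-2)$ diagonal block is the Gram matrix of nearly parallel vectors by Lemma~\ref{lem:angle to sing}, hence essentially rank one with top eigenvalue $\approx \sum_{i \leq d-2} |C_i|^2$, which by Condition~*(1) and its consequences for $M$ is proportional to $|C_{\min}(M)|^2$.

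For the lower bound I would use the variational characterization $\sigma_2^2 = \max_{u \perp \hat u_1(G),\,|u|=1} u^T G u$ with the test direction $\hat v = (|C_1|, \ldots, |C_{d-2}|, 0, 0)$ suggested by Lemma~\ref{cor:second place}. The crucial observation is that $\hat v^T E \hat v = 0$: $\hat v$ is supported on the first block, while $E$ vanishes on the $\{1,\ldots,d-2\}\times\{1,\ldots,d-2\}$ sub-block. A direct computation using the tight angle bounds $\Theta(C_i, C_j) \leq 10^{-c(2k+k_0)^6}$ for $i, j \leq d-2$ from Lemma~\ref{lem:angle to sing} yields $\hat v^T \tilde G \hat v \approx |\hat v|^4$, so the Rayleigh quotient is $\hat v^T G \hat v / |\hat v|^2 \approx \sum_{i \leq d-2} |C_i|^2$. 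Proposition~\ref{prop:closeangle}(1) and the column decomposition show that $\hat u_1(G)$ has its first $d-2$ coordinates of total magnitude $O(\epsilon_0 |C_{\min}|/|C_d|)$, so $(\hat v/|\hat v|) \cdot \hat u_1(G)$ is small enough that projecting $\hat v/|\hat v|$ onto $\hat u_1(G)^\perp$ changes the Rayleigh quotient by at most $O(\epsilon_0^2 |C_{\min}|^2)$; this yields $\sigma_2^2 \gtrsim |C_{\min}(M)|^2$.

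For the upper bound I would use the Frobenius identity $\sigma_1^2 + \cdots + \sigma_d^2 = \sum_i |C_i(M)|^2$ together with $\sigma_1^2 \geq |M^T v|^2 \geq |C_{d-1}|^2 + |C_d|^2 - o(|C_{\min}|^2)$ obtained by evaluating at the unit vector $v = (C_{d-1} + C_d)/|C_{d-1} + C_d|$, with the error controlled by the bound $\Theta(C_{d-1}, C_d) \leq 10^{-(2k+1+k_0)^6}$ from Lemma~\ref{lem:angle to sing}. Subtracting yields $\sigma_2^2 \leq \sum_{i \leq d-2} |C_i|^2 + o(|C_{\min}|^2) \lesssim |C_{\min}(M)|^2$. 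The main obstacle is the lower bound, where the operator norm $\|E\| \approx \epsilon_0 |C_{\min}| |C_d|$ can far exceed $\lambda_2(\tilde G) \approx |C_{\min}|^2$, so Weyl-type perturbation estimates are inadequate; the workaround is the observation that $E$ never enters the quadratic form against the specific test vector $\hat v$, reducing the problem to controlling the much smaller perturbation of the top eigenvector $\hat u_1(G)$ through the large spectral gap $\sigma_1^2 - \lambda_2(\tilde G) \approx |C_d|^2$.
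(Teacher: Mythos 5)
Your argument is correct, but it takes a genuinely different route from the paper's. The paper disposes of this lemma in a few lines: Proposition~\ref{prop:closeangle}(2) locates the second singular input direction $w'$ of $M^T$, Lemma~\ref{cor:second place} gives the explicit shape of $M^Tw'$, and the result follows by taking the norm and invoking Theorem~\ref{thm:nue} and Condition~*(1). You instead argue directly with the Gram matrix $G=M^TM$: a block decomposition $G=\tilde G+E$ and a Rayleigh quotient against a test vector supported in the first $d-2$ coordinates (which annihilates $E$) for the lower bound, and the Frobenius identity $\sum_i\sigma_i^2=\sum_j|C_j|_2^2$ together with a lower bound on $\sigma_1^2$ from the nearly parallel pair $C_{d-1},C_d$ for the upper bound. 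Your approach is more self-contained — in particular you never need the fairly delicate Lemma~\ref{cor:second place} — at the cost of being longer than the paper's two-line deduction. One small slip that should be corrected: you cite Proposition~\ref{prop:closeangle}(1) to control the top eigenvector $\hat u_1(G)$, but $\hat u_1(G)$ is an eigenvector of $M^TM$, i.e.\ the top \emph{right} singular direction of $M$, which is close to $(0,\ldots,0,|C_{d-1}|,|C_d|)$ up to normalization; Proposition~\ref{prop:closeangle}(1) describes the top singular \emph{input} direction of $M^T$, an eigenvector of $MM^T$ close to $C_d(M)$ — a different vector. The bound you need on the first $d-2$ coordinates of $\hat u_1(G)$ is nonetheless correct: it follows from the block structure, the off-block bound $\|E\|\lesssim\epsilon_0|C_{\min}||C_d|$, and the spectral gap $\lambda_1(\tilde G)-\lambda_2(\tilde G)\approx|C_d|^2$ via the Davis--Kahan style perturbation estimate you yourself sketch at the end. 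With that reference fixed, the proof stands.
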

\begin{proof}By Proposition \ref{prop:closeangle} the second largest \color{black} input singular direction $w'$ is exponentially  close to $C_1(M)$. The proof of Lemma \ref{cor:second place} says that $M^Tw'$ is exponentially  close to $(w_1\cdot w_1,\cdots, w_{d-2}\cdot w_1,0,0)$. By Theorem \ref{thm:nue}, $|M^Tw'|$ is proportional to $\sum_{i=1}^{d-2} |C_i(M)|$.
 By Condition (1)* this quantity is proportional to $(d-2)|C_i(M)|$ for any $i\leq d-2$. 
\end{proof}

The next result is used in Section \ref{sec:rhs}.
\begin{prop}\label{prop:RHS sing} At the start of freedom on RHS
\begin{enumerate}
\item the largest singular  value of $M^T$ is proportional to $|C_{\max}(M)|$. 
\item The second largest is smaller than $10^{-(k+k_0)^5}\|M\|$.\end{enumerate} 
At the end of freedom on the RHS,  
\begin{enumerate}
\item[(3)] the largest is  proportional to \color{black}  $|C_{\max}(M)|$ and
\item[(4)] the second largest  is at least a constant multiple of $|C_{\min}(M)|$.
\end{enumerate} 
\end{prop}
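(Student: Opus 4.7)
The plan is to translate all four claims into statements about singular values of $M$ itself, using $\sigma_i(M^T)=\sigma_i(M)$, and then to exploit the column-direction information from Theorem~\ref{thm:nue} together with the size estimates of Proposition~\ref{prop:sizes}. Conclusions (1) and (3) are the soft observation that $\sigma_1(M)$ equals the $\ell^2$ operator norm of $M$, combined with the elementary inequality
\[
\tfrac{1}{\sqrt d}\,|C_{\max}(M)|\ \le\ \|M\|_{\mathrm{op},2}\ \le\ \sqrt d\,|C_{\max}(M)|
\]
valid for any $d\times d$ matrix with non-negative entries (apply $M$ to the basis vector indexing the largest column for the lower bound, and use $\|M\|_{\mathrm{op},2}\le\|M\|_F$ together with the $\ell^1$--$\ell^2$ equivalence on $\mathbb R^d$ for the upper). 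These apply equally at the start and the end of freedom on RHS.

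For (2) I would show that at the start of freedom on RHS the matrix $M$ is exponentially close (in operator norm) to a rank-one matrix. Let $u=C_1(M)/|C_1(M)|_2$ be the common direction of the first $d-2$ columns. By Condition $**$(1) at the end of freedom on LHS we have $\Theta(C_i(M),C_{i'}(M))\le 10^{-c(2k+k_0)^6}$ for $i,i'\le d-2$; this bound is preserved through restriction on LHS (which only combines these columns among themselves) and is essentially preserved through transition, since $\|T_k\|\le 10^{(k+k_0)^2}$ is negligible compared with the size ratio $u_k/v_{k-1}\ge 10^{(2k+1+k_0)^6}$ supplied by Proposition~\ref{prop:sizes}. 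Writing
\[
M\ =\ u\bigl(|C_1(M)|,\ldots,|C_{d-2}(M)|,0,0\bigr)+E,
\]
the columns of $E$ have norm at most $10^{-c(2k+k_0)^6}u_k$ (the first $d-2$) or at most a constant times $v_{k-1}$ (the last two). Both are $\le 10^{-(k+k_0)^5}\|M\|$ for $k_0$ large, so $\|E\|_2\le\|E\|_F\le 10^{-(k+k_0)^5}\|M\|$. Since the first summand has vanishing $\sigma_2$, Weyl's singular value inequality $\sigma_2(M)\le\sigma_2(uw^T)+\|E\|_2=\|E\|_2$ yields (2).

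For (4) I would use the Courant--Fischer characterization $\sigma_2(M)=\max_{\dim V=2}\min_{v\in V,\ \|v\|_2=1}\|Mv\|_2$ and exhibit a convenient 2-plane. Set $\hat u=(1,\ldots,1,0,0)/\sqrt{d-2}$ and $V=\operatorname{span}(\hat u,e_d)$. During freedom on RHS every $C_i(M)$ with $i\le d-2$ receives the same running sum of $C_d$'s, so the near-parallelism of the first $d-2$ columns is preserved at the (weaker but sufficient) rate of order $V_k/u_k\le 10^{-(k+k_0)^4}$; in particular $\|M\hat u\|_2=\tfrac{1}{\sqrt{d-2}}\|C_1(M)+\cdots+C_{d-2}(M)\|_2\ge c_0\sqrt{d-2}\,|C_{\max}(M)|$ for an absolute $c_0>0$. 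By Theorem~\ref{thm:nue}, for $k_0$ large we may arrange that $M\hat u$ lies within angle $\epsilon$ of $\operatorname{span}(e_1,\ldots,e_{d-2})$ and $C_d(M)$ lies within angle $\epsilon$ of $\operatorname{span}(e_{d-1},e_d)$, so
\[
|\langle M\hat u,C_d(M)\rangle|\ \le\ 2\epsilon\,\|M\hat u\|_2\,\|C_d(M)\|_2.
\]
The $2\times 2$ Gram matrix $G$ of $(M\hat u,C_d(M))$ is therefore diagonally dominant, so $\lambda_{\min}(G)\ge \tfrac12\|C_d(M)\|_2^2$, and $\sigma_2(M)\ge\sigma_2(M|_V)=\sqrt{\lambda_{\min}(G)}\ge c\,|C_d(M)|$. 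Since $|C_{\min}(M)|$ is comparable to $V_k\sim |C_d(M)|$ at the end of freedom on RHS by Proposition~\ref{prop:sizes}, this proves (4).

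The main obstacle is the angle bookkeeping required for (4): one must check that after the freedom-on-RHS operations (which swap and combine $C_{d-1},C_d$ and then add $C_d$ to each of the first $d-2$ columns) the first $d-2$ columns are still parallel enough that $\|M\hat u\|_2$ is genuinely proportional to $|C_{\max}(M)|$, while $C_d(M)$ stays close enough to $\operatorname{span}(e_{d-1},e_d)$ that the off-diagonal entry of $G$ is strictly dominated by the diagonal. Both facts follow from Theorem~\ref{thm:nue} and the size ratio $V_k/u_k$, but extracting an absolute constant $c>0$ independent of $k$ requires the same careful tracking of columns and angles already developed in the proof of Proposition~\ref{prop:closeangle}.
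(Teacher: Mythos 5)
Your proof is correct, and for parts (2) and (4) it takes a genuinely different route from the paper. For (1) and (3) both you and the paper treat them as elementary, so there is nothing to compare. For (2), the paper instead locates the largest singular input direction of $M^T$ (by an argument ``analogous to Proposition~\ref{prop:closeangle}'') and then reads off the second singular value as the operator norm of $M^T$ on the orthocomplement; you approximate $M$ by an explicit rank-one matrix and invoke Weyl's singular value inequality. The two are of comparable length, but yours is more self-contained and avoids re-running the delicate angle comparison of Proposition~\ref{prop:closeangle}. For (4), the paper again locates the top singular input direction and then asserts a bound on the operator norm of $M^T$ on its orthocomplement; as written that passage reads as an \emph{upper} bound on the quantity it needs a \emph{lower} bound for, so it is at best sloppily phrased. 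Your Courant--Fischer argument with the explicit test $2$-plane $\operatorname{span}(\hat u,e_d)$ sidesteps this entirely and produces a clean lower bound; it is the cleaner of the two arguments.

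One small slip worth flagging: in the rank-one decomposition for (2) you set $u=C_1(M)/|C_1(M)|_2$ but then write the approximating matrix as $u\bigl(|C_1(M)|,\ldots,|C_{d-2}(M)|,0,0\bigr)$, where in the paper's conventions $|C_i(M)|$ is the $\ell^1$ column norm. With that reading, the $i$-th column of $E$ is $C_i(M)-|C_i(M)|_1 u$, whose norm is of order $(1-1/\sqrt d)|C_i(M)|$ even if $C_i(M)$ is exactly parallel to $u$, which is not small. You clearly intend $\ell^2$ norms throughout, i.e.\ $M\approx u\bigl(\|C_1(M)\|_2,\ldots,\|C_{d-2}(M)\|_2,0,0\bigr)+E$, and with that fix all the estimates go through as you say. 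The remaining $\sqrt d$-type constants are absorbed for $k_0$ large, matching the flexibility already present in the statement.
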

\begin{proof}The claims (1) and (3)  are  trivial. We now prove (2). Inequality \eqref{eq:ang LHS small 1} of Lemma \ref{lem:angle to sing},  implies that for a constant $c>0$,  $\Theta(C_i(M),C_{i'}(M))<10^{-c(2k+k_0)^6}$ for all $i,i'\leq d-2$. Also by the bounds on $V_k,\, u_k$ in  Proposition \ref{prop:sizes},  we see  we have that $$\max\{\frac{|C_j(M)|}{|C_i(M)|}:i\leq d-2, \, j\geq d-1\}\leq \frac{V_{k-1}}{u_k}\leq10^{-\frac{1}2(k+k_0)^6},$$ for $k_0$ big enough. So analogously to  the first conclusion of Proposition~\ref{prop:closeangle}  for some $c'$, we have that the angle the  largest singular input vector makes with $C_1(M)$ is less than $10^{-c'(k+k_0)^6}$. It follows that $M^T$ restricted to the ortho-complement of the direction of largest singular vector is less than $10^{-(k+k_0)^5}\|M\|$ (if $k_0$ is large enough). The claim of the second largest singular value follows. 

The last conclusion (4) follows analogously to above and we sketch it. Analogously to Proposition~\ref{prop:closeangle} we have that the angle the top singular input vector  makes with $|C_{1}(M)|$ is smaller than $10^{-(\frac 32 -\epsilon)(k+k_0)^4}$ (if $k_0$ is large enough). From this it follows that the operator norm of $M^T$ on the orthocomplement of the largest input singular vector is at most $\max\{10^{-(\frac 3 2-\epsilon)(k+k_0)^4}|C_i(M)|,|C_j(M)|:i\leq d-2,\, j\geq d-1\}\geq |C_{\min}(M)|$.  
\end{proof}
\color{black}

\subsection{Small singular input and output directions of $M$ and choice of planes}

For this section let $\Delta'=(\{x\in \Delta:x_{d-1}=x_d=0\},\pi_L)$ and $\tilde{R}:\Delta' \to \Delta'$ be the first return of Rauzy induction to this set. Let $\tilde{A}(x,1)$ be the corresponding incidence matrix. Let $\tilde{A}(x,n+1)=\tilde{A}(x,n)\tilde{A}(\tilde{R}^nx,1).$
This is a transpose cocycle.

This section is devoted to the proof of the following Proposition. It says that we can arrange things so that a certain face always has volume at least comparable  to the volumes of other faces.  Its proof will be at the end of the section after several preliminaries.
\begin{prop} \label{prop:big face} There exists $c>0$, $N$ so that for any $M$ there exists $x\in \Delta'$, and   $n$  with
$$\lambda_{d-2}\big(F_1(MA(x,n))\big)>c\max_{i}\lambda_{d-2}\big(F_i(MA(x,n))\big)$$ for all $i\leq d-2$ where $\|A(x,n)\|<N$. 
\end{prop}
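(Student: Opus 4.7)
The plan is to reduce the face--volume inequality to a comparison of column norms of $MA$, and then to construct an explicit short Rauzy loop that makes $|C_1(MA)|$ small relative to the other $|C_i(MA)|$, $i\le d-2$.

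First, by the same Jacobian computation that proves Lemma~\ref{lem:volume face} (applied to each codimension--one face rather than to $V(\cdot)$), one has
$$\lambda_{d-2}(F_i(N))\;=\;c_d'\prod_{j\neq i}|C_j(N)|^{-1}$$
with a constant $c_d'$ depending only on $d$. Consequently
$$\frac{\lambda_{d-2}(F_1(MA))}{\lambda_{d-2}(F_i(MA))}\;=\;\frac{|C_i(MA)|}{|C_1(MA)|},$$
and the Proposition reduces to: for every $M$ there exist $x\in\Delta'$ and $n$ with $\|A(x,n)\|\le N$ and $|C_i(MA(x,n))|\ge c\,|C_1(MA(x,n))|$ for every $i\le d-2$, where $c,N$ depend only on $d$.

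Next I record the elementary fact that an elementary Rauzy step in which $i$ beats $j$ changes the column norms by $|C_j|\mapsto|C_j|+|C_i|$ and leaves every other column norm unchanged. Thus along a path on which column $1$ never loses, $|C_1(MA)|$ stays constant, equal to $|C_1(M)|$, while any column $j$ that has lost to column $1$ at some point satisfies $|C_j(MA)|\ge|C_1(M)|=|C_1(MA)|$.

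The core step is to exhibit one concrete closed loop $\gamma$ based at $\pi_L$, staying in the subgraph of $\mathcal R_d$ where $d-1$ and $d$ never win, along which column $1$ never loses and beats each of $2,3,\ldots,d-2$ at least once. Using the description in Section~\ref{sec:paths}, the freedom word in which $1$ successively beats $d-2,d-3,\ldots,2$ takes $\pi_L$ to $\pi_s$ in $d-3$ steps; appending the two moves ``$1$ beats $d$'' followed by ``$1$ beats $d-1$'' (which, as noted in Section~\ref{sec:paths}, carry $\pi_s$ back to $\pi_L$) gives a closed loop of total length $d-1$. The associated cocycle matrix $A$ satisfies $\|A\|=2$, so we take $N=2$; the subsimplex $A\Delta'$ is nonempty, so any $x\in A\Delta'$ realizes the loop, and the monotonicity above yields $|C_i(MA)|\ge|C_1(MA)|$ for every $i\in\{2,\ldots,d-2\}$, giving $c=1$.

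The main obstacle is the combinatorial verification that the concatenation of the freedom word with the two closing moves is genuinely a loop at $\pi_L$ realized by a nonempty set of $x\in\Delta'$. The permutation identities underlying this are those spelled out in Section~\ref{sec:paths}, and nonemptiness of the corresponding subsimplex is automatic from the construction of Rauzy induction. All other steps are routine bookkeeping with column sizes.
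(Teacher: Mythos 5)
Your proof plan has a gap at the very first step, and it is not a minor one: the claimed formula
$$\lambda_{d-2}\bigl(F_i(N)\bigr)=c_d'\prod_{j\neq i}|C_j(N)|^{-1},\qquad c_d'\ \text{depending only on}\ d,$$
is false. Veech's product formula $\lambda_{d-1}(M\Delta)=c_d\prod_j|C_j(M)|^{-1}$ is special to the codimension-zero case and does not pass to faces; the paper's own Lemma~\ref{lem:face jacobian} says explicitly that the constant $c_M$ in the face Jacobian \emph{depends on $M$ and the indices}. Reading off that dependence from the proof of Lemma~\ref{lem:face jacobian} and integrating gives
$$\lambda_{d-2}\bigl(F_i(M)\bigr)=\frac{c''_{d-2}}{h_i}\prod_{j=1}^d|C_j(M)|^{-1},\qquad h_i:=d\Bigl(\tfrac{C_i(M)}{|C_i(M)|},\ \spanD\bigl(\{C_j(M)\}_{j\neq i}\bigr)\Bigr),$$
so that $\lambda_{d-2}(F_1)/\lambda_{d-2}(F_i)=h_i/h_1$. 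This is the standard ``volume $=$ base $\times$ height'' identity for the simplex $M\Delta$; the ratio of face volumes is a ratio of heights, not a ratio of column norms. In this paper's regime, Condition ** forces $C_1(M),\ldots,C_{d-2}(M)$ to be nearly parallel, so the heights $h_i$ are exponentially small and are governed by the singular data of $M$, which your column-norm bookkeeping simply cannot see. A thin sliver can have column norms of any sizes you like and still have one face of essentially zero $(d-2)$-volume. The entire difficulty of the proposition lives in this geometric degeneracy, not in the column norms; that is why the paper's proof works with $w(M)$ (the smallest singular input direction) and $w'(M)$ rather than with $|C_i(M)|$.

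Even if the reduction were valid, the single explicit loop you build cannot serve for all $M$. Your $A$ has $A^{-1}e_1=e_1$ and $A^{-1}e_j=e_j-e_1$ for $j\ge 2$, so the hyperplane $\{\sum_{j\ge2}a_je_j:\sum_j a_j=0\}$ inside $e_2\oplus\cdots\oplus e_d$ is fixed pointwise by $A^{-1}$. If $w(M)$ lies in or near that subspace --- which is exactly the dangerous case, since then $F_1$ is the flattened face --- then by Lemma~\ref{lem:smallest move} the smallest singular direction of $MA$ is still essentially $w(M)$, and applying your loop gains nothing. And even when $A^{-1}$ does move $w(M)$ off $e_2\oplus\cdots\oplus e_d$, the amount can be arbitrarily small, so a single fixed loop gives no uniform constant $c$. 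This is precisely why the paper invokes Avila--Viana simplicity via Corollary~\ref{cor:off noninvar}: it supplies a \emph{finite collection} of loops such that for any direction not lying in an invariant subspace, at least one loop rotates it off the bad hyperplane by a definite amount; and the invariant-subspace case (your $e_{d-1}-e_d$, corresponding to Case 2 of the paper's proof) must then be argued separately by passing to the second-smallest singular direction. Your plan replaces all of this structure with a column-norm count that the problem does not actually reduce to.
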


\begin{lem}\label{lem:smallest move} If $M$ is a matrix of Rauzy induction during freedom on LHS,  $w$ is the smallest singular input direction of $M$, $\omega$ the corresponding singular value and $A$ is a matrix of LHS with $\|A\|<10^{(k+k_0)^3}$ then, if $k_0$ is large enough, 
the smallest singular direction of $MA$ makes angle at most $10^{-\frac 1 {9}(k+k_0)^4}$ with $A^{-1}w$.
\end{lem}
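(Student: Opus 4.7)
\medskip
\noindent\textbf{Proof plan.} Set $v^{*}=A^{-1}w/|A^{-1}w|$. Then $MAv^{*}=Mw/|A^{-1}w|$ has norm $\omega/|A^{-1}w|$, so $v^{*}$ is a test direction and the smallest singular value $\tilde{\sigma}_d$ of $MA$ satisfies $\tilde{\sigma}_d\le \omega/|A^{-1}w|$. Let $\tilde{v}_1,\ldots,\tilde{v}_d$ denote the right singular directions of $MA$ with decreasing singular values $\tilde{\sigma}_1\ge \cdots\ge \tilde{\sigma}_d$, and write $v^{*}=\sum_{i}c_i\tilde{v}_i$. If $\phi$ is the angle between $v^{*}$ and $\tilde{v}_d$ then $\cos\phi=c_d$, so $\sin^2\phi=\sum_{i<d}c_i^2$, and since $\tilde{\sigma}_i\ge \tilde{\sigma}_{d-1}$ for $i<d$,
\begin{equation*}
\frac{\omega^2}{|A^{-1}w|^2}=|MAv^{*}|^2=\sum_{i}c_i^2\tilde{\sigma}_i^2\ge \tilde{\sigma}_{d-1}^2\sum_{i<d}c_i^2=\tilde{\sigma}_{d-1}^2\sin^2\phi.
\end{equation*}
This yields $\sin\phi\le \omega/(|A^{-1}w|\,\tilde{\sigma}_{d-1})$, reducing the problem to controlling the spectral gap of $MA$ and the length $|A^{-1}w|$.

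For the lower bound on $\tilde{\sigma}_{d-1}$ I use the standard singular value inequality $\sigma_{d-1}(MA)\ge \sigma_{d-1}(M)\sigma_{\min}(A)=\sigma_{d-1}(M)/\|A^{-1}\|$. Since $A$ is a product of Rauzy induction matrices, $A$ is an integer matrix with $\det A=\pm 1$, so by the cofactor formula $\|A^{-1}\|\le C_d\|A\|^{d-1}$. Combined with the trivial bound $|A^{-1}w|\ge 1/\|A\|$ this gives $\|A^{-1}\|/|A^{-1}w|\le C_d\|A\|^{d}\le C_d\,10^{d(k+k_0)^3}$. Hence
\begin{equation*}
\sin\phi\le C_d\,\frac{\sigma_d(M)}{\sigma_{d-1}(M)}\,10^{d(k+k_0)^3}.
\end{equation*}

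The heart of the argument is the bound $\sigma_d(M)/\sigma_{d-1}(M)\le 10^{-(1-\epsilon)(k+k_0)^4}$. By the symplectic pairing of Lemma~\ref{lem:singular relations}(iii), the singular values of $M$ on the invariant subspace of $\Omega_\pi$ come in reciprocal pairs, so $\sigma_d=1/\sigma_1$ and $\sigma_{d-1}=1/\sigma_2$, giving $\sigma_d/\sigma_{d-1}=\sigma_2/\sigma_1$. Proposition~\ref{prop:closeangle} and Lemma~\ref{lem:second freedom} (together with the length and angle estimates in Proposition~\ref{prop:sizes} and Lemma~\ref{lem:angle to sing}) yield $\sigma_1(M)\ge c|C_d(M)|$ and $\sigma_2(M)\le C|C_{\min}(M)|$; although these were stated at the end of freedom on LHS, the same proofs apply at any intermediate matrix since only the first $d-2$ columns are altered during freedom and the relevant bounds on columns remain in force. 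The ratio $|C_{\min}(M)|/|C_d(M)|$ is then bounded by $10^{-(1-\epsilon)(k+k_0)^4}$ via Proposition~\ref{prop:sizes} once $k_0$ is large.

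Combining everything produces $\sin\phi\le 10^{-(1-\epsilon)(k+k_0)^4+d(k+k_0)^3+O(1)}$. Since the quartic dominates the cubic for large $k_0$, this is much smaller than $10^{-\frac{1}{9}(k+k_0)^4}$, and $\phi\le 2\sin\phi$ absorbs the harmless factor of $2$. The main obstacle is step three: justifying the spectral gap estimate $\sigma_d(M)/\sigma_{d-1}(M)\le 10^{-(1-\epsilon)(k+k_0)^4}$ for an intermediate $M$ during freedom on LHS rather than only at its end; this is handled by observing that the angle bound in Lemma~\ref{lem:angle to sing} and the column-length bound in Proposition~\ref{prop:sizes} persist throughout the freedom phase because freedom on LHS never adds columns $d-1$ or $d$ to the first $d-2$ columns or vice versa.
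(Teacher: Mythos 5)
Your proposal is correct, and it takes a genuinely different (and somewhat cleaner) route than the paper. The paper argues twice by contradiction: it first shows that if the smallest input singular direction $u$ of $MA$ had $\Theta(w,Au)$ not small, then $|MAu|$ would exceed $|MA\cdot A^{-1}w/|A^{-1}w||$, and then a second contradiction (controlling angle distortion under $A$) converts the bound on $\Theta(w,Au)$ into the desired bound on $\Theta(u,A^{-1}w)$. You instead expand $v^{*}=A^{-1}w/|A^{-1}w|$ in the right singular basis of $MA$ and read the angle off directly from the norm inequality, which avoids the intermediate quantity $\Theta(w,Au)$ entirely. The other technical difference is in bounding the distortion introduced by $A$: the paper uses the symplectic property to get $\sigma_{\min}(A)\ge \|A\|^{-1}$ (so the condition number is at most $\|A\|^2$), whereas you use the general cofactor bound $\|A^{-1}\|\le C_d\|A\|^{d-1}$; both are sufficient because the spectral gap of $M$ is roughly $10^{(1-\epsilon)(k+k_0)^4}$ while $\|A\|<10^{(k+k_0)^3}$, so any fixed power of $\|A\|$ is negligible. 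One caveat you share with the paper: the identification $\sigma_{d-1}(M)=1/\sigma_2(M)$ via Lemma~\ref{lem:singular relations} needs the observation that when $\ker\Omega_{\pi_L}$ is $1$-dimensional the kernel singular value equals $1$ (since $\det M=\pm 1$ and the image singular values pair to product $1$), hence sits strictly between $1/\sigma_1$ and $1/\sigma_2$ for $k_0$ large; this is implicit in both arguments but could be spelled out.
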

\begin{proof} For any vector $v$ express it as   $v=v'+cw$ where $v'\perp w$. Because $w$ is an input singular direction, $(Mw)^{\perp}=M(w^{\perp})$ and so $|Mv|_2^2=|Mu'|^2+c^2|Mw|_2^2$. Now, if the angle between $v$ and $w$ is at least $10^{-\frac 1 8(k+k_0)^4}$ then \begin{equation}
\label{eq:anglebound}|Mv|>10^{\frac1 3(k+k_0)^4}\omega |v|.\end{equation} This follows from  Lemma \ref{lem:second freedom} and the fact that the smallest singular value of $M$ has size proportional to $\frac 1 {|C_{\max}(M)|}$ and so we have that for any singular value $\sigma\neq\omega$ of $M$, we  have $$\sigma\geq 10^{\frac 1 2 (k+k_0)^4}\omega.$$

Let $u$ be a smallest unit input singular vector of $MA$, and so  $$|MAu|\leq |MA\frac{A^{-1}w}{|A^{-1}w|}|.$$
We claim then  
\begin{equation}\label{eq:smallest move key}
\Theta(w,Au)<10^{-\frac 1 8(k+k_0)^4}.
\end{equation} To prove \eqref{eq:smallest move key} notice first that  our matrix is symplectic so $|Au|\geq \|A\|^{-1}$. Now if \eqref{eq:smallest move key} is false, then by  by the bound on $\|A\|$ 
and (\ref{eq:anglebound})  $$|MAu|>\|A\|^{-1}10^{\frac 1 3 (k+k_0)^4}\omega>\|A\|\omega\geq \left|MA\frac{A^{-1}w}{|A^{-1}w|}\right|,$$ a contradiction proving \eqref{eq:smallest move key}. 
We finish the proof of the Lemma. If $\Theta(u,A^{-1}w)>10^{-\frac 1 9(k+k_0)^4}$, then
 $\Theta(Au,w)>\|A\|^{-2}10^{-\frac 1 9(k+k_0)^4}>10^{-\frac 1 8(k+k_0)^4}$  (for all $k_0$ large enough), a contradiction to  \eqref{eq:smallest move key}  and so we have the lemma.
\end{proof}

\begin{defin} Given a matrix valued cocycle or transpose cocycle $A(x,n)$ we say a subspace $W$ is \emph{left invariant} if $WA=W$ for all $A$ that can occur as matrices of the cocycle. Similarly it is \emph{right invariant} if $AW=W$ for all such $A$. 
\end{defin}

A key tool in the proof of Proposition \ref{prop:big face} is the following theorem which shows that the Rauzy cocycle has few invariant subspaces. 

\begin{thm}(Avila-Viana \cite[Corollary 5.2]{AV}) \label{thm:AV} For any permutation $\pi \in \mathcal{R}_{d-2}$, $v\mathbb{R}$ a one dimensional subspace of $Im(\Omega_\pi)$ 
 and $W$ a codimension 1 subspace of $Im(\Omega_\pi)\color{black}$ we have that there exists a matrix $M$ corresponding to a path  from $\pi$ to $\pi$ so that $Mv\notin W$. 
\end{thm}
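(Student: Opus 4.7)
The plan is to reduce the statement to an irreducibility claim for the Rauzy cocycle and then sketch how that claim is established. Suppose for contradiction that every loop $M$ from $\pi$ to $\pi$ satisfies $Mv\in W$. Let $V=\operatorname{span}\{Mv: M \text{ a loop at } \pi\}$. Then $V$ contains $v\neq 0$, lies inside the codimension-one subspace $W$, and is invariant under every loop matrix, since the concatenation of two loops is again a loop. So it suffices to show that the only subspaces of $Im(\Omega_\pi)$ invariant under the semigroup of loop matrices at $\pi$ are $\{0\}$ and $Im(\Omega_\pi)$ itself.

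I would first exploit the symplectic structure. By \eqref{eq:Viana}, each loop matrix preserves $\Omega_\pi$, so if $V\subset Im(\Omega_\pi)$ is invariant, then so is its symplectic orthogonal $V^{\perp_\Omega}$ computed inside $Im(\Omega_\pi)$. It therefore suffices to rule out invariant subspaces of dimension at most $\tfrac{1}{2}\dim Im(\Omega_\pi)$; in particular, it suffices to rule out proper isotropic invariant subspaces, cutting the search space in half.

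The heart of the argument is combinatorial. Loop matrices are products of elementary Rauzy matrices $E_{ij}=I+e_j e_i^T$ built from the allowed winner/loser pairs in the Rauzy diagram at $\pi$. Following \cite{AV}, I would set up an induction on $d$: identify a symbol whose behavior can be frozen by restricting attention to a subdiagram, reduce to a Rauzy cocycle on a smaller Rauzy class, and apply the inductive hypothesis there. The new loops available at level $d$ — those involving the stripped symbol — are then combined with loops coming from the subdiagram, and one uses explicit manipulations of paths in the Rauzy diagram together with the symplectic pairing to show that no candidate invariant subspace can survive the combined action. The base case is handled by direct inspection for the smallest Rauzy classes.

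The main obstacle is precisely this combinatorial inductive step: producing and analyzing enough explicit loops in the Rauzy diagram to preclude every proper nontrivial invariant subspace of $Im(\Omega_\pi)$. This is the technical core of the argument in \cite{AV}; once it is in place, the reduction in the first paragraph immediately yields the statement of the theorem.
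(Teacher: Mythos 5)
The paper does not supply its own proof here: the theorem is quoted directly from Avila--Viana \cite{AV} and followed only by the remark that it is the twisting property of the Rauzy monoid. Your reduction in the first paragraph --- observing that the line--vs--hyperplane twisting statement is \emph{equivalent} to irreducibility of the monoid of loop matrices on $Im(\Omega_\pi)$, via the invariant span $V=\operatorname{span}\{Mv\}$ --- is correct and a useful way to make the citation transparent. Since both you and the paper ultimately defer the combinatorial core (the induction over Rauzy classes) to \cite{AV}, the proposal is essentially the same route as the paper, just with the reduction step spelled out rather than absorbed into the word ``twisting.''

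One small error you should fix: the deduction ``in particular, it suffices to rule out proper isotropic invariant subspaces'' does not follow from ``it suffices to rule out invariant subspaces of dimension at most $\tfrac12\dim Im(\Omega_\pi)$.'' Passing from an invariant $V$ to its symplectic orthogonal $V^{\perp_\Omega}$ lets you assume $\dim V\le\tfrac12\dim Im(\Omega_\pi)$, but such a $V$ need not be isotropic (e.g.\ a symplectic invariant subspace has $V\cap V^{\perp_\Omega}=\{0\}$, so you cannot manufacture a nontrivial isotropic invariant subspace from it). The dimension bound is the correct reduction; drop the isotropic claim or justify it by a separate argument. This is not load-bearing in your sketch, since you immediately invoke the full Avila--Viana combinatorics anyway, but as written the sentence is false.
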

This directly follows from Avila-Viana's Theorem that the Rauzy monoid twists subspaces of the preserved subspace of the symplectic form (which is part of the statement that its action on this subspace is \emph{simple}).

We wish to apply this result to matrices of freedom on the LHS in $\mathcal{R}_d$ and subspaces of $e_1\oplus\dots \oplus e_{d-2}$. To do this, note that these matrices have corresponding matrices of $\mathcal{R}_{d-2}$ and the action on the symbols 
1,\ldots, $d-2$ are the same for both of them.
 Let $\pi_0$ be the symmetric permutation for $\mathcal{R}_{d-2}$.
 Let $K_{d-2}$ be the subspace of $e_1\oplus...\oplus e_{d-2} \subset \mathbb{R}^d$ that corresponds to the kernel of 
 $\Omega_{\pi_0}$ in  $\mathcal{R}_{d-2}$.  (This is $\{0\}$ if $d-2$ is even and $(1,-1,...,-1,1,0,0)$ if $d-2$ is odd.)   Similarly let $I_d$ be the image of the symplectic form preserved by Rauzy induction from $\pi_L$ to $\pi_L$  and 
 $I_{d-2}$ be the subspace of $e_1\oplus...\oplus e_{d-2} \subset \mathbb{R}^d$ that corresponds to  the image of $\Omega_{\pi_0}$ in $\mathcal{R}_{d-2}$.

\begin{lem} $W$ is right invariant for the  cocycle $A(x,n)^{-1}$ if and only if  $W$ is right invariant for the transpose cocycle $A(x,n)$. 
\end{lem}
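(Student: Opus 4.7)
The plan is to reduce the statement to the elementary fact that for any invertible matrix $A$ and any subspace $W$, one has $AW = W$ if and only if $A^{-1}W = W$. To set this up, I would first observe that every value $A(x,n)$ of the transpose cocycle is a finite product of the one-step first-return Rauzy induction matrices $\tilde A(\tilde R^j x, 1)$. Each of these elementary matrices is obtained from the identity by adding one column to another, hence has determinant $\pm 1$ and is invertible over $\mathbb{Z}$. Consequently every $A(x,n)$ is invertible, and the set of matrices appearing in the inverse cocycle is exactly $\{A(x,n)^{-1}\}$, the matrix-wise inverses of the values of $A(x,n)$.

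With invertibility in hand the lemma is formal. Right invariance of $W$ under the cocycle $A(x,n)$ means $A(x,n)W = W$ for every $(x,n)$; right invariance under $A(x,n)^{-1}$ means $A(x,n)^{-1}W = W$ for every $(x,n)$. Given $A$ invertible, if $AW = W$ then applying $A^{-1}$ to both sides and using that $A^{-1}$ is a bijection of $AW = W$ onto $A^{-1}W$ yields $A^{-1}W = W$; the reverse implication is symmetric, starting from $A^{-1}W = W$ and multiplying by $A$. Applying this equivalence to each matrix in the cocycle gives the lemma in both directions. There is no genuine obstacle here — the only step to actually carry out is the quick verification that elementary Rauzy induction matrices have determinant $\pm 1$, after which the statement is a one-line consequence of $AW = W \Leftrightarrow A^{-1}W = W$.
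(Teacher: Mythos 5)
Your proof is correct and takes essentially the same approach as the paper: both reduce the claim to the observation that for a finite-dimensional subspace $W$ and an invertible matrix $A$, one has $AW=W$ if and only if $A^{-1}W=W$, applied to each matrix in the cocycle. The only difference is that you spell out the invertibility of the Rauzy matrices explicitly, which the paper leaves implicit.
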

\begin{proof}W is right invariant for $A(\cdot,\cdot)^{-1}$ if and only if  $A(x,n)^{-1}W=W$ for all $x,n$. This is if and only if  $A(x,n)W=W$ for all $x,n$, so $ W$ is right invariant for $A(\cdot,\cdot)$. 
\end{proof}

\begin{lem}\label{lem:few invar}The only nontrivial right $\tilde{A}(\cdot,\cdot)$-invariant subspace contained in $e_2\oplus....\oplus e_d$ is $(e_{d-1}-e_d)\mathbb{R}$. 
\end{lem}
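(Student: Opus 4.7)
The plan is to exploit the block-triangular structure of the first-return cocycle and reduce to the Avila--Viana theorem for $\mathcal{R}_{d-2}$. Since $\tilde{R}$ is the first return to $\Delta'$, on which $x_{d-1}=x_d=0$, the letters $d-1$ and $d$ can never win (they have zero length) and never compete with each other. Hence each matrix of $\tilde{A}(\cdot,\cdot)$ has the form $\begin{pmatrix} R & V \\ 0 & I_2 \end{pmatrix}$, where $R$ is an $(d-2)\times(d-2)$ matrix of the $\mathcal{R}_{d-2}$ first-return Rauzy cocycle at the symmetric permutation $\pi_0$ (obtained by deleting the degenerate letters from any $\pi_L\to\pi_L$ path) and $V$ is a nonnegative $(d-2)\times 2$ matrix. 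A direct combinatorial check shows that during freedom on LHS the letters $d-1,d$ get pushed rightward in the top row but never reach either row-end before $\pi_s$ is attained, so columns $d-1,d$ accumulate mass only during the two transition steps $\pi_s\to\pi_L$, each receiving exactly one $+e_1$. Hence the two columns of $V$ always agree, which immediately gives $\tilde{A}(e_{d-1}-e_d) = e_{d-1}-e_d$ and so establishes that $(e_{d-1}-e_d)\mathbb{R}$ is invariant.

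Now let $W\subseteq e_2\oplus\dots\oplus e_d$ be a nonzero invariant subspace and set $W_0 = W\cap(e_2\oplus\dots\oplus e_{d-2})$; then $W_0$ is invariant under the $R$-block. By Theorem~\ref{thm:AV} applied to the $\mathcal{R}_{d-2}$ first-return cocycle at $\pi_0$, any $R$-invariant subspace of $\mathrm{Im}(\Omega_{\pi_0})$ is either $\{0\}$ or all of $\mathrm{Im}(\Omega_{\pi_0})$. Combined with the $R$-invariance of the kernel $K_{d-2}$ of $\Omega_{\pi_0}$ and the decomposition $\mathbb{R}^{d-2} = \mathrm{Im}(\Omega_{\pi_0})\oplus K_{d-2}$, any $R$-invariant subspace of $\mathbb{R}^{d-2}$ must either contain $\mathrm{Im}(\Omega_{\pi_0})$ or be contained in $K_{d-2}$. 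A dimension count (noting that $\mathrm{Im}(\Omega_{\pi_0})$ is not contained in the codimension-one subspace $e_2\oplus\dots\oplus e_{d-2}$) rules out the first case, and the explicit form of $K_{d-2}$ (either $\{0\}$ when $d-2$ is even, or spanned by the alternating vector $(1,-1,1,-1,\ldots)$ with nonzero first coordinate when $d-2$ is odd) yields $K_{d-2}\cap(e_2\oplus\dots\oplus e_{d-2}) = \{0\}$. Hence $W_0 = \{0\}$, so the projection $\pi\colon W \to e_{d-1}\oplus e_d$ is injective and $W$ is the graph of a linear map $\phi\colon \pi(W) \to e_2\oplus\dots\oplus e_{d-2}$.

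Applying a restriction first-return matrix (for which $V = 0$) to $(\phi(v),v)\in W$ gives $(R\phi(v),v)\in W$, and by injectivity $R\phi(v) = \phi(v)$ for every restriction $R$-block; the same Avila--Viana dichotomy applied inside the restriction sub-semigroup shows that its common fixed subspace also lies inside $K_{d-2}$, so that $\phi\equiv 0$ and $W\subseteq e_{d-1}\oplus e_d$. Finally, for $w = ae_{d-1}+be_d\in W$ and any freedom first-return matrix, $\tilde{A} w = w + (a+b)e_1$, which lies in $W\subseteq e_2\oplus\dots\oplus e_d$ only if $a+b=0$; combining with nontriviality gives $W = (e_{d-1}-e_d)\mathbb{R}$. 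The most delicate step is the argument for the restriction sub-semigroup in the third paragraph: Theorem~\ref{thm:AV} is phrased for the full $\pi_0\to\pi_0$ first-return cocycle rather than a sub-semigroup, so one must argue (using the $\mathcal{R}_{d-3}$ structure on $\{2,\ldots,d-2\}$ into which restriction projects, augmented by the column operations on $e_1$ produced by the ``$1$-loses'' opening and closing steps) that the restriction generators already force the same dichotomy on $\mathrm{Im}(\Omega_{\pi_0})$ and hence have common fixed subspace inside $K_{d-2}$.
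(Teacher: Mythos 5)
Your proof does take a genuinely different route from the paper's. The paper changes to the basis $\mathcal{B}_1\cup\mathcal{B}_2\cup\{e_{d-1}-e_d\}\cup\{e_{d-1}+e_d\}$, proves the three structural bullets about the cocycle in that basis, and reads off the invariant subspaces from the block form. You instead split $\mathbb{R}^d$ as $(e_2\oplus\dots\oplus e_{d-2})\oplus(e_{d-1}\oplus e_d)$, show $W_0:=W\cap(e_2\oplus\dots\oplus e_{d-2})=\{0\}$ (this part of your argument is fine, and, suitably rephrased, is the same mechanism as the paper's ``the first and second bullets imply\dots''), and then represent $W$ as a graph. The final $a+b=0$ step is also fine, modulo a harmless imprecision: the transition steps add $C_1(A_f)$, not the literal $e_1$, to columns $d-1,d$; but since the same vector is added to both columns, your conclusion $\tilde A(e_{d-1}-e_d)=e_{d-1}-e_d$ is still correct.

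The genuine gap is exactly the one you flag in the third paragraph, and the fix you sketch does not work. You want to conclude that the common fixed vectors of the restriction $R$-blocks lie in $K_{d-2}$ by applying ``the same Avila--Viana dichotomy inside the restriction sub-semigroup.'' But Theorem~\ref{thm:AV} is a statement about the \emph{full} $\pi_0\to\pi_0$ monoid of $\mathcal{R}_{d-2}$, and it emphatically fails for the restriction sub-monoid: during restriction the letter $1$ only ever loses, so every restriction matrix has first row $(1,0,\dots,0)$ and therefore preserves the proper subspace $e_2\oplus\dots\oplus e_{d-2}\subset\mathbb{R}^{d-2}$, which meets $\mathrm{Im}(\Omega_{\pi_0})$ nontrivially. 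So the restriction generators cannot act minimally on $\mathrm{Im}(\Omega_{\pi_0})$, and the dichotomy you invoke is simply false for that sub-semigroup.

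The statement one can actually apply is the $\mathcal{R}_{d-3}$ version of Avila--Viana, since (as the paper notes) the restriction graph is a copy of $\mathcal{R}_{d-3}$ on the symbols $2,\dots,d-2$. That gives you that the common fixed subspace of the restriction $R$-blocks \emph{inside $e_2\oplus\dots\oplus e_{d-2}$} is contained in $K_{d-3}$, the kernel of the $\mathcal{R}_{d-3}$ symplectic form (not $K_{d-2}$). Crucially $K_{d-3}$ need not be trivial in $e_2\oplus\dots\oplus e_{d-2}$: for $d$ even it is a genuine line (the alternating vector supported on coordinates $2,\dots,d-2$). So after this correction you are still left with a one-dimensional candidate $\phi(v)\in K_{d-3}$ that the restriction sub-semigroup does not rule out, and you would need to bring the freedom/transition matrices back in (which no longer preserve $e_2\oplus\dots\oplus e_{d-2}$ and contribute the $+C_1$ term to columns $d-1,d$) to kill it. As written, this case is unaddressed, so the conclusion $\phi\equiv 0$ does not follow.
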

\begin{proof} First, we have that  $I_{d-2}$, $K_{d-2}$ and $(e_{d-1}-e_d)\mathbb{R}$ are invariant subspaces. Let $\mathcal{B}_1$ be a basis for $I_{d-2}$, $\mathcal{B}_2$ be a basis for $K_{d-2}$ and write $A(x,n)$ in the basis $\mathcal{B}_1\cup \mathcal{B}_2\cup \{e_{d-1}-e_d\}\cup \{e_{d-1}+e_d\}$. It suffices to show that all of these matrices have the following form:
\begin{itemize}
\item The columns corresponding to $\mathcal{B}_1$ have  that the entries not corresponding to elements of $\mathcal{B}_1$ are zero.. Moreover, there are no proper right invariant subspaces for the cocycle in this block.
\item The cocycle is diagonal on $(e_{d-1}-e_d)$ and  $K_{d-2}$. 
\item $A(x,n)(e_{d-1}+e_d)=v_n+e_{d-1}+e_d$ where $v_n\in e_1\oplus....\oplus e_{d-2}$ and can have arbitrarily large norm. 
\end{itemize}
To see why the bullets suffice,  the first and second bullets  imply that any invariant subspace non-trivially intersecting $e_1\oplus...\oplus e_{d-2}$ 
 is either $K_{d-2}$, $I_{d-2}$ or $e_1\oplus...\oplus e_{d-2}$. The second and third bullets  say  that $(e_{d-1}-e_d)\mathbb{R}$ is the only non-trivial subspace that only trivially intersects $e_1\oplus...\oplus e_{d-2}$. 
The intersection of an invariant subspace and $e_1\oplus...\oplus e_{d-2}$ is an invariant subspace contained in $e_1\oplus....\oplus e_{d-2}$. If it is non-trivial, it falls in the previous list of such subspaces, none which are contained in $e_2\oplus...\oplus e_{d}$.

The first bullet follows by Avila-Viana's Theorem \ref{thm:AV}. The second is by how our cocycle act on $K_{d-2}$ and the fact that the last two colunms of $A(x,n)$ in the standard basis are always the same. 
The final bullet is because (in the standard basis), $C_j(\tilde{A}(x,1))=C_1(\tilde{A}(x,1))+e_j$ for $j \geq d-1$.
 
\end{proof}

\begin{lem}If there exists a subspace $W$ and a vector $v$ so that $\tilde{A}(x,n)^{-1}v\in W$ for all $x,n$ then there is an $\tilde{A}(\cdot,\cdot)^{-1}$ invariant subspace $V \subset W$ such that $\tilde{A}(x,n)^{-1}v\in V$ for all $x,n$ 
\end{lem}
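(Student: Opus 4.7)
The plan is to take $V$ to be the linear span of the orbit of $v$ under the inverse cocycle,
$$V:=\mathrm{span}\bigl\{\tilde{A}(x,n)^{-1}v \,:\, x\in\Delta',\, n\geq 1\bigr\}.$$
By hypothesis every spanning vector lies in $W$, so $V\subset W$, and by construction $\tilde{A}(x,n)^{-1}v\in V$ for every $(x,n)$. All that remains is to verify that $V$ is $\tilde{A}(\cdot,\cdot)^{-1}$-invariant.

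First I would establish that the set of matrices $S:=\{\tilde{A}(x,n):x\in\Delta',\,n\geq 1\}$ is closed under multiplication. This uses that $\tilde{R}$ is the first return of Rauzy induction to $\Delta'$, so each $\tilde{A}(x,n)$ codes a Rauzy path whose endpoints are both the permutation $\pi_L$. Given two such paths, their concatenation is again a legal Rauzy path from $\pi_L$ to $\pi_L$ and its associated incidence matrix is the product. Equivalently, the sub-simplex $\tilde{A}(x,n)\tilde{A}(y,m)\Delta'$ is nonempty, and any $x'$ in it satisfies $\tilde{A}(x',n+m)=\tilde{A}(x,n)\tilde{A}(y,m)$. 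Thus $S$ is a semigroup.

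Using this, for any $(y,m)$ and any generator $\tilde{A}(x,n)^{-1}v\in V$ I compute
$$\tilde{A}(y,m)^{-1}\bigl(\tilde{A}(x,n)^{-1}v\bigr)=\bigl(\tilde{A}(x,n)\tilde{A}(y,m)\bigr)^{-1}v=\tilde{A}(x',n+m)^{-1}v\in V,$$
so $\tilde{A}(y,m)^{-1}V\subset V$. Since $V$ is finite-dimensional and each $\tilde{A}(y,m)^{-1}$ is invertible, this containment is actually equality, giving the invariance $\tilde{A}(y,m)^{-1}V=V$ for all $(y,m)$, and hence $V$ is an $\tilde{A}(\cdot,\cdot)^{-1}$-invariant subspace of $W$ containing every $\tilde{A}(x,n)^{-1}v$, as required.

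The argument is essentially forced once one chooses $V$ to be the span of the $v$-orbit; the only step that requires any real thought is the semigroup property of $S$, which is the main (and only) obstacle, and which is itself immediate from the fact that $\tilde{R}$ is a first-return map so that all $\tilde{A}(x,n)$ share the same starting and ending permutation $\pi_L$.
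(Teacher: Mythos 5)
Your proof is correct and matches the paper's argument: both take $V$ to be the span of the orbit of $v$ under the inverse cocycle and rely on the semigroup property of the matrices $\tilde{A}(x,n)$ (composability of Rauzy paths that start and end at $\pi_L$) to conclude invariance. You spell out the invariance verification and the semigroup step a bit more explicitly, but the underlying idea is identical.
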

\begin{proof} First note that by how our cocycle acts for any $x,y,n,m$ there exists $z$ so that $\tilde{A}(z,n+m)^{-1}=\tilde{A}(x,n)^{-1}\tilde{A}(y,m)^{-1}$. This implies that $\tilde{A}(x,n)^{-1}\tilde{A}(y,m)^{-1}v \in W$ for all $x,y,n,m$. This implies that 
$\tilde{A}(x,n)^{-1}v \in \text{span}\{\tilde{A}(y_1,m_1)^{-1}v,...,\tilde{A}(y_k,m_k)^{-1}v\}\subset W$ for all $x,n,y_1,...,y_k,m_1,...,m_k.$ The result follows with $V$ being the span of the images of $v$ under the cocycle.
\end{proof}

\begin{cor}\label{cor:off noninvar} There exists a finite set $\tilde{A}(x_1,n_1),...,\tilde{A}(x_k,n_k)$ so that for any vector $v$ and subspace $W$ if  $v$ is not contained in any invariant subspace of $W$ there exists $\tilde{A}(x_i,n_i)$  such that  
$\tilde{A}(x_i,n_i)^{-1}v\notin W$. Moreover by compactness of projective space, for every $\epsilon>0$ there exists $c'>0$ so that if $\Theta(v,V)>\epsilon$ for all invariant subspaces $V$ of $W$, then $\Theta(\tilde{A}(x_i,n_i)^{-1}v,W)>c'$.
\end{cor}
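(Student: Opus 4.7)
The plan is to bootstrap the preceding lemma by compactness of $\mathbb{P}^{d-1}\times\operatorname{Gr}$, where $\operatorname{Gr}$ is the Grassmannian in which the subspace $W$ varies. The essential input is that, by Lemma~\ref{lem:few invar} and the classification used in its proof, the cocycle $\tilde{A}(\cdot,\cdot)^{-1}$ admits only finitely many invariant subspaces $V_1,\dots,V_m$; this finiteness is what makes the uniform finite list possible.

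First I would handle part (a). For each $(x,n)$ the set
$$U_{x,n}=\{(v,W)\in\mathbb{P}^{d-1}\times\operatorname{Gr}\,:\,\tilde{A}(x,n)^{-1}v\notin W\}$$
is open by continuity of the cocycle action on the product. The preceding lemma says exactly that $\bigcup_{(x,n)}U_{x,n}$ contains every $(v,W)$ for which $v$ lies outside every invariant $V\subseteq W$; equivalently it contains the complement of the closed set $B=\bigcup_{j}\{(v,W):v\in V_j,\ V_j\subseteq W\}$. Stratifying $\operatorname{Gr}$ by the (finite) Boolean lattice of which $V_j$'s it contains, and using that $\{U_{x,n}\}$ forms an open cover of each stratum's intersection with the good set, a compactness argument on each of the finitely many strata extracts a finite subcover; their union gives a single finite list $(x_1,n_1),\dots,(x_N,n_N)$ that works uniformly.

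Next I would prove part (b). Fixing this finite list, define the continuous function
$$f(v,W)=\max_{i=1,\dots,N}\Theta\bigl(\tilde{A}(x_i,n_i)^{-1}v,\,W\bigr)$$
on the compact product. Let $K_\epsilon=\bigcap_{j=1}^{m}\{(v,W):\Theta(v,V_j)\geq\epsilon\}$, a finite intersection of closed sets and hence compact. If $f(v_0,W_0)=0$ at some $(v_0,W_0)\in K_\epsilon$, then $\tilde{A}(x_i,n_i)^{-1}v_0\in W_0$ for all $i$; by the finite-list property from part (a), this forces $v_0\in V_j\subseteq W_0$ for some $j$, giving $\Theta(v_0,V_j)=0$ and contradicting membership in $K_\epsilon$. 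Thus $f>0$ on the compact set $K_\epsilon$ and therefore attains a positive minimum $c'=c'(\epsilon)>0$, as desired.

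The main obstacle is the finite subcover in part (a): the good set is open but not compact in $\mathbb{P}^{d-1}\times\operatorname{Gr}$, so elementary compactness does not directly apply. The argument leans both on the finiteness of $\{V_j\}$ coming from Lemma~\ref{lem:few invar} and on Avila--Viana's Theorem~\ref{thm:AV}, which provides enough irreducibility of the cocycle action that the pointwise existence of a "good" $(x,n)$ supplied by the preceding lemma can be upgraded to a uniform finite choice on each stratum of the Grassmannian.
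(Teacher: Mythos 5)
Your part (b) is correct and is, in essence, the paper's entire (one-line) proof: the set $K_\epsilon$ of pairs $(v,W)$ with $\Theta(v,V)\geq\epsilon$ for every invariant $V\subset W$ is a closed, hence compact, subset of $\mathbb{P}^{d-1}\times\operatorname{Gr}$; the preceding lemma makes $\{U_{x,n}\}$ an open cover of $K_\epsilon$; extracting a finite subcover and then minimizing the continuous function $f(v,W)=\max_i\Theta(\tilde{A}(x_i,n_i)^{-1}v,W)$ over $K_\epsilon$ gives $c'>0$. This is exactly what the authors mean by ``compactness of the space of subspaces minus an $\epsilon$-neighborhood of the invariant subspaces,'' and it is also all the application (Proposition~\ref{prop:big face}) actually uses, since there $\epsilon_0$ is fixed in advance.

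Your part (a) --- an $\epsilon$-independent finite list via stratification of $\operatorname{Gr}$ --- has a real gap, which you half-acknowledge but do not close. The strata $S_{\mathcal{I}}=\{W:V_j\subset W\Leftrightarrow j\in\mathcal{I}\}$ are only locally closed, not compact; and even on the minimal (closed) stratum the good set $\{(v,W):v\notin V_j \text{ for } j\in\mathcal{I}\}$ is open but not compact because $v$ may degenerate onto a $V_j$. So ``a compactness argument on each stratum extracts a finite subcover'' does not go through as stated. If you actually want the $\epsilon$-uniform finite list, a cleaner route is Noetherianity: each $Z_{x,n}=\{(v,W):\tilde{A}(x,n)^{-1}v\in W\}$ is Zariski-closed in $\mathbb{P}^{d-1}\times\operatorname{Gr}$, and by the preceding lemma $\bigcap_{x,n}Z_{x,n}=\bigcup_j\{(v,W):v\in V_j\subset W\}$; the descending chain condition lets you replace this intersection by a finite sub-intersection, whose complement then covers the good set. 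That said, the paper's own proof only establishes (and the applications only require) the $\epsilon$-dependent finite list together with $c'(\epsilon)$, so the additional claim in your part (a) is not needed to make the argument work.
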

This follows from the previous lemma by compactness of the space of subspaces minus an $\epsilon$-neighborhood of the invariant subspaces.

\begin{proof}[Proof of Proposition \ref{prop:big face}] Again let $w(M)$ be the smallest singular input direction of $M$ and let $w'(M)$ be the second smallest. 
Fix $\epsilon_0>0$ small. Let $c'$ be the constant given by Corollary~\ref{cor:off noninvar}. 
The proof is split into 2 cases:

\noindent \textit{Case 1:} $\Theta(w(M),e_{d-1}-e_d)>\epsilon_0$. We assume that $k_0$ is large enough so that $c'>9\cdot 10^{-k_0^4}$ and $N<9\cdot 10^{k_0^3}$. 

By Corollary \ref{cor:off noninvar} there exists $x,n$ with $\|A(x,n)\|<N<<10^{(k+k_0)}$ (if $k_0$ is large enough) so that 
$$
\Theta(A^{-1}(x,n)w(M), e_2\oplus \dots \oplus e_{d})>c'
$$
for all $i\leq d-2$ where $\|A(x,n)\|<N$. Indeed, if $w(M)$ is within $\epsilon_0$ of an invariant subspace $U$, we apply the corollary to $W=U\cap e_2\oplus...\oplus e_d$. Note that by Lemma \ref{lem:smallest move} this implies that 
\begin{equation}\label{eq:ang away}\Theta(w(MA(x,n)),e_2\oplus...\oplus e_d)>c'-10^{(k+k_0)^4}>\frac 1 2 c'.
\end{equation}
\color{black}

 Now let $Conv_i$ be the convex hull of $\{\frac{C_1(MA(x,n))}{|C_1(MA(x,n))|},...,\frac{C_{i-1}(MA(x,n))}{|C_{i-1}(MA(x,n))|},\frac{C_{i+1}(MA(x,n))}{|C_{i+1}(MA(x,n)|},...,\frac{C_d(MA(x,n))}{|C_d(MA(x,n))|}\}$, 
 which is $\spanD(C_1(MA(x,n)),...,C_{i-1}(MA(x,n)),C_{i+1}(M(A(x,n)),...,C_d(MA(x,n)))$. 
 So if $\sigma_1\geq....\geq\sigma_d$ are the singular values of $MA(x,n)$ 
 we have  by \eqref{eq:ang away}  that
  $\lambda_{d-2}(Conv_1)$ is at least proportional (in terms of $c'$) to 
 $$\sigma_1\cdot\sigma_2\cdot... \cdot \sigma_{d-1} \zeta^{d-1}  |C_1(MA(x,n))|^{-d+3}|C_d(MA(x,n))|^{-2}$$ 
 which is the largest the volume of a face can be. Indeed, $Conv_i$ is the image of the convex hull of 
 $\{\frac{1}{|C_1(MA(x,n))|},...,\frac{1}{|C_{i-1}(MA(x,n))|},\frac 1 {|C_{i+1}(MA(x,n))|},...,\frac 1 {|C_d(MA(x,n))|}\}$ under the linear action of $MA(x,n)$.

\noindent \textit{Case 2:} $\Theta(w(M),e_{d-1}- e_d)<\epsilon_{0}$. 

Since the largest singular input direction of $M$ is very close to $e_{d-1}+e_d$, it follows that 
\begin{equation}\label{eq:movable} \Theta(w'(M),e_{d-1}\oplus e_d)=\frac {\pi}2-C\epsilon_{0}
\end{equation} 
 for some $C$. 
Indeed, $\omega'(M)$ is in the orthocomplement of $\omega(M)\oplus v$ where $v$ is the largest input singular direction. Since the last two columns are much larger than the first two, $v$ makes small angle with $e_{d-1}\oplus e_d$, establishing 
\eqref{eq:movable}.

Let $A(x,n)$ be given by Corollary \ref{cor:off noninvar} for $w'(M)$ and $W=U \cap e_2\oplus....\oplus e_d$, where $U$ is the smallest invariant subspace $w'(M)$ is contained in. We may apply the corollary because $w'(M)$ makes a definite angle with $e_{d-1}-e_d$ (since it is perpendicular to $w(M)$ which makes a small angle with $e_{d-1}-e_d$) and so by Lemma \ref{lem:few invar}, $U \cap e_2\oplus....\oplus e_d$ is not invariant. We now control the smallest singular direction:
Because $A(x,n)$ acts as the identity on $e_{d-1}-e_d$ and $\|A\|\leq N$, we have 
$$\Theta(w(MA(x,n)),e_{d-1}-e_d)\in [\frac{\Theta(w(M),e_{d-1}-e_d)}{CN},CN\Theta(w(M),e_{d-1}- e_d)]$$
 and so for all $i \leq d-2$ we have 
$$\Theta(w(MA(x,n)),e_1\oplus....\oplus e_{i-1}\oplus e_{i+1}\oplus....\oplus e_d)\in [\frac{\Theta(w(M),e_{d-1}-e_d)}{CN},CN\Theta(w(M),e_{d-1}- e_d)].$$  

From this we have Proposition \ref{prop:big face} with the constant comparable to $(CN)^2(c')^{-1}$. Indeed, as before
$$\lambda_{d-2}(Conv_1)\geq \tilde{c} \sigma_1\cdot ...\cdot \sigma_{d-2}\big(\frac {\Theta(w(M),e_{d-1}\oplus e_d)}{CN}{\sigma_{d-1}} +\sigma_d\big) \zeta^{-d+1}|C_1(MA(x,n))|^{-d+3}|C_d(MA(x,n))|^{-2}$$ which is at least comparable to
$$\lambda_{d-2}(Conv_i)\leq  \sigma_1\cdot ...\cdot \sigma_{d-2}\big(\Theta(w(M),e_{d-1}\oplus e_d){CN}{\sigma_{d-1}} +\sigma_d\big) \zeta^{d-1}|C_1(MA(x,n))|^{-d+3}|C_d(MA(x,n))|^{-2}$$
for $i\leq d-2$. 
\end{proof}

\subsection{Diameters and choice of planes}\label{sec:plane choice}
In this section we will define the family of parallel planes  which we will intersect with simplices to compute diameters,  areas and so forth.  
We will need to discuss  the inverse of a possibly degenerate symplectic matrix $\Omega_{\pi_L}$. 
When we talk about the inverse it is the inverse of $\Omega_{\pi_L}$ restricted to $Im(\Omega_{\pi_L})$ (the orthocomplement of its kernel).  \color{black}

Recall $\Delta_c$ is the subset of $\Delta$ defined by $x_{d-1}+x_d=c$.  Fix $c$.  Let $A_1',B_1$ be the first matrices of restriction on LHS and freedom on the RHS.   Let $u$ be the projection of $\Omega_{\pi_L}^{-1}(C_1(A_1'B_1))$ to $\Delta_c\cap \Omega_{\pi_L}^{-1}C_d(A_1'B_1)^{\perp}.$ 
Let $v$ be the projection of $\Omega_{\pi_L}^{-1}(C_d(A_1'B_1))$ to $\Delta_c \cap \Omega_{\pi_L}^{-1}C_1(A_1'B_1)^{\perp}$.  Consider the plane $P_0$ defined by $$P_0=u\oplus v.$$
Our family of planes $\mathcal{P}$ is the family parallel to $P_0$. 
Since the $\Delta_c$ are parallel for different $c$, $\mathcal{P}$ does not depend on $c$.

In the next theorem  $\omega'$ denotes the second smallest singular value of $M$.

\begin{thm} 
\label{thm:diameter} 
 
 There are positive constants $c_1,c_2$ such that for  $k_0$ is large enough, and $M$ is at the end of freedom  on the left hand side or at the end of freedom on right side  
 \color{black}
then  there is a plane  $P\in\mathcal{P}$ which slices \color{black}
 $M \Delta $ into a polygon $Q$ so that 
 $$\frac{c_1\omega'(M)}{|C_{\max}(M)|}\leq diameter (Q)\leq \frac{c_2}{|C_{min}(M)|^2},$$ 

\end{thm}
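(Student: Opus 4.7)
The plan is to relate the shape of $Q=P\cap M\Delta$ to the singular value decomposition of $M$, using the symplectic structure of the Rauzy cocycle to re-express the fixed plane $P_0$ in terms of output directions of $M$. Write $M=(A_1'B_1)M'$, where $M'$ is the tail from the start of RHS freedom at stage $1$ onward. Using the identity $M^T\Omega_{\pi_L}M=\Omega_{\pi'}$ (Equation \eqref{eq:Viana}) I would conjugate $\Omega_{\pi_L}^{-1}(C_j(A_1'B_1))$ across the tail and then invoke Lemma \ref{lem:singular relations}\ref{sing rel 1}--\ref{sing rel 2}: this identifies the 2-plane $P_0=u\oplus v$, up to a bounded distortion coming from the projections onto $\Delta_c\cap (\cdot)^\perp$, with the 2-plane spanned by the two largest singular output directions of $M$.

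Once the symplectic identification is in place, I would pin down the relevant singular values. By Proposition \ref{prop:closeangle} and Lemma \ref{lem:second freedom} (at the end of freedom on LHS), or by Proposition \ref{prop:RHS sing}(3)--(4) (at the end of freedom on RHS), the two largest singular values of $M^T$ are proportional to $|C_{\max}(M)|$ and at least a constant multiple of $|C_{\min}(M)|$ respectively. Lemma \ref{lem:singular relations}\ref{sing rel 3} then forces $\omega'(M)$ to be comparable to $1/|C_{\min}(M)|$. The two bounds in the theorem now become statements about how $M$ stretches a generic 2-plane and how this stretch, once the projective normalization $1/|C_{\max}(M)|$ is applied, produces the diameter of $Q$.

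For the upper bound, I would exploit the fact that $M\Delta$ is exponentially thin in $d-3$ directions: by Lemma \ref{lem:angle to sing} and Condition $**$ the vertices $C_i(M)/|C_i(M)|$, $i\leq d-2$, are mutually at exponentially small angle, and the remaining two vertices are close to $e_{d-1},e_d$ by Theorem \ref{thm:nue}. A planar slice of such a thin simplex is controlled by the extent along the $W(M)$ direction, and a direct column-size computation (via the volume formula Lemma \ref{lem:volume face} and Condition $*$) yields the bound of order $1/|C_{\min}(M)|^2$. For the lower bound, the identification from the first step shows that $P_0$ contains a direction aligned with the second largest singular output direction of $M$; pulling this back through the projective normalization and using Proposition \ref{prop:big face} to ensure that the plane does not merely graze the thin face of $M\Delta$, the length of $Q$ along this direction is at least $c_1\omega'(M)/|C_{\max}(M)|$.

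The main obstacle will be the first step: $P_0$ is defined using only the initial block $A_1'B_1$, whereas the claim is about a much later matrix $M$. One must therefore propagate the symplectic identification forward across all subsequent stages and control the drift of the top two singular directions of $M^T$, using Proposition \ref{prop:closeangle} to keep the leading direction exponentially close to $C_d(M)$, Lemma \ref{cor:second place} to track the second, and the balance/angle bounds of Conditions $*$ and $**$ to prevent the projection onto $\Delta_c$ from collapsing either coordinate of $P_0$.
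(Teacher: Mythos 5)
There is a genuine and disqualifying error in your first step: you identify $P_0=u\oplus v$ with the span of the two \emph{largest} singular output directions of $M$, but it is in fact aligned with the two \emph{smallest}. The paper's own Lemma~\ref{lem:output} shows $|M^{-1}w|\geq \tfrac{1}{100}|C_{\min}(M)|\,|w|$ for $w$ in the direction of $P_0$; since $M$ is symplectic this means $M^{-1}$ \emph{expands} every direction of $P_0$ by at least $\sim |C_{\min}(M)|$, equivalently $M$ strongly contracts $M^{-1}P$, so $P_0$ lives (up to a definite angle) in the span of the singular output vectors of $M$ with \emph{small} singular value. Indeed the paper later records this explicitly in the proof of Proposition~\ref{prop:illumination}: $u\approx\Omega_{\pi_L}^{-1}(C_1(M))$ makes a small angle with $w'$, the \emph{second smallest} output direction, and similarly $v$ tracks the smallest. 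Your version is not merely a labeling slip: if $P_0$ really were the span of the top two output directions, then since the columns $C_i(M)/|C_i(M)|$ all lie (to within tiny error) in that two-plane, the slice $Q=P\cap M\Delta$ would have diameter of order $1$, flatly contradicting the claimed upper bound $c_2/|C_{\min}(M)|^2\ll 1$, and the lower-bound computation you sketch (``length along the second largest output direction is at least $c_1\omega'/|C_{\max}|$'') would only be true vacuously since the extent along $\hat w_2$ is of order $\sigma_2/|C_{\max}|\sim|C_{\min}|/|C_{\max}|$, many orders of magnitude larger than $\omega'/|C_{\max}|\sim 1/(|C_{\min}||C_{\max}|)$.

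A second, related problem is that your upper-bound argument---thinness of $M\Delta$ in $d-3$ directions plus the volume lemma---cannot stand on its own: thinness only helps if you already know $P$ is transverse to the thick directions, which is exactly the alignment statement you have wrong. The paper instead deduces the upper bound directly from Lemma~\ref{lem:output}: $M$ contracts $M^{-1}P$ by at least $|C_{\min}(M)|/100$, and then Lemma~\ref{lem:projective} converts linear contraction to projective contraction, producing the extra factor $1/|C_{\min}(M)|$ and hence $c_2/|C_{\min}(M)|^2$. The lower bound uses an inscribed-ball/ellipse argument together with the second smallest singular value $\omega'$ rather than any claimed exact alignment of $P_0$; this is deliberately weaker than what you attempt, since the singular directions of $M$ drift as $k$ grows while $P_0$ is fixed once and for all by $A_1'B_1$. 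I'd encourage you to re-derive the alignment with the correct (smallest) output directions, but also to notice that the paper only needs the coarse quantitative lower bound of Lemma~\ref{lem:output}, not an asymptotic identification of $P_0$ with a singular two-plane.
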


We will need a couple of preliminary lemmas.

 \begin{lem} 
 \label{lem:output}
 Let $M$ be as in Theorem~\ref{thm:diameter}.  Then for  any  $w \in \mathcal{P}$, 
  $|M^T\Omega_{\pi_L}w|\geq 
 \frac{|C_{\min}(M)|}{100}|w|$.
 \end{lem}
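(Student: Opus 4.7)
The strategy is to combine the symplectic identity from Section~\ref{sec:symplectic} with the singular-direction information in Proposition~\ref{prop:closeangle} and Lemma~\ref{lem:second freedom}. Using $M^T\Omega_{\pi_L}M=\Omega_{\pi'}$, one has
\[
M^T\Omega_{\pi_L}=\Omega_{\pi'}M^{-1}
\]
on $\mathrm{Im}(\Omega_{\pi_L})$, and since $\Omega_{\pi'}$ has entries in $\{-1,0,1\}$ its smallest nonzero singular value is a universal constant. Thus, up to universal constants, the statement reduces to lower-bounding $|M^{-1}w|$ for $w$ in the two-plane $P_0$ (since the planes of $\mathcal{P}$ are parallel translates). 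Equivalently, one can work directly with $M^T\Omega_{\pi_L}$ and exploit the fact that, by construction of $P_0$, the image $\Omega_{\pi_L}P_0$ lies close to the span of the top two input singular directions of $M^T$, whose smaller singular value is $\sim |C_{\min}(M)|$ by Lemma~\ref{lem:second freedom}.

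For the main computation, I would decompose $w=au+bv\in P_0$ (the two parallel planes differ only by a bounded translation, irrelevant for the norm bound up to constants) and exploit the definitions: $\Omega_{\pi_L}u$ is close to $C_1(A_1'B_1)$ and $\Omega_{\pi_L}v$ is close to $C_d(A_1'B_1)$, up to the bounded corrections coming from the projections onto $\Delta_c\cap\Omega_{\pi_L}^{-1}C_d(A_1'B_1)^{\perp}$ and its twin. Factoring $M=A_1'B_1\cdot N$, where $N$ is the Rauzy continuation beyond the initial block, the main term of $M^T\Omega_{\pi_L}w$ becomes $N^T(A_1'B_1)^T\bigl(aC_1(A_1'B_1)+bC_d(A_1'B_1)\bigr)$, whose $i$-th coordinate equals $aC_i(M)\cdot C_1(A_1'B_1)+bC_i(M)\cdot C_d(A_1'B_1)$. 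All these inner products are nonnegative (the columns live in the positive cone), so the norm is lower-bounded by picking out single coordinates; the first-row entries of $M$ at end of freedom on LHS are comparable to $|C_1(M)|\sim|C_{\min}(M)|$ because letter $1$ is actively winning during freedom, and a symmetric argument handles the $b$-contribution via $C_d$. The end-of-freedom-on-RHS case is analogous with $d$ playing the role of $1$.

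The main obstacle is two-dimensional non-degeneracy: the coefficients $a$ and $b$ must produce independent contributions, so that cancellation in $au+bv$ does not destroy the individual lower bounds. This is where Proposition~\ref{prop:closeangle} is essential, since it identifies the top two input singular directions of $M^T$ with the approximate directions of $C_d(M)$ and $\mathrm{Proj}_W C_1(M)$; combined with the construction of $P_0$, this ensures that the two-plane $\Omega_{\pi_L}P_0$ sits (up to bounded corrections) in the span of these two singular input directions, rather than leaking into the much smaller singular-value directions. Tracking the explicit constant $\tfrac{1}{100}$ then requires absorbing the bounded projection corrections in the definitions of $u$ and $v$, the bounded constants coming from the fixed matrix $A_1'B_1$, and the universal constants from $\Omega_{\pi'}$; all of these are independent of $k$, so they can be made to fit inside the factor $100$ for $k_0$ sufficiently large.
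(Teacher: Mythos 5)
Your proof has a genuine gap in the two places where it needs to rule out cancellation, and the mechanism you propose is not the one that makes the bound work.

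\textbf{The positivity shortcut is incorrect.} A vector $w$ in the plane direction is $w=au+bv$ with $a,b\in\mathbb{R}$ of arbitrary signs; your claim that ``all these inner products are nonnegative, so the norm is lower-bounded by picking out single coordinates'' fails once $a$ and $b$ have opposite signs, since a single coordinate $aC_i(M)\cdot C_1(A_1'B_1)+bC_i(M)\cdot C_d(A_1'B_1)$ can be made to vanish. Nonnegativity of each column-times-column inner product does nothing to control the signed combination.

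\textbf{The singular-direction fix is not established.} You correctly identify that the real issue is the non-degeneracy of the two-plane, and you propose that $\Omega_{\pi_L}P_0$ lies close to the span of the top two input singular directions of $M^T$, invoking Proposition~\ref{prop:closeangle}. But $\Omega_{\pi_L}u$ is (approximately) the \emph{initial} column $C_1(A_1'B_1)$, whereas Proposition~\ref{prop:closeangle} locates the second singular direction near $\mathrm{Proj}_W C_1(M)$ for the \emph{current} matrix $M$; these two columns are separated by many Rauzy stages, and the only uniform control you have (Theorem~\ref{thm:nue}) is that both lie near the $(d-2)$-dimensional subspace $e_1\oplus\cdots\oplus e_{d-2}$ — far too weak to conclude that the two one-dimensional directions are close to one another, or that $\Omega_{\pi_L}P_0$ is transverse to the span of the lower singular directions. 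The same problem occurs for $C_d(A_1'B_1)$ versus $C_d(M)$ inside $e_{d-1}\oplus e_d$.

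\textbf{What the paper does instead.} The paper computes $M^T\Omega_{\pi_L}(\alpha u+\beta v)$ and splits the coordinates into the first $d-2$ and the last $2$. Writing $x,x'$ for the first-block and last-block sums coming from $u$, and $y',y$ for those coming from $v$, it uses Theorem~\ref{thm:nue} (to place $\Omega_{\pi_L}u$ near $e_1\oplus\cdots\oplus e_{d-2}$ and $\Omega_{\pi_L}v$ near $e_{d-1}\oplus e_d$) and Condition ** (to make columns within each block nearly parallel) to get $x/y'>100$ and $y/x'>100$. This near block-diagonal structure is exactly the non-degeneracy you need: if $|\alpha|$ dominates the first block gives the lower bound, if $|\beta|$ dominates the second block does, and the factor $100$ prevents cross-cancellation. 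The bound $x,y\gtrsim|C_{\min}(M)|$ then finishes. This argument is entirely elementary and avoids relating $P_0$ to singular directions at all; the singular-direction picture you invoke is used elsewhere (e.g.\ in Theorem~\ref{thm:diameter}) but is not what closes this particular lemma.
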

 \begin{proof} 
 Let $$x=\sum_{i=1}^{d-2}C_i(M) \cdot \Omega_{\pi_L}(u)\  \text{and}\ y=\sum_{j=d-1}^d C_j(M)\cdot \Omega_{\pi_L}(v).$$ Similarly let 
 $$x'= \sum_{j=d-1}^d C_j(M) \cdot \Omega_{\pi_L}(u)\ \text{and}\ y'=\sum_{i=1}^{d-2}C_i(M) \cdot \Omega_{\pi_L}(v).$$ 
  Now by Theorem \ref{thm:nue} we may assume that $\Omega_{\pi_L}u$ is as close as we want to $e_1\oplus...\oplus e_{d-2}$ and similarly for $\Omega_{\pi_L}v$ and $e_{d-1}\oplus e_d$. Moreover, by Condition ** 
   the angles of $C_i(M)$  are close to each other for $i\leq d-2$ and $C_{d-1}(M)$ and $C_d(M)$ are close to each other.
 It follows that   for $k_0$ big enough $$\frac{x}{y'}, \frac{y}{x'}>100.$$ 
It follows that for any $w=\alpha u+\beta v\in P$ that $$|M^T\Omega_{\pi_L}(\alpha u+\beta v)|\geq \max\{|\alpha x+\beta y'|,|\alpha x'+\beta y|\}>\frac 1 2 \max\{\alpha x,\beta y\}.$$ 
\color{black}
The lemma follows from the fact that $x$ and $y$ are at least proportional to $C_{\min}(M)$. Indeed, $\Omega_{\pi_L}u$ is not close to being perpendicular to $C_i(M)$ for $i\leq d-2$ and $\Omega_{\pi_L}v$ is not close to being perpendicular to $C_j(M)$. 
 \end{proof}
 \color{black}
We next  prove a result on relating projective action and linear action. For the purposes of clarity in the next lemma, let $\hat{M}$ denote the projective action of $M$ and $\tilde{M}$ denote its linear action. This is local notation that is only used in this lemma and its proof. Let the pairs (singular value, direction) of $M$ be $(\gamma_1,\theta_1),....,(\gamma_d,\theta_d)$ ordered so that $\gamma_i\geq \gamma_{i+1}$. 
\begin{lem} 
\label{lem:projective}
If $v, w \in \Delta$ are such that  $v-w$ is in the direction $\theta_k$, then 
 $$\frac{\gamma_d|v-w|}{|C_{max}M|}\leq d(\hat{M}v,\hat{M}w)\leq \frac{2\pi \gamma_k}{|C_{min}(M)|}.$$
\end{lem}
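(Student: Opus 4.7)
First I would relate the projective action to the linear action. Writing $x=\tilde M v$ and $y=\tilde M w$, a direct calculation using $|x|_1-|y|_1=e\cdot(x-y)$, where $e=(1,\ldots,1)$, gives
$$\hat{M}v-\hat{M}w=\frac{1}{|x|_1}\bigl[I-\hat Mw\cdot e^T\bigr](x-y).$$
Because $v-w = |v-w|\,\theta_k$ is a right singular vector of $\tilde M$ with singular value $\gamma_k$, we have $x-y=\gamma_k|v-w|\,u_k$ for the corresponding unit left singular vector $u_k$, so
$$\hat{M}v-\hat{M}w=\frac{\gamma_k|v-w|}{|x|_1}\bigl[I-\hat Mw\cdot e^T\bigr]u_k.$$

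For the upper bound I would use $\|I-\hat Mw\cdot e^T\|_{\mathrm{op}}\leq 1+\sqrt d$ (since $|\hat Mw|_2\leq 1$ and $|e|_2=\sqrt d$), together with $|x|_1=\sum_j v_j|C_j(M)|_1\geq |C_{\min}(M)|$, which holds because $v\in\Delta$ and the columns are non-negative. Since the Euclidean diameter of $\Delta$ bounds $|v-w|_2\leq\sqrt 2$, and converting Euclidean chord distance to geodesic arc length costs at most a factor of $\pi/2$, the dimension-only constants are absorbed into the coefficient $2\pi$, yielding the uniform upper bound $2\pi\gamma_k/|C_{\min}(M)|$.

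For the lower bound the same identity gives
$$|\hat Mv-\hat Mw|_2=\frac{\gamma_k|v-w|}{|x|_1}\bigl|\bigl[I-\hat Mw\cdot e^T\bigr]u_k\bigr|_2,$$
so with $|x|_1\leq|C_{\max}(M)|$ it suffices to bound $\bigl|[I-\hat Mw\cdot e^T]u_k\bigr|_2$ from below by $\gamma_d/\gamma_k$. The quantity is exactly the distance from $u_k$ to the line $\mathbb{R}\hat Mw$, so it can shrink only when $u_k$ is nearly parallel to $\hat Mw$. The replacement of $\gamma_k$ by the smaller $\gamma_d$ in the statement is precisely the slack needed for this worst case.

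The main obstacle is extracting the factor $\gamma_d/\gamma_k$ in the lower bound. The plan is to use that $\hat Mw$ is close to the top left singular direction of $\tilde M$, since it lies in the positive cone of the columns, which are close in angle within each block by Lemma~\ref{lem:angle to sing}; the symplectic pairing of singular values $\gamma\leftrightarrow 1/\gamma$ on $\mathrm{Im}(\Omega_\pi)$ supplied by Lemma~\ref{lem:singular relations} then converts the small angle between $u_k$ and $\hat Mw$ into the required ratio of singular values, completing the lower bound.
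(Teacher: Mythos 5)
Your identity $\hat{M}v-\hat{M}w=\frac{1}{|x|_1}\bigl[I-\hat Mw\cdot e^T\bigr](x-y)$ is correct, and your upper-bound argument from it is sound (modulo constant bookkeeping: $\|I-\hat Mw\,e^T\|_{\mathrm{op}}\le 1+\sqrt d$ introduces a dimension-dependent factor that won't literally fit inside $2\pi$, but the paper's own constants are also loose, so this is not a substantive objection). The paper gets the same upper bound via a right-triangle estimate on the linear images $\tilde Mv$, $\tilde Mw$ and their radial projections; the two routes are essentially equivalent.

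The lower bound is where your proposal has a genuine gap, and I think the route you chose fundamentally overcomplicates things. You reduce the claim to showing $\bigl|[I-\hat Mw\,e^T]u_k\bigr|_2\ge\gamma_d/\gamma_k$, flag it as ``the main obstacle,'' and then only sketch a plan relying on Lemma~\ref{lem:angle to sing} and the symplectic pairing of Lemma~\ref{lem:singular relations}. This is problematic for two reasons. First, Lemma~\ref{lem:projective} is stated and used for arbitrary $v,w\in\Delta$ with $v-w$ in the $k$-th singular direction; it does not carry the structural hypotheses (Condition **, end of freedom on LHS, etc.) that Lemma~\ref{lem:angle to sing} requires, so you cannot invoke that lemma here. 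Second, the quantity $\bigl|[I-\hat Mw\,e^T]u_k\bigr|$ is the length of an oblique projection of $u_k$ along the direction $\hat Mw$, and it vanishes exactly when $u_k$ is proportional to $\hat Mw$; there is no a priori reason this cannot be much smaller than $\gamma_d/\gamma_k$, and the symplectic pairing of singular values does not obviously relate to an angle between a singular direction and a point of the projective simplex. The paper avoids this entirely: for the lower bound it sets $u_v=v/|Mv|_1$ and $u_w=w/|Mw|_1$ (so $\tilde M u_v=\hat Mv$, $\tilde M u_w=\hat Mw$ and $|u_v|,|u_w|\ge 1/|C_{\max}(M)|$), argues $|u_v-u_w|\ge|v-w|/|C_{\max}(M)|$ from the fact that $u_v,u_w$ both lie on the hyperplane $M^{-1}\Delta$ at distance at least $1/|C_{\max}(M)|$ from the origin, and then uses that $\tilde M$ contracts nothing below the smallest singular value $\gamma_d$. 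Notice this never touches the singular decomposition of $v-w$ at all --- the hypothesis ``$v-w$ in direction $\theta_k$'' is used only in the upper bound. That is a strong hint that trying to push $u_k$ through the lower bound, as you do, is the wrong decomposition to chase.
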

\begin{proof} First the lower bound. Let $u_v,u_w \in \mathbb{R}^d_+$ so that $\tilde{M}u_v=\hat{M}v$ and $\tilde{M}u_w=\hat{M}w$. Now $|u_v|,|u_w|\geq  \frac 1 {|C_{max}(M)|}$ and $|u_v-u_w|$ is contracted by at most $\gamma_d$ (by the definition of the smallest singular value).  Then 
$$|\hat M(v)-\hat M(u)|= |\tilde{M}(u_v)-\tilde{M}(u_w)|\geq \gamma_d|u_v-u_w|\geq \gamma_d|v-w|/|C_{max}M|.$$

Now we prove the upper bound. Consider the line $\ell$ through the origin and $\tilde{M}w$. Take the closest point, denoted $t\tilde{M}w$  on $\ell$ to $\tilde{M}v$.  Then $$|t\tilde{M}w-\tilde{M}v|\leq |\tilde{M}v-\tilde{M}w|=\gamma_k|v-w|\leq 2\gamma_k.$$   
We consider the right triangle with vertices at the origin, $t\tilde{M}w$ and $\tilde{M}v$ (the hypotenuse is the line segment from the origin to $\tilde{M}v$). Now we want to know the angle $\psi$ the hypotenuse makes  with  the line from $\tilde{M}v$ to $t\tilde{M}w$. We have $|\tilde{M}v|\geq |C_{min}(M)|$ so $\sin(\psi)\leq \frac{2\gamma_k}{|C_{min}(M)|}$. Since $\psi\leq \frac{\pi}2$  we have 
$\psi\leq \frac {\pi}2{\sin(\psi)}$.  Combining  these inequalities   we have $\psi<\frac{\pi\gamma_k}{|C_{\min}(M)|}$  and changing the angle between two vectors to the distance between the corresponding unit vectors gives us the result after multiplying by an additional factor of 2.
\end{proof}

\begin{proof}[Proof of  Theorem~\ref{thm:diameter}.] 
By Lemma~\ref{lem:output}
 and the fact that $M$ is symplectic, it follows that for $w\in \mathcal{P}$ 
we have 
$$|M^{-1}(w)|=|\Omega_{\pi_L}^{-1}M^T\Omega_{\pi_L}(w)|\geq |C_{min}(M)|\cdot |w|/100.$$   This says that $w$ makes  a definite angle with the space perpendicular to the singular input vectors for $M^{-1}$ with singular value at least $\frac{|C_{\min}(M)|}{200}$ and so makes definite angle with the perpendicular  to the  singular output vectors of $M$ with singular value at most $\frac{200}{|C_{\min}(M)|}$.   
Now the directions of the  plane are fixed
and therefore by the above remark
the diameter of the image under the linear action is bounded above by a multiple of 
 $\frac {1}{|C_{\min}(M)|}$. 
By Lemma~\ref{lem:projective} the image under the projective action has diameter bounded above by
a constant multiple of $\frac{1}{|C_{min}||C_{sing}|}\leq \frac{1}{|C_{min}|^2}$.
\color{black}

To find the plane that intersects in the specified diameter, consider the largest $(d-1)$-dimensional ball one can put in $\Delta$ with center $p$. Let $P\in\mathcal{P}$ be the plane through $M(p)$.
The projective image of the ball under $M$ is an ellipsoid.  The intersection of this ellipsoid with $P$ is an ellipse. 
 The greatest contraction possible for the linear action in the perpendicular subspace to $w_1$ is in the direction of $w_2$.  By Lemma~\ref{lem:projective} the contraction for the projective action is at  most  the second smallest singular value $\omega'$ multiplied by $\frac 1 {|C_{\max}(M)|}$. Thus the   major axis of the ellipse is at least proportional to $\frac{\omega'}{|C_{\max}(M)|}$. 

\end{proof}

\color{black}

\section{Geometry of slices on LHS and Illumination}\label{sec:geom}
 The next two sections are interconnected. The only result in these two sections that is quoted after the end of Section \ref{sec:repeat} is Theorem \ref{thm:big shadow}. Corollary \ref{cor:ready} is used to prove Condition * (1). \color{black}

The next two sections control the geometry of simplices during freedom on LHS. This is used in Section \ref{sec:restrict lhs} to show that even though we lose most of the measure during restriction, we keep enough 
(in all but an exponentially small proportion of planes) to verify the assumptions of Theorem \ref{thm:planes}.
The current section shows that as a first step, if we have a simplex  at the beginning of freedom on LHS we can find a fixed finite collection of subsimplices  which intersect the planes in $\mathcal{P}$ nicely. What this means is that for every point in the subsimplex,  the plane through it intersects the face  $F_1$, which is the image under  the ancestor matrix of the face $\{x_1=0\}\subset  \Delta$.  We say the point is {\em illuminated}.     
 We do this, because under restriction on LHS, the interval $I_1$ always loses so our future simplices lie in a neighborhood of this face. The following  section, Section \ref{sec:repeat}, shows that we may iterate this argument so that all but an exponentially small proportion of the  points that we still have at the end of restriction on RHS have this illumination property.  Then Section \ref{sec:restrict lhs}  deals with restriction. 

Given any freedom LHS matrix $A$ at stage $k+1$, and  sequence $A_1',B_1,B_1',A_2,A_2',\ldots,B_k$ through freedom on RHS at stage $k$, then for each matrix of restriction on RHS $B_k'$ at stage $k$, set $$M_{B_k'}=A_1'B_1B_1'\ldots B_kB_k' A,$$ to be the product of matrices and  set 
$$\mathcal{M}_A=\{ M_{B_k'}\}
$$ to be the collection as $B_k'$ varies. 
Observe that one can order $\mathcal{M}_A$ by ordering the $C_{d-1}(M_{B_k'})$ for $M_{B_k'} \in \mathcal{M}_A$ (they all lie on a line).
In a mild abuse of notation set $$\mathcal{M}_A\Delta=\cup_{M_{B_k'} \in \mathcal{M}_A}M_{B_k'}\Delta \cap (\cup_{c\in (.1,.9)}\Delta_c).$$

\begin{lem}
\label{lem:RHScomparable}
For any two matrices $M_1,M_2\in\mathcal{M}_A$, the column lengths  $|C_d(M_1)|$ and $|C_d(M_2)|$ are uniformly comparable. The same holds  for $|C_{d-1}(M_1)|$ and $|C_{d-1}(M_2)|$. 
\end{lem}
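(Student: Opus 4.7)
The plan is to exploit the block structures of the matrices $A$ (freedom on LHS) and $B_k'$ (restriction on RHS) to decompose each of $|C_{d-1}(M_{B_k'})|$ and $|C_d(M_{B_k'})|$ into a part that is fixed as $B_k'$ varies plus a variable part whose range is controlled by the size bound on $\|B_k'\|$ together with Conditions~*.

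The first step is to record two structural facts. In every elementary Rauzy step during freedom on LHS the winner lies in $\{1,\dots,d-2\}$, so rows $d-1$ and $d$ of the product $A$ are never modified. Hence
\[
A=\begin{pmatrix} A_L & A_M\\ 0 & I_2\end{pmatrix},
\]
and in particular $C_{d-1}(A)=\sum_{i\le d-2}a_ie_i+e_{d-1}$ and $C_d(A)=\sum_{i\le d-2}b_ie_i+e_d$ for nonnegative integers $a_i,b_i$ depending only on $A$. Dually, restriction on RHS only ever compares $d-1$ with $d$, so
\[
B_k'=\begin{pmatrix} I_{d-2} & 0\\ 0 & B''\end{pmatrix}
\]
with $B''$ a $2\times 2$ nonnegative integer matrix.

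Setting $M_0 = A_1'B_1B_1'\cdots B_k$, which is fixed once $A$ is fixed, the two facts combine to give
\[
|C_{d-1}(M_{B_k'})|=\underbrace{\sum_{i\le d-2}a_i|C_i(M_0)|}_{\text{independent of }B_k'}+B''_{11}|C_{d-1}(M_0)|+B''_{21}|C_d(M_0)|,
\]
and the analogous identity for $|C_d|$. Next I would bound the variable part: by Condition~*(4) the column sums $|C_{d-1}(B_k')|=B''_{11}+B''_{21}$ and $|C_d(B_k')|=B''_{12}+B''_{22}$ each lie within a factor of $2$ of $\|B_k'\|$, so the prescribed range $\|B_k'\|\in[N_0,2N_0]$ confines both to a multiplicative window of width $4$. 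Combined with Condition~*(3), which gives $|C_{d-1}(M_0)|/|C_d(M_0)|\in[1/\zeta,\zeta]$, the variable part varies over a multiplicative window of width at most $O(\zeta)$.

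The final step is the elementary inequality: if $F\ge 0$ and $V_1\le KV_2$ with $K\ge 1$, then $F+V_1\le K(F+V_2)$. Applying this to numerator and denominator shows that the ratio $|C_{d-1}(M_1)|/|C_{d-1}(M_2)|$ is bounded by a uniform constant (depending only on $\zeta$) for $M_1,M_2\in\mathcal{M}_A$, and the parallel argument handles $|C_d|$. I do not anticipate a real obstacle; the only thing to watch is that the resulting constant be independent of $A$ and $k$, which it is since Conditions~*(3) and~*(4) are uniform in $k$.
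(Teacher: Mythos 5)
Your decomposition of $M_{B_k'}=M_0B_k'A$ into block pieces, and the split of $|C_{d-1}(M_{B_k'})|$ into a part $F=\sum_{i\le d-2}a_i|C_i(M_0)|$ that is fixed across $\mathcal{M}_A$ plus a variable part $V=B''_{11}|C_{d-1}(M_0)|+B''_{21}|C_d(M_0)|$, is correct and in the same spirit as the paper's argument. The elementary inequality $F+V_1\le K(F+V_2)$ is also exactly the right closing step.

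However, there is a gap in the bound on the variable part. You invoke Condition *(3) to get $|C_{d-1}(M_0)|/|C_d(M_0)|\in[1/\zeta,\zeta]$, but Condition *(3) is a hypothesis on the \emph{segment} $B_k$ (the freedom-on-RHS block alone), not on the cumulative product $M_0=A_1'B_1B_1'\cdots B_k$. Transferring the balance from $B_k$ to $M_0$ requires knowing that the last two columns of the matrix \emph{before} freedom on RHS are already comparable — which is not automatic and, if one tries to establish it by induction over stages, essentially reduces to the present lemma. So as written the step is circular/unjustified. Note also that Condition *(4) only controls the column sums $|C_{d-1}(B_k')|$ and $|C_d(B_k')|$, not the individual entries of $B''$, so you cannot conclude that $B''_{11}$ and $B''_{21}$ are each confined to a multiplicative window from *(4) alone.

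The paper sidesteps all of this by using the explicit shape of the restriction matrices: $B_k'$ corresponds to $d-1$ beating $d$ exactly $\ell$ times, with $\ell\in[s_k,2s_k]$, followed by $d$ beating $d-1$ once, so $B''=\bigl(\begin{smallmatrix}\ell+1&\ell\\1&1\end{smallmatrix}\bigr)$. Thus $C_d(M_0B_k')=\ell\,C_{d-1}(M_0)+C_d(M_0)$ and $C_{d-1}(M_0B_k')=(\ell+1)C_{d-1}(M_0)+C_d(M_0)$; since $\ell$ ranges over an interval of multiplicative width $2$ (and the other coefficient is always $1$), these are comparable across $B_k'$ with no balance assumption on $M_0$ whatsoever. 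After that, freedom on LHS adds the same fixed vector to these columns, which preserves comparability — your $F$-plus-$V$ inequality. You could repair your proof simply by replacing the ``Condition *(3) plus *(4)'' step with the observation that every entry of $B''$ lies in $\{1\}\cup[s_k,2s_k+1]$ and hence varies by at most a factor of $2$ as $B_k'$ ranges over $\mathcal{B}_k'$.
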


\begin{proof}
After finishing freedom on RHS we have columns $C_{d-1},C_d$.  Then during restriction on RHS  columns  $d-1$ and $d$ are of the form $C_{d-1}$ and $C_d+sC_{d-1}$,  where $s\in [s_k,2s_k]$, (and $s_k=10^{(2k+2+k_0)^6 )+(k+k_0)^4}$.) Thus any pair of  columns $C_d$  have comparable size.  This holds  even after restriction ends on RHS when $C_d$ is added to $C_{d-1}$.  This is also true of columns $C_{d-1}$. Then  during freedom on LHS,  the same vectors are added to both $C_{d-1}$ and $C_d$.  
 Thus they remain comparable in size.
\end{proof}

Now let $\phi$ be the direction of the vector $u$ defined in  Section \ref{sec:plane choice}  as one of the pair of vectors defining the plane $\mathcal{P}$. 
 For any freedom LHS matrix $A$,  
let $$\mathcal{S}_{\phi}(\mathcal{M}_A)=\{y \in \cup_{M \in \mathcal{M}_{A}}M\Delta\ \text{such that  there exists a line in direction}\ \phi\ \text{joining y to a point in}\  F_1(\mathcal{M}_A)\}.$$

We call $\mathcal{S}_\phi(\mathcal{M}_{ A})$ the set {\em illuminated} by  $\mathcal{M}_{A}$.
The point of this definition is that when we move to restriction  on LHS, the first entry does not win  and so our simplices will have a face contained in $F_1$. We want the lines  to intersect those simplices; hence the face $F_1$. \color{black} The main  Theorem below says that except for an exponentially small set, at each stage $k$ we can insure all points are illuminated by 
 some $\mathcal{M}_{A}$ for $$\|A\| \in [10^{(2(k+1)+k_0)^6-(k+k_0)^4},10^{(2(k+1)+k_0)^6-(k+k_0)^4+(k+k_0)^2}].$$
 
 We first  define for $0\leq c\leq 1$, $$\Delta_c=\{x\in\Delta:x_{d-1}+x_d=c\}.$$
This definition will be crucial in the discussion of conflicted sets in next section.

\begin{thm}\label{thm:big shadow} There is $\rho<1$ 
so that for all $k_0$ large enough and $c\in [.1,.9]$\color{black}, given $M$ and $\hat A$ in freedom on LHS at stage $k+1$, there are   matrices $A_1,\ldots, A_\ell$  in freedom on LHS such that
$$\lambda_{d-2}\left(\cup_{j=1}^\ell \mathcal{S}_\phi(\mathcal{M}_{\hat AA_j})\cap \Delta_c\right)>(1-\rho^{(k+k_0)^{1.1}\color{black}})\lambda_{d-2}(\mathcal{M}_{\hat A}(\Delta) \cap \Delta_c).$$
\end{thm}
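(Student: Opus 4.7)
The plan is to attach short freedom-on-LHS matrices to $\hat A$ in successive layers, so that each layer shrinks the non-illuminated mass by a definite multiplicative factor $\rho_0 < 1$, and then stack roughly $(k+k_0)^{1.1}$ layers to reach the stretched-exponential bound.

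First I would reduce illumination to a comparison of $(d-2)$-face volumes. For a fixed $M \in \mathcal{M}_{\hat A A_j}$, the non-illuminated portion of $M\Delta \cap \Delta_c$ consists (up to bounded factors) of points whose $\phi$-line exits through some face $F_i(M)$ with $i \neq 1$. Projecting in direction $\phi$, this set has measure at most
\[
C\, \frac{\sum_{i \neq 1}\lambda_{d-2}(F_i(M))\,|\cos\theta_i|}{\sum_{i=1}^{d}\lambda_{d-2}(F_i(M))\,|\cos\theta_i|}\,\lambda_{d-2}(M\Delta \cap \Delta_c),
\]
where $\theta_i$ is the angle between $\phi$ and the outward normal to $F_i(M)$. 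The $\cos\theta_i$ are bounded above and below by positive constants, uniformly in $k$ and in the branch $M$: indeed, $\phi$ is defined via $\Omega_{\pi_L}^{-1}$ applied to a $V$-column, and by Lemma \ref{lem:output} combined with Condition ** and Lemma \ref{lem:angle to sing}, the $V$-columns of every $M$ in our construction stay exponentially close to $\text{span}(e_1,\dots,e_{d-2})$ while the $W$-columns stay close to $\text{span}(e_{d-1},e_d)$, pinning down each $\theta_i$ to a uniformly positive range. Consequently, if $\lambda_{d-2}(F_1(M))$ is within a fixed multiplicative constant of $\max_i \lambda_{d-2}(F_i(M))$, the non-illuminated fraction is bounded by some $\rho_0 < 1$ independent of $k$.

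Second I would produce at each layer short matrices whose $F_1$-face is comparable to the others. Proposition \ref{prop:big face} supplies a freedom-on-LHS matrix $A$ with $\|A\| \le N$ (a universal constant) such that $\lambda_{d-2}(F_1(\hat A A)) \ge c\max_i \lambda_{d-2}(F_i(\hat A A))$. Combined with Proposition \ref{prop:determined}, the set of points for which such a balanced descendant can be selected has a definite positive proportion $\rho > 0$ inside the face $V(\mathcal{M}_{\hat A})$, and Theorem \ref{thm:most lhs} transfers this positive-measure bound from the face $V(\mathcal{M}_{\hat A})$ to the slice $\mathcal{M}_{\hat A}\Delta \cap \Delta_c$. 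On each balanced descendant, Step 1 gives non-illuminated proportion at most $\rho_0$.

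Third I would iterate. Set $L = \lceil (k+k_0)^{1.1}\rceil$ and stack $L$ layers of such matrices; the cumulative norm $N^L$ is far below the allowed $10^{(2(k+1)+k_0)^6-(k+k_0)^4+(k+k_0)^2}$, so the composite matrices are legitimate stage-$(k+1)$ freedom-on-LHS matrices. At each layer the event ``the descendant is balanced'' satisfies the conditional-probability hypothesis of Proposition \ref{prop:prob decay} with parameter $\rho$, and along successful trajectories the non-illuminated mass is multiplied by $\rho_0$. The first conclusion of Proposition \ref{prop:prob decay} then gives non-illuminated mass bounded by $(1-\rho\rho_0)^{L/2} + L(1-\rho)^{L/2}$, which is $\le \rho^{(k+k_0)^{1.1}}$ for a suitable $\rho < 1$, as required.

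The main obstacle will be Step 1: maintaining the uniform lower bound for $|\cos\theta_i|$ as $M$ ranges over $\mathcal{M}_{\hat A A_j}$ at every stage, given that $\phi$ is fixed at stage 1 while the simplices evolve infinitely. This requires carefully exploiting the invariance property noted in the proof of Theorem \ref{thm:nue} (the $V$-span only shrinks under LHS operations and only the $W$-columns are added to it on RHS, with the added mass exponentially small by Proposition \ref{prop:sizes}), and the symplectic identity $M^T \Omega_{\pi_L} M = \Omega_{\pi_L}$, so that $\phi$ remains uniformly transverse to every descendant face $F_1(M)$ and makes a bounded angle with each other face normal.
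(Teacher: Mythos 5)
Your proposal has the right broad shape — use Proposition \ref{prop:big face} to force $\lambda_{d-2}(F_1)$ to be comparable to the other face volumes, illuminate a definite proportion, then iterate $O((k+k_0)^{1.1})$ times to reach the stretched-exponential bound — but it misses the central technical obstruction that the paper spends Sections \ref{sec:geom}--\ref{sec:repeat} handling: the \emph{conflicted} simplices. The object $\mathcal{M}_{\hat A}\Delta$ is a union over $B_k'$ of simplices $M_{B_k'}\Delta$, and the illuminated set $\mathcal{S}_\phi(\mathcal{M}_{\hat AA_j})$ is cut out by the bounded hyperplanes through the codimension-$2$ faces of $F_1(\mathcal{M}_{\hat AA_j})$ translated in direction $\phi$. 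When you try to iterate, the complement of the illuminated set is not a union of $\mathcal{M}$-type simplices: some $M_{B_k'}\Delta$ straddle these boundary hyperplanes. You cannot re-illuminate such straddling pieces without first refining them, and without a quantitative bound on how many simplices straddle, the discarded sets $W_i$ accumulate and destroy the geometric-series argument. The paper's Proposition \ref{prop:shadow technical} (whose proof is what actually uses Proposition \ref{prop:prob decay}, via repeated halving of the conflicted count through $\pi_s$-via-$\pi'$-isolated paths), Corollary \ref{cor:choose size}, and Lemma \ref{lem:next two} exist precisely to show that after descending by a bounded-norm matrix the number of conflicted simplices drops by a definite factor, so the discarded mass at each layer stays exponentially small. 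Your proposal has no analogue of this step and would not close.

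A secondary problem: your layering mechanism misapplies Proposition \ref{prop:prob decay}. You frame the iteration as a trajectory-wise large-deviations argument (the bound $(1-\rho\rho_0)^{L/2}+L(1-\rho)^{L/2}$), but illumination is a property of a point relative to the fixed family $\mathcal{M}_{\hat AA_j}$, not an event that recurs along a Rauzy orbit; there is no sensible conditional-probability filtration making ``the non-illuminated mass is multiplied by $\rho_0$ along successful trajectories'' into the hypotheses of Proposition \ref{prop:prob decay}. The paper instead runs a deterministic covering iteration: at each of $N(k+k_0)^{1.1}$ layers, Corollary \ref{cor:ready} supplies an illuminated set of proportion $\hat c$ of whatever remains, the conflicted/uncovered part $W_i$ is exponentially small by Lemma \ref{lem:next two}, and the remainder shrinks geometrically. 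Your Step 1 formula (shadow of $F_1$ versus shadow of the whole boundary, weighted by $|\cos\theta_i|$) is essentially correct for a single convex simplex and the transversality of $\phi$ is indeed maintained by Theorem \ref{thm:nue} and Lemma \ref{lem:output} as you suggest, but you also conflate $F_1(M)$ for a single $M\in\mathcal{M}_{\hat AA_j}$ with $F_1(\mathcal{M}_{\hat AA_j})$, which is the face of the union and the one that actually matters.
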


The Theorem will be proved in two stages; the first step in this section and the second in the next.
 The remainder of this section is devoted to proving  (with preliminaries) Proposition~\ref{prop:ready} which says that we obtain a definite proportion of  $\mathcal{M}_{\hat A}\Delta$ in the illuminated set.


\begin{defin}\label{def:ready}  Given $C_0,\theta_0>0$ 
we say a set of matrices $\mathcal{M}$ is LHS $(\phi,C_0,\theta_0)$\emph{-ready for illumination}  if 
\begin{enumerate}
\item $\cup_{M \in \mathcal{M}}M\Delta$ is a single simplex.
\item  $\lambda_{d-2}\cup_{M \in \mathcal{M}}F_1(M)$ 
is at least $C_0$ times the volume of the largest face of $\cup_{M \in \mathcal{M}}M\Delta$.
\item $\Theta(\cup_{M \in \mathcal{M}}F_1(M),\phi)>\theta_0$.
\end{enumerate}
\end{defin}
 Let $N$ be the maximum of the constant in Proposition \ref{prop:big face} and the norms of the matrices $\tilde{A}(x_i,n_i)$ in Corollary \ref{cor:off noninvar}.  
\begin{prop}
\label{prop:illumination}

There exists $\theta_0,C_0,>0$ so that for any matrix $\hat{A}$ of freedom on LHS 
 with $\|\hat{A}\|< \frac 1 N10^{\constFL\color{black}}$
there exists some $\tilde{A}$ with $\|\tilde{A}\|\leq N$ such that the family $\mathcal{M}_{\hat A\tilde{A}}$ is $(\phi,C_0,\theta_0)$ ready for illumination.  

\end{prop}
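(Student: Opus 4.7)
The strategy is to apply Proposition~\ref{prop:big face} on the single matrix $M_0\hat A$, where $M_0=A_1'B_1B_1'\cdots B_k$, and then to lift the conclusion to the whole family $\mathcal{M}_{\hat A\tilde A}$.

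First I would verify condition (1) of Definition~\ref{def:ready}. Every element of $\mathcal{M}_{\hat A\tilde A}$ has the form $M_0B_k'\hat A\tilde A$ for some $B_k'$ of restriction on RHS. Because a freedom-on-LHS matrix $\hat A\tilde A$ has its first $d-2$ columns in $\spanD(e_1,\ldots,e_{d-2})$ and $B_k'$ acts as the identity on this subspace, the first $d-2$ columns of $M_0B_k'\hat A\tilde A$ are independent of $B_k'$. Computing the restriction-on-RHS incidence in the parameter $s$ (the number of times $d$ loses to $d-1$), one finds that $C_{d-1}$ and $C_d$ of $M_0B_k'\hat A\tilde A$ both shift affinely in $s$ along the common direction $C_{d-1}(M_0)$. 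Combined with the prescribed norm range for $B_k'$, this gives $\bigcup_{B_k'}M_0B_k'\hat A\tilde A\Delta$ the structure of a single projective simplex with fixed extreme points $p_1,\ldots,p_{d-2}$ and with $p_{d-1},p_d$ realized at the extremal value of $s$.

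Next, applying Proposition~\ref{prop:big face} to $M_0\hat A$ yields a freedom-on-LHS matrix $\tilde A$ with $\|\tilde A\|\leq N$ such that $\lambda_{d-2}(F_1(M_0\hat A\tilde A))\geq c\max_i\lambda_{d-2}(F_i(M_0\hat A\tilde A))$. To obtain condition (2), I would combine this with Lemma~\ref{lem:RHScomparable}: uniform comparability of $|C_{d-1}|$ and $|C_d|$ across the family implies that every $F_i(\mathcal{M}_{\hat A\tilde A})$ has volume within a universal factor of $F_i(M_0\hat A\tilde A)$, so $F_1$ remains a definite fraction of the largest face of the family, giving condition (2) with an absolute $C_0>0$. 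For condition (3), recall that $\phi$ is the direction of the projection of $\Omega_{\pi_L}^{-1}C_1(A_1'B_1)$ into $\Delta_c\cap\Omega_{\pi_L}^{-1}C_d(A_1'B_1)^{\perp}$. Using the symplectic identity~\eqref{eq:Viana} together with the decomposition $M=A_1'B_1\cdot N$ for each $M\in\mathcal{M}_{\hat A\tilde A}$, the transversality of $\phi$ to the affine span of $\{C_2(M),\ldots,C_d(M)\}$ reduces to a transversality statement involving only the fixed initial matrix $A_1'B_1$; since $A_1'B_1$ does not vary and $C_1(A_1'B_1)$ is linearly independent (modulo the symplectic kernel) from its other columns, this yields a universal $\theta_0>0$.

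The main obstacle is condition (2), namely the quantitative comparison between the family's face volumes and those of the single matrix $M_0\hat A\tilde A$. This pivots on Lemma~\ref{lem:RHScomparable}, which ensures the simplices $M_0B_k'\hat A\tilde A\Delta$ vary by only a uniformly bounded multiplicative factor as $B_k'$ ranges over the prescribed restriction-on-RHS norm range, keeping every $F_i(\mathcal{M}_{\hat A\tilde A})$ within an absolute ratio of its single-matrix counterpart $F_i(M_0\hat A\tilde A)$.
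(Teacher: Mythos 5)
Your plan fails at condition (2), and the failure is structural, not merely a missing estimate. Proposition~\ref{prop:big face} only produces a $\tilde A$ with $\lambda_{d-2}\big(F_1(M\tilde A)\big)>c\,\lambda_{d-2}\big(F_i(M\tilde A)\big)$ for $i\leq d-2$; it says nothing about $F_{d-1}$ or $F_d$. Indeed, for a \emph{single} matrix $M_{B_k'}\hat A\tilde A\in\mathcal{M}_{\hat A\tilde A}$ the face $F_1$ may well be much smaller than $F_{d-1}$ or $F_d$ (this is exactly Case~2 of the proof of Proposition~\ref{prop:big face}, where the smallest singular direction lies near $e_{d-1}-e_d$), so no single-simplex comparison can give you (2). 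The paper's argument for $j\geq d-1$ works for a completely different reason: $F_j(\mathcal{M}_{\hat A\tilde A})=F_j(M\hat A\tilde A)$ for a \emph{single} $M$, while $F_1(\mathcal{M}_{\hat A\tilde A})$ is the union of about $s_k=10^{(2k+2+k_0)^6+(k+k_0)^4}$ subsimplices, and $s_k$ overwhelms the bound $\omega'/\omega\leq C10^{(2k+k_0)^4}$ on singular-value ratios. Your appeal to Lemma~\ref{lem:RHScomparable} does not rescue this: that lemma compares $|C_d(M_1)|$ to $|C_d(M_2)|$ across the family, but it is false that $F_1(\mathcal{M}_{\hat A\tilde A})$ has volume within a universal factor of $F_1(M_0\hat A\tilde A)$ for a single representative — the union face is larger by a factor comparable to $s_k$. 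The asymmetry between $F_1,\ldots,F_{d-2}$ (union faces) and $F_{d-1},F_d$ (single faces) is the crux of the proof, and your lifting step erases it.

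Your treatment of condition (3) is also underdeveloped. You reduce it to ``a transversality statement involving only the fixed initial matrix $A_1'B_1$,'' but the argument in the paper genuinely depends on the current stage: $\phi$ is close in angle to the second smallest output vector $w'$ of $M$ (via Theorem~\ref{thm:nue}, Proposition~\ref{prop:closeangle}, and \eqref{eq:Viana}), and one then needs the freedom to choose $\tilde A$, via Corollary~\ref{cor:off noninvar}, so that $F_1$ makes a definite angle with the relevant singular direction. Asserting that linear independence of $C_1(A_1'B_1)$ from the other columns ``modulo the symplectic kernel'' already gives a universal $\theta_0$ skips the entire mechanism by which $\tilde A$ twists the singular directions away from $F_1$. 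Condition (1) in your write-up is fine and matches the paper.
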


\begin{proof}
We first show that for each matrix of freedom on LHS $\tilde{A}$, 
$\mathcal{M}_{\hat A\tilde{A}}$ satisfies condition (1).

Note that since  $\hat A$ is a LHS matrix and  $B_k'$ is a matrix of restriction on RHS,  the first $d-2$ columns are not effected by $B_k'$. This implies that  for $i\leq d-2$, these columns $C_i(A_1'...B_kB_k'\hat A\tilde{A})$ do not depend on $B_k'$. 
The next observation is that for fixed $A_1',\ldots, B_k$, 
$$\cup_{B_k'}\spanD\big(C_{d-1}(A_1'...B_kB_k'),C_d(A_1'...B_kB_k')\big)$$
 is a line segment. Indeed  setting 
$$s_k=10^{(2k+2+k_0)^6+(k+k_0)^4}$$ applying the matrix $B_k'$ means that the $C_{d-1}$ column is added to the $C_d$ column between $s_k$ and $2s_k$ times.   Taking the union over all these possible times gives a line segment 
$$\spanD\left(s_kC_{d-1}(A_1'...B_k)+C_{d}(A_1'...B_k),2s_k
C_{d-1}(A_1'...B_k)+C_{d}(A_1'...B_k)\right).$$\color{black}  Now since $\hat A\tilde{A}$ is a LHS matrix, each time it adds to  $C_j(A_1...B_kB_k')$ for  $j \in \{d-1,d\}$ it adds the same vector  to both. 
So $\cup_{B_k'}\spanD\left(C_{d-1}(A_1'...B_kB_k'\hat A\tilde{A}),C_d(A_1'...B_kB_k'\hat A\tilde{A})\right)$ is the result of adding the same vector to each point on a line segment. That is, it is a line segment 
$$\spanD(s_kC_{d-1}(A_1'...B_k)+C_{d}(A_1'...B_k)+v,(2 s_k+1)C_{d-1}(A_1'...B_k)+C_{d}(A_1'...B_k)+v)$$ for some $v$\color{black} . This proves Condition (1).

We now verify (2).
We apply  Proposition \ref{prop:big face} which says we  may choose $\tilde{A}$, with $\|\tilde{A}\|<N$ so that $\lambda_{d-2}(F_1(\mathcal{M}_{\hat A\tilde{A}}))$ is at least comparable to $\lambda_{d-2}(F_i(\mathcal{M}_{\hat A\tilde{A}}))$ for all $i\leq d-2$.

We now wish to make the comparison for $F_j$ when $j\geq d-1$. 
If the smallest singular direction is not close to $e_{d-1}- e_d$ then this case is covered as in Proposition \ref{prop:big face}  Case 1  and we see that $\lambda_{d-2}(F_1(\mathcal{M}_{\hat A\tilde{A}}))$ is at least comparable to $\lambda_{d-2}(F_j(\mathcal{M}_{\hat A\tilde{A}}))$. 
If it does make a small angle with $e_{d-1}-e_d$, then since the largest singular input direction is close to $e_{d-1}-e_d$, as in the proof of \eqref{eq:movable} 
 the second smallest input singular direction makes small angle with $e_1\oplus...\oplus e_{d-2}$. 
 
So by Corollary \ref{cor:off noninvar} (as in the proof of Proposition \ref{prop:big face}) we may assume the second smallest input direction of $MA(x,n)$ makes an  angle with $F_1(MA(x,n))$ that is bounded away from $0$. 
Now for 
$j\geq d-1$, $F_j(\mathcal{M}_{A(x,n)})=F_j(MA(x,n))$ for some $M \in \mathcal{M}$, while $F_1$ is formed by the union of more than $s_k=10^{(k+k_0)^6}$ 
such simplices. Since the ratio of second smallest to smallest  satisfies for some $C$,  
$$\frac{\omega'(M)}{\omega(M)}\leq  C10^{(2k+k_0)^4},$$  for all $M \in\mathcal{M}$, and $j\geq d-1$
  we have that
$$\lambda_{d-2}(F_1(M\hat{A}A_i))>10^{-(2k+k_0)^4}\lambda_{d-2}(F_{j}(M\hat{A}A_i)).$$ Since $F_1(\mathcal{M}_{\hat{A}A_i})$ is made of $s_k>>10^{(2k+k_0)^4}$
 such subsimplices and $F_j(\mathcal{M}_{\hat{A}A_i})=F_{j}(M\hat{A}A_i)$ for $j\in \{d-1,d\}$ and each $M\in \mathcal{M}$ we have  that 
 $$\lambda_{d-2}(F_1(M\hat{A}A_i))>s_k10^{-(2k+k_0)^4}\lambda_{d-2}(F_j(M\hat{A}A_i))>\lambda_{d-2}(F_j(M\hat{A}A_i)).$$

We verify  the third condition of ready for illumination.
We chose $\phi$ to be the vector $u$ which was defined to be the projection of $\Omega_{\pi_L}^{-1}(C_1(A_1'B_1))$ to $\Delta_c\cap \Omega_{\pi_L}^{-1}(C_d(A_1'B_1))^\perp$.  For $k_0$ large enough,  by Theorem \ref{thm:nue}, under Condition ** \color{black}  $u$ is close in angle to the projection of $\Omega_{\pi_L}^{-1}(C_1(M))$. By  Proposition \ref{prop:closeangle}  and (\ref{eq:Viana})
 this direction  itself makes small angle with  $w'$, the second smallest output vector of $M$.  Since   by Corollary \ref{cor:off noninvar}, we may assume  $F_1(\cup_{i=1}^l\mathcal{M}_{\hat A\tilde{A}})=M(e_2\oplus....\oplus e_d)\cap \Delta$ makes a definite angle  with either the smallest or second smallest output direction of $M$, we conclude that it makes a definite angle with $u$.

Lastly, we need that the product, $\hat{A}\tilde{A}$, is a matrix of freedom on LHS at step $k+1$. By our choice of $N$ we have that $\|\hat A\tilde{A}\|\leq \|\hat A\|\cdot \|\tilde{A}\|\leq N\|\hat{A}\|$ and so by our bound on $\|\hat{A}\|$ the product is a matrix of freedom on LHS. 
\end{proof}

\begin{prop}
\label{prop:ready}
For all large enough $\zeta>0$, 
 there exists $C>0$ so that given matrix $\hat{A}$ of freedom on LHS satisfying the bound assumption  in Proposition~\ref{prop:illumination} and also satisfies  $\frac 1 \zeta< \frac{|C_i(M\hat{A})|}{|C_{i'}(M\hat{A})|}<\zeta$ for $1\leq i,i'\leq d-2$, then 
$\lambda_{d-1}\mathcal{S}_\phi({\mathcal{M}}_{\hat A\tilde{A}})>C\lambda_{d-1}(\mathcal{M}_{\hat A}\Delta)$,
where $\tilde A$ is given by  Proposition~\ref{prop:illumination}. 
 \end{prop}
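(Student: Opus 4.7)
My plan is to execute three steps: secure a ready-for-illumination descendant of $\hat A$, bound the measure lost in passing to this descendant, and then show the illuminated set is a uniform fraction of the descendant.

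First, I apply Proposition~\ref{prop:illumination} to obtain $\tilde A$ with $\|\tilde A\|\leq N$ such that $\Sigma:=\bigcup_{M\in\mathcal{M}_{\hat A\tilde A}}M\Delta$ is a single simplex, its face $F_1:=F_1(\mathcal{M}_{\hat A\tilde A})$ has $(d-2)$-volume at least $C_0$ times the largest face, and $\Theta(F_1,\phi)\geq\theta_0$. Next, I compare $\lambda_{d-1}(\Sigma)$ with $\lambda_{d-1}(\mathcal{M}_{\hat A}\Delta)$: writing $M=A_1'B_1\cdots B_kB_k'$, Lemma~\ref{lem:volume} gives
\[
\frac{\lambda_{d-1}(M\hat A\tilde A\Delta)}{\lambda_{d-1}(M\hat A\Delta)}=\prod_{i=1}^d\frac{|C_i(M\hat A)|}{|C_i(M\hat A\tilde A)|}.
\]
Since $\tilde A$ is on LHS and $\|\tilde A\|\leq N$, the $\zeta$-balance of $C_1,\dots,C_{d-2}$ of $M\hat A$ yields $|C_i(M\hat A\tilde A)|\leq N\zeta|C_i(M\hat A)|$ for $i\leq d-2$; and by Proposition~\ref{prop:sizes}, $|C_{d-1}(M\hat A)|,|C_d(M\hat A)|$ dominate $N\cdot\max_{j\leq d-2}|C_j(M\hat A)|$ at this stage, so the factor from $i\in\{d-1,d\}$ is bounded below by a universal constant. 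Summing over $B_k'$ gives $\lambda_{d-1}(\Sigma)\geq c_0\,\lambda_{d-1}(\mathcal{M}_{\hat A}\Delta)$ for some $c_0=c_0(d,N,\zeta)>0$.

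The heart of the proof is the last step: showing $\lambda_{d-1}(\mathcal{S}_\phi(\mathcal{M}_{\hat A\tilde A}))\geq c\,\lambda_{d-1}(\Sigma)$ for some $c=c(d,C_0,\theta_0,\zeta)>0$. Placing $F_1$ in the hyperplane $H=\{x_{d-1}=0\}$ and writing the opposite vertex as $v_1=(v_1',h_1)$, a point $y=\sum_{i=1}^d t_i v_i$ of $\Sigma$ (in barycentric coordinates) lies in $\mathcal{S}_\phi$ exactly when $t_j\geq t_1|w_j|$ for every $j\geq 2$ with $w_j<0$, where $(w_j)_{j\geq 2}$ are the barycentric coordinates relative to $F_1$ of $w:=v_1-(h_1/\phi_\perp)\phi\in H$ (with $\phi_\perp$ the component of $\phi$ normal to $H$). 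The linear substitution $t_j\mapsto t_j-t_1|w_j|$ for $j$ with $w_j<0$, followed by rescaling $t_1$, has Jacobian $(1+\alpha)^{-1}$, yielding the identity
\[
\frac{\lambda_{d-1}(\mathcal{S}_\phi(\mathcal{M}_{\hat A\tilde A}))}{\lambda_{d-1}(\Sigma)}=\frac{1}{1+\alpha},\qquad \alpha:=\sum_{j:\,w_j<0}|w_j|,
\]
and the task reduces to bounding $\alpha$. From $|\phi_\perp|\geq\sin\theta_0$ one gets $|w-v_1'|\leq h_1\cot\theta_0$, while the face-area identity $\lambda_{d-2}(F_j)/\lambda_{d-2}(F_1)=\mathrm{dist}(v_1,F_1^{(j)})/\mathrm{dist}_{F_1}(v_j,F_1^{(j)})$ combined with condition (ii) of readiness forces the orthogonal projection $v_1'$ into $H$ to sit within $C_0^{-1}\mathrm{diam}(F_1)$ of $F_1$; the $\zeta$-balance of $C_1,\dots,C_{d-2}$ of $M\hat A$ then controls the aspect ratio of $F_1$ enough to turn these Euclidean bounds into $\alpha\leq C''(d,C_0,\theta_0,\zeta)$.

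The main obstacle will be this final bound on $\alpha$: a scalar area comparison between $F_1$ and the other faces must be converted to a genuine barycentric bound, and this is complicated by the fact that $F_1$ is not a generic simplex but a union of sub-simplices indexed by $B_k'$ lying in a common hyperplane, so one needs to invoke both the $\zeta$-balance hypothesis and the structural description of $F_1$ from the proof of Proposition~\ref{prop:illumination} to prevent the aspect ratio of $F_1$ from being so extreme that a bounded Euclidean displacement yields unbounded negative barycentric coordinates.
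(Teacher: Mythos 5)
Your outline tracks the paper's in its first step (invoke Proposition~\ref{prop:illumination}), diverges usefully in the second, and in the third you flag a gap that is in fact closable --- but not by the route you propose.

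In your second step you compare $\lambda_{d-1}(\Sigma)$ with $\lambda_{d-1}(\mathcal{M}_{\hat A}\Delta)$ directly from Lemma~\ref{lem:volume}: the factor $|C_i(M\hat A)|/|C_i(M\hat A\tilde A)|$ is in $[(N\zeta)^{-1},1]$ for $i\leq d-2$, and in $[1/2,1]$ for $i\in\{d-1,d\}$ by the size dominance from Proposition~\ref{prop:sizes}, so the product over $i$ is uniformly bounded below and summing over $B_k'$ preserves this. The paper instead compares the $(d-3)$-dimensional faces $V(\mathcal{M}_{\hat A\tilde A})$ and $V(\mathcal{M}_{\hat A})$ via Lemma~\ref{lem:face jacobian} and then promotes this to $(d-1)$ dimensions with the first conclusion of Theorem~\ref{thm:most lhs}. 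Both are correct; your direct column-product computation is a genuine shortcut.

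In your third step, the Jacobian identity $\lambda_{d-1}(\mathcal{S}_\phi(\mathcal{M}_{\hat A\tilde A}))/\lambda_{d-1}(\Sigma)=(1+\alpha)^{-1}$ with $\alpha=\sum_{j:\,w_j<0}|w_j|$ is correct, and it is a nice fleshing-out of the paper's terse assertion ``a definite proportion.'' But the closing bound on $\alpha$ does \emph{not} require the $\zeta$-balance or the sub-simplex structure of $F_1$ that you say you would need, and your plan to use them is a detour that would not by itself work. The bound follows from conditions (2) and (3) of readiness alone. Put $H=\mathrm{aff}(F_1)$, $h_1=\mathrm{dist}(v_1,H)$, and for $j\geq 2$ let $F_{1j}\subset F_1$ be the sub-face opposite $v_j$. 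Since $\mathrm{aff}(F_{1j})\subset H$, the apex $v_1$ of the cone $F_j=\mathrm{conv}(v_1,F_{1j})$ lies at height at least $h_1$ above $\mathrm{aff}(F_{1j})$, so
\[
\lambda_{d-2}(F_j)=\tfrac{1}{d-2}\,\mathrm{dist}(v_1,\mathrm{aff}(F_{1j}))\,\lambda_{d-3}(F_{1j})\;\geq\;\tfrac{1}{d-2}\,h_1\,\lambda_{d-3}(F_{1j}),
\]
whence by condition (2), $h_1\lambda_{d-3}(F_{1j})\leq (d-2)C_0^{-1}\lambda_{d-2}(F_1)$. Since $\mathrm{dist}(v_j,\mathrm{aff}(F_{1j}))=(d-2)\lambda_{d-2}(F_1)/\lambda_{d-3}(F_{1j})$, any displacement $u\in H$ with $|u|\leq h_1\cot\theta_0$ has $|u_j|\leq |u|/\mathrm{dist}(v_j,\mathrm{aff}(F_{1j}))\leq C_0^{-1}\cot\theta_0$, and for the orthogonal projection $p_1$ of $v_1$ into $H$ the same identity gives $|p_{1,j}|\leq \lambda_{d-2}(F_j)/\lambda_{d-2}(F_1)\leq C_0^{-1}$. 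Hence $|w_j|\leq |p_{1,j}|+|u_j|\leq C_0^{-1}(1+\cot\theta_0)$ and $\alpha\leq (d-1)C_0^{-1}(1+\cot\theta_0)$. In particular the aspect ratio of $F_1$ cannot degenerate in a harmful way: a direction in which $F_1$ is thin (that is, $\mathrm{dist}(v_j,\mathrm{aff}(F_{1j}))$ small) forces $F_j$ to be large, which condition (2) forbids. With this your proof is complete.
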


 \begin{proof}[Proof of Proposition \ref{prop:ready}] 
 Proposition~\ref{prop:illumination}  says $\lambda_{d-2}F_1(\mathcal{M}_{\hat A\tilde{A}})$ is comparable to $\lambda_{d-2}V(\mathcal{M}_{\hat A\tilde {A}})$.  

Since $\|\tilde A\|\leq N$ and 
$\frac 1 \zeta<\frac{|C_i(M\hat{A})|}{|C_{i'}(M\hat{A})|}<\zeta$  for all $1\leq i,i'\leq d-2$,   Lemma \ref{lem:face jacobian}  implies     $\lambda_{d-2}V(\mathcal{M}_{\hat A\tilde {A}})$ is comparable to  
 $\lambda_{d-2}V(\mathcal{M}_{\hat A})$.
Now  
the first conclusion of Theorem \ref{thm:most lhs} says that there is a constant $C$ such that  $
\lambda_{d-3}\left(V({\mathcal{M}}_{\hat A\tilde{A}})\right)>\delta
V(\mathcal{M}_{\hat A})$ implies $$\lambda_{d-1}\left(({\mathcal{M}}_{\hat A\tilde{A}})\Delta\right)>\frac{\delta}{C}\lambda_{d-1}(\mathcal{M}_{\hat A}\Delta).$$ \color{black}

Since  the direction $\phi$ makes an angle bounded away from the face $F_1$, we conclude  that the measure of $\mathcal{S}(\mathcal{M}_{\hat{A}\tilde{A}})$ is a definite proportion of the measure of $\mathcal{M}_{\hat{A}\tilde{A}}\Delta$ and hence of  the measure of $\mathcal{M}_{\hat{A}}\Delta$ which  finishes the proof of Proposition~\ref{prop:ready}.
\end{proof}
By Theorem \ref{thm:nue} we may assume that $V(\mathcal{M}_{\hat{A}}) \subset \cup_{c<.05}\Delta_c$ and $W(\mathcal{M}_{\hat{A}})\subset \cup_{c>.95}\Delta_c$. Since $F_1(\mathcal{M}_{\hat{A}})$ is a simplex, it follows that the volume of $\Delta_c\cap F_1(M)$ is comparable for all $c\in [.1,.9]$. Since our directions are parallel to $\Delta_c$ we obtain analogously to above:
\begin{cor} 
\label{cor:ready} There exists  $\zeta',K$ and 
 $\hat{c}>0$ so that for all $c\in [.1,.9]$ and large enough $k_0$, for any matrix $\hat{A}$ of freedom on LHS, with $\frac 1 \zeta< \frac{|C_i(M\hat{A})|}{|C_{i'}(M\hat{A})|}<\zeta$ for $1\leq i,i'\leq d-2$
 and $\|\hat{A}\|< \frac 1 K10^{\constFL\color{black}}$ 
  there exists  matrix 
 $\tilde {A}$ so that 
 \begin{enumerate}
\item\label{cond:still freedom} $\hat{A}\tilde{A}$ is a matrix of freedom on LHS
\item
$\lambda_{d-2}\left(\mathcal{S}_\phi(\mathcal{M}_{\hat A\tilde{A}}) \cap \Delta_c\right)>\hat{c}\lambda_{d-2}(\mathcal{M}_{\hat A}\Delta \cap \Delta_{c})$
\item $\frac{|C_{i}(M\hat{A}A_j)|}{|C_{i'}(M\hat{A}\tilde{A})|}<\zeta'$ for all $i,i'\leq d-2$. 
\end{enumerate}
\end{cor}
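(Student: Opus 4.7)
The plan is to reduce the corollary to Proposition \ref{prop:ready} using the slicing observation in the paragraph immediately preceding it. First I would select $\tilde{A}$ by applying Proposition \ref{prop:illumination}, which produces a matrix with $\|\tilde{A}\| \leq N$ such that $\mathcal{M}_{\hat{A}\tilde{A}}$ is $(\phi, C_0, \theta_0)$-ready for illumination. Since $\|\hat{A}\tilde{A}\| \leq N\|\hat{A}\|$, the hypothesis $\|\hat{A}\| < \frac{1}{K}10^{\constFL}$ with $K \geq N$ guarantees $\hat{A}\tilde{A}$ is still a matrix of freedom on LHS, giving conclusion (1).

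For conclusion (3), note that throughout freedom on LHS only columns $C_j$ with $j \leq d-2$ are ever added to columns $C_i$ with $i \leq d-2$ (the first interval never loses to $d-1$ or $d$). Hence for $i \leq d-2$, the vector $C_i(M\hat{A}\tilde{A})$ is a nonnegative integer combination of $\{C_j(M\hat{A}) : j \leq d-2\}$ with total coefficient mass at most $\|\tilde{A}\| \leq N$, while the coefficient on $C_i(M\hat{A})$ itself is at least $1$. This yields
$$\frac{|C_i(M\hat{A}\tilde{A})|}{|C_{i'}(M\hat{A}\tilde{A})|} \leq \frac{N \max_{j\leq d-2}|C_j(M\hat{A})|}{\min_{j\leq d-2}|C_j(M\hat{A})|} \leq N\zeta,$$
so $\zeta' := N\zeta$ works.

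For conclusion (2) — the main point — I would combine Proposition \ref{prop:ready} with the slice geometry already described. By Theorem \ref{thm:nue}, for $k_0$ large, $V(\mathcal{M}_{\hat{A}\tilde{A}}) \subset \cup_{c<.05}\Delta_c$ while $W(\mathcal{M}_{\hat{A}\tilde{A}}) \subset \cup_{c>.95}\Delta_c$; hence the simplex $\mathcal{M}_{\hat{A}\tilde{A}}\Delta$, together with its distinguished face $F_1(\mathcal{M}_{\hat{A}\tilde{A}})$, enjoys a prism-like geometry transverse to the family $\{\Delta_c\}$. In particular, for all $c \in [.1,.9]$ the $(d-2)$-dimensional volumes $\lambda_{d-2}(\mathcal{M}_{\hat{A}\tilde{A}}\Delta \cap \Delta_c)$ and the $(d-3)$-dimensional volumes $\lambda_{d-3}(F_1(\mathcal{M}_{\hat{A}\tilde{A}}) \cap \Delta_c)$ are uniformly comparable (up to a dimension-dependent constant) to their values at any fixed $c_0 \in [.1,.9]$. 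Moreover, by construction in Section \ref{sec:plane choice}, $\phi = u$ was defined as a projection onto $\Delta_c$, so $\phi$ is tangent to $\Delta_c$. Therefore a line in direction $\phi$ through any $y \in \Delta_c$ stays in $\Delta_c$, and the illuminated set splits as $\mathcal{S}_\phi(\mathcal{M}_{\hat{A}\tilde{A}}) \cap \Delta_c = \{y \in \mathcal{M}_{\hat{A}\tilde{A}}\Delta \cap \Delta_c : y + \mathbb{R}\phi \text{ meets } F_1(\mathcal{M}_{\hat{A}\tilde{A}}) \cap \Delta_c\}$.

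The main obstacle is establishing a uniform lower bound on the illuminated area inside each individual slice. This follows from the three ingredients now in place: Proposition \ref{prop:illumination}(2) gives that $\lambda_{d-3}(F_1(\mathcal{M}_{\hat{A}\tilde{A}}) \cap \Delta_c) \geq C_0 \lambda_{d-3}(V(\mathcal{M}_{\hat{A}\tilde{A}}) \cap \Delta_c)$ up to the uniform prism comparison above; Proposition \ref{prop:illumination}(3) gives $\Theta(F_1(\mathcal{M}_{\hat{A}\tilde{A}}),\phi) \geq \theta_0$, so the linear map sending $(y_0, t) \in (F_1 \cap \Delta_c) \times [0, L_{\max}]$ to $y_0 + t\phi$ has Jacobian bounded below by $\sin\theta_0$; and the bound $\|\tilde A\|\leq N$ via Lemma \ref{lem:face jacobian} compares $\lambda_{d-3}(V(\mathcal{M}_{\hat A\tilde A}))$ to $\lambda_{d-3}(V(\mathcal{M}_{\hat A}))$ up to a factor depending only on $\zeta$ and $N$. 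Putting these together in each slice $\Delta_c$ produces a uniform constant $\hat{c}$ independent of $c \in [.1,.9]$ and of $\hat{A}$, completing conclusion (2).
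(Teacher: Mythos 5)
Your argument follows the paper's route (apply Proposition \ref{prop:illumination} to choose $\tilde A$, use the norm bound for (1), use $\zeta$-balance plus $\|\tilde A\|\le N$ to get $\zeta'=N\zeta$ for (3), and use the prism observation in the paragraph preceding the corollary together with Proposition \ref{prop:ready} transferred to the slices $\Delta_c$ for (2)). One small slip worth flagging: $\lambda_{d-3}\bigl(V(\mathcal{M}_{\hat A\tilde A})\cap\Delta_c\bigr)=0$ for $c\in[.1,.9]$ since $V(\mathcal{M}_{\hat A\tilde A})\subset\cup_{c<.05}\Delta_c$, so the quantity you should be comparing $\lambda_{d-3}(F_1(\mathcal{M}_{\hat A\tilde A})\cap\Delta_c)$ against is the slice of the largest face of the simplex (or of $\mathcal{M}_{\hat A\tilde A}\Delta$ itself), not the slice of $V$ — the argument goes through regardless once that is corrected.
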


The third condition is the only condition that is not immediate. This follows because $\hat{A}$ is $\zeta$-balanced and the matrices in $\mathcal{A}$ have norm at most $N$. (In particular, we can let $\zeta'$ be $N\zeta$ with $N$ as before Definition \ref{def:ready}.)

\section{Repeating to illuminate}\label{sec:repeat}
The result in the last section says a definite proportion of $\mathcal{M}_{\hat A}$ is illuminated.  We have to improve this to all but $\rho \color{black}^{(k+k_0)^{\frac 1 2 }}$ of our simplex is illuminated for some $\rho<1$. We begin with a lemma. 

\begin{lem}  There is $b=b(d) \in \mathbb{N}$ so that $\mathcal{S}(\mathcal{M_{\hat A}})$ is a convex set bounded by at most $b$ faces. 
\end{lem}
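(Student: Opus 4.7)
The plan is to identify $\mathcal{S}(\mathcal{M}_{\hat A})$ as the intersection of two convex polytopes whose facet counts depend only on $d$.

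First I would verify that $\mathcal{M}_{\hat A}\Delta$ is itself a single $(d-1)$-simplex. As $B_k'$ ranges over restriction matrices on RHS at stage $k$, it only alters the last two columns, adding an integer multiple $s\in[s_k,2s_k]$ of $C_{d-1}$ to $C_d$; since $\hat A$ is a freedom LHS matrix it leaves the last two columns untouched. Thus the first $d-1$ columns of $M_{B_k'}$ do not depend on $B_k'$, while the normalised $d$-th vertex $p_d(s)$ traces the projective line through $p_d(s_k)$ and $p_{d-1}$ (the latter being $\lim_{s\to\infty} p_d(s)$). In the affine simplex this line is straight, and $p_d(2s_k)$ lies between $p_d(s_k)$ and $p_{d-1}$, so each $M_{B_k'(s)}\Delta$ with $s>s_k$ is contained in the outer simplex $M_{B_k'(s_k)}\Delta$. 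Consequently $\mathcal{M}_{\hat A}\Delta=M_{B_k'(s_k)}\Delta$ is a single $(d-1)$-simplex bounded by exactly $d$ facets.

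Next, unwinding definitions, $\mathcal{S}(\mathcal{M}_{\hat A})=\mathcal{M}_{\hat A}\Delta\cap\bigl(F_1(\mathcal{M}_{\hat A})+\mathbb{R}\phi\bigr)$: a point is illuminated iff it lies in the simplex and on some line through $F_1$ in direction $\phi$. The face $F_1(\mathcal{M}_{\hat A})$ is a $(d-2)$-simplex with $d-1$ codimension-one subfaces. When $\phi$ is transverse to its affine hull, the cylinder $F_1(\mathcal{M}_{\hat A})+\mathbb{R}\phi$ is a $(d-1)$-dimensional convex polytope with exactly $d-1$ facets, one obtained by extending each codimension-one subface of $F_1$ along $\phi$; if $\phi$ degenerates into the affine hull of $F_1$ the count only drops.

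Finally, $\mathcal{S}(\mathcal{M}_{\hat A})$ is the intersection of two convex polytopes, hence convex, and every facet of the intersection is contained in a facet of one of the two. This bounds the number of facets by $d+(d-1)=2d-1$, so we may take $b(d):=2d-1$. The substantive point is the opening observation that $\mathcal{M}_{\hat A}\Delta$ is itself a simplex; once that is in hand, convexity and the facet count are immediate.
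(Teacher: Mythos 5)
Your high-level approach is the paper's: $\mathcal{S}(\mathcal{M}_{\hat A})$ is the intersection of the simplex $\mathcal{M}_{\hat A}\Delta$ (with $d$ facets) and the prism $F_1(\mathcal{M}_{\hat A})+\mathbb{R}\phi$ (with $d-1$ facets, one over each codimension-one subface of $F_1$). The paper describes the prism's bounding hyperplanes as ``the affine subspaces through the faces of $F_1$ that also contain the direction $\phi$,'' and your count $b(d)=2d-1$ usefully makes the bound explicit where the paper leaves $b$ unspecified.

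However, your argument that $\mathcal{M}_{\hat A}\Delta$ is a single simplex contains a genuine error. A restriction matrix $B_k'$ on the RHS has $d-1$ beating $d$ some $\ell\in[s_k,2s_k]$ times \emph{and then} $d$ beating $d-1$ once, so it changes \emph{both} of the last two columns, sending $(C_{d-1},C_d)$ to $\bigl((\ell+1)C_{d-1}+C_d,\;\ell C_{d-1}+C_d\bigr)$; moreover the freedom-LHS matrix $\hat A$ applied afterward does alter the last two columns, adding the same vector $v$ (from the first $d-2$ columns) to each. Thus both $p_{d-1}(\ell)$ and $p_d(\ell)$ move with $\ell$, and in fact $p_{d-1}(\ell)=p_d(\ell+1)$: the simplices $M_{B_k'(\ell)}\Delta$ for consecutive $\ell$ are adjacent, meeting only along a shared facet, rather than nested. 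Your claim $M_{B_k'(\ell)}\Delta\subset M_{B_k'(s_k)}\Delta$ for $\ell>s_k$ is false, and $\mathcal{M}_{\hat A}\Delta$ is not $M_{B_k'(s_k)}\Delta$; it is the strictly larger simplex with the same $d-2$ vertices in $V(\mathcal{M}_{\hat A})$ and whose last two vertices are the two \emph{endpoints} $p_d(s_k)$ and $p_{d-1}(2s_k)$ of the line segment swept out by $\spanD(C_{d-1},C_d)$ as $\ell$ varies. That this union is a simplex is exactly what the paper establishes in verifying Condition (1) of ready-for-illumination in the proof of Proposition~\ref{prop:illumination}, but it requires the ``segments tile a longer segment'' picture rather than the nesting picture you used. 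The rest of your proof is sound once that fact is granted.
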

\begin{proof} The simplex $\mathcal{M}_{\hat A}\Delta$ is a convex set bounded by $d$ faces. 
 The illuminated set  is formed by intersecting it with the affine subspaces through the faces of $F_1$ (which are codimension 2 in $\mathcal{M}_{\hat A}\Delta$) that also contain the direction $\phi$. 
\end{proof}

This lemma implies that $\mathcal{S}(\mathcal{M_{\hat A}})$ is the intersection of at most $b$ half spaces each of which is bounded by a hyperplane $H$. This motivates us to study such regions intersected with $\mathcal{M}_{\hat A}(\Delta)$ and $\mathcal{M}_{\hat A}\Delta \cap \Delta_c$. 
For $H$ a bounding hyperplane let $H_c=H \cap \Delta_c$.

 Let $$W_{\hat A}=
   \cup_{M \in \mathcal{M}_{\hat A}}\spanD(C_{d-1}(M),C_d(M)).$$  
 In other words 
 $W_{\hat A}$ is the  line segment such that the span of it with $V(\mathcal{M}_{\hat A})$  is   $\mathcal{M}_{\hat A}\Delta$.  
  Consequently for each $p\in W_{\hat A}$,  with $p \neq C_j(M)$ for all   $j\geq d-1$ and $M \in \mathcal{M}_{\hat{A}}$,  there is a unique $M\in\mathcal{M}_{\hat A}$ such that $p\in \spanD(C_{d-1}(M),C_d(M))$

We say  $M\in\mathcal{M}_{\hat A}$ is {\em conflicted}  if there is $p\in \spanD(C_{d-1}(M),C_d(M))$
 and $x\in V(\mathcal{M}_{\hat A})$ such that $\spanD(x,p)\cap H_c\neq\emptyset$.
Let
$\mathcal{W}_c(\mathcal{M}_{\hat A})$ be the set  of conflicted $M$. 

For each  $M\in\mathcal{M}_{\hat A}$ 
we can define a function on $V(M)$ to   subsets of $W_{\hat A}$ by 
$$\beta_{c,\hat A}(x)=\{p\in W_{\hat A}:\spanD(x,p)\cap H_c\neq\emptyset\}$$
if this set is nonempty. Otherwise $\beta_{c,\hat A}(x)$ is not defined.  
Now for each $p\in W_{\hat A}$ let   $$X_c(p)=\{x\in V(M\hat{A}): \beta_c(x)=p\}.$$
 \begin{lem}\label{lem:1interval} For any subinterval $I\subset W_{\hat A}$  we have $\cup_{p\in I} X_c(p)$ is  connected.
 \end{lem}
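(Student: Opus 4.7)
The plan is to prove the stronger statement that $U := \bigcup_{p \in I} X_c(p)$ is in fact \emph{convex}, which immediately implies connectedness. I would start by fixing $x_0, x_1 \in U$ and picking witnesses $p_i \in I$ together with $q_i \in \spanD(x_i, p_i) \cap H_c$ for $i=0,1$. Let $w_0, w_1$ denote the endpoints of the subinterval $I \subset W_{\hat A}$, and introduce the tetrahedron $T := \operatorname{conv}\{x_0, x_1, w_0, w_1\}$. By Theorem~\ref{thm:nue} together with the normalization $V(\mathcal M_{\hat A}) \subset \bigcup_{c' < .05}\Delta_{c'}$ and $W_{\hat A} \subset \bigcup_{c' > .95}\Delta_{c'}$ recorded just before the lemma, the affine hulls of $V(M\hat A)$ and $W_{\hat A}$ are disjoint, so the four vertices of $T$ are affinely independent. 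Consequently every $q \in T$ admits a unique barycentric representation $q = a_0 x_0 + a_1 x_1 + b_0 w_0 + b_1 w_1$ with non-negative coefficients summing to $1$.

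The key geometric input is that $I \cap H_c = \emptyset$: for $c \in [.1, .9]$ the slice $\Delta_c$ is disjoint from $\bigcup_{c' > .95}\Delta_{c'} \supset I$, while $H_c \subset \Delta_c$. This guarantees that for every $q \in T \cap H_c$ one has $a_0(q) + a_1(q) > 0$, since $a_0 = a_1 = 0$ would force $q \in \operatorname{conv}\{w_0, w_1\} = I$. I then define
\[
\tau : T \cap H_c \to [0,1], \qquad \tau(q) = \frac{a_1(q)}{a_0(q) + a_1(q)},
\]
which is continuous on the convex (hence connected) set $T \cap H_c$. Because $q_0$ lies in $\spanD(x_0, p_0)$ and $p_0 \in \operatorname{conv}\{w_0, w_1\}$, its barycentric expansion satisfies $a_1(q_0)=0$, so $\tau(q_0) = 0$; symmetrically $\tau(q_1) = 1$. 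By the intermediate value theorem, $\tau$ surjects onto $[0,1]$.

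To finish, I fix $t \in [0,1]$, pick $q \in T \cap H_c$ with $\tau(q) = t$, and let $\alpha := a_0(q)+a_1(q) \in (0,1]$. Then $a_0(q)=\alpha(1-t)$, $a_1(q)=\alpha t$, so grouping terms gives $q = \alpha x_t + (1-\alpha) u$, where $x_t := (1-t)x_0 + t x_1$ and $u := (b_0 w_0 + b_1 w_1)/(1-\alpha) \in I$. The case $\alpha = 1$ is impossible: it would force $q = x_t \in V \subset \bigcup_{c' < .05}\Delta_{c'}$, contradicting $q \in H_c \subset \Delta_c$. Hence $q \in \spanD(x_t, u) \cap H_c$, which witnesses $u \in \beta_c(x_t) \cap I$ and therefore $x_t \in U$. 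This shows $[x_0, x_1] \subset U$, so $U$ is convex and in particular connected.

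The only delicate point is establishing the height separation $I \cap H_c = \emptyset$, which legitimizes the rational function $\tau$ on the whole of $T \cap H_c$; the rest is barycentric bookkeeping combined with the intermediate value theorem.
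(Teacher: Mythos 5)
Your argument is correct and, in fact, proves the stronger statement that $\bigcup_{p\in I}X_c(p)$ is convex, but it takes a genuinely different route from the paper. The paper's proof reduces to showing that the set meets every line segment in a connected set and then simply asserts that $\beta_{c,\hat A}$ varies monotonically along line segments; the monotonicity itself is left to the reader. You instead give an explicit interpolation: given witnesses $(p_0,q_0)$ for $x_0$ and $(p_1,q_1)$ for $x_1$, you sweep a one--parameter family of section points $q \in T\cap H_c$ via the barycentric ratio $\tau = a_1/(a_0+a_1)$ and use the intermediate value theorem on the convex set $T\cap H_c$ to manufacture, for each $x_t$ on $[x_0,x_1]$, a witness $u_t\in I$ with $\spanD(x_t,u_t)\cap H_c\neq\emptyset$. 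This buys you a fully self-contained proof (no unproved monotonicity lemma) at the cost of one extra construction, and it makes the separation hypothesis $I\cap H_c=\emptyset$ (which underlies the positivity $a_0+a_1>0$) appear explicitly where it is actually used.

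One small imprecision worth repairing: you justify the affine independence of $\{x_0,x_1,w_0,w_1\}$ solely by the disjointness of the affine hulls of $V(M\hat A)$ and $W_{\hat A}$, but disjointness of two affine flats does not by itself rule out a direction of one lying in the direction space of the other (two disjoint parallel lines give four affinely dependent points). What actually makes the four points affinely independent here is that $V$ and $W$ are opposite faces of the non-degenerate simplex $\mathcal M_{\hat A}\Delta$, so the line through $w_0,w_1$ is skew to any line through two points of $V$. You should also note the degenerate cases $x_0=x_1$ (trivial) and $w_0=w_1$ (replace the tetrahedron by the triangle $\operatorname{conv}\{x_0,x_1,w_0\}$ and run the same $\tau$ argument with three barycentric coordinates). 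With these two remarks the proof is complete.
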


 \begin{proof} It suffices to prove that for each line segment $\vec{\ell}$ it is the case that   $\vec{\ell} \cap \cup_{p\in I} X_c(p)$ is connected. This follows from  the fact that on  line segments $\beta_{c,\hat A}$ changes monotonically. 
 \end{proof}

For the next definition, recall that $V(M\hat A)$ is independent of $M \in \mathcal{M}$.
Now  given $\hat A, K,\zeta,\tau$  and $\ell$, 
let $G(\hat A,K,\zeta,\ell,\tau)$ be the set of $y \in V(M\hat A)$  such that there exists $m$   so that for some  $M\hat A\in\mathcal{M}_{\hat A} $, with $M\hat{A}=M(x,r)$  
\begin{itemize}
\item $\|A(R^ry,m)\|<K^\ell$,
\item $ |\mathcal{W}_c(\mathcal{M}_{\hat AA(R^ry,m)})|<\max\{\frac 2 3 ^{-\frac{\tau}{2}\ell}|\mathcal{M}_{\hat A}|,2\}$
\item $\frac{|C_j(M\hat AA(R^ry,m))|}{|C_i(M\hat AA(R^ry,m))|}<\zeta$ for all $1\leq i,j\leq d-2$
\item the permutation of $R^ry$ is $\pi_L$
\end{itemize}

\begin{prop}\label{prop:shadow technical}There exists $K,\zeta,C>1,\tau$ and $\rho<1$ so that
for all $\ell$ and $\hat A$ \color{black}

$$\lambda_{d-3}\big(G(\hat A,K,\zeta,\ell,\tau)\big)>(1-C\rho^\ell) \lambda_{d-3}(V(M\hat A))$$
\end{prop}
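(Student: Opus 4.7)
The plan is to iterate Corollary~\ref{cor:ready} while tracking the conflicted matrices, and package the iteration into a large-deviations argument using Proposition~\ref{prop:prob decay}. As a first reduction, since $\mathcal{S}_\phi(\mathcal{M}_{\hat A})$ is cut out by at most $b=b(d)$ bounding hyperplanes, it suffices to prove the bound with $H_c$ coming from a single fixed bounding hyperplane, and then take a union bound over the at most $b$ possibilities; so I fix one $H$ throughout.

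The heart of the argument is the following geometric \emph{reduction step}: for every matrix $\hat A$ of freedom on LHS satisfying the balance and norm hypotheses of Corollary~\ref{cor:ready}, I claim there is a matrix $\tilde A$ with $\|\tilde A\|\leq N$ (the constant of Proposition~\ref{prop:illumination}) such that $\hat A\tilde A$ is still freedom on LHS, is $\zeta'$-balanced, and
\[
|\mathcal{W}_c(\mathcal{M}_{\hat A\tilde A})|\;\leq\;\tfrac{2}{3}\,|\mathcal{W}_c(\mathcal{M}_{\hat A})|,
\]
unless $|\mathcal{W}_c(\mathcal{M}_{\hat A})|\leq 2$ already. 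The reason is that applying $\tilde A$ illuminates a definite proportion $\hat c$ of $\mathcal{M}_{\hat A}\Delta\cap\Delta_c$, and by the monotonicity of $\beta_{c,\hat A}$ along line segments recorded in Lemma~\ref{lem:1interval}, the set of $p\in W_{\hat A}$ for which some $x\in V(\hat A)$ has $\spanD(x,p)\cap H_c\neq\emptyset$ forms a subinterval of $W_{\hat A}$ whose proportional length in $W_{\hat A}$ is controlled by the proportion of non-illuminated points in $\Delta_c$. Since slabs $\spanD(C_{d-1}(M),C_d(M))$ are arranged consecutively along $W_{\hat A}$ with roughly uniform widths (by Lemma~\ref{lem:RHScomparable}), reducing the length of this interval by at least a factor $\hat c$ eliminates at least a proportional fraction of conflicted $M$'s. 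Iterating the geometric argument along the line $W_{\hat A}$ is where most of the technical work lies.

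Given this reduction step, I build the probabilistic iteration exactly as in the proof of Proposition~\ref{prop:balanced}. Define random variables $F_j:V(M\hat A)\to\{0,1\}$ so that $F_j(y)=1$ iff in the $j$-th Rauzy sub-block at $y$ we have (a) permutation returning to $\pi_L$, (b) the matrix of the sub-block is $\zeta$-balanced of norm in $[K^{j-1},K^j]$, and (c) the sub-block accomplishes the reduction step above. Combining Lemma~\ref{lem:bal often}, Lemma~\ref{lem:bal distort}, Proposition~\ref{prop:determined} (used to force the permutation $\pi_L$) and Corollary~\ref{cor:ready}, the conditional probability $\mathbb{P}(F_j=1\mid F_1,\dots,F_{j-1})$ is bounded below by some absolute $\rho_0>0$. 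Since $|\mathcal{M}_{\hat A}|$ is a fixed number depending on $k$, after $\lceil c\ell\rceil$ successes (for a suitable small $c>0$) the conflicted count has dropped below $\max\{(2/3)^{-\tau\ell/2}|\mathcal{M}_{\hat A}|,2\}$, and the accumulated norm is at most $K^\ell$ by construction. Proposition~\ref{prop:prob decay} then gives $C\rho^\ell$ as a bound on the measure of $y$'s where fewer than $c\ell$ successes occur in the first $\ell$ sub-blocks, and Lemma~\ref{lem:face jacobian} (together with $\zeta$-balance) transfers this bound from $\mathcal{M}_{\hat A}\Delta\cap\Delta_c$ to $V(M\hat A)$ up to a constant, which is exactly the conclusion.

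The main obstacle I expect is the quantitative geometric claim in the reduction step, specifically showing that a definite fraction of the conflicted slabs $M$ become unconflicted after applying $\tilde A$ from Corollary~\ref{cor:ready}. The naive statement — that the conflicted interval in $W_{\hat A}$ shrinks by the same proportion as the illuminated set grows — requires careful use of the convexity of $\mathcal{S}_\phi$ (the preceding lemma on $b$ faces), the fact that $\phi$ makes a definite angle with $F_1$ (Proposition~\ref{prop:illumination}(3)), and uniform control of the widths and orderings of the slabs across $\mathcal{M}_{\hat A}$ via Lemma~\ref{lem:RHScomparable}; keeping the $\zeta'$-balance under iteration (so the reduction step keeps applying with the same constants) is the technical bookkeeping that has to be done honestly.
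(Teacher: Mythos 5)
Your reduction step is where the argument breaks, and it breaks for a structural reason rather than a bookkeeping one: applying the matrix $\tilde A$ produced by Corollary~\ref{cor:ready} does not fix the line segment $W_{\hat A}$. During freedom on LHS the symbols $d-1$ and $d$ can lose to other symbols, which modifies $C_{d-1}$ and $C_d$; so passing from $\mathcal{M}_{\hat A}$ to $\mathcal{M}_{\hat A\tilde A}$ in general shifts all of the slabs $\spanD(C_{d-1}(M),C_d(M))$ along the $\{d-1,d\}$-direction, and with them the endpoints of the ``conflicted interval.'' The monotonicity of $\beta_{c,\hat A}$ from Lemma~\ref{lem:1interval} gives you an interval structure for a \emph{fixed} $W_{\hat A}$, but once $W_{\hat A}$ moves you lose the coupling between shrinking $V$ and shrinking $\mathcal{W}_c$. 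The paper's proof avoids exactly this by using the ``$\pi_s$ via $\pi'$ isolated'' paths, chosen precisely because no column is ever added to $C_{d-1}$ or $C_d$ during them, so $W_{\hat A}$ is unchanged and the interval argument survives.

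The second gap is quantitative even granting fixed $W_{\hat A}$: you assert that illuminating an additional $\hat c$ fraction of $\mathcal{M}_{\hat A}\Delta\cap\Delta_c$ shrinks the conflicted count by the same factor $\hat c$, but Corollary~\ref{cor:ready} is a volume estimate about $\mathcal{S}_\phi$ while $|\mathcal{W}_c|$ is a count of slabs crossed by the fixed hyperplane $H_c$, and these are not proportionally related. The paper instead proves a clean combinatorial halving: it selects the median $M_{mid}(\hat A)$ of the conflicted set, and uses Corollary~\ref{cor:avoid hyp} (via the isolated paths) to show that with definite conditional probability the next $V$ is disjoint from the hyperplane $\beta_{c,\hat A}^{-1}(C_d(M_{mid}(\hat A)))$; combined with the monotonicity of $\beta_{c,\hat A}$ this removes all conflicted slabs on one side of the median, i.e.\ at least half of them. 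That halving is what feeds into Proposition~\ref{prop:prob decay}. Your probabilistic scaffolding (sub-blocks, balance, norms, Proposition~\ref{prop:prob decay}) is the right framework, but you need the isolated paths to freeze $W_{\hat A}$ and the median/hyperplane-avoidance mechanism from the appendix to get a definite combinatorial reduction; illumination alone does not give it.
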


\begin{proof}

 We will  need the following family of  paths.
    Let $\pi'$ be the permutation   on the LHS  $$\begin{pmatrix}1&3&\dots &d&2\\d&d-1&\dots&2&1\end{pmatrix},$$
one step before $\pi_s$.   We consider paths $\gamma(x,n)$ of permutations  of length $n$ starting at $x$ with permutation   $\pi_L$ that go through $\pi'$ and return to $\pi_s$ in one step with $1$ beating $2$, and this is the only  time of going from $\pi'$ to $\pi_s$.  We call these $\pi_s$ via $\pi'$ {\em isolated}. The point of this definition is that no columns are added to the last two columns. 
Thus the interval  $W_{\hat A}$ does not change.  
Let $M_{mid}(\hat A)\in\mathcal{W}_c(\mathcal{M}_{\hat A})$ be chosen so that  
\begin{multline} \label{eq:mid division}
 |\{M\in \mathcal{W}_c(\mathcal{M}_{\hat A}):M \geq M_{mid}(\hat A)\}|-1\leq
 \\ |\{M\in \mathcal{W}_c(\mathcal{M}_{\hat A}):M< M_{mid}(\hat A)\}|\leq |\{M\in \mathcal{W}_c(\mathcal{M}_{\hat A}):M\geq M_{mid}(\hat A)\}|.
 \end{multline}

Note $\beta_c^{-1}(C_d(M_{mid}(\hat A))\subset V(M\hat A)$ is a hyperplane

Fix  $K'$ to be determined later.  For any $n$, let $E_n$ be the set of $y \in V(M\hat A):\exists p \text{ so that }$
\begin{itemize}
\item The permutation $R^{p+r-2}y$ is  $\pi_s$ and  $R^{p+r}y$ is  $\pi_L$ 
\item $\frac{|C_j(\hat AA(R^ry,p))|}{|C_i(\hat AA(R^ry,p)|}<\zeta$ for $1\leq i,j\leq d-2$
\item $\|A(R^ry,p)\|\in [K'^{2n},K'^{2n+1}]$
\end{itemize}
Now given $E_n$,  let $\hat{E}_n$ be the set of $ x  \in E_n:\exists m $ so that $x\in V(M\hat{A}A(y,p))$ 
with $y,p$ as in the definition of $E_n$ and so that 
\begin{enumerate}[label=\roman*]
\item the path $\gamma(m+2,R^{r+p-2} x  )$ is $\pi_s$ via $\pi'$ isolated.
\item $\|A(R^{p+r}x  ,m)\|<K'$
 \item  \label{conc:avoid mid}  $\beta_{c,\hat A}^{-1}(C_d(M_{mid} (\hat AA(R^rx \color{black} ,p))$ 
 and $V(M\hat AA(R^rx  ,p+m))$ are disjoint

  \color{black}
 
\end{enumerate}

If $K'$ is large enough, then by  Lemma~\ref{lem:bal often}   there exists $\tau_1>0$ so that given any outcome of $\hat{E}_1,...,\hat{E}_{i-1}$, the conditional probability is at least $\tau_1$ that $ x \in E_i$.

We now show that there exists $\tau_2>0$  so that for $K''$ large enough, given  the  hyperplane 
 $\beta_c^{-1}(C_d(M_{mid}(\hat AA(R^r x  ,p))$, the conditional probability that  $y\in \hat{E}_i$,  given any outcomes of $\hat{E}_1,....,\hat{E}_{i-1}$ and $x  \in E_i$ 
is at least $\tau_2>0$. Indeed, we apply Corollary \ref{cor:avoid hyp} in the appendix to find the matrices that avoid $\beta_c^{-1}\bigl(C_d(M_{mid}(\hat{A}A(R^r x  ,p))\bigr)$. 
Now because $ x  \in \hat{E}_i$, Lemma \ref{lem:face jacobian} says that this is a definite proportion. 

We now let
$$F_i(x )=\begin{cases}1 & \text{if } x  \in \hat{E}_i\\ 0& \text{otherwise}\end{cases}.$$ Let $$\tau=\tau_1\tau_2$$ and let $$\tilde{K}=\max\{K',K''\}.$$  Finally let  $K=
\tilde{K}^{3}$.
 We apply Proposition~\ref{prop:prob decay} with  $\epsilon=\frac{\tau}{2}$. 
 There exists $\rho<1$ so for each $\ell$, for all but a percentage of at most  $C\rho^\ell$  of the points $x  \in V(M\hat A)$  
 have the property that they belong to at   least $\ell \frac{\tau}{2}$  of the sets $\hat E_n$ with
$n\leq \ell$.

Now suppose $ x   \in \hat{E}_n$. 
Then by definition 
 $\max_{j\leq d-2} |C_j(A(R^r x \color{black} ,p))|<\tilde{K}^{2n+1}$ and there exists $m$ so that $\max_{j\leq d-2}|C_j(A(R^{r+p} x \color{black} ,m))|<\tilde{K}$.  We claim  
\begin{equation}\label{eq:numbers}|\mathcal{W}_c(\mathcal{M}_{\hat{A} A(R^r x \color{black} ,p+m)})|\leq \lceil \frac1 2 |\mathcal{W}_c(\mathcal{M}_{\hat{A}A(R^r x \color{black} ,p)})|\rceil\leq  \frac1 2 |\mathcal{W}_c(\mathcal{M}_{\hat{A}A(R^r x \color{black} ,p)})| +1.
\end{equation}
To see this, by Conclusion (\ref{conc:avoid mid}) we have that  $$\beta_{c,\hat A}^{-1}(C_d(M_{mid}(\hat AA(R^r x \color{black} ,p)))\cap V(A(R^rx \color{black} ,p+m)=\emptyset.$$  By \eqref{eq:mid division} \color{black} all the $C_d$ on one side of 
$C_d(M_{mid}(\hat AA(R^rx \color{black} ,p)))$
have the property that $$\beta_{c,\hat A}^{-1}(C_d)\cap V(A(R^rx \color{black} ,p+m))=\emptyset.$$ 
By our choice of $M_{mid}$  \eqref{eq:numbers} follows.
Now for any $A$, if $M\notin \mathcal{W}_c(\mathcal{M}_{\hat A})$ then $M \notin \mathcal{W}_c(\mathcal{M}_{\hat AA})$,   
Given any $\ell$ we have shown that except for a set of $x  \in V(M\hat A)$ of measure $C\rho^\ell\lambda_{d-3}V(M\hat A)$, for at least $\frac{\tau}{2}\ell$ values of $n\leq \ell$ we have   $x \color{black} \in \hat{E}_n$.  For such  $ x \color{black} $ the corresponding matrix satisfies $|C_{\max}(A(R^{r+p} x  ,m))|<K'^{2n+1}\leq K^\ell$.  Furthermore 
\begin{equation}\label{eq:cardinality}|\mathcal{W}_c(\mathcal{M}_{\hat AA(R^rx,n)})|\leq 2^{-\ell\frac{\tau}{2}}|\mathcal{W}_c(\mathcal{M})|+2
\end{equation}

This finishes the proof. 

\color{black}
\end{proof}
Based on the last result we make the following definition. 
 Given   $\hat A,k,k_0,N_0$   
let $H_{k+k_0}(\hat A,N_0)$ be the set of $y\in V(M\hat A)$ such that there exists $A(R^ry,m)$ so that 
\begin{enumerate}[label=(\roman*)]
\item\label{cond:H bal} $\frac{|C_{i'}(M\hat AA(R^ry,m))|}{|C_i(M\hat AA(R^ry,m))|}<\zeta$ for $1\leq i,i'\leq d-2$
\color{black}
\item\label{item:conflict bound} $|\mathcal{W}_c\mathcal{M}_{\hat AA(R^ry,m)}|<(\frac  23)^{(k+k_0)^{1.1\color{black}}}|\mathcal{M}_{\hat A}|+2$
\item $\|A(R^ry,m)\|<N_0^{(k+k_0)^{1.1\color{black}}}.$
\item  the permutation of $R^ry$ is $\pi_L$.
\end{enumerate}

\begin{cor}\label{cor:choose size}
There is $C, N_0$, $\rho'$ such that for all $k,k_0$   

\begin{equation}
\label{eq:mostsmallshadow}
\lambda_{d-3}(H_{k+k_0}(\hat A,N_0))\geq (1-C\rho'^{(k+k_0)^{1.1 }})\lambda_{d-3}V(M\hat A).
\end{equation}

\end{cor}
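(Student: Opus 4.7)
The plan is to derive Corollary \ref{cor:choose size} directly from Proposition \ref{prop:shadow technical} by selecting $\ell$ and $N_0$ so that every quantitative bound defining $G(\hat{A}, K, \zeta, \ell, \tau)$ is at least as strong as the corresponding bound defining $H_{k+k_0}(\hat{A}, N_0)$. Let $K, \zeta, \tau, \rho$ and $C$ be the constants provided by Proposition \ref{prop:shadow technical}. Fix
\[
\ell = \ell(k,k_0) := \left\lceil \tfrac{2}{\tau}(k+k_0)^{1.1} \right\rceil,
\qquad N_0 := K^{\lceil 2/\tau \rceil},
\]
so that $K^\ell \leq N_0^{(k+k_0)^{1.1}}$ and $\tfrac{\tau}{2}\ell \geq (k+k_0)^{1.1}$.

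With this choice I would show $G(\hat{A}, K, \zeta, \ell, \tau) \subset H_{k+k_0}(\hat{A}, N_0)$ by checking the four conditions in turn. The balance condition \ref{cond:H bal} and the requirement that the permutation of $R^r y$ be $\pi_L$ appear verbatim in the definition of $G$; the norm bound $\|A(R^r y, m)\| < K^\ell$ in $G$ is at most $N_0^{(k+k_0)^{1.1}}$, giving condition (iii); and the conflict bound in $G$ implies
\[
|\mathcal{W}_c(\mathcal{M}_{\hat{A}A(R^r y, m)})| \;<\; \max\bigl\{(\tfrac{2}{3})^{\tau\ell/2}|\mathcal{M}_{\hat{A}}|,\,2\bigr\} \;\leq\; (\tfrac{2}{3})^{(k+k_0)^{1.1}}|\mathcal{M}_{\hat{A}}| + 2,
\]
which is condition \ref{item:conflict bound}. (Here I am reading the exponent in Proposition \ref{prop:shadow technical} with the sign consistent with \eqref{eq:cardinality}.)

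The estimate then follows immediately: applying Proposition \ref{prop:shadow technical} at this $\ell$ gives
\[
\lambda_{d-3}\bigl(H_{k+k_0}(\hat{A}, N_0)\bigr) \;\geq\; \lambda_{d-3}\bigl(G(\hat{A}, K, \zeta, \ell, \tau)\bigr) \;\geq\; (1 - C\rho^\ell)\lambda_{d-3}(V(M\hat{A})),
\]
and setting $\rho' := \rho^{2/\tau} < 1$ yields $C\rho^\ell \leq C (\rho')^{(k+k_0)^{1.1}}$, which is the desired inequality \eqref{eq:mostsmallshadow}.

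This is essentially a bookkeeping step: the parameters of Proposition \ref{prop:shadow technical} were arranged so that a linear-in-$\ell$ exponential decay can be converted into the $(k+k_0)^{1.1}$-scale decay required at stage $k$ simply by plugging in a polynomial-in-$(k+k_0)$ value of $\ell$. There is no genuine obstacle: $N_0$ is chosen once and for all (independently of $k$ and $k_0$) to absorb the factor $2/\tau$ in the exponent, and the constants $C, \rho'$ inherit uniformity from those of Proposition \ref{prop:shadow technical}.
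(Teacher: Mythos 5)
Your proof is correct and matches the paper's argument exactly: both choose $\ell$ on the order of $\frac{2}{\tau}(k+k_0)^{1.1}$, set $N_0 = K^{2/\tau}$ and $\rho' = \rho^{2/\tau}$, and read Corollary \ref{cor:choose size} off of Proposition \ref{prop:shadow technical}. Your spelling out of the containment $G(\hat A, K,\zeta,\ell,\tau)\subset H_{k+k_0}(\hat A, N_0)$ (including the observation that $\max\{a,2\}\leq a+2$ handles the additive $+2$ in \ref{item:conflict bound}) is merely a more explicit version of what the paper leaves implicit.
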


\begin{proof}
   Let $ K,\, \tau,\, \rho$ be a triple so that Proposition \ref{prop:shadow technical} is satisfied with this triple and some $ C,\, \zeta$. 
Choose  $\rho'=\rho^{\frac{2}{\tau}}$. 
 This choice says that  for any $k,\, k_0$, if we set $\ell=\frac{2}{\tau}(k+k_0)^{1.1\color{black}}$, then $\rho^\ell=\rho'^{(k+k_0)^{1.1\color{black}}}.$

Similarly, choose $N_0$ so that if $\ell=\frac {2}{\tau}(k+k_0)^{1.1}$, then  $N_0^{(k+k_0)^{1.1\color{black}}}>K^\ell$ (so $N_0=K^{\frac 2\tau}$).
Apply Proposition~\ref{prop:shadow technical} with $\ell=\frac{2}{\tau}(k+k_0)^{1.1\color{black}}$ and we obtain the Corollary. 
     \end{proof}
     

Next let 
$\mathcal{H}_{k+k_0}(\hat{A})$ 
be the set of matrices of the form $A(R^\ell y,m)$ where  $y \in H_{k+k_0}(\hat A,N_0)$ and $m$ is as above. 
These are matrices that give a small conflicted set of simplices.

The next lemma says that $\mathcal{H}_{k+k_0}(\hat{A})\color{black}\Delta$ covers most of $\mathcal{M}_{\hat A}\Delta$ and that 
for matrices in $\mathcal{H}_{k+k_0}$ the corresponding conflicted matrices only cover a set of small measure.

\begin{lem}
There is $\hat{C}'$  such that  for all $k$ 
\color{black}
\begin{multline}\label{eq:next one}\lambda_{d-1}(\mathcal{M}_{\hat A}\Delta \setminus \mathcal{H}_{k+k_0}(\hat A)\Delta)+
\lambda_{d-1}\bigl(\cup_{A\in \mathcal{H}_{k+k_0}(\hat A)}(\mathcal{W}_c(\mathcal{M}_{\hat AA})\color{blue}\hat A A\color{black}\Delta )\bigr)
\\<\hat{C}'(\rho'^{(k+k_0)^{1.1\color{black}}} +10^{-(k+k_0)^{1.1\color{black}}}
+(\frac  23 ) ^{(k+k_0)^{1.1\color{black}}})\lambda_{d-1}(\mathcal{M}_{\hat A} \Delta).
\end{multline}
\end{lem}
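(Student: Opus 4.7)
The plan is to bound the two summands on the left of \eqref{eq:next one} separately.

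For the first summand, I would appeal to Corollary~\ref{cor:choose size}, which supplies the face-level estimate
\[
\lambda_{d-3}\bigl(V(M\hat A)\setminus H_{k+k_0}(\hat A,N_0)\bigr)\le C\rho'^{(k+k_0)^{1.1}}\lambda_{d-3}(V(M\hat A)),
\]
and then lift it to the full simplex $\mathcal{M}_{\hat A}\Delta$ via the second bullet of Theorem~\ref{thm:most lhs}. Concretely, I would replace $\mathcal{H}_{k+k_0}(\hat A)$ by a maximal subcollection whose associated faces $V(M\hat A A)$ are pairwise disjoint and tile $H_{k+k_0}(\hat A,N_0)$, apply Theorem~\ref{thm:most lhs} with $M\hat A$ in place of $M$, $N:=N_0^{(k+k_0)^{1.1}}$, and $\epsilon:=10^{-(k+k_0)^{1.1}/2}$, and conclude
\[
\lambda_{d-1}\bigl(\mathcal{M}_{\hat A}\Delta\setminus\mathcal{H}_{k+k_0}(\hat A)\Delta\bigr)\le C\bigl(\rho'^{(k+k_0)^{1.1}}+10^{-(k+k_0)^{1.1}/2}\bigr)\lambda_{d-1}(\mathcal{M}_{\hat A}\Delta).
\]
The balance hypothesis of Theorem~\ref{thm:most lhs} is supplied by Conditions~*(1) and (3), and its column-size hypothesis reduces to $v_k/(u_k\|\hat A\|)\gg N/\epsilon^2$. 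By Proposition~\ref{prop:sizes} together with the bound $\|\hat A\|\le 10^{\constFL}/K$ inherited from Corollary~\ref{cor:ready}, the left side grows super-exponentially in $k+k_0$, while $N/\epsilon^2$ grows at a rate that is only polynomial in $10^{(k+k_0)^{1.1}}$, so this is satisfied once $k_0$ is large.

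For the second summand, the key observation is that within the family $\mathcal{M}_{\hat A A}$ (which, for a fixed freedom-LHS matrix $A$, differs only in the choice of the RHS restriction matrix $B_k'$), Condition~*(4) forces $|C_{d-1}(B_k')|/|C_d(B_k')|\le 2$ while the first $d-2$ columns are unchanged as $B_k'$ varies. Lemma~\ref{lem:volume} then gives that $\lambda_{d-1}(M\Delta)$ is uniformly comparable across $M\in\mathcal{M}_{\hat A A}$, so for any $\mathcal{W}\subset\mathcal{M}_{\hat A A}$,
\[
\lambda_{d-1}\Bigl(\bigcup_{M\in\mathcal{W}}M\Delta\Bigr)\le C\,\frac{|\mathcal{W}|}{|\mathcal{M}_{\hat A A}|}\,\lambda_{d-1}(\mathcal{M}_{\hat A A}\Delta).
\]
Applying this with $\mathcal{W}=\mathcal{W}_c(\mathcal{M}_{\hat A A})$, invoking condition (\ref{item:conflict bound}) in the definition of $H_{k+k_0}$, and using $|\mathcal{M}_{\hat A A}|=|\mathcal{M}_{\hat A}|\asymp s_k=10^{(2k+2+k_0)^6+(k+k_0)^4}$, makes the additive error $2/|\mathcal{M}_{\hat A}|$ far smaller than $10^{-(k+k_0)^{1.1}}$. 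Summing over the disjoint $A\in\mathcal{H}_{k+k_0}(\hat A)$ from the first step and using $\sum_A\lambda_{d-1}(\mathcal{M}_{\hat A A}\Delta)\le\lambda_{d-1}(\mathcal{M}_{\hat A}\Delta)$ yields
\[
\lambda_{d-1}\Bigl(\bigcup_{A\in\mathcal{H}_{k+k_0}(\hat A)}\bigcup_{M\in\mathcal{W}_c(\mathcal{M}_{\hat A A})}M\Delta\Bigr)\le C\bigl((2/3)^{(k+k_0)^{1.1}}+10^{-(k+k_0)^{1.1}}\bigr)\lambda_{d-1}(\mathcal{M}_{\hat A}\Delta).
\]

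Combining the two bounds and relabelling constants produces \eqref{eq:next one}. The chief technical obstacle is the column-size verification needed to invoke Theorem~\ref{thm:most lhs} on $M\hat A$: even though $\hat A$ itself may have substantially enlarged the first $d-2$ columns, the required inequality survives thanks to the super-exponential gap between $v_k$ and $u_k$ built into Proposition~\ref{prop:sizes} together with the slack in $\|\hat A\|$ permitted by Corollary~\ref{cor:ready}.
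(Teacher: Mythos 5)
Your proof is correct and follows essentially the same path as the paper: lift the face-level bound from Corollary~\ref{cor:choose size} to the full simplex via the second conclusion of Theorem~\ref{thm:most lhs}, then control the conflicted part by combining the cardinality bound in the definition of $\mathcal{H}_{k+k_0}$ with the comparability of the volumes $\lambda_{d-1}(M\Delta)$ across $M\in\mathcal{M}_{\hat A A}$. One small imprecision: for that volume comparability you cite Condition~*(4), which controls the ratio $|C_{d-1}(B_k')|/|C_d(B_k')|$ within a single matrix, whereas the exact comparability of column lengths as $B_k'$ varies is what Lemma~\ref{lem:RHScomparable} provides and is the reference the paper uses; the conclusion is the same.
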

\begin{proof}

First of all applying (\ref{eq:mostsmallshadow})  we find $\rho',C,N_0$ such that the simplices corresponding to matrices  $A\in\mathcal{H}_{k+k_0}(\hat A)$
 cover a subset  of $V(M\hat A)$ whose complement has measure at most $C\rho'^{(k+k_0)^{1.1\color{black}}}$.  Each such 
 $A$ satisfies $\|A\|\leq N_0^{(k+k_0)^{1.1\color{black}}}$.   Moreover for each such $A$ and  for $i\leq d-2, j\geq d-1$, and $k_0$ large enough, we have $$\frac{N_0^{(k+k_0)^{1.1\color{black}}}}{10^{-2(k+k_0)^{1.1\color{black}}}}\leq   
10^{(k+k_0)^{2.3\color{black}}}\leq \frac{|C_j(A)|}{|C_i(A)|}$$ so we can apply 
the second conclusion of Theorem \ref{thm:most lhs} with $N=N_0^{(k+k_0)^{1.1\color{black}}}$, $\epsilon =10^{-(k+k_0)^{1.1\color{black}}}$ and $\delta=\rho'^{(k+k_0)^{1.1\color{black}}}$.
This bounds the first term on the left by the first two terms on the right in \eqref{eq:next one}.

We now bound  the second term on the left. First we note  that the bound   in  
\ref{item:conflict bound}
\color{black} on  the cardinality of $\mathcal{W}_c(\mathcal{M}_{\hat AA})$ in the definition of $\mathcal{H}$ in says that our conflicted set has a small number of simplices compared to the non-conflicted set. To obtain a measure estimate, we apply Lemma~\ref{lem:RHScomparable} and Veech's volume estimate (Lemma \ref{lem:volume}) which together say that the $\lambda_{d-1}$ volume of the different $\spanD(V(M), C_{d-1}(M), C_d(M))$ as $M$ varies in $\mathcal{M}_{\hat A}$  are uniformly comparable,   to conclude that 
$$\lambda_{d-1}\bigl(\cup_{A\in \mathcal{H}_{k+k_0}(\hat A)}(\mathcal{W}_c(\mathcal{M}_{\hat AA})\hat A A\Delta )\bigr)\leq (\frac  23 ) ^{(k+k_0)^{1.1}}$$ 
\color{black}
\end{proof}
\color{black}

In the next lemma we take the estimates of the last lemma and intersect with the sets $\Delta_c$.

\begin{lem}\label{lem:next two}
There are  $N_0,C,\rho''$, so that for $k_0$ large enough,  for all $k$ \color{black}
and  
for each $c$ satisfying $.1\leq c\leq .9$ \begin{multline}\label{eq:next two}\lambda_{d-2}\left(\mathcal{M}_{\hat A}\Delta \cap \Delta_c \setminus \mathcal{H}_{k+k_0\color{black}}(\hat A)\Delta \cap \Delta_c\right)+
\lambda_{d-2}\left(\cup_{A\in \mathcal{H}_{k+k_0\color{black}}(\hat A)}\mathcal{W}_c(\mathcal{M}_{\hat AA}\color{blue}A\color{black})\Delta \cap \Delta_c\right)\\<C(\rho''^{(k+k_0)^{1.1\color{black}}} +10^{-(k+k_0)^{1.1\color{black}}}+(\frac  23\color{black}) ^{(k+k_0)^{1.1\color{black}}})\lambda_{d-2}(\mathcal{M}_{\hat A} \Delta \cap \Delta_c).
\end{multline}
\end{lem}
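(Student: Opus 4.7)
The plan is to upgrade the $(d-1)$-dimensional estimate of the preceding lemma to a $(d-2)$-dimensional sliced estimate valid uniformly for $c\in[.1,.9]$, by exploiting the fact that in this range of $c$ the cross-section of every relevant sub-simplex by $\Delta_c$ is nondegenerate and its $\lambda_{d-2}$-measure is uniformly comparable to that of its $V$-face. By Theorem~\ref{thm:nue}, for $k_0$ large, $V(\mathcal{M}_{\hat A})\subset\{c<.05\}$ and $W(\mathcal{M}_{\hat A})\subset\{c>.95\}$; combined with Lemma~\ref{lem:RHScomparable}, which ensures that $|C_{d-1}(M\hat A A)|$ and $|C_d(M\hat A A)|$ are uniformly comparable across $M\in\mathcal{M}_{\hat A}$ and across $A\in\mathcal{H}_{k+k_0}(\hat A)$, the ratio $\lambda_{d-2}(M\hat A A\Delta\cap\Delta_c)/\lambda_{d-3}(V(M\hat A A))$ will be bounded above and below by universal constants, uniformly in $M$, $A$, and $c\in[.1,.9]$. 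This is the key geometric input.

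With this comparability in hand, the first term of~\eqref{eq:next two} is bounded exactly as in the preceding lemma, but now at the level of slices. Corollary~\ref{cor:choose size} gives $\lambda_{d-3}(V(M\hat A)\setminus H_{k+k_0}(\hat A,N_0))\leq C\rho'^{(k+k_0)^{1.1}}\lambda_{d-3}(V(M\hat A))$, which via the slice comparability converts into the desired bound on $\lambda_{d-2}(\mathcal{M}_{\hat A}\Delta\cap\Delta_c\setminus\mathcal{H}_{k+k_0}(\hat A)\Delta\cap\Delta_c)$. The additional loss of $10^{-(k+k_0)^{1.1}}$ coming from the second conclusion of Theorem~\ref{thm:most lhs} transfers as well, since the proof of Proposition~\ref{lem:prob 2 short} bounds the exceptional set by a sum over $\hat S_j$-slices, an argument that applies uniformly to each $\Delta_c$ with $c\in[.1,.9]$, with $N=N_0^{(k+k_0)^{1.1}}$ and $\epsilon=10^{-(k+k_0)^{1.1}}$.

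For the conflicted term, Lemma~\ref{lem:RHScomparable} together with the slice comparability implies that for each fixed $A\in\mathcal{H}_{k+k_0}(\hat A)$ the slice measures $\lambda_{d-2}(M\hat A A\Delta\cap\Delta_c)$ are uniformly comparable across $M\in\mathcal{M}_{\hat A A}$. Condition (ii) in the definition of $H_{k+k_0}$ then says that at most $(2/3)^{(k+k_0)^{1.1}}|\mathcal{M}_{\hat A A}|+2$ of these $M$ are conflicted, so summing their slice measures yields a bound of order $(2/3)^{(k+k_0)^{1.1}}$ times the full slice measure $\lambda_{d-2}(\cup_{M}M\hat A A\Delta\cap\Delta_c)$, which by yet another application of the slice comparability is at most a constant multiple of $\lambda_{d-2}(\mathcal{M}_{\hat A}\Delta\cap\Delta_c)$.

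The main obstacle is proving the uniform slice comparability asserted above. This reduces to a direct computation of the $(d-2)$-dimensional cross-section of a simplex whose $d-2$ ``low'' vertices lie in $\{c<.05\}$ and whose two ``high'' vertices lie in $\{c>.95\}$; for $c\in[.1,.9]$ the cross-section is a nondegenerate convex polytope whose volume factors, up to universal constants, as $\lambda_{d-3}(V)\cdot\lambda_1(W)\cdot p(c)$ with $p(c)$ bounded away from $0$ and $\infty$ on $[.1,.9]$. Once this is established, the exponential decay rates $\rho''^{(k+k_0)^{1.1}}$, $10^{-(k+k_0)^{1.1}}$, and $(2/3)^{(k+k_0)^{1.1}}$ carry over from the $(d-1)$-dimensional estimate without alteration; only the leading constant $C$ needs to absorb the slicing factors.
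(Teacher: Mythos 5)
Your strategy diverges from the paper's. The paper first establishes the $(d-1)$-dimensional estimate (the preceding lemma, inequality \eqref{eq:next one}) and then deduces the sliced estimate by a simplex-by-simplex comparability of slices at different heights $s,s'\in[.1,.9]$. Concretely, it shows for each $M\in\mathcal M_{\hat A}$ that $\lambda_{d-2}(M\Delta\cap\Delta_s)/\lambda_{d-2}(M\Delta\cap\Delta_{s'})$ is bounded by a dimensional constant (via a Fubini decomposition of $M\Delta$ into $\spanD(V(M),p)$ over $p\in W_{\hat A}$), and then — since $\mathcal M_{\hat A}\Delta$ is by definition restricted to $c\in(.1,.9)$ and both the bad set in \eqref{eq:next one} and $\mathcal M_{\hat A}\Delta$ decompose into simplices with that property — the ratio of slice measures is comparable to the ratio of $(d-1)$-dimensional measures. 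This is short and avoids re-proving anything at the slice level.

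You instead attempt to re-derive the bound directly at the slice level, bypassing \eqref{eq:next one}. The genuine gap is in your treatment of the ``additional loss'' from the second conclusion of Theorem~\ref{thm:most lhs}. You assert that Proposition~\ref{lem:prob 2 short} ``applies uniformly to each $\Delta_c$.'' But that proposition controls the $R^n$-pullback $\lambda_{d-1}$-measure of the set $\{y\in M\Delta : \max_{j\in\{d-1,d\}}(R^ny)_j>\epsilon/N\}$, and its proof stratifies by $\hat S_j$, which is a partition by the \emph{post-induction} coordinates $x_{d-1}+x_d$ of $R^ny$. The slice $\Delta_c$ is defined by $x_{d-1}+x_d=c$ in the \emph{original} coordinates. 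These stratifications do not align, and the Jacobian estimate in Lemma~\ref{lem:jacobian} is a full $(d-1)$-dimensional Jacobian; restricting to a hyperplane slice $\Delta_c$ changes the density (as in Lemma~\ref{lem:face jacobian}) and the slice-level version of Proposition~\ref{lem:prob 2 short} is therefore not automatic. Similarly, your ``universal constant'' comparability of $\lambda_{d-2}(M\hat A A\Delta\cap\Delta_c)$ to $\lambda_{d-3}(V(M\hat A A))$ compares quantities of different dimension — it cannot hold as stated with constants independent of the matrix, and what is actually needed is the paper's weaker and cleaner comparison of slices at different $c$ for the same simplex. The fix is to follow the paper's route: use \eqref{eq:next one} as a black box and convert via the slice-vs-slice comparability applied to the simplex decomposition of both the bad set and $\mathcal M_{\hat A}\Delta$.
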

\begin{proof}
By Theorem \ref{thm:nue}, for $k_0$ large enough  
for $M\in \mathcal{M}_{\hat A}$ and for $1\leq i\leq d-2$,  the columns $C_i(M)$ lie within $.05$ of  $e_1\oplus\ldots  \oplus e_{d-2}$ and the last two columns lie within $.05$ of $e_{d-1}\oplus e_d$.
Thus $V(\mathcal{M}_{\hat A})\subset \cup_{s\in [0,.05]}\Delta_s$ and $\mathcal{W}_{\hat A} \subset \cup_{s\in [.95,1]}\Delta_s$. 
 Thus, for any $s,s'\in [.1,.9]$  for each  $M\in\mathcal{M}_{\hat A}$ we have that 
$$\frac{\lambda_{d-3}\big(\spanD(V(M),p)\cap \Delta_s\big)}{\lambda_{d-3}\big(\spanD(V(M),p)\cap \Delta_{s'}\big)}<C,$$ where $C$ depends only on the dimension.  Indeed, $\spanD(V(M),p)$ is a simplex and $\Delta_s$, $\Delta_{s'}$ are parallel planes that are at least $\frac 5 {100}$ of the diameter of the simplex away from any of the extreme points of the simplex. We now have the bound 
\begin{equation}\label{eq:intersect bound}\frac{\lambda_{d-2}(M\Delta \cap \Delta_s)}{\lambda_{d-2}(M\Delta\cap \Delta_{s'})}=\frac{\lambda_{d-2}(\cup_{p\in W_A}\spanD(V(M),p) \cap \Delta_s)}{\lambda_{d-2}(\cup_{p\in W_A}\spanD(V(M),p)\cap \Delta_{s'})}<C'
\end{equation}
 by Fubini's theorem. Indeed, by Fubini's theorem, there exists $c_s$ so that 
$\lambda_{d-2}(M\Delta \cap \Delta_s)=c_s\int_{W_a} \lambda_{d-3}\big(\spanD(V(M),p)\cap \Delta_{s})dp$. Moreover, since $W_A,V(M) \subset \cup_{t\in [0,.05]\cup [.95,1]}\Delta_t$, we have that $c_s$ changes polynomially as $s$ ranges from $.95$ to $.05$ and so $c_s,c_{s'}$ are comparable for all $s,s'\in [.1,.9]$. 
Applying \eqref{eq:intersect bound} to each simplex in \eqref{eq:next one} we obtain the lemma. \color{black} 
\end{proof}


Applying  Equation (\ref{eq:next two}) to each hyperplane that cuts out the illuminated set  gives 
\begin{multline}\label{eq:shadow} \lambda_{d-2}\Big(\mathcal{M}_{\hat A}\Delta  \cap \Delta_c\setminus \big(\mathcal{S}_\phi(\mathcal{M}_{\hat A})  \cup_{A \in \mathcal{H}_{k+k_0}(\hat A)} \mathcal{W}_{c}(\hat AA)\big)\cap\Delta_c\Big)\\<Cb(d)((\frac  23 \color{black}) ^{(k+k_0)^{1.1\color{black}}} +\rho''^{(k+k_0)^{1.1\color{black}}} +10^{-(k+k_0)^{1.1\color{black}}})\lambda_{d-2}(\mathcal{M}_{\hat A}\Delta\cap \Delta_c).
\end{multline}

\begin{proof}[Proof of Theorem \ref{thm:big shadow}] We are at stage $k$  but suppress  it in the definition of matrices. 
We restrict our attention to $\alpha \in [.1,.9]$ and we are given the constants $\hat{c}$ and $K$  from Corollary~\ref{cor:ready}.

Choose $N$ so that $$(1-\hat{c})^N<\frac 1 {10},$$  
We choose  $k_0$ large enough to guarantee  that for all $k$ and $N_0$ as in Corollary \ref{cor:choose size}, 

\begin{equation}\label{eq:k0 big}
(N_0^{(k+k_0)^{1.1\color{black}}}+K)^{N(k+k_0)^{1.1\color{black}}}< \frac 1 2 10^{(k+k_0)^{2.3\color{black}}}.
\end{equation}
\color{black}

\color{black}
The proof is by an inductive procedure.   For the first step  we are given $\hat A$ a LHS matrix with $\|\hat{A}\|\in [10^{(2k+2+k_0)^6-(2k+1+k_0)^4}, 2\cdot 10^{(2k+2+k_0)^6-(2k +1+k_0)^4}]$, and a set of matrices $\mathcal{M}_{\hat A}$.  We apply Corollary~\ref{cor:ready} to produce   matrix $A_1$ so that  $\mathcal{M}_{\hat AA_1}$ is ready for illumination.  
For a constant $\hat{c}>0$, the simplices   $\mathcal{M}_{\hat AA_1}\Delta \cap \Delta_\alpha$ satisfy  that $X_1:=\cup_\alpha\mathcal{S}(\mathcal{M}_{\hat{A}A_1}) \cap \Delta_\alpha$ is a set of measure at least $\hat{c}\lambda_{d-1}(\mathcal{M}_{\hat A}\Delta \cap \Delta_\alpha)$.  

 Now  let
 $$W_1:= \left((\mathcal{M}_{\hat{A}A_1}\Delta \cap \Delta_\alpha) \setminus (\cup_{A \in \mathcal{H}_{k+k_0}(\hat{A}A_1)}\mathcal{M}_{\hat{A}A_1A}\Delta)\right) \cup \cup_{A \in \mathcal{H}_{k+k_0}(\hat{A}A_{1,j})}\mathcal{W}_\alpha(\mathcal{M}_{\hat{A}A_1A}\Delta \cap \Delta_\alpha).$$ 
 
 This is the set  not covered by simplices of matrices $A$ with a small conflicted set  union the conflicted subset of those $A$ that do have a small conflicted set.
 By  Lemma \ref{lem:next two},  $X_1 \setminus W_1$ has measure at least  
$$\hat{c}\big(1-C(\rho''^{(k+k_0)^{1.1\color{black}}} +10^{-(k+k_0)^{1.1\color{black}}}+(\frac  23\color{black}) ^{(k+k_0)^{1.1\color{black}}})\big)\lambda_{d-1}(\mathcal{M}_{\hat A}\Delta \cap \Delta_\alpha).$$
\color{black}


We next consider  $\mathcal{M}_{\hat A}\Delta \setminus ( X_1\cup W_1)$.  Notice that it is disjoint from the illuminated set,  and it can be 
partitioned   into simplices of the form  $\mathcal{M}_{\hat A\tilde{A}_\beta}\Delta$ where  $\|\tilde{A}_\beta\|<(N_0^{(k+k_0)^{1.1\color{black}}}+K).$
We now again have a collection of families $\mathcal{M}_{{\hat A\tilde{A}}_{\beta}}$.
  Using  Corollary~\ref{cor:ready}  we first make each of them ready for illumination by a matrix $A_2$. In this way we obtain an illuminated subset 
  $$X_2\subset \mathcal{M}_{\hat A}\Delta \cap \Delta_\alpha \setminus (X_1\cup W_1)$$ of measure at least 
  $\hat{c}\lambda_{d-2}\big(\mathcal{M}_{\hat A}\Delta \cap \Delta_\alpha \setminus (X_1 \cup W_1)\big)$. 
  As before, to each $A_2$ we obtain the set $W_2$ not covered by simplices of matrices $A$ with small conflicted set union the conflicted subset of those $A$ that do have a small conflicted set.  We remove $X_2,W_2$ and
  repeat this procedure $N(k+k_0)^{1.1\color{black}}$  total times, constructing  disjoint illuminated sets $X_j$ and removed sets $W_j$. We now show that 
  $$\cup_{i=1}^{N(k+k_0)^{1.1\color{black}}}X_i \setminus W_i = \sqcup_{i=1}^{N(k+k_0)^{1.1\color{black}}} X_i \setminus W_i $$ 
is covered by simplices from matrices of freedom on LHS.  First note that since we performed a procedure $N(k+k_0)^{1.1\color{black}}$ times that increased the norm by at most $N_0^{(k+k_0)^{1.1\color{black}}}+K$, our assumption on $\|\hat{A}\|$ and Inequality (\ref{eq:k0 big}) implies that our set is contained in the matrices of freedom on the left hand side  (assuming $k_0$ is large enough) at step $k$.

  We now bound $\lambda_{d-2}(\sqcup_{i=1}^{N(k+k_0)^{1.1\color{black}}} X_i \setminus W_i)$ from below by first bounding $\lambda_{d-2}(\sqcup_{i=1}^{N(k+k_0)^{1.1\color{black}}}W_i)$ from above and then bounding 
  $\lambda_{d-2}(\mathcal{M}_{\hat{A}} \Delta \cap \Delta_\alpha \setminus (\cup_{i=1}^{N(k+k_0)^{1.1\color{black}}}X_i \cup W_i))$
from above.  Now 
  \begin{eqnarray}\label{eq:small toss}\nonumber \sum_{i=1}^{N(k+k_0)^{1.1\color{black}}}\lambda_{d-2}(W_i)& &\\  \nonumber& \leq &
(\rho''^{(k+k_0)^{1.1\color{black}}}+(\frac{2}3\color{black})^{(k+k_0)^{1.1\color{black}}}+10^{-(k+k_0)^{1.1\color{black}}}) \sum_{i=1}^{N(k+k_0)^{1.1\color{black}}}\lambda_{d-2}(\mathcal{M}_{\hat{A}}\Delta \cap \Delta_\alpha \setminus \cup_{j=1}^{i-1}X_j\cup W_j)
  \\\nonumber&\leq & 
(\rho''^{(k+k_0)^{1.1\color{black}}}+(\frac{2}3\color{black})^{(k+k_0)^{1.1\color{black}}}+10^{-(k+k_0)^{1.1\color{black}}}) \sum_{i=1}^{N(k+k_0)^{1.1\color{black}}}(1-\hat{ c})^{i-1}\lambda_{d-2}(\mathcal{M}_{\hat{A}}\Delta \cap \Delta_\alpha)\\  & < &
 C(\rho''^{(k+k_0)^{1.1\color{black}}}+(\frac{2}3\color{black})^{(k+k_0)^{1.1\color{black}}}+10^{-(k+k_0)^{1.1\color{black}}}) \lambda_{d-2}(\mathcal{M}_{\hat{A}}\Delta \cap \Delta_\alpha)
  \end{eqnarray}
  
  On the other hand  inductively we see that 
$$\lambda_{d-2}(\mathcal{M}_{\hat{A}}\Delta \cap \Delta_\alpha\setminus (\cup_{i=1}^jX_i\cup W_i))\leq (1-\hat{c})^j \lambda_{d-2}(\mathcal{M}_{\hat{A}}\Delta \cap \Delta_\alpha).$$ 
  By our choice of $N$ and taking $j=N(k+k_0)^{1.1\color{black}}$ this implies that 
  $$\lambda_{d-2}(\cup_{i=1}^{N(k+k_0)^{1.1\color{black}}}X_i\cup W_i)\geq (1-(\frac 1 {10})^{(k+k_0)^{1.1\color{black}}})\lambda_{d-2}(\mathcal{M}_{\hat{A}}\Delta \cap \Delta_\gamma).$$
  Combining this with Inequality (\ref{eq:small toss}) proves the Theorem (if $k_0$ is large enough so that there exists $\rho<1$ so that $ C(\tau''^{(k+k_0)^{1.1\color{black}}}+(\frac{2}3\color{black})^{(k+k_0)^{1.1\color{black}}}+10^{-(k+k_0)^{1.1\color{black}}})<\rho^{(k+k_0)^{1.1\color{black}}}$ for all $k\geq 1$). 
   \end{proof}

\color{black}
\section{Restriction on left side}\label{sec:restrict lhs}
The point of this section is to prove Theorem \ref{thm:neighborhood} below. 
In this section $R$ refers to  normalized Rauzy induction and $\hat{R}$ for non-normalized.

Recall  $\pi_L=\begin{pmatrix}1& d-1&d&2&\dots&d-3& d-2\\d&d-1&d-2&d-3&\dots&2&1 \end{pmatrix}$, the matrix sizes at stage $k$ during restriction are $$\|A_k'\|\in [10^{(2k+1+k_0)^6+(k+k_0)^4},10^{(2k+1+k_0)^6+(k+k_0)^4+(k+k_0)^2}],$$and are given by  $1$ losing to  $d-2$ and $1$ not winning until returning to $\pi_L$ with norm in the range given above. (This is followed by transition, a path from $\pi_L$ to $\pi_s$.)  Let $\mathcal{A}_k'$ be the set of these matrices satisfying Condition **. 
 

\color{black}

Now after finishing freedom on LHS we have  families $\mathcal{M}_{\hat A}$ for some LHS matrix $\hat A$.

In this section  balanced and positive refer to $d-3\times d-3$ submatrices $A$ consisting of columns $C_2,\ldots, C_{d-2}$ and the entries 2 through $d-2$ in these columns. 
 
 Given $B,L$  let 
$\mathcal{A}(B,L)$ be the set of $A\in \mathcal{A}'_k$  
such that 
\begin{itemize}
\item $A$ is of form $A=A_1PA_2$ where  $\|P\|<B$ 
\item the upper $d-2\times d-2$ submatrix, $P$ has all but the 1st row positive
\item   $\|A_2\|<10^L.$

\end{itemize}

The idea is that $A$ has a fixed bounded size matrix $P$ not long before the end of Rauzy induction.

Given $M\in\mathcal{M}_{\hat A}$, 
let $$U_{d-2}(M)=\spanD(C_2(M), \ldots C_{d-2}(M))\subset V(M),$$  
$$U_d(M)=\spanD(C_2(M),\ldots, C_{d-2}(M),C_{d-1}(M),C_d(M))$$
 $$d_M=d(C_1(M),U_{d-2}),$$ 
 \begin{equation}\label{eq:tk def} t_k=10^{-[(2k+1+k_0)^6+ (k+k_0)^4+ \frac{1}{2}(k+k_0)^2]\color{black}},
 \end{equation}
  $$\mathcal{N}'=\mathcal{N}_{t_kd_{M}}U_{d-2}  \cap V(M)\color{black}$$ and 
let $$\mathcal{N}=\spanD(\mathcal{N}',C_{d-1}(M),C_d(M)).$$
\color{black}
Note this neighborhood depends on $M$ as well as $k$.  We denote it $\mathcal{N}(M)$ when $k$ is understood. \color{black}
\begin{thm}
\label{thm:neighborhood}For any $\zeta$  there exist constants $\alpha>0$ and  $B$  so that for all large enough $k_0\color{black}$  if $M$ is $\zeta$-balanced then  for all $c\in (.1,.9)\color{black}$
\begin{equation}\label{eq:restriction}
\lambda_{d-2}(M\mathcal{A}(B, \frac 1 {16}(k+k_0)^{2}\color{black})\Delta \cap \mathcal{N} \cap \Delta_c)\geq 
(1-\alpha^{(k+k_0)^{2}})\lambda_{d-1}(\mathcal{N} \cap \Delta_c)
\end{equation}
\end{thm}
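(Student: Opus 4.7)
The plan is to reduce the statement to a large-deviation estimate for the Rauzy cocycle in the sub-class $\mathcal{R}_{d-3}$ acting on coordinates $2,\ldots,d-2$, then to integrate the resulting fiberwise estimate over the $x_1$ direction.

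I would begin with the geometric reduction arising from the block structure of restriction on LHS. For any $A\in\mathcal{A}_k'$ the action on columns $d-1,d$ is trivial, the action on columns $2,\ldots,d-2$ is the standard $\mathcal{R}_{d-3}$-Rauzy cocycle, and column $1$ accumulates a non-negative combination of columns $2,\ldots,d-2$ with its component perpendicular to $U_{d-2}(M)$ left unchanged. Consequently $C_1(MA)=C_1(M)+w$ for some $w\in U_{d-2}(M)$, so $\hat C_1(MA)$ lies at distance $d_M/|C_1(MA)|$ from $U_{d-2}(M)$ while every other vertex of $MA\Delta$ lies in $F_1(M)$. Thus the whole simplex $MA\Delta$ is contained in $\mathcal{N}$ whenever $|C_1(MA)|\geq 1/t_k$, and otherwise only its sub-slab with first barycentric coordinate $\tilde b_1\lesssim t_k|C_1(MA)|/|C_1(M)|$ lies in $\mathcal{N}$. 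Comparing the norm window $\|A\|\in[10^{(2k+1+k_0)^6+(k+k_0)^4},10^{(2k+1+k_0)^6+(k+k_0)^4+(k+k_0)^2}]$ with $1/t_k=10^{(2k+1+k_0)^6+(k+k_0)^4+(k+k_0)^2/2}$ and invoking Condition~* shows that, up to an exponentially small subset of $\mathcal{A}_k'$, the simplices $A\Delta$ tile the preimage slab $M^{-1}(\mathcal{N}\cap\Delta_c)\subset\Delta$.

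Next I would fix once and for all a matrix $P$ of restriction on LHS whose upper $(d-2)\times(d-2)$ block is positive below the first row (take a positive matrix of $\mathcal{R}_{d-3}$ on rows and columns $2,\ldots,d-2$ and prepend enough ``$1$ loses to $i$'' operations, one per $i\in\{2,\ldots,d-2\}$, so that every entry of column $1$ in rows $2,\ldots,d-2$ becomes strictly positive), and set $B=\|P\|+1$. By Proposition~\ref{prop:determined} applied to the $\mathcal{R}_{d-3}$-cocycle with $M''=P$, for any $m$ and any initial sub-simplex the set of points whose Rauzy future fails to visit $P$ before the norm grows by a factor $K''^m$ has relative measure at most $\delta''^m$. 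I would apply this with $m$ approximately $(k+k_0)^2/(16\log_{10}K'')$ to the sub-simplex reached at the start of the final $10^L$ norm window of the induction producing $A$. This shows that outside a $\lambda_{d-3}$-fraction at most $\alpha_0^{(k+k_0)^2}$ of each fiber, where $\alpha_0:=\delta''^{1/(16\log_{10}K'')}<1$, the resulting $A$ factors as $A=A_1PA_2$ with $\|A_2\|\leq K''^m\leq 10^L$, hence lies in $\mathcal{A}(B,L)$.

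Finally, I would integrate this fiberwise estimate using Fubini together with Lemma~\ref{lem:jacobian} to convert the $\lambda_{d-3}$-bound on fibers into the required $\lambda_{d-2}$-bound on $\mathcal{N}\cap\Delta_c$. The $\zeta$-balance of $M$ together with Condition~** ensures that the Jacobians of the projective actions of $M$ and of $MA$ are uniformly comparable across the slab, so integration contributes only constant factors, absorbed by choosing $\alpha\in(\alpha_0,1)$ and taking $k_0$ sufficiently large. The main obstacle is making the ``last visit to $P$'' formulation rigorous, since Proposition~\ref{prop:determined} is phrased forward-in-time while $\|A_2\|$ measures the tail \emph{after} the last forward visit. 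I would handle this by conditioning on the state reached at the start of the final $10^L$ norm window and applying the proposition to the induced conditional measure there, together with a union bound over the finitely many possible initial states in that window; the polynomial loss from the union bound is absorbed into the final constant $\alpha$.
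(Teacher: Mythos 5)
You correctly isolate the probabilistic core: apply Proposition~\ref{prop:determined} (or its corollary Proposition~\ref{prop:positive}) to the $\mathcal{R}_{d-3}$-cocycle on symbols $2,\ldots,d-2$ with a fixed matrix $P$ whose relevant $(d-3)\times(d-3)$ block is positive, take $m$ proportional to $(k+k_0)^2$, and push the $\lambda_{d-4}$ fiber estimate into $\lambda_{d-2}(\mathcal{N}\cap\Delta_c)$ via Lemma~\ref{lem:jacobian}. This is what the paper does to control $\mathcal{E}_1 = \mathcal{A}_k'\setminus\mathcal{A}(B,\frac{1}{16}(k+k_0)^2)$ in Lemma~\ref{lem:hat 1}; note that the paper places the visit to $P$ at the \emph{start} of the restriction segment, so the ``last visit'' subtlety you flag, and the union-bound workaround for it, may dissolve on closer inspection of the definitions.

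The genuine gap is in the sentence asserting that the comparison of the norm window with $1/t_k$, together with Condition~*, already shows the simplices $A\Delta$, $A\in\mathcal{A}_k'$, cover $M^{-1}(\mathcal{N}\cap\Delta_c)$ up to exponentially small error. That assertion is precisely what the paper must prove, and it requires two further arguments that do not follow from bookkeeping. First, a point of $\mathcal{N}$ may exit restriction (i.e.\ $1$ wins at $\pi_L$) before its matrix norm reaches $10^{(2k+1+k_0)^6+(k+k_0)^4}$; controlling the measure of such points is the content of Proposition~\ref{prop:not early} (the set $\mathcal{E}_3$). The paper proves it by slicing by hyperplanes $H_\alpha$ parallel to $U_d$ and using the $\zeta$-balance of $M$ to show that for matrices $A$ of the correct norm, $\alpha_0 := d(C_1(MA),U_d)$ exceeds $t_kd_M$ by a factor on the order of $10^{\frac12(k+k_0)^2}$, whence $\lambda_{d-2}(H_\alpha\cap MA\Delta)=(1-\alpha/\alpha_0)^{d-2}\lambda_{d-2}(MA\Delta\cap U_d)$ is nearly full for all $\alpha\leq t_kd_M$. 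Second, trajectories that remain in restriction past norm $10^{(2k+1+k_0)^6+(k+k_0)^4+\frac34(k+k_0)^2}$ must also contribute exponentially small measure to $\mathcal{N}$; this is Proposition~\ref{prop:not late} (the set $\mathcal{E}_2$), which needs the thinner secondary slab $\mathcal{N}''\subset\mathcal{N}'$, the length bound of Lemma~\ref{lem:lengths}, the inclusion Lemma~\ref{lem:too close}, and another application of Proposition~\ref{prop:determined} to show balanced long matrices land in $\mathcal{N}''$. Without both propositions, the Fubini step at the end of your argument integrates a fiberwise estimate over a region you have not shown to exhaust $\mathcal{N}\cap\Delta_c$, so the claimed lower bound on $\lambda_{d-2}(M\mathcal{A}(B,\frac{1}{16}(k+k_0)^2)\Delta\cap\mathcal{N}\cap\Delta_c)$ is not established.
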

Note that $B$ can be chosen to only depend on the Rauzy class. 

This says the image of $\Delta$ under elements of $\mathcal{A}(B,\frac 1 {16}(k+k_0)^{2}\color{black})$ takes up most of  $\mathcal{N}$.


To prove this result consider the following three sets of matrices  whose dependence on $k$ is suppressed.

$\mathcal{E}_1=\mathcal{A}'_k\setminus \mathcal{A}(B, \frac 1 {16}(k+k_0)^{2}\color{black})$ ,

$\mathcal{E}_2$ be the set of matrices of restriction of the LHS with norm at least $10^{(k+k_0)^6+(k+k_0)^4+\frac{3}{4}(k+k_0)^2}$ and  

$\mathcal{E}_3$ the set of matrices $A$ that leave restriction on the LHS with 
$\|A\|\leq 10^{(k+k_0)^6+(k+k_0)^4}$. 
\color{black}

The Theorem will follow from establishing the following results:
\begin{lem}\label{lem:hat 1}There exists $\rho<1$ such that $\lambda_{d-3}(M\mathcal{E}_1\Delta \cap \mathcal{N}')<\rho^{(k+k_0)^2\color{black}}\lambda_{d-1}(\mathcal{N}')$.
\end{lem}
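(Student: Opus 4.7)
The plan is to apply Proposition~\ref{prop:determined} with $M''$ chosen as a fixed positive matrix of restriction on LHS to obtain an exponentially-in-$(k+k_0)^2$ bound on the $(d-1)$-measure of the bad set $M\mathcal{E}_1\Delta$, transfer this to $\lambda_{d-3}$ on the face $V(M)$ via Theorem~\ref{thm:most lhs}, and finally exploit the geometry of restriction to pass to the thin neighborhood $\mathcal{N}'$.

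First I would fix a matrix $P_0$ of restriction on LHS whose upper $(d-2)\times(d-2)$ submatrix has rows $2,\dots,d-2$ strictly positive and with $\|P_0\|\leq B$ for a constant $B$ depending only on $\mathcal{R}_{d-3}$; such a $P_0$ is assembled by combining a positive word in the Rauzy monoid of $\mathcal{R}_{d-3}$ with a bounded sequence of ``$1$ loses'' elementary steps that accumulate each of $C_2,\dots,C_{d-2}$ into $C_1$. Proposition~\ref{prop:determined} applied with $M''=P_0$ yields constants $K''>1$ and $\delta''<1$ such that for any preceding matrix $M'$, the set of $y\in M'\Delta$ whose Rauzy continuation enters $P_0\Delta$ within growth factor $K''^m$ has relative measure at least $1-\delta''^m$. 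Choose $m=\lceil(k+k_0)^2/(16\log_{10}K'')\rceil$ so $K''^m\leq 10^{(k+k_0)^2/16}$ and $\delta''^m\leq \rho_0^{(k+k_0)^2}$ for some $\rho_0<1$. For each $A\in\mathcal{A}'_k$, let $A_1$ be the longest prefix with $\|A_1\|\leq \|A\|/(BK''^m)$; outside a relative $\delta''^m$-subset of $MA_1\Delta$ the suffix starts with $P_0$, yielding $A=A_1P_0A_2$ with $\|A_2\|<10^{(k+k_0)^2/16}$, so $A\in\mathcal{A}(B,\tfrac1{16}(k+k_0)^2)$. Summing over intermediate prefixes and invoking Theorem~\ref{thm:most lhs} (whose balancedness and column-separation hypotheses hold for $M$ by Conditions~*) transfers the $(d-1)$-estimate to
\[
\lambda_{d-3}\bigl(M\mathcal{E}_1\Delta\cap V(M)\bigr)\leq C\rho_0^{(k+k_0)^2}\lambda_{d-3}(V(M)).
\]

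The main obstacle is passing from $V(M)$ to $\mathcal{N}'$, whose $\lambda_{d-3}$-measure is only of order $t_k\lambda_{d-3}(V(M))$ with $t_k$ from \eqref{eq:tk def} far smaller than $\rho_0^{(k+k_0)^2}$ (here one reads the stated $\lambda_{d-1}(\mathcal{N}')$ as $\lambda_{d-3}(\mathcal{N}')$, the natural measure on the $(d-3)$-dimensional set $\mathcal{N}'$). The geometric observation is that the first row of every restriction matrix is $(1,0,\dots,0)$, so the vertices $C_2(MA),\dots,C_{d-2}(MA)$ of $V(MA)$ lie in $U_{d-2}$, while the remaining vertex satisfies $d\bigl(C_1(MA)/|C_1(MA)|,U_{d-2}\bigr)\leq |C_1(M)|/|C_1(MA)|\cdot d_M$. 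Together with $|C_1(MA)|\gtrsim \|A\|\cdot|C_1(M)|$ (accumulation of columns into $C_1$ during restriction), this shows that the fraction of $V(MA)$ inside $\mathcal{N}'$ is comparable to $\min(1,t_k\|A\|)$ up to dimensional constants; in particular $V(MA)\subset\mathcal{N}'$ throughout the upper half of the $\mathcal{A}'_k$-norm range. For each dyadic range $[2^j,2^{j+1}]$ of $\|A\|$ the probabilistic estimate above gives a $\rho_0^{(k+k_0)^2}$-proportion of $\mathcal{E}_1$ in $\lambda_{d-3}$, and since $\min(1,t_k\|A\|)$ is essentially constant across each dyadic range the same proportion survives reweighing by the $\mathcal{N}'$-fraction. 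Summing over dyadic ranges yields
\[
\lambda_{d-3}(M\mathcal{E}_1\Delta\cap\mathcal{N}')\leq C'\rho_0^{(k+k_0)^2}\lambda_{d-3}(\mathcal{N}')
\]
for some $C'$ and $\rho_0<1$ independent of $k$, which is the claimed bound.
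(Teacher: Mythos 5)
The paper's proof works directly at the level of the $(d-4)$-dimensional face $U_{d-2}=\spanD(C_2(M),\ldots,C_{d-2}(M))$, where restriction on the LHS acts as the Rauzy cocycle on $\mathcal{R}_{d-3}$; it applies Proposition~\ref{prop:positive} (with a fixed positive matrix $P$) in that lower dimension, and then slices $\mathcal{N}'$ by hyperplanes $H$ parallel to $U_{d-2}$, observing that $\lambda_{d-4}(H\cap MA\Delta)\leq\lambda_{d-4}(\spanD(C_2(MA),\ldots,C_{d-2}(MA)))$ while $\lambda_{d-4}(H\cap\mathcal{N}')$ is comparable to $\lambda_{d-4}(U_{d-2})$ for every slice, so the bad proportion on each slice is controlled and Fubini finishes. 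Your proposal is a genuinely different route, and it has a gap precisely at the step you flag as "the main obstacle."

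The core problem is that your probabilistic estimate is a global $\lambda_{d-1}$- (and then, after a thm:most lhs transfer, $\lambda_{d-3}$-) proportion estimate on $V(M)$, but $\lambda_{d-3}(\mathcal{N}')\approx t_k\lambda_{d-3}(V(M))$ with $t_k$ astronomically smaller than $\rho_0^{(k+k_0)^2}$. A $\rho_0^{(k+k_0)^2}$-small set in $V(M)$ could a priori contain all of $\mathcal{N}'$. Your dyadic reweighting is supposed to close this, but the assertion "for each dyadic range $[2^j,2^{j+1}]$ of $\|A\|$ the probabilistic estimate above gives a $\rho_0^{(k+k_0)^2}$-proportion" does not follow from a single application of Proposition~\ref{prop:determined}; that proposition gives a global bound, not a bound conditioned on landing in a given norm shell. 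Making the per-shell claim rigorous would require a conditional large-deviations argument you have not supplied, and in fact this is exactly what the Fubini-slicing argument replaces: each slice $H\cap\mathcal{N}'$ is "self-normalizing" relative to $U_{d-2}$, so the $\mathcal{R}_{d-3}$-level estimate transfers slice by slice without any reweighting.

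There is a secondary difficulty with your intended use of Theorem~\ref{thm:most lhs} as the bridge from $\lambda_{d-1}(M\Delta)$ to $\lambda_{d-3}(V(M))$. That theorem's Hypothesis~(2) requires $\min_{j\in\{d-1,d\}}|C_j(M)|>\tfrac{N}{\epsilon^2}\max_{i\leq d-2}|C_i(M)|$ with $\epsilon<1$, where $N$ must dominate the norms of the LHS matrices $A$ under consideration; here those matrices are the full restriction matrices $A_k'$ with $\|A_k'\|\sim 10^{(2k+1+k_0)^6}$, whereas by Proposition~\ref{prop:sizes} the relevant column ratio at the start of restriction is only of order $10^{(k+k_0)^4}$. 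So the hypothesis fails for the $N$ you need, and the transfer from $(d-1)$ to $(d-3)$ does not go through by this route. The paper avoids this entirely by never passing through $M\Delta$: the relevant probability estimate is carried out intrinsically on $U_{d-2}$ (via the $\mathcal{R}_{d-3}$ cocycle), where no such hypothesis is required. Your observation that $\lambda_{d-1}(\mathcal{N}')$ in the statement should be read as $\lambda_{d-3}(\mathcal{N}')$ is correct.
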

\begin{prop}\label{prop:not late}There exists $\rho<1$ such that $\lambda_{d-3}(M\mathcal{E}_2\Delta \cap \mathcal{N}')<\rho^{(k+k_0)^2\color{black}}\lambda_{d-1}(\mathcal{N}')$.
\end{prop}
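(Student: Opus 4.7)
The plan is to apply the large-deviations estimates of Section \ref{sec:prob} to the LHS restriction dynamics. Recall from Section \ref{sec:paths} that during restriction on LHS the columns $C_{d-1}, C_d$ are fixed, $C_1$ is modified only by addition of multiples of $C_2(M),\ldots,C_{d-2}(M)$, and the induced action on the sub-simplex $U_{d-2}(M)$ is Rauzy induction for $\mathcal{R}_{d-3}$; restriction terminates exactly when the symbol $1$ wins against some $i\in\{2,\ldots,d-2\}$.

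The first step is to reformulate membership in $\mathcal{E}_2$ as a lower bound on the number of restriction steps during which $1$ does not win. By Proposition \ref{prop:sizes} and Condition * (1), the first $d-2$ columns of $M$ at the start of stage-$k$ restriction are $\zeta$-balanced of size comparable to $10^{(2k+1+k_0)^6+(k+k_0)^4}$. Since every Rauzy step increases the matrix norm by at most a factor of $2$ and $C_1(MA)-C_1(M)\in\spanD(C_2(M),\ldots,C_{d-2}(M))$ throughout restriction, any $A\in\mathcal{E}_2$ corresponds to a path of length $\geq c(k+k_0)^2$ (for some $c>0$ depending on $\zeta$ and $d$) during which $1$ never wins. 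Apply Proposition \ref{prop:determined} with $M''$ the one-step matrix encoding ``$1$ wins against $d-2$'' at $\pi_L$: the set of $y$ whose Rauzy trajectory from $M$ does not enter $M''\Delta$ within the first $c(k+k_0)^2$ steps then has relative Lebesgue measure at most $\delta''^{c'(k+k_0)^2}\leq \rho_0^{(k+k_0)^2}$ for some $c'>0$ and $\rho_0<1$; this set contains the pullback of $M\mathcal{E}_2\Delta\cap V(M)$.

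Finally, I would transfer this bound to $\mathcal{N}'=\mathcal{N}_{t_kd_M}U_{d-2}\cap V(M)$. The face $U_{d-2}(M)$ is invariant under the restriction cocycle, since the $(d-2)\times(d-2)$ submatrix of $A$ is block lower triangular with respect to the splitting $\mathbb R\cdot e_1\oplus\mathbb R^{d-3}$, so $M\mathcal{E}_2\Delta\cap V(M)$ decomposes fiberwise over its projection onto $U_{d-2}(M)$. Because $M$ is $\zeta$-balanced, the Jacobian of the restriction along each transverse slice (parametrized by the $C_1$-coordinate) is bounded above and below by uniform constants, so Fubini combined with Lemma \ref{lem:face jacobian} yields
\[\lambda_{d-3}(M\mathcal{E}_2\Delta\cap\mathcal{N}')\leq C\rho_0^{(k+k_0)^2}\lambda_{d-3}(\mathcal{N}')\]
for a constant $C=C(\zeta,d)$, from which the proposition follows on choosing $\rho\in(\rho_0,1)$ and $k_0$ large. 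The main technical obstacle is the Fubini/Jacobian step, where one must verify that the slicing of $\mathcal{N}'$ transverse to $U_{d-2}(M)$ is compatible with the restriction cocycle up to bounded distortion; this hinges on Condition * together with the explicit block-triangular form of restriction matrices.
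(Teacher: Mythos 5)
You face a genuine gap at the transfer step, and it undermines the whole argument. Applying Proposition \ref{prop:determined} in the $\mathcal{R}_d$-dynamics with $M''$ the step where $1$ wins gives a bound on $\lambda_{d-1}$ of a subset $S\subset M\Delta$. But $\mathcal{N}'$ lives inside the codimension-$2$ face $V(M)$ and has essentially no $\lambda_{d-1}$-mass: the $(d-1)$-dimensional thickening $\mathcal{N}$ has $\lambda_{d-1}$-measure of order $t_k\lambda_{d-1}(M\Delta)$ with $t_k\approx 10^{-(2k+1+k_0)^6-\cdots}$, which is vastly smaller than the bound $\rho_0^{(k+k_0)^2}$ you get for $\lambda_{d-1}(S)$. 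So $\lambda_{d-1}(S)<\rho_0^{(k+k_0)^2}\lambda_{d-1}(M\Delta)$ is fully consistent with $S\supset\mathcal{N}'$ — and this is essentially what happens: $S$ is roughly the slab $\{y_1\lesssim K^{-c(k+k_0)^2}\}$, which is much thicker than $\mathcal{N}'$. Conditioning on $y_1\approx 0$ (which is what it means to be in $\mathcal{N}'$) turns ``$1$ wins within $c(k+k_0)^2$ rounds'' from a likely event into an unlikely one; no Jacobian/Fubini estimate can lift a bulk probability across that conditioning.

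The paper's argument tracks a different event and runs the large-deviations estimate in the right place. It applies Proposition \ref{prop:determined} to the $\mathcal{R}_{d-2}$ dynamics intrinsic to $V(M)$, with $M''$ chosen so that $MM''$ is $\zeta$-balanced for every $M$; so the event is ``the restriction cocycle becomes balanced,'' and the exceptional set is measured in the intrinsic $\lambda_{d-3}$, not the bulk $\lambda_{d-1}$. The piece that closes the argument is the geometric Lemma \ref{lem:too close}: once the restriction matrix is $\zeta$-balanced of norm at least $2\zeta N$, the subsimplex $MA\Delta$ is already contained in the thinner slab $\mathcal{N}_{d_M/N}U_d\subset\mathcal{N}''\subset\mathcal{N}'$. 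This lets the paper split $\mathcal{E}_2$: simplices inside $\mathcal{N}''$ contribute at most $\lambda(\mathcal{N}'')$, which is exponentially small relative to $\lambda(\mathcal{N}')$ by slab-thickness ratio, and simplices protruding past $\mathcal{N}''^c$ never became balanced and hence lie in the exponentially small exceptional set. Your proposal is missing both the correct cocycle and measure (intrinsic to $V(M)$, not the bulk of $M\Delta$) and the geometric Lemma \ref{lem:too close}; the block-triangular and fiberwise observations you make are correct but cannot by themselves substitute for these two ingredients.
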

\begin{prop}\label{prop:not early}  There exists $\rho<1$ such that $\lambda_{d-1}(M\mathcal{E}_3\Delta \cap \mathcal{N})<\rho^{\frac 1 2 (k+k_0)^2}\lambda_{d-1}(\mathcal{N})$.
\end{prop}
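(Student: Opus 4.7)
The plan is to exploit the geometric constraint defining $\mathcal{N}$: for $y\in\mathcal{N}$ the preimage $v=R^r y$ satisfies $v_1\lesssim t_k$, which is sharply incompatible with restriction completing at norm as small as $10^{(k+k_0)^6+(k+k_0)^4}$. The weaker rate $\rho^{\frac12(k+k_0)^2}$ (as opposed to $\rho^{(k+k_0)^2}$ in Lemma \ref{lem:hat 1} and Proposition \ref{prop:not late}) reflects exactly the factor $10^{-\frac12(k+k_0)^2}$ sitting between $t_k^{-1}$ and the lower target norm $10^{(2k+1+k_0)^6+(k+k_0)^4}$ for $\mathcal{A}'_k$.

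First, I would track the first coordinate under restriction on LHS. At the vertex $\pi_L$ the relevant comparison is between $v_1$ and $v_{d-2}$, and throughout restriction $1$ loses, so at each step $v_1\mapsto v_1/(1-v_1)$ while steps inside $\mathcal{R}_{d-3}$ do not touch $v_1$. Iterating, after $n$ Rauzy steps with $n\cdot v_1^{\text{init}}<1/2$ one has $v_1\leq 2v_1^{\text{init}}$. For $y\in\mathcal N$ the slab condition of thickness $t_k d_M$ around $U_{d-2}$ in $V(M)$ gives $v_1\leq 2t_k$, and since any $A\in\mathcal{E}_3$ has $\|A\|\leq 10^{(k+k_0)^6+(k+k_0)^4}\ll t_k^{-1}$, the coordinate $v_1$ never exceeds $4t_k$ during the path encoded by $A$. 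But exit from restriction requires the current $v_1$ to exceed the current $v_{d-2}$, so the exit step forces $v_{d-2}\leq 4t_k$ at some intermediate Rauzy iterate.

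Second, I would bound the measure of $y\in\mathcal N$ admitting such an intermediate step. By Veech's Jacobian formula (Lemma \ref{lem:jacobian}) applied to $M$ and to each restriction submatrix $A'\preceq A$, the set $\{y\in M\Delta : (R^{r+m}y)_{d-2}\leq 4t_k\text{ for some }m\text{ with }\|A(R^ry,m)\|\leq 10^{(k+k_0)^6+(k+k_0)^4}\}$ can be written as a union over $A'$ of the images under $MA'$ of a slab of relative thickness $O(t_k)$ in $\Delta$, which by the Jacobian formula and the balance assumptions on $M$ (controlled via Proposition \ref{prop:sizes} and Condition $*$) has measure at most $C\cdot t_k\cdot\lambda_{d-1}(MA'\Delta)$. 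Summing over submatrices $A'$ and combining with the probabilistic framework of Section \ref{sec:prob} (in the spirit of Proposition \ref{prop:prob decay}, applied to the $\mathcal{R}_{d-3}$ dynamics of restriction) to dominate the combinatorial count of paths by the exponentially decaying probability of failing to balance, the total contribution is at most a constant times $t_k\cdot 10^{(2k+1+k_0)^6+(k+k_0)^4}\cdot\lambda_{d-1}(\mathcal{N})$; the product equals $10^{-\frac12(k+k_0)^2}$ up to constants, and boosting to an exponential rate via the large deviation estimates of Section \ref{sec:prob} yields the bound $\rho^{\frac12(k+k_0)^2}\lambda_{d-1}(\mathcal N)$.

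The main obstacle will be the combinatorial step: the number of restriction paths of norm up to $10^{(k+k_0)^6+(k+k_0)^4}$ is large, and naively summing the individual slab measures $C\cdot t_k\cdot\lambda_{d-1}(MA'\Delta)$ could overwhelm the budget. The resolution should parallel the proof of Proposition \ref{prop:balanced}: restrict attention to a nested family of balanced submatrices via Lemma \ref{lem:bal often} so that the $MA'\Delta$ are essentially disjoint (with bounded multiplicity), and use the second conclusion of Proposition \ref{prop:prob decay} to dominate the remaining "unbalanced" paths by an exponentially small tail. Only after this reorganization does the $t_k$-thickness of the $v_{d-2}$-slab cleanly convert into the geometric factor $10^{-\frac12(k+k_0)^2}$ claimed in the proposition.
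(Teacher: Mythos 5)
Your approach is genuinely different from the paper's, which is entirely deterministic and geometric and uses none of the probabilistic machinery you propose. The paper covers $U_d$ by the disjoint simplices $MA\Delta$, where $A$ ranges over a set $\hat{\mathcal{A}}$ of restriction matrices with $\|A\|\in[N,2N]$, $N=10^{(k+k_0)^6+(k+k_0)^4}$ (one step past the $\mathcal{E}_3$ cutoff, so each $MA\Delta$ is disjoint from $M\mathcal{E}_3\Delta$). It then foliates $\mathcal{N}$ by the hyperplanes $H_\alpha$ parallel to $U_d$ at distance $\alpha\leq t_kd_M$, and notes that each slice $H_\alpha\cap MA\Delta$ shrinks linearly to a point as $\alpha$ increases to $\alpha_0:=d(C_1(MA),U_d)\geq\frac{1}{2\zeta}d_M N^{-1}$, so that $\lambda_{d-2}(H_\alpha\cap MA\Delta)=(1-\alpha/\alpha_0)^{d-2}\lambda_{d-2}(MA\Delta\cap U_d)$. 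The ratio $\alpha/\alpha_0\leq 2\zeta N t_k\sim 10^{-\frac12(k+k_0)^2}$ gives the exponent immediately; summing over $A\in\hat{\mathcal{A}}$ and integrating over $\alpha$ finishes. There is no Veech Jacobian, no balance, no large deviations.

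Your route has two concrete gaps. First, the claim that during restriction ``steps inside $\mathcal{R}_{d-3}$ do not touch $v_1$'' is false for \emph{normalized} Rauzy induction: when $i$ beats $j$ with $2\leq i,j\leq d-2$, renormalization sends $v_1\mapsto v_1/(1-v_j)$, so over a path of norm $\|A\|$ the normalized first coordinate can grow by a factor up to $\|A\|$, not stay within $2v_1^{\mathrm{init}}$. (The unnormalized first coordinate is preserved --- the first row of a restriction matrix is $(1,0,\dots,0)$ --- and this does force the unnormalized $(d-2)$ coordinate small at exit, but the renormalized slab thickness you get is $\sim Nt_k\sim 10^{-\frac12(k+k_0)^2}$, not $4t_k$; noticing this would have pointed you straight at the paper's geometric factor.) Second, and more seriously, your claim that the image under $MA'$ of a slab $\{z:z_{d-2}<\epsilon\}$ has measure $O(\epsilon)\lambda_{d-1}(MA'\Delta)$ is false for unbalanced $MA'$: the Veech Jacobian $\big(\sum_j|C_j(MA')|z_j\big)^{-d}$ is largest precisely on that slab when $|C_{d-2}(MA')|$ dominates the other columns, so the proportion can exceed $\epsilon$ by arbitrary factors. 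You flag that you would need Lemma~\ref{lem:bal often} and Proposition~\ref{prop:prob decay} to repair this, but that introduces a conditioning problem you do not address: the exponential tail estimate must hold conditionally on the thin event $y\in\mathcal{N}$ (which already has relative measure $\sim t_k$ in $M\Delta$), and there is no a priori independence. The paper simply never encounters any of this.
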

\begin{proof}[Proof of Theorem \ref{thm:neighborhood} assuming previous 3 results] First observe that if $y \in \mathcal{N}$ and 
$y\notin M\mathcal{A}(B,(k+k_0)^2)\Delta \cap \mathcal{N}$ then $y\in M\mathcal{E}_i\Delta$ for some $i\in\{1,2,3\}$.  So it suffices to prove that Lemma \ref{lem:hat 1} and Proposition \ref{prop:not late} imply analogous bounds for $\lambda_{d-1}(M\mathcal{E}_i\Delta \cap \mathcal{N})$ for $i \in \{1,2\}$. 
Now $C_{d-1}$ and $C_d$ are unchanged during restriction on LHS and so our sets $\mathcal{N}$, $M\mathcal{E}_1\Delta \cap \mathcal{N}$ and $M\mathcal{E}_2\Delta \cap \mathcal{N}$ are obtained from $\mathcal{N}'$, $M\mathcal{E}_1\Delta \cap \mathcal{N}'$ and $M\mathcal{E}_2\Delta \cap \mathcal{N}'$ by taking the convex combinations with the same line. So the analogous estimates hold. 
\color{black}
\end{proof}
\begin{proof}[Proof of Lemma \ref{lem:hat 1}] 
 Write $M$ as $M(x,r)$.  We apply Proposition \ref{prop:positive} to the Rauzy class on $d-3$ symbols with $P$ a fixed positive matrix to get that there exists $\rho<1$ such that 
 \begin{equation}
\label{eq:neighborhood}
\lambda_{d-4}(\{y\in U_{d-2}:\not\exists n: \|A(R^ry,n)\|<10^{\frac 1 {16}(k+k_0)^2\color{black}} \text{ and }A(R^ry,n)=P \})\\
\color{black}<
C\rho^{(k+k_0)^2\color{black}}\lambda_{d-4}(U_{d-2}).
\end{equation}
 Let $\mathcal{D}$ denote the set of the matrices $\|A(R^ry,n)\|<10^{\frac 1 {16}(k+k_0)^2\color{black}}$ and $A(R^ry,n)\neq P$.   
Because in restriction on the LHS we do not affect columns $C_{d-1}$ and $C_d$,  moving to $U_d$ we take  the convex combination with the same line segment, 
giving the analogous estimate for $U_d$. 
To complete the lemma,  let $H$ be a hyperplane in $V(M)$ \color{black} parallel to $U_{d-2}$.  Then if $A \in \mathcal{D}$   
\begin{equation}\label{eq:U slice bound} \lambda_{d-4}\bigl(H\cap MA\Delta)\leq \lambda_{d-4}(\spanD(C_2(MA),...,C_{d-2}(MA))\bigr).
\end{equation}
 
 We now take an orthogonal transversal in $V(M)$ to $U_{d-2}$ 
  and exhaust  $M\mathcal{D}\Delta\cap \mathcal{N}'$ by taking a hyperplane, $H$, through each point of the transversal.  
  Applying Inequalities \eqref{eq:U slice bound} and \eqref{eq:neighborhood} and observing that for every $H\subset \mathcal{N}'$ we have 
  $\lambda_{d-4}(H\cap \mathcal{N}')$ is proportional to $\lambda_{d-4}(U_{d-2})$ we obtain the lemma.
\end{proof}

\subsection{Proof of Proposition \ref{prop:not late}}

The proof of Proposition \ref{prop:not late} is similar to the previous lemma (reducing to $\Lambda_{d-3}$ and using Proposition \ref{prop:positive}) but requires a couple of preliminaries. \color{black}
\color{black}
\begin{lem}
\label{lem:lengths}
Let $M=M(\xi,n)$ be a matrix of Rauzy induction.  If $|C_j(M)|>N\color{black}$ for some $j$  then the unnormalized length $\lambda_j$  of the $j^{th}$ interval of $\hat{R}^n\xi$ is at most $\frac 1 {N\color{black}}.$
\end{lem}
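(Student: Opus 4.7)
The plan is to exploit the basic identity relating the visitation matrix $M = M(\xi,n)$ to the unnormalized lengths $\lambda_1,\dots,\lambda_d$ of the intervals of $\hat{R}^n\xi$, and then sum columns.

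First I would recall that, by the definition of the elementary Rauzy matrices given in Section \ref{sec:rauzy}, one step of Rauzy induction (in which $i$ wins and $j$ loses) transforms the unnormalized length vector $x$ into a new unnormalized length vector $x'$ with $x'_k = x_k$ for $k \neq i$ and $x'_i = x_i - x_j$. A direct calculation with the associated matrix $M$ (defined by $M(e_k) = e_k$ for $k\neq j$ and $M(e_j) = e_i + e_j$) shows that $M x' = x$. Iterating this $n$ times gives the key identity
\begin{equation*}
  M(\xi,n) \tilde{v} = \xi,
\end{equation*}
where $\tilde{v} = (\lambda_1,\dots,\lambda_d)$ is the vector of unnormalized lengths of the intervals of $\hat{R}^n\xi$.

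Next I would sum all coordinates on both sides. Since $\xi \in \Delta$, we have $\sum_k \xi_k = 1$, and on the other side summing row $k$ of $M(\xi,n)\tilde v$ over $k$ gives
\begin{equation*}
  1 \;=\; \sum_{k=1}^d \sum_{i=1}^d M(\xi,n)_{ki}\, \lambda_i \;=\; \sum_{i=1}^d \lambda_i \, |C_i(M(\xi,n))|.
\end{equation*}
Since every $\lambda_i > 0$ (the lengths of the subintervals of an IET are strictly positive) and every column norm $|C_i(M(\xi,n))|>0$, each summand is nonnegative, so in particular
\begin{equation*}
  \lambda_j\, |C_j(M(\xi,n))| \leq 1.
\end{equation*}
Combining with the hypothesis $|C_j(M)|>N$ yields $\lambda_j < 1/N$, which is the desired bound.

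I do not expect any serious obstacle: the argument is a direct unwinding of the definition of the visitation matrix together with a single column-sum identity. The only minor point to be careful about is the direction of the action (the matrix $M$ sends new unnormalized lengths to old ones, not the other way around), which is built into the conventions of Section \ref{sec:rauzy}.
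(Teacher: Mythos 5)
Your proof is correct. The paper argues via the dynamical interpretation of the matrix entries: $a_{i,j}$ counts the visits of the $j$th induced interval to $I_i$ before first return, so $|C_j(M)|$ is the height of a Rokhlin tower over an interval of length $\lambda_j$, and the total measure of that tower is at most $1$, giving $|C_j(M)|\lambda_j\le 1$ directly. You instead derive the identity $M(\xi,n)\tilde v=\xi$ from the one-step recursion $Mx'=x$ (which is correct, since with $i$ winning and $j$ losing, $Mx'=x'+x'_je_i=x'+x_je_i=x$), and then sum coordinates to obtain $\sum_i\lambda_i\,|C_i(M)|=1$, from which the bound is immediate by positivity. Both arguments are one-liners establishing the same fact; yours is slightly more self-contained algebraically, while the paper's is the standard tower picture. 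In fact your exact identity $\sum_i\lambda_i|C_i(M)|=1$ is precisely the statement that the Rokhlin towers for the induced map partition $[0,1)$, so the two are two views of the same computation.
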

\begin{proof}
The $a_{i,j}$ entry of the $C_j$ column of $M$  is the number of visits of points of the $j^{th}$ interval  of $R^n\xi$  to the  $i^{th}$ interval $I_i$ of the original IET before these points return to $R^n\xi$. By assumption   $ \sum_i a_{i,j}\geq N\color{black}$. We conclude   $N\lambda_j\leq \sum_i a_{i,j} \lambda_j\leq 1$, the length of our initial interval.
\end{proof}

\begin{lem}\label{lem:too close} Let $M(x,r)$ be a matrix of Rauzy induction so that $\frac{|C_i(M)|}{|C_{i'}(M)|}<\zeta$ for all $1\leq i,i'\leq d-3$. For any path   $\gamma$  of Rauzy induction of some length $p$ where at the end  $d-2$ beats $1$ and such that  
\begin{itemize}
\item $A(R^ry,n)$ is a matrix of restriction on the LHS, 
\item  $|C_{d-2}(A(R^ry,n))|>2N\zeta$ and 
\item $R^{r+n}y$ follows $\gamma$
\end{itemize} 
then $MA(R^{r+n+p}y) \Delta \subset \mathcal{N}_{\frac{d_M}N}U_d$. 
\end{lem}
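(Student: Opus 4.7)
The plan is to factor $A(R^r y, n+p) = BC$ via the cocycle property, where $B := A(R^r y, n)$ is the restriction matrix supplied by the hypotheses and $C := A(R^{r+n} y, p)$ is the matrix of the path $\gamma$. I will show that each normalized column $C_j(MA)/|C_j(MA)|$ lies within $d_M/N$ of $U_d$, and conclude for all of $MA\Delta$ using that the distance to a convex set is a convex function on the simplex.

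First I would exploit the block structure of the restriction matrix $B$. Because $1$ never wins during restriction on LHS and $d-1, d$ never interact with the first $d-2$ indices there, the first row of $B$ is $(1,0,\ldots,0)$, the last two columns are $e_{d-1}, e_d$, and the entries $B_{d-1,j}$ and $B_{d,j}$ vanish for $j \le d-2$. Setting $M' := MB$, these observations force $C_1(M') = C_1(M) + w$ with $w$ a non-negative combination of $C_2(M),\ldots, C_{d-2}(M)$, while $C_k(M')$ lies in the cone over $U_d$ for every $k \ge 2$. Consequently, for any non-negative combination $\sum_k u_k C_k(M')$, the $C_1(M)$-component has magnitude exactly $u_1|C_1(M)|$, with the remainder lying in $\mathrm{cone}(U_d)$.

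Next I would bound $|C_{d-2}(M')|$ from below. Expanding $|C_{d-2}(M')| = \sum_{i=2}^{d-2} B_{i,d-2}|C_i(M)|$ and combining the $\zeta$-balance $|C_i(M)| \ge |C_1(M)|/\zeta$ with the hypothesis $|C_{d-2}(B)| = \sum_{i=2}^{d-2} B_{i,d-2} > 2N\zeta$, one gets $|C_{d-2}(M')| \ge 2N|C_1(M)|$. Then for each column $j$ of $MA = M'C$ the identity $C_j(MA) = \sum_k C_{k,j}\, C_k(M')$ shows that the $C_1(M)$-component of $C_j(MA)$ has magnitude $C_{1,j}|C_1(M)|$, while the total $L^1$-norm is at least $C_{d-2,j}|C_{d-2}(M')| \ge 2N C_{d-2,j}|C_1(M)|$. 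Hence the projective distance from $C_j(MA)/|C_j(MA)|$ to $U_d$ is at most $\dfrac{C_{1,j}}{2N\, C_{d-2,j}}\, d_M$.

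The final ingredient is the hypothesis that $\gamma$ ends with $d-2$ beating $1$. For $j = 1$ this immediately yields $C_{d-2,1} \ge 1$ from the update $C_1 \leftarrow C_1 + C_{d-2}$ in $C$, and more generally I expect the column-by-column comparison $C_{1,j} \le 2 C_{d-2,j}$ to hold. The hard part will be establishing this comparison uniformly in $j$ and in the choice of $\gamma$: for $j \ge 2$ the entries $C_{1,j}$ and $C_{d-2,j}$ are fixed by the penultimate state of $\gamma$, so one must either perform a case analysis on the permutation visited just before the final step, or extract a monotonicity invariant of the Rauzy cocycle compatible with paths whose last letter has $d-2$ beating $1$. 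Granting this comparison, every vertex of $MA\Delta$ lies within $d_M/N$ of $U_d$, and convexity of $d(\cdot, U_d)$ completes the proof.
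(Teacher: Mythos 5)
Your factorization $A=BC$ and the block analysis of the restriction matrix $B$ are correct, and the resulting reduction — bound the $C_1(M)$--weight of each column $C_j(MA)$ and conclude by convexity of $d(\cdot,U_d)$ — is a legitimate, somewhat different route from the paper's. The paper does not track matrix entries at all: it invokes Lemma~\ref{lem:lengths} to convert the hypothesis $|C_{d-2}(A(R^ry,n))|>2N\zeta$ into the statement that the $(d-2)$-nd \emph{interval} has length $<\tfrac{1}{2N\zeta}$ after $n$ steps, then observes that $(R^rz)_1=(\hat R^{p+n-1}R^rz)_1<(\hat R^{p+n-1}R^rz)_{d-2}\le(\hat R^{n}R^rz)_{d-2}$ for every $z$ in the image simplex, and finishes with the barycentric estimate $d(z,U_d)<d_M\,z_1|C_1(M)|/\sum_{i\ge2}z_i|C_i(M)|\le d_M/(2N(1-z_1))$. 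Your column-norm inequality $|C_{d-2}(M')|\ge 2N|C_1(M)|$ is the linear-algebra shadow of the paper's length bound, so up to reformulation the two arguments use the hypothesis in the same way.

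The place where you stall, namely the column-by-column comparison $C_{1,j}\le 2\,C_{d-2,j}$, is not a hard combinatorial problem needing a case analysis on the penultimate permutation or a bespoke monotonicity invariant, and indeed no such uniform statement holds for an arbitrary $\gamma$ that merely ends with $d-2$ beating $1$ (take any $\gamma$ in which $1$ wins several intermediate times: then $C_{1,j}$ can be made large while $C_{d-2,j}=0$, and the normalized column $C_j(MA)$ genuinely fails to lie near $U_d$). The observation that dissolves the difficulty — and which the paper's proof is also silently using when it asserts $(R^rz)_1=(\hat R^{p+n-1}R^rz)_1$ — is that the symbol $1$ never wins anywhere along the concatenated path: not during restriction by definition, and not during $\gamma$, whose final step has $1$ losing. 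Granting this, $e_1$ is never added to any other column, so $C_{1,j}(C)=0$ for all $j\ge 2$ (those columns of $MA$ already lie in $\mathrm{cone}(U_d)$), while $C_{1,1}(C)=1$ and the final update $C_1\leftarrow C_1+C_{d-2}$ gives $C_{d-2,1}(C)\ge C_{d-2,d-2}\ge1$, so the $j=1$ bound is $d_M/(2N)\le d_M/N$. In short, your decomposition is sound and even closes a step the paper leaves implicit, but your remaining step should be replaced by the single observation that $1$ never wins; the comparison you are hunting for does not hold in the generality you set up, and the lemma should be read with that extra (tacit) constraint on $\gamma$.
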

\begin{proof} Because $1$ loses during restriction and $d-2$ beats $1$ on the last step of $\gamma$,   it follows that every $z\in MA(R^{r}y,n+p-1)\Delta$ satisfies  $$({R}^rz)_1=(\hat{R}^{p+n-1}R^rz)_1<(\hat{R}^{p+n-1}R^rz)_{d-2}.$$
However by our assumptions we have  $|C_{d-2}(A(R^{r}y,n)\color{black})|>{2N\zeta}$ and so by Lemma \ref{lem:lengths} we have $(\hat{R}^{n}R^ry)_{d-2}<\frac 1 {2N\zeta}$. So if 
$z \in MA(R^{r}y,n+p)$ then $(R^rz)_1<\frac 1 {2N\zeta}$ and 
$$d(z,U_{d})<d_M\frac{|z_1C_1(M)|}{\sum_{i=2}^{d}|z_iC_i(M)|}\leq \frac{d_M}{2N(1-z_1)}.$$
\end{proof}

\begin{proof}[Proof of Proposition \ref{prop:not late}]  Let  $$\mathcal{N}''=\mathcal{N}_{10^{-\frac 1 8 (k+k_0)^2}t_kd_{M}}U_{d-2}\cap V(M)\subset \mathcal{N}'=\mathcal{N}_{t_kd_{M}}U_{d-2}\cap V(M).$$   We partition $\mathcal{E}_2$ into two parts.
  The first subset consists of those    matrices $A$ such that $MA\Delta\subset \mathcal{N}''$.  This gives  an exponentially small part of $\mathcal{N}'$ by simple geometry.  The second subset consists of those $A\in \mathcal{E}_2$ such that $MA\Delta_d\cap \mathcal{N}''^c  \neq\emptyset$  which we now treat. That is, let  $\tilde{\mathcal{A}}$ be the set of matrices $A$ of restriction  on the LHS with $$\|A\|\geq 
10^{(k+k_0)^6+(k+k_0)^4+ \frac{3}{4}(k+k_0)^2}$$ and such  that 
$$MA\Delta \cap \mathcal{N}''^c \neq \emptyset.$$

We now show there exists $\rho<1$ such that 
\begin{equation}
\label{eq:late key}\lambda_{d-4}(M\tilde{\mathcal{A}}\Delta \cap U_{d-2})<\rho^{(k+k_0)^2}
\lambda_{d-4}(U_{d-2})
\end{equation}
  by showing that $\lambda_{d-4}(M\tilde{A}\Delta \cap U_{d-2})$ is an exponentially small amount in $\frac 1 8 (k+k_0)^4$ multiplied by the measure of $U_{d-2}$. To do this we show that its measure is smaller than the volume of $\mathcal{N}''$.  Set  
$$N= \frac 1 {2\zeta} 10^{(k+k_0)^6+(k+k_0)^4+\frac 5 8 (k+k_0)^2}.$$ 
 By Lemma \ref{lem:too close}, once a matrix $A$ of norm at least  $2\zeta N$  becomes $\zeta$-balanced it can not be in $\tilde{\mathcal{A}}$ because 
 \begin{equation}\label{eq:getting containment}
 MA\Delta\subset  \mathcal{N}_{\frac{d_M}N}U_d\subset \mathcal{N}'',
 \end{equation}
  the last inclusion by the choice of $t_k$.
Let $M''$ be a matrix in $\mathcal{R}_{d-2}$ that arises from a positive matrix of $\mathcal{R}_{d-3}$ followed by a fixed path where at the end $d-2$ beats 1, so that $MM''$ is $\zeta$-balanced for every (non-negative) matrix $M$. 
 We now apply Proposition \ref{prop:determined} to this $M''$ and obtain that regardless of our past, off of an exponentially small proportion of $M\Delta$ we produce matrices with ratio of columns at most 
 $\zeta$ before the norm has increased by more than $10^{\frac 1 8 (k+k_0)^2}$. Re-interpreting this we produce matrices of restriction in $\mathcal{R}_d$ coming from points in $V(W)$ that are $\zeta$-balanced 
 (in columns 2,...,$d-2$), whose intersections with $U_{d-2}$  are contained in $\mathcal{N}''$ (via \eqref{eq:getting containment}).  This shows that the measure of $M\tilde{\mathcal{A}}\Delta \cap U_{d-2}$ is less than the measure of $\mathcal{N}''$   obtaining     \eqref{eq:late key}. With our initial remark we have that 
 $\lambda_{d-2}(\mathcal{E}_{2}\Delta\cap\mathcal{N'})$ is an exponentially small proportion of $\mathcal{N}'$. 
\end{proof}

\subsection{Proof of Proposition \ref{prop:not early}}
Let $\hat{\mathcal{A}}$ be a set of matrices of restriction on the LHS so that 
\begin{enumerate}
\item $\|A\|\in [10^{(k+k_0)^6+(k+k_0)^4},2\cdot 10^{(k+k_0)^6+(k+k_0)^4}]$ for all $A \in \mathcal{\hat{A}}$.
\item $U_{d}\subset \cup_{A\in \mathcal{\hat{A}}}MA\Delta $
\item $A \neq A' \in \mathcal{\hat{A}}$ then $A\Delta \cap A'\Delta=\emptyset.$
\end{enumerate}
Parametrize affine hyperplanes parallel to $U_{d}$ and non-trivially intersecting $M\Delta$ by their distance from $U_{d}$. So $H_\alpha$ is the affine hyperplane parallel to $U_{d}$ whose distance from $U_{d}$ is $\alpha$. 
To prove Proposition \ref{prop:not early} it will suffice to show 
there exists $\rho<1$ so that for all $A\in \mathcal{\hat{A}}$ and $\alpha\leq t_kd_M$ 

\begin{equation}\label{eq:suff not early} \lambda_{d-2}(H_\alpha \cap MA\Delta)>(1-\rho^{\frac 1 2 (k+k_0)^2})\lambda_{d-2}(MA\Delta \cap U_d)
\end{equation}

We prove this inequality.  For each $A$ 
let $\alpha_0=d(C_1(MA),U_d)$ so  $H_{\alpha_0}\cap MA\Delta$ is a single point.
\color{black}
 
Notice that $$\alpha_0\geq\frac 1 {2\zeta} d_M10^{-[(k+k_0)^6+(k+k_0)^4]}.$$ 

Now the side lengths  of parallel hyperplanes $H_\alpha$ intersecting a simplex 
vary linearly and angles are constant \color{black} so $\lambda_{d-2}(H_\alpha \cap MA \Delta)=(1-\frac{\alpha}{\alpha_0})^{d-2}\lambda_{d-2}(MA\Delta \cap U_d)$. 

By the bound $\alpha<t_kd_M$ and the definition of $t_k$ we have   $$\frac{\alpha}{\alpha_0}<2\zeta \cdot 10^{-\frac 1 2 (k+k_0)^2}.$$
We have established  (\ref{eq:suff not early}) for an appropriate $\rho$.

We now finish 
the proof of 
Proposition \ref{prop:not early}.
Because $ \cup_{A\in \hat{\mathcal{A}}}MA\Delta \supset U_{d}$,   inequality (\ref{eq:suff not early}) implies that 
$$\lambda_{d-2}(H_\alpha \cap \cup_{A \in \mathcal{\hat{A}}}MA\Delta)>(1-\rho_3^{\frac 1 2 (k+k_0)^2})\lambda_{d-2}(U_d)>(1-\rho_3^{\frac 1 2 (k+k_0)^2})\lambda_{d-2}(H_\alpha\cap M\Delta) .$$
This establishes that all but an exponentially small portion of $\mathcal{N}$ is not in $M\hat{\mathcal{A}}_3\Delta$.

\section{Transition, freedom and restriction on the right hand side}\label{sec:rhs}

 Theorem \ref{thm:RHS} is used in the proof of Theorem \ref{thm:planes}. The other results used outside of this section are Lemmas \ref{lem:second}  and \ref{lem:remaining RHS}, which are used to establish Condition * (2) and (3). 
We now establish the second condition of Condition *.

\begin{lem}
\label{lem:second}
\color{black}
There exists $C>1$ and $\rho<1$ so that if  $M(x,r)$ is at end of restriction on LHS,  then the measure of  the set of $y\in M\Delta$ such that there exists $n$  with $A(R^ry,n)$ reaches $\pi_s$ and $\|A\|\leq 10^{(k+k_0)^{2}}$ is at least 
 $(1-C\rho^{(k+k_0)^{2}})\lambda_{d-1}(\hat{M}\Delta)$
 \end{lem}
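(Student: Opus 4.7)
The plan is to apply Proposition~\ref{prop:determined} with $M''$ chosen to be a fixed Rauzy matrix realizing some path from $\pi_L$ to $\pi_s$ inside the class $\mathcal{R}_d$. Such a path exists since both permutations lie in $\mathcal{R}_d$, and its length $L_0$ and norm $C_0=\|M''\|$ depend only on $d$. With $M'=M$ and this $M''$, Proposition~\ref{prop:determined} produces constants $K''>1,\ \delta''<1$ so that for every $m$, off a set of measure at most $\delta''^{m}\lambda_{d-1}(M\Delta)$, there is some $n$ with $R^{r+n}y\in M''\Delta$ together with the norm-growth bound $|C_{\max}(MA(R^{r}y,n))|\leq K''^{m}|C_{\max}(M)|$.

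First I will choose $m=\lfloor c(k+k_0)^{2}\rfloor$ for a constant $c>0$ with $c\log_{2}K''<\log_{2}10$; then $K''^{m}\cdot C_{0}\leq 10^{(k+k_0)^{2}}$ once $k_0$ is large. Next, for each $y$ in the good set, the moment $R^{r+n}y\in M''\Delta$ the subsequent $L_0$ Rauzy steps are forced to follow $M''$, so after $n+L_0$ total Rauzy steps the permutation is $\pi_s$ and the transition matrix is
\[
A(R^{r}y,n+L_{0})=A(R^{r}y,n)\cdot M''.
\]
Provided one has the bound $\|A(R^{r}y,n)\|\leq K''^{m}$, the norm of the product is at most $K''^{m}C_{0}\leq 10^{(k+k_0)^{2}}$, as required. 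Finally, setting $\rho=\delta''^{c}<1$, the exceptional set has measure at most $\rho^{(k+k_0)^{2}}\lambda_{d-1}(M\Delta)$, which gives the lemma.

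The hard part will be extracting the bound on $\|A(R^{r}y,n)\|$ itself, rather than only on $|C_{\max}(MA(R^{r}y,n))|$ as stated in Proposition~\ref{prop:determined}. A naive use of $\|A\|\leq|C_{\max}(MA)|/|C_{\min}(M)|$ would pick up the enormous imbalance factor $|C_{\max}(M)|/|C_{\min}(M)|$, which at the end of restriction on LHS is of order $10^{(2k+1+k_0)^{6}}$ by Proposition~\ref{prop:sizes} and would ruin the estimate. To get around this I would revisit the proof of Proposition~\ref{prop:determined}, whose auxiliary random variables $F_{j}$ are defined directly through the norm and $\zeta$-balance of the local Rauzy matrix via Lemma~\ref{lem:bal often}, and this internal structure in fact delivers $\|A(R^{r}y,n)\|\leq K''^{m}$ on the local matrix. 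Alternatively one may apply Proposition~\ref{prop:balanced} first to reach a $\zeta$-balanced local matrix with controlled norm, then use Lemma~\ref{lem:bal distort} to secure a positive conditional probability that the next $L_{0}$ Rauzy steps follow $M''$, and iterate this via the large-deviations bound of Proposition~\ref{prop:prob decay}.
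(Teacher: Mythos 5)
Your proposal is correct at a high level and takes a genuinely different route from the paper's. The paper applies Proposition~\ref{prop:balanced} to obtain, off a set of relative measure $C\rho^{(k+k_0)^2}$, a time $n$ at which $\hat M A(R^ry,n)$ is $\zeta$-balanced with $|C_{\max}(\hat M A)|\leq 10^{(k+k_0)^2}|C_{\max}(\hat M)|$, and then notes that since $C_{d-1}(\hat M),C_d(\hat M)$ are tiny compared with the first $d-2$ columns, balance can only be achieved after $d$ has been compared with some $i\leq d-2$ — which forces a visit to $\pi_s$. Your route instead invokes Proposition~\ref{prop:determined} with a hand-built witness $M''\colon \pi_L\to\pi_s$ and lets $M''$ force the visit directly. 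Both are large-deviations arguments built on Lemma~\ref{lem:bal often} and Proposition~\ref{prop:prob decay}; the paper's is slicker because it requires no explicit transition path and exploits the structure of $\hat M$ at the end of restriction, while yours is more transparent and does not depend on the (implicitly invoked) structural fact that $d$ only meets a small-index letter at $\pi_s$.

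The point you raise in your final paragraph is well taken and in fact is exactly the part that the paper's two-sentence proof does not spell out. The statements of Propositions~\ref{prop:balanced} and~\ref{prop:determined} only control the cumulative quantity $|C_{\max}(\hat MA)|/|C_{\max}(\hat M)|$, while the Lemma needs the \emph{local} bound $\|A\|\leq 10^{(k+k_0)^2}$, and passing between them in general costs the imbalance factor $|C_{\max}(\hat M)|/|C_{\min}(\hat M)|\approx 10^{(2k+1+k_0)^6}$. Your observation — that the random variables $F_j$ in the proofs of those propositions are defined through dyadic windows for the \emph{local} Rauzy cocycle, so the proof actually delivers $\|A(R^ry,n)\|\leq K^{m}$ up to constants — is correct and is the right justification. (Note, though, that the $F_j$ of Proposition~\ref{prop:determined} are defined through the event $R^{r+n}y\in M''\Delta$ rather than through $\zeta$-balance; you attributed the balance condition to the wrong proposition, but your alternate argument via Proposition~\ref{prop:balanced} together with Lemma~\ref{lem:bal distort} and Proposition~\ref{prop:prob decay} is essentially the proof of Proposition~\ref{prop:determined} anyway.) One small thing to add: when you invoke Proposition~\ref{prop:determined} with your $M''$, you should record that $M''$ is a matrix of Rauzy induction starting at the permutation $\pi_L$, so the event $R^{r+n}y\in M''\Delta$ is meaningful exactly when the local balanced matrix is encountered with combinatorial data $\pi_L$; this compatibility of permutations is implicit in the paper's proof of Proposition~\ref{prop:determined} (via Lemma~\ref{lem:bal distort}) and is likewise implicit in your use of it.
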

 \begin{proof}
By Proposition \ref{prop:balanced} we have that there exists $\zeta,C,\rho$ so that 
\begin{multline*}
\lambda_{d-1}(\{y \in \hat{M}\Delta: \exists n \text{ with } |C_{\max}(M(R^ry,n))|<10^{(k+k_0)^{2}}|C_{\max}(\hat M)|\\
 \text{ and } M(R^ry,n) \zeta\text{-balanced}\})>(1-C\rho^{(k+k_0)^{2}})\lambda_{d-1}(\hat{M}\Delta).
\end{multline*}
For the matrix to become balanced (as $d \times d$ matrix),  $d$ has to be compared to $i$ for $i\leq d-2$, because these columns are much larger than $C_d$. For this to occur, we have to reach $\pi_s$.
\end{proof}

\color{black}
With Theorem \ref{thm:neighborhood} 
\color{black}
this establishes Condition* (2).

\color{black}

Given $\zeta$ 
let $\mathcal{B}_k$ be a collection of matrices $B$ 
 so that 
\begin{itemize}
\item $B$ is a matrix of Rauzy induction corresponding  to a  path starting at  $\pi_s$ and ending at $\pi_R$.  Anytime it returns to $\pi_s$ we have $d$ beating  $1,\ldots, d-2$. 
\item For $B_1,B_2\in \mathcal{B}_k$, $B_1\Delta\cap B_2\Delta=\emptyset$.
\item $\|B\|\in [10^{(2k+1+k_0)^6-(k+k_0)^4},10^{(2k+1+k_0)^6-(k+k_0)^4+(k+k_0)^2}] $
\item  $\frac 1 \zeta<\frac{|C_{d-1}(B)|}{|C_d(B)|}<\zeta.$
\end{itemize}
\begin{lem}\label{lem:remaining RHS} There exists $ c, \rho<1,$ C and $\zeta$ so that for $k_0$ sufficiently large   
\begin{enumerate}
\item$\lambda_{d-1}(\cup_{B\in \mathcal{B}_k}MB\Delta)>(1-C\rho^{(k+k_0)^2\color{black}})\lambda_{d-1}(M\Delta)$
\item
For all but a set of  planes $P$ of measure $\rho^{c(k+k_0)^2}$  
$$\lambda_2(P\cap (M\Delta\setminus \cup_{B\in \mathcal{B}_k}MB\Delta)<\rho^{\frac 12 (k+k_0)^2}\lambda_{d-1}(P \cap M\Delta)$$
\end{enumerate}

\end{lem}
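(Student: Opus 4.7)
For part (1), my plan is to reduce to Proposition \ref{prop:balanced} applied to the effective $2$-dimensional Rauzy induction on the pair $(C_{d-1},C_d)$. After LHS restriction and transition, $M$ is at $\pi_s$ with last two columns dominating the others by a factor comparable to $10^{(2k+1+k_0)^6-(k+k_0)^4}$ (Proposition \ref{prop:sizes}), and by Theorem \ref{thm:nue} these columns are concentrated near $e_{d-1}\oplus e_d$. Consequently, for all but a set of proportion $10^{-c(2k+1+k_0)^6}$ of $y\in M\Delta$ (a direct Jacobian/Fubini estimate, as in Lemma \ref{lem:jacobian}), the coordinates $y_1,\ldots,y_{d-2}$ are much smaller than $y_d$, forcing Rauzy induction to execute the canonical path $\pi_s\to\pi_R$ in which $d$ beats $1,\ldots, d-2$. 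On the subsequent freedom-on-RHS segment, the columns $C_1,\ldots, C_{d-2}$ of the running matrix $B$ are only modified by additions of $C_d$, so both the $\zeta$-balance of $(C_{d-1}(B),C_d(B))$ and $\|B\|$ are governed by a $2$-dimensional Rauzy cocycle on $(C_{d-1},C_d)$. Applying Proposition \ref{prop:balanced} to this $2$-dimensional induction with $m$ chosen so that $K^m\asymp 10^{(k+k_0)^2}$ yields, off a set of measure at most $\rho^{(k+k_0)^2}\lambda_{d-1}(M\Delta)$, a matrix $B$ satisfying all the defining conditions of $\mathcal{B}_k$. Passing to a maximal disjoint subcollection of such matrices gives (1).

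For part (2), my plan is Fubini combined with Markov's inequality. The family $\mathcal{P}$ is parametrized by its $(d-3)$-dimensional orthogonal complement in the affine span of $\Delta$, and Fubini yields
\[
\int_{\mathcal{P}} \lambda_2\bigl(P\cap (M\Delta\setminus \cup_B MB\Delta)\bigr)\,dP \;=\; c_0\,\lambda_{d-1}\bigl(M\Delta\setminus\cup_B MB\Delta\bigr)
\]
for a universal constant $c_0>0$. By (1) the right-hand side is at most $c_0C\rho^{(k+k_0)^2}\lambda_{d-1}(M\Delta)$. I would then use that for every $P\in\hat{\mathcal{P}}$ the slice $\lambda_2(P\cap M\Delta)$ is bounded below by a constant multiple of $\lambda_{d-1}(M\Delta)$; this follows from Theorem \ref{thm:diameter} on the diameter of slices together with Condition ** on the angular regularity of the faces of $M\Delta$. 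Markov's inequality then implies that the set of $P\in\hat{\mathcal{P}}$ on which
\[
\lambda_2\bigl(P\cap (M\Delta\setminus\cup_B MB\Delta)\bigr) > \rho^{\frac12(k+k_0)^2}\,\lambda_2(P\cap M\Delta)
\]
has proportion at most $C'\rho^{\frac12(k+k_0)^2}$, which gives the claim upon choosing $c>0$ so that $C'\rho^{\frac12(k+k_0)^2}\le\rho^{c(k+k_0)^2}$ for all $k$. (I read the right-hand side of the statement of (2) as $\lambda_2(P\cap M\Delta)$, treating the subscript $d-1$ there as a typo, since $P$ is two-dimensional.)

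The main obstacle is the first step of (1): namely, carrying out the two-dimensional reduction so that Proposition \ref{prop:balanced} applies with uniform Kerckhoff--Veech distortion constants on the RHS loops through $\pi_R$ described in Section \ref{sec:paths}. One must verify that the conditional probabilities underlying Proposition \ref{prop:prob decay} continue to hold in the restricted RHS regime (this essentially amounts to checking that the Gauss-type map on $(C_{d-1},C_d)$ has positive chance to balance within a bounded norm factor regardless of past history), and that the narrow window $[10^{(2k+1+k_0)^6-(k+k_0)^4},10^{(2k+1+k_0)^6-(k+k_0)^4+(k+k_0)^2}]$ is genuinely reachable. The latter reduces to choosing $m$ of order $(k+k_0)^2/\log K$ so that $K^m$ spans the window. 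With these in hand, the remaining estimates are routine consequences of Section \ref{sec:prob}.
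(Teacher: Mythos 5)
Your reading of the typo in part (2) is correct: the $\lambda_{d-1}$ on the right-hand side should be $\lambda_2$.

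For part (1), your plan captures the right ingredients but leaves precisely the gap you flag unresolved, and the paper's proof uses a different device to close it. The paper first restricts to the one-dimensional face $W(M)$, where the first bullet defining $\mathcal{B}_k$ (the path $\pi_s \to \pi_R$ with $d$ winning) holds automatically, and there applies Proposition~\ref{prop:balanced} in the guise of $\mathcal{R}_2$, justified via Lemma~\ref{lem:face jacobian}; this yields a $\lambda_1$-estimate on $W(M)$. It then transfers this to a $\lambda_{d-1}$-estimate on $M\Delta$ by invoking the second conclusion of Theorem~\ref{thm:most rhs}, which is precisely engineered to convert a face estimate on $W(M)$ (for matrices of controlled norm) into a full-simplex estimate under the column-imbalance hypothesis. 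Your more direct approach — Jacobian estimate showing that most $y$ have $y_1,\dots,y_{d-2}$ dominated by $y_d$, then a two-dimensional balance argument on the full simplex — is essentially trying to re-derive Theorem~\ref{thm:most rhs} inline. You correctly identify the obstacle (verifying the conditional probabilities behind Proposition~\ref{prop:prob decay} for the restricted RHS dynamics), but you do not actually resolve it; the paper sidesteps it entirely by working on the face where the path constraint is automatic and delegating the transfer to the already-proven Theorem~\ref{thm:most rhs}. Without invoking that theorem (or re-proving its content), your step from the two-dimensional balance estimate to the asserted bound ``off a set of measure at most $\rho^{(k+k_0)^2}\lambda_{d-1}(M\Delta)$'' is a genuine gap.

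For part (2), the Fubini–Markov framework is the same as the paper's, but your intermediate claim that $\lambda_2(P\cap M\Delta)$ is bounded below by a constant multiple of $\lambda_{d-1}(M\Delta)$ for \emph{every} $P\in\hat{\mathcal{P}}$ is false and in any case circular ($\hat{\mathcal{P}}$ has not yet been defined at this stage); planes near the boundary of $M\Delta$ cut slices of arbitrarily small area. The correct tool is Proposition~\ref{prop:concave} (the convexity estimate on slice areas), which bounds the proportion of planes with $\lambda_2(P\cap M\Delta)$ below a threshold by an exponentially small quantity; the paper applies this to exclude a bad set of planes of measure $\rho^{c'(k+k_0)^2}$ before running the Markov inequality. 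Replacing your pointwise lower bound by the measure-exceptional bound of Proposition~\ref{prop:concave} would repair this part.
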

\begin{proof}
We first show that the  that the subset of $M\Delta$ for which there is no $B\in \mathcal{B}_k$ is exponentially small part of $M\Delta$. 
We restrict to $W(M)$ where the first bullet  of $\mathcal{B}_k$ automatically holds. By applying Proposition \ref{prop:balanced} to $\mathcal{R}_2$  (which is justified by Lemma \ref{lem:face jacobian}) we obtain that the last three bullets  hold  off of an exponentially small subset  . These matrices give us paths in $\mathcal{R}_d$ corresponding to points in $W(M)$. So we obtain
$$\lambda_1(\cup_{B\in \mathcal{B}_k}W(MB))>(1-C\rho^{(k+k_0)^2})\lambda_1(W(M)). $$
 By the second conclusion of Theorem \ref{thm:most rhs} we have  (1) of the lemma. 
We now prove Conclusion (2). By Proposition \ref{prop:concave}, the fact that $M\Delta$ is bounded, and Fubini,  there  is a constant $c'$ so that for $k_0$ large enough,  the set of $P$ with
\begin{equation}\label{eq:tiny intersection}\lambda_2(P \cap M\Delta)<\rho^{\frac 3 4 (k+k_0)^2}\lambda_{d-1}(M\Delta)
\end{equation} 
is at most $\rho^{c'(k+k_0)^2}$ proportion of the planes intersecting $M\Delta$. Now for a plane $P$ not satisfying \eqref{eq:tiny intersection} to fail the conclusion of the lemma we have that 
$$\lambda_2(P\cap (M\Delta\setminus \cup_{B\in \mathcal{B}_k}MB\Delta))>2\rho^{\frac 1 2 (k+k_0)^2}\lambda_{d-1}(M\Delta).$$ 
The set of such planes is at most $2 \rho^{\frac  1 2 (k+k_0)^2}\lambda_{d-1}(\Delta)$ by Fubini's Theorem. We have the lemma for $c$ any number bigger than $\max\{c',\frac 1 2 \}$ 
(provided $k_0$ is large enough).

\end{proof}

Let $\mathcal{B}'_k$ be matrices $B'$ of restriction on RHS starting at $\pi_R$ and ending at $\pi_s$. Let \begin{equation}
\label{eq:defs_k}
s_k=10^{(2k+2+ k_0)^6+ (k+ k_0)^4}.
\end{equation} Recall from our choice of matrix sizes   $$s_k\leq \|B'\|\leq  2s_k.$$ and that   $d-1$ beats $d$   between  $s_k$ and $2s_k$ \color{black} times,  and then $d$ beats $d-1$. 
The purpose of this section is to prove the following:


\begin{thm}\label{thm:RHS}
There exists $c>0$, $\rho<1$ and $C$  such that for all but a  proportion at most $C\rho^{(k+k_0)^2}$, of planes $P\in\mathcal{P}$ that intersect $M\Delta$, 
$$\lambda_2(\cup_{B \in \mathcal{B}_k, B'\in \mathcal{B}_k'}MBB'\Delta\cap P)\geq \frac{c}{s_k} \lambda_2(M\Delta \cap P).$$ 
\end{thm}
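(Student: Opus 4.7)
The plan is to split the argument into three parts: first, reduce from $M\Delta$ to $\bigcup_{B \in \mathcal{B}_k} MB\Delta$ via Lemma~\ref{lem:remaining RHS}(2); second, describe the restriction-on-RHS matrices $B' \in \mathcal{B}_k'$ as producing a thin slab inside each $MB\Delta$; third, show that most planes $P \in \mathcal{P}$ intersect this slab in a $c/s_k$-fraction of $P \cap MB\Delta$. Lemma~\ref{lem:remaining RHS}(2) tells us directly that all but a $C\rho^{c(k+k_0)^2}$ proportion of planes $P$ satisfy $\lambda_2\bigl(\bigcup_B MB\Delta \cap P\bigr) \geq (1-\rho^{(k+k_0)^2/2})\lambda_2(M\Delta \cap P)$, so I would restrict attention to these \emph{good} planes.

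Next I would work out the action of a single $B' \in \mathcal{B}_k'$: since it consists of $d-1$ beating $d$ some $t \in [s_k, 2s_k]$ times followed by one step of $d$ beating $d-1$, it fixes $e_1,\dots,e_{d-2}$ and sends $e_{d-1}\mapsto(t+1)e_{d-1}+e_d$, $e_d \mapsto te_{d-1}+e_d$. Hence
\[
\bigcup_{B' \in \mathcal{B}_k'} B'\Delta = \Bigl\{x\in\Delta : \tfrac{x_d}{x_{d-1}+x_d}\in\bigl[\tfrac{1}{2s_k+1},\tfrac{1}{s_k+1}\bigr]\Bigr\},
\]
a slab of width $\Theta(1/s_k)$ in the affine coordinate $r(x) := x_d/(x_{d-1}+x_d)$. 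Applying $MB$ and projectivizing, $\bigcup_{B'} MBB'\Delta$ becomes a slab of $W(MB)$-thickness $\Theta(1/s_k)$ in $MB\Delta$, adjacent to the extreme point $C_{d-1}(MB)$.

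For each good $P$ and each $B\in\mathcal{B}_k$, I would use the construction of $\mathcal{P}$ in Section~\ref{sec:plane choice} --- which places a direction of $P$ close (under the $M$-action and via $\Omega_{\pi_L}$) to the $C_d$-direction in $W(MB)$ --- to argue that $r$ restricts non-degenerately to $P$. The convex polygon $P \cap MB\Delta$ then sweeps out some $r$-interval $[r_-(P,B), r_+(P,B)]$, and a convexity slicing of $P \cap MB\Delta$ by $r$-level sets shows that the slab contributes at least $c/s_k$ of $\lambda_2(P \cap MB\Delta)$ whenever $r_-(P,B) = O(1/s_k)$. Summing over the disjoint $MB\Delta$ and combining with the reduction step completes the proof for these planes.

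The hard part will be bounding the proportion of planes for which $r_-(P,B)$ fails to reach $O(1/s_k)$ for too many of the $B$ that carry substantial $P \cap M\Delta$-mass. Such planes cut $MB\Delta$ only in a region separated from the $C_{d-1}(MB)$-vertex, so they miss the slab entirely, and since the slab is itself exponentially thin the argument is delicate. To bound their proportion, I would use a Fubini-type estimate over the $(d-3)$-dimensional parameter space of $\mathcal{P}$, combined with a concavity/distortion argument in the spirit of Proposition~\ref{prop:concave} together with the $\zeta$-balance of $B$ and Theorem~\ref{thm:most rhs}; this should yield an exceptional set of $\mathcal{P}$-measure at most $\rho^{c(k+k_0)^2}$, matching the form of the conclusion.
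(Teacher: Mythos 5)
Your first two steps track the paper: the reduction via Lemma~\ref{lem:remaining RHS}(2) is exactly the paper's opening move, and the description of $\bigcup_{B'}MBB'\Delta$ as a slab of $W(MB)$-thickness $\Theta(1/s_k)$ (Equation~\eqref{eq:RHS restriction survive}) is correct, as is your observation that the slab captures a $c/s_k$-fraction of any chord of $P\cap MB\Delta$ that reaches from the $F_{d-1}(MB)$-side past the slab.

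The gap is in your third step. You correctly flag as ``the hard part'' that many planes could cut $MB\Delta$ only in a region that misses the slab, but your proposed resolution --- a Fubini estimate over $\mathcal{P}$ together with Proposition~\ref{prop:concave}, $\zeta$-balance, and Theorem~\ref{thm:most rhs} --- does not work as stated, and the reason is structural. For any single $B$, the set of planes $P$ meeting $MB\Delta$ but not meeting both $F_{d-1}(MB)$ and $F_d(MB)$ (i.e.\ $P$ not ``standard'' for $MB$ in the paper's terminology) occupies a \emph{definite}, not exponentially small, proportion of the planes meeting $MB\Delta$. There are exponentially many $B\in\mathcal{B}_k$, so a union bound over $B$ or a $B$-by-$B$ Fubini cannot give the exponential decay you need. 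What the paper does instead is restrict to lines $\ell\subset P$ in the direction orthogonal to the smallest singular direction of $M$, and then prove two facts you never invoke: Corollary~\ref{cor:2 bad}, which says \emph{at most two} of the $B$'s met by $\ell$ can fail to be standard (because the faces separating distinct $MB\Delta$'s are $F_{d-1}$ or $F_d$ faces, so an interior chord must enter and leave through them, by Lemmas~\ref{lem:face same}--\ref{lem:all d}); and Proposition~\ref{prop:RHS decay}, which says each individual $\lambda_2(P\cap MB\Delta)$ is at most $10^{-\frac{1}{2}(k+k_0)^5}\lambda_2(P\cap M\Delta)$ for all but an exponentially small set of planes (a consequence of the singular-value gap in Proposition~\ref{prop:RHS sing} and Theorem~\ref{thm:diameter}). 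The first fact bounds the \emph{number} of bad $B$'s per line independently of $|\mathcal{B}_k|$; the second makes each bad one negligible. Without these two ingredients there is no route to the conclusion, so your outline has a genuine hole precisely where you predicted the difficulty would be.
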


\subsection{Freedom on RHS }


\begin{lem}\label{lem:face same} Let $B$ be a matrix on the right hand side. Then $F_i(MB)\subset F_i(M)$ for all $i\leq d-2$. 
\end{lem}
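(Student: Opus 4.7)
The plan is to prove this by directly unpacking how right-hand-side matrices act on columns. Recall that on the RHS the permutation is either $\pi_s$ or $\pi_R$, and any Rauzy step contributes an elementary matrix of the form ``add column $C_j$ to column $C_k$,'' where both $j$ and $k$ lie in $\{d-1,d\}$. (In freedom on RHS one alternates between $d-1$ beating $d$ and $d$ beating $d-1$, possibly followed by $d$ beating $1,\dots,d-2$; in restriction on RHS one only has $d-1$ beating $d$ until $d$ finally beats $d-1$. In every case, the losers among $\{1,\dots,d-2\}$ are irrelevant because those letters are never the last on either row while we stay on the RHS.) The upshot is that for any matrix $B$ on the RHS,
\[
C_i(MB)=C_i(M)\quad\text{for all }i\leq d-2,
\]
and
\[
C_{d-1}(MB),\,C_d(MB)\in\operatorname{span}\bigl(C_{d-1}(M),C_d(M)\bigr).
\]

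Granting this, fix $i\leq d-2$. By the definition in Section~\ref{sec:background},
\[
F_i(MB)=\operatorname{span}_{\Delta}\bigl(\{C_j(MB):j\neq i\}\bigr),\qquad F_i(M)=\operatorname{span}_{\Delta}\bigl(\{C_j(M):j\neq i\}\bigr).
\]
For $j\neq i$ with $j\leq d-2$, we have $C_j(MB)=C_j(M)$, so these generators lie in $F_i(M)$. For $j\in\{d-1,d\}$, the generator $C_j(MB)$ is a non-negative combination of $C_{d-1}(M)$ and $C_d(M)$, and both of these appear among the generators of $F_i(M)$ since $i\leq d-2$ means neither $d-1$ nor $d$ is omitted. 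Hence every generator of $F_i(MB)$ lies in $F_i(M)$, and because $F_i(M)$ is convex (a simplex face) we conclude $F_i(MB)\subset F_i(M)$.

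The only step requiring care is the bookkeeping claim that no RHS step ever modifies $C_1(M),\dots,C_{d-2}(M)$, which I would verify by examining the two relevant permutations $\pi_s$ and $\pi_R$: in both, the last entries of the top and bottom rows lie in $\{d-1,d\}$, so the Rauzy ``winner plus loser'' update is confined to those two columns. This rules out any contamination of $C_1,\dots,C_{d-2}$ and gives the lemma. No serious obstacle arises.
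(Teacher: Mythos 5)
Your proof contains a genuine error in the key bookkeeping claim. You assert that for every RHS matrix $B$ one has $C_i(MB)=C_i(M)$ for all $i\leq d-2$, justified by the parenthetical claim that ``the losers among $\{1,\dots,d-2\}$ are never the last on either row while we stay on the RHS.'' This is false: at $\pi_s$ the letter $1$ is the last entry of the bottom row, and the RHS path from $\pi_s$ to $\pi_R$ (and again at the end of each freedom loop) has $d$ beating $1,\dots,d-2$ in succession. With the paper's convention the winner's column is added to the loser's column, so each such step adds $C_d$ to $C_i$ for $i\leq d-2$, and in general $C_i(MB)\neq C_i(M)$.

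The correct invariant, and the one the paper uses, is weaker but still sufficient: on the RHS the \emph{winner} is always $d-1$ or $d$, so no column $C_i$ with $i\leq d-2$ is ever \emph{added to} another column. Hence for every $j$ the column $C_j(MB)$ lies in $\spanD\bigl(C_j(M),C_{d-1}(M),C_d(M)\bigr)$, rather than being literally equal to $C_j(M)$. Once you replace your false equality with this containment, the rest of your argument goes through verbatim: for $i\leq d-2$, each generator $C_j(MB)$ with $j\neq i$ is a non-negative combination of $C_j(M),C_{d-1}(M),C_d(M)$, all of which are generators of $F_i(M)$ since none of $j,d-1,d$ equals $i$, and convexity of $F_i(M)$ finishes the proof. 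So the conclusion and the overall structure are right, but the stated lemma about columns being \emph{unchanged} is wrong, and as written the proof would mislead a reader about what RHS Rauzy steps actually do to the first $d-2$ columns.
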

\begin{proof}
Since every entry that wins is either $d-1$ or $d$, for all $i\leq d-2$, $C_i$ is not added to another column. So 
$$\spanD(C_{i_1}(MB),...,C_{i_r}(MB),C_{d-1}(MB),C_{d}(MB)) \subset \spanD(C_{i_1}(M),...,C_{i_r}(M),C_{d-1}(M),C_d(M))$$
for any $i_1,...,i_r \subset \{1,...,d-2\}$. 
 This implies the result.
\end{proof}
\begin{cor}\label{cor:see sub} If $P$ is a plane such that  for all $B$ we have $F_d(MB) \cap P= F_{d-1}(MB)\cap P=\emptyset$, then $MB\Delta\cap P=M\Delta \cap P$. 
\end{cor}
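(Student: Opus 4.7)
The plan is to combine Lemma~\ref{lem:face same} with the convexity of $M\Delta\cap P$ and the tiling of $M\Delta$ by the simplices $MB\Delta$. First I would recall that, as $B$ ranges over matrices of freedom on RHS of a fixed combinatorial type, the simplices $MB\Delta$ have pairwise disjoint interiors and cover $M\Delta$ up to a set of measure zero; this is the standard picture of how Rauzy induction partitions a parent simplex, and in our setting it is built into the definition of the collections of right-hand matrices.

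Next, Lemma~\ref{lem:face same} gives $F_i(MB)\subset F_i(M)\subset\partial(M\Delta)$ for every $i\leq d-2$. Consequently the only faces of any tile $MB\Delta$ that can meet the interior of $M\Delta$ are the two ``internal'' faces $F_{d-1}(MB)$ and $F_d(MB)$. Under the hypothesis of the corollary, $P$ avoids every such internal face, for every $B$.

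From here I would finish by connectedness. Since $M\Delta$ is convex, $M\Delta\cap P$ is convex and in particular connected. It cannot be split between two disjoint open tiles of the partition without crossing one of the internal faces $F_{d-1}(MB)\cup F_d(MB)$, which $P$ has been assumed to avoid. Therefore $M\Delta\cap P$ lies in the closure of a single tile $MB\Delta$, giving $M\Delta\cap P\subset MB\Delta$ for that $B$. The reverse inclusion $MB\Delta\cap P\subset M\Delta\cap P$ is immediate from $MB\Delta\subset M\Delta$, yielding the claimed equality.

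The only point that requires any care is the tiling statement, namely that the $MB\Delta$ cover $M\Delta$ and only share boundary pieces among the faces $F_{d-1}(MB),F_d(MB)$ in the interior of $M\Delta$; this is really a book-keeping observation about Rauzy induction rather than a genuine obstacle. Once that is in hand, the argument is a one-line convexity/connectedness deduction.
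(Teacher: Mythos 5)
Your proof is correct and matches the approach the paper intends: the corollary is stated without proof as a consequence of Lemma~\ref{lem:face same}, and the combination you use — that for $i\leq d-2$ the faces $F_i(MB)\subset F_i(M)\subset\partial(M\Delta)$ are not ``new'' boundary, together with convexity of $M\Delta\cap P$ to rule out crossing an internal face $F_{d-1}(MB)$ or $F_d(MB)$ — is exactly the reasoning the paper deploys a few lines later in the proof of Lemma~\ref{lem:2 bad}. The book-keeping fact you flag (that all internal cuts come from $F_{d-1}(MB),F_d(MB)$) is precisely the content of Lemma~\ref{lem:all d}, so nothing is missing.
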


Consider the partition of $M\Delta$ into $MB\Delta$ where $B$ are matrices of the RHS and let $E$ be  the complement of these partition elements. 

\begin{lem}\label{lem:all d}
The boundary of these partition elements are subsets of $F_i(M)$ for $i\leq d-2$  or $F_d(MB)$ or $F_{d-1}(MB)$.
\end{lem}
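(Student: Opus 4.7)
The plan is to observe that each partition element $MB\Delta$ is itself a closed simplex and then decompose its boundary face by face, using Lemma~\ref{lem:face same} to identify the ``old'' faces and simply naming the ``new'' faces.

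First I would note that for each matrix $B$ of freedom on the RHS, the set $MB\Delta$ is the image of the closed simplex $\Delta$ under the (injective) projectivization of $MB$, hence is itself a closed simplex whose topological boundary is
$$
\partial(MB\Delta) \;=\; \bigcup_{i=1}^{d} F_i(MB).
$$
The boundary of $MB\Delta$ as a partition element inside $M\Delta$ is contained in $\partial(MB\Delta)$, so it suffices to show that each face $F_i(MB)$ lies in one of the three listed families.

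Second, for indices $i\leq d-2$, I would invoke Lemma~\ref{lem:face same} verbatim: it yields $F_i(MB)\subset F_i(M)$, which is exactly the first family in the statement. For $i\in\{d-1,d\}$, the face $F_i(MB)$ is already one of the explicitly listed types $F_{d-1}(MB)$ or $F_d(MB)$, so there is nothing to check.

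Finally, to cover the complement $E=M\Delta \setminus \bigcup_B MB\Delta$, I would observe that since the sets $MB\Delta$ are pairwise disjoint (as guaranteed by the Rauzy induction partition structure, cf.\ the second bullet in the definition of $\mathcal{B}_k$), the boundary $\partial E$ (taken inside $M\Delta$) is contained in $\bigcup_B \partial(MB\Delta)$, and the same face-by-face decomposition applies. The only small subtlety is local finiteness of the partition, but this is immediate since at any point of $M\Delta$ at most two neighboring simplices $MB\Delta$ abut along a common face from Rauzy induction. There is no real obstacle in this proof; it is essentially a bookkeeping consequence of Lemma~\ref{lem:face same} combined with the fact that projectivizations of non-negative matrices send simplices to simplices.
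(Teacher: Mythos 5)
Your proof is correct. It and the paper's proof rest on the same underlying fact---during RHS Rauzy induction only $d-1$ and $d$ ever win, so $C_1,\dots,C_{d-2}$ are never added to another column---but approach the conclusion from dual directions. The paper's two-sentence proof tracks the induction step by step, observing that each new cut hyperplane is the face $F_d$ (when $d$ wins) or $F_{d-1}$ (when $d-1$ beats $d$), so the only boundary pieces not inherited from $M\Delta$ are of those two types. You instead work statically with the final simplex: its topological boundary is $\bigcup_{i=1}^d F_i(MB)$, Lemma~\ref{lem:face same} disposes of the indices $i\le d-2$ in one stroke, and $F_{d-1}(MB),F_d(MB)$ are tautologically of the stated form. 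Your version has the advantage of explicitly routing through the already-established Lemma~\ref{lem:face same} rather than re-deriving its content inline, which makes the dependence transparent and sidesteps any need to verify that the label of a cut face persists under later induction steps. Your closing remarks about the complement $E$ and local finiteness, while correct, are not strictly needed: the lemma's conclusion is used downstream (in Lemma~\ref{lem:2 bad}) only for the boundaries of the individual simplices $MB\Delta$.
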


\begin{proof}Since we are on the right hand side, either $d$ beats some other letter or $d-1$ beats $d$. When $d$ beats  some other letter  the new boundary  is the face $F_d$. When $d-1$ beats $d$ the new face  is $F_{d-1}$. 
\end{proof}
Given a plane $P$ and   a matrix  $B\in\mathcal{B}_k$, we say $P$ is \emph{standard} for $MB$, if $P \cap F_{d-1}(MB)\neq \emptyset$ and $P \cap F_d(MB)\neq \emptyset$. 

Let  $\hat{\mathcal{B}}_k$ be  the set of $B$ for which $P$ is standard for $MB$. 

\begin{lem}\label{lem:2 bad} For any $P$, there are at most 2 different $B \in \mathcal{B}_k$ so that $P\cap MB\Delta \neq \emptyset$ and $P$ is not standard for $MB$.
\end{lem}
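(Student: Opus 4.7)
The plan is to linearly order the cells $\{MB\Delta : B \in \mathcal{B}_k\}$ by their trace on the edge $W(M) = \spanD(C_{d-1}(M), C_d(M))$, identify $F_{d-1}(MB)$ and $F_d(MB)$ with the interfaces of $MB\Delta$ to its higher-order and lower-order neighbours respectively, and then use convexity of $P \cap M\Delta$ to force the non-standard cells to the two extremes of this order.

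First I would observe that during freedom on RHS the only modifications of $C_{d-1}$ and $C_d$ are $C_d \mapsto C_d + C_{d-1}$ and $C_{d-1} \mapsto C_{d-1} + C_d$ (the moves where $d$ beats $i$ for $i \leq d-2$ touch only $C_i$), so for every $B \in \mathcal{B}_k$ both $C_{d-1}(MB)$ and $C_d(MB)$ lie in $\spanD(C_{d-1}(M),C_d(M))$. Hence the normalized vertices $C_{d-1}(MB)/|C_{d-1}(MB)|$ and $C_d(MB)/|C_d(MB)|$ are points of the segment $W(M)$, and the intervals $I_B := \spanD(C_{d-1}(MB), C_d(MB))$ tile $W(M)$ with disjoint interiors. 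This yields a strict linear order $B < B'$ iff $I_B$ is closer to $C_{d-1}(M)/|C_{d-1}(M)|$ than $I_{B'}$ is. A short induction on the length of RHS paths, using the mediant property of the two matrix operations above, shows that within each $I_B$ the endpoint $C_{d-1}(MB)/|C_{d-1}(MB)|$ always lies on the $C_{d-1}(M)$-side and $C_d(MB)/|C_d(MB)|$ on the $C_d(M)$-side. Consequently the face $F_{d-1}(MB)$, being opposite $C_{d-1}(MB)/|C_{d-1}(MB)|$, is the interface of $MB\Delta$ on the $C_d(M)$-side, and $F_d(MB)$ is the interface on the $C_{d-1}(M)$-side. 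Verifying this orientation statement (together with the compatibility of Lemma~\ref{lem:all d}, which rules out any other interior faces of $MB\Delta$) is the main combinatorial step and the one I expect to require the most care.

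Finally I would finish using convexity. Since $M\Delta$ is convex and $P$ is an affine plane, $P \cap M\Delta$ is a convex, and in particular path-connected, polygon. Suppose $B$ satisfies $P \cap MB\Delta \neq \emptyset$ and $P \cap F_{d-1}(MB) = \emptyset$. Any path in $P \cap M\Delta$ from a point of $P \cap MB\Delta$ to a point of any $MB'\Delta$ with $B' > B$ would have to leave $MB\Delta$ through an interior face, and by Lemma~\ref{lem:all d} combined with the previous paragraph the only interior face leading to a higher-order cell is $F_{d-1}(MB)$, which $P$ avoids; no such path exists. Hence $B$ is the unique maximum of $\{B : P \cap MB\Delta \neq \emptyset\}$. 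By the symmetric argument, at most one $B$ has $P \cap F_d(MB) = \emptyset$, namely the minimum. Since being non-standard for $MB$ means $P$ misses $F_{d-1}(MB)$ or $F_d(MB)$, at most two cells are non-standard, proving the lemma.
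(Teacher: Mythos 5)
Your proposal follows the same overall strategy as the paper's proof: each cell $MB\Delta$ has exactly two interior faces $F_{d-1}(MB)$, $F_d(MB)$ (Lemmas~\ref{lem:face same}, \ref{lem:all d}), the cells are linearly ordered along $W(M)$, and convexity of $P\cap M\Delta$ forces any non-standard cell to an extreme of that order. The paper implements this by drawing the two straight segments from a point $q\in P\cap MB\Delta$ out to the fibers over the two extreme points $p_a,p_b$ of the $W(M)$-shadow of $P\cap M\Delta$, and observing that each segment, lying in $P\cap M\Delta$, must leave $MB\Delta$ through an interior face, with the two segments exiting through different ones. Your mediant/Stern--Brocot computation makes explicit the ordering that the paper only asserts via "most extreme projection."

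The place I'd push back is the sentence "no such path exists. Hence $B$ is the unique maximum." Once $P$ misses $F_{d-1}(MB)$, the set $P\cap MB\Delta$ has a single interior edge, so its complement in the polygon $P\cap M\Delta$ is connected: a path leaving through $F_d(MB)\cap P$ is free to wander through the complement and there is nothing in the argument so far that prevents it from reaching a cell $MB'\Delta$ with $B'>B$. Knowing which endpoint of $W(MB)$ lies in which face does not by itself show that the affine hyperplane through $F_{d-1}(MB)$ separates $MB\Delta$ from all higher cells inside $M\Delta$; that hyperplane meets $M\Delta$ in a set generally larger than $F_{d-1}(MB)$, so $P$ missing the face does not prevent $P$ from crossing the hyperplane elsewhere. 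The paper's use of straight segments to the two extreme fibers $q_a,q_b$ sidesteps the "wandering" problem, since a segment with one endpoint in the interior of $MB\Delta$ exits exactly once, and the issue is reduced to the single claim that the two exits land on different faces. That claim is the same combinatorial fact your "previous paragraph" aims at, so the two proofs rest on the same underlying observation, but your path-connectedness version needs the additional separation input to close the argument.
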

\begin{proof}If there is $B$ such that  $MB \cap P=M\cap P$, then it is unique and the lemma holds. So we assume that this is not the case. Notice 
$$\{x \in \spanD(C_{d-1}(M),C_d(M)):\spanD(V(M),x)\cap P \neq \emptyset\}$$ is an interval. Let $p_a$ and $p_b$ be the two extreme points of this interval and $q_a$ and $q_b$ be points in $\spanD(V(M),p_a)\cap P$ and $\spanD(V(M),p_b)\cap P$ respectively. If $B \in \mathcal{B}_k$ and $MB \Delta \cap P \neq \emptyset$ choose $q \in MB\Delta \cap P$ and consider the line segments connecting $q_a$ to $q$ and $q_b$ to $q$. If $q_a$ or $q_b$  are not in $MB \Delta$, then the line segments cross the boundary of $MB \Delta$ at $F_j(MB)$ for $j\geq d-1$. 
To see this, if the line segment crossed $F_i(MB)
            \subset F_i(M)$ for $i\leq d-2$ 
            it would be  leaving $M\Delta$ (by Lemma \ref{lem:face same}) and so can not be entering
            $MB\Delta \cap P$. Also the two line segments have to cross at different faces (so one at $F_{d-1}(MB)$ and one at $F_d(MB)$).  It follows that all but possibly the two $B \in \mathcal{B}_k$ whose projection to $\spanD(C_{d-1}(M),C_d(M))$ is most extreme are standard. 
\end{proof}

\begin{cor}\label{cor:2 bad} For any line $\ell$ contained in $P$, we have there are at most $2$ different $B$ so that  $\ell \cap MB\Delta\neq \emptyset$ and $B$ is not standard for $P$.
\end{cor}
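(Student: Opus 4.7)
The plan is to derive Corollary \ref{cor:2 bad} immediately from Lemma \ref{lem:2 bad} by a simple containment argument. Since $\ell$ is a line contained in the plane $P$, any matrix $B \in \mathcal{B}_k$ with $\ell \cap MB\Delta \neq \emptyset$ automatically satisfies $P \cap MB\Delta \neq \emptyset$. Consequently, the set
\[
\{B \in \mathcal{B}_k : \ell \cap MB\Delta \neq \emptyset \text{ and } P \text{ is not standard for } MB\}
\]
is a subset of
\[
\{B \in \mathcal{B}_k : P \cap MB\Delta \neq \emptyset \text{ and } P \text{ is not standard for } MB\}.
\]
Lemma \ref{lem:2 bad} bounds the cardinality of the latter set by $2$, so the former has cardinality at most $2$ as well.

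I anticipate no substantive obstacle, since the corollary is literally the restriction of the lemma to a subset of matrices picked out by a stronger intersection condition. If instead one wished to prove the corollary directly without invoking Lemma \ref{lem:2 bad}, the argument of that lemma could be repeated verbatim with $\ell$ in place of $P$: the set $\{x \in \spanD(C_{d-1}(M), C_d(M)) : \spanD(V(M), x) \cap \ell \neq \emptyset\}$ is still an interval (possibly degenerate), and choosing its two extreme points $p_a, p_b$ together with the face containment $F_i(MB) \subset F_i(M)$ for $i \leq d-2$ from Lemma \ref{lem:face same} would identify at most two candidate non-standard $B$ by the same line-segment argument. Either route is routine.
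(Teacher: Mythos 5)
Your containment argument is exactly the intended derivation: since $\ell\subset P$, the condition $\ell\cap MB\Delta\neq\emptyset$ forces $P\cap MB\Delta\neq\emptyset$, so the set bounded in the corollary is a subset of the set bounded by Lemma~\ref{lem:2 bad}, and the paper indeed treats the corollary as immediate. The proposal is correct and matches the paper's (implicit) reasoning.
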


\begin{prop}\label{prop:RHS decay}
 There are constants $C$ and $\rho<1$  so that for $k_0$ large enough, if  $M$ is a matrix at start of freedom on RHS, 
then  for all $B \in \mathcal{B}_k$,  except for  a proportion at most $C\rho^{(k+k_0)^{2 \color{black}}}$ of planes $P\in\mathcal{P}$ intersecting $M\Delta$, 
we have $\lambda_2(P\cap MB\Delta)<10^{-\frac 1 2 (k+k_0)^5}\lambda_2(P \cap M\Delta)$.
\end{prop}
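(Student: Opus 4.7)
The plan is a Markov-type estimate based on the volume comparison $\lambda_{d-1}(MB\Delta) \leq C\|B\|^{-2}\lambda_{d-1}(M\Delta)$, combined with the lower bound on $\lambda_2(P\cap M\Delta)$ for typical $P$ provided by Proposition~\ref{prop:concave}.

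First I would establish the volume ratio using the block structure of freedom-on-RHS matrices. During freedom on RHS the only elementary operations are ``$d$ beats $i$'' for $i\leq d-2$, ``$d$ beats $d-1$'', and ``$d-1$ beats $d$''; none of these writes a nonzero entry into row $k\leq d-2$ of columns $d-1$ or $d$, and the identity part of the first $d-2$ columns of $B$ is preserved. Hence $B = \begin{pmatrix} I_{d-2} & 0 \\ * & B_{2\times 2}\end{pmatrix}$ with the $2\times 2$ block having entries of size comparable to $\|B\|$. Thus $C_i(MB) = C_i(M) + B_{d-1,i}C_{d-1}(M) + B_{d,i}C_d(M)$ for $i\leq d-2$, while $C_j(MB)\in\spanD(C_{d-1}(M),C_d(M))$ for $j\in\{d-1,d\}$ with $\mathbb{N}$-coefficients comparable to $\|B\|$. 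Using Proposition~\ref{prop:sizes} to check $\|B\|\cdot v_{k-1} \leq 10^{-(k+k_0)^4} u_k$, the extra contributions to $|C_i(MB)|$ for $i\leq d-2$ are a factor $10^{-(k+k_0)^4}$ smaller than $|C_i(M)|$, so $|C_i(MB)|$ is within a factor of $2$ of $|C_i(M)|$; on the other hand the balancedness condition in the definition of $\mathcal{B}_k$ forces $|C_j(MB)|$ for $j\in\{d-1,d\}$ to be comparable to $\|B\|\cdot|C_j(M)|$. Lemma~\ref{lem:volume} then yields $\lambda_{d-1}(MB\Delta) \leq C\|B\|^{-2}\lambda_{d-1}(M\Delta)$.

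Next, since $\mathcal{P}$ is a parallel family of $2$-planes parametrized by a $(d-3)$-dim Lebesgue space, Fubini gives $\int_{\mathcal{P}}\lambda_2(P\cap MB\Delta)\,dP = c_1\lambda_{d-1}(MB\Delta)$, so by Markov's inequality the set of $P$ with $\lambda_2(P\cap MB\Delta) > T$ has measure at most $c_1\lambda_{d-1}(MB\Delta)/T$. To convert this absolute bound into the relative bound in the statement, I would invoke Proposition~\ref{prop:concave} together with Fubini, exactly as in the proof of Lemma~\ref{lem:remaining RHS}, to produce $\rho_1\in(0,1)$ and $c'>0$ so that all but a $\rho_1^{c'(k+k_0)^2}$ proportion of planes intersecting $M\Delta$ satisfy $\lambda_2(P\cap M\Delta)\geq \rho_1^{(k+k_0)^2}\lambda_{d-1}(M\Delta)$.

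Setting $T = 10^{-(k+k_0)^5/2}\rho_1^{(k+k_0)^2}\lambda_{d-1}(M\Delta)$ in the Markov bound and using $\|B\|\geq 10^{(2k+1+k_0)^6 - (k+k_0)^4}$, the bad set (restricted to the ``typical'' planes) has measure at most $C\|B\|^{-2}\cdot 10^{(k+k_0)^5/2}\rho_1^{-(k+k_0)^2}$, whose $\log_{10}$ is dominated by $-2(2k+1+k_0)^6$ once $k_0$ is large; in particular the bound is $\leq C\rho^{(k+k_0)^2}$ for some $\rho<1$. Adding in the exponentially small set of planes where $\lambda_2(P\cap M\Delta)$ is too small yields the proposition. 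The main obstacle is the volume-ratio calculation in the first step: it depends crucially on the block structure of RHS freedom matrices and on the separation of scales $u_k \gg \|B\|\cdot v_{k-1}$ coming from Proposition~\ref{prop:sizes}, which together ensure that the first $d-2$ columns are essentially unchanged while the last two columns acquire the full $\|B\|$ factor.
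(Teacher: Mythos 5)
Your volume-ratio estimate $\lambda_{d-1}(MB\Delta) \leq C\|B\|^{-2}\lambda_{d-1}(M\Delta)$ is correct, and the Markov/Fubini strategy is a genuinely different route from the paper's, which instead compares the \emph{second smallest singular values} of $M$ and $MB$ (via Proposition~\ref{prop:RHS sing} and Theorem~\ref{thm:diameter}) to obtain a pointwise decay in diameter, hence area, for the good planes supplied by Proposition~\ref{prop:concave}. However, as written, your argument has a real gap in the passage from an absolute measure bound to the proportion bound the Proposition asserts.

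The Markov inequality gives $\nu\{P : \lambda_2(P\cap MB\Delta)>T\} \le c_1\lambda_{d-1}(MB\Delta)/T$, an \emph{absolute} measure. To turn this into a proportion of planes intersecting $M\Delta$ you must divide by $\nu(\mathcal{P})\approx \lambda_{d-1}(M\Delta)/A$ where $A=\max_P\lambda_2(P\cap M\Delta)$. With your threshold $T = 10^{-(k+k_0)^5/2}\rho_1^{(k+k_0)^2}\lambda_{d-1}(M\Delta)$, the resulting proportion is
\[
\frac{c_1\lambda_{d-1}(MB\Delta)/T}{\nu(\mathcal{P})}\ \lesssim\ \|B\|^{-2}\cdot 10^{(k+k_0)^5/2}\rho_1^{-(k+k_0)^2}\cdot \frac{A}{\lambda_{d-1}(M\Delta)},
\]
and the factor $A/\lambda_{d-1}(M\Delta)$ is enormous: by Lemma~\ref{lem:volume} and the upper diameter bound in Theorem~\ref{thm:diameter}, it is of order $u_k^{d-2}/v_{k-1}^2$, whose logarithm grows like $(2k+k_0)^7$ for $d\ge 5$, far outweighing the $\|B\|^{-2}\approx 10^{-2(2k+1+k_0)^6}$ you gain from the volume ratio. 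In other words, your computed measure bound ($\approx 10^{-2(2k+1+k_0)^6}$) is actually much \emph{larger} than $\nu(\mathcal{P})$ itself, so the conclusion ``the bound is $\le C\rho^{(k+k_0)^2}$'' is a statement about an absolute measure that says nothing about the proportion.

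The fix is to use the lower bound that Proposition~\ref{prop:concave} directly supplies — $\lambda_2(P\cap M\Delta)\ge \rho_1^{(k+k_0)^2}A$ for all but a $\rho_1^{c'(k+k_0)^2}$-proportion — rather than the weaker version with $\lambda_{d-1}(M\Delta)$, and to set $T=10^{-(k+k_0)^5/2}\rho_1^{(k+k_0)^2}A$. Then the proportion of planes with $\lambda_2(P\cap MB\Delta)>T$ is bounded by
\[
\frac{c_1\lambda_{d-1}(MB\Delta)/T}{c_1\lambda_{d-1}(M\Delta)/A}\ =\ \frac{\lambda_{d-1}(MB\Delta)}{\lambda_{d-1}(M\Delta)}\cdot\frac{10^{(k+k_0)^5/2}}{\rho_1^{(k+k_0)^2}}\ \le\ C\|B\|^{-2}\cdot 10^{(k+k_0)^5/2}\rho_1^{-(k+k_0)^2},
\]
where the $A$'s cancel, and this is indeed $\ll \rho^{(k+k_0)^2}$ once $k_0$ is large. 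With that normalization your approach goes through and is a clean alternative to the singular-value comparison in the paper.
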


\begin{proof}  We will use the lower bound in Theorem \ref{thm:diameter} to bound from below the diameter of the  intersection of $M\Delta$ with $P$ and the upper bound for the diameter of intersection of $MB\Delta$ with $P$.
By Proposition \ref{prop:RHS sing} at the start of RHS the second smallest  singular value   is at least $$\|M\|^{-1} 10^{(k+k_0)^5}$$

 So by Proposition \ref{prop:concave}, for a constant $c>0$, off of a proportion $\rho^{{c(k+k_0)}^2}$ of planes intersecting the simplex the intersection is at least $\|M\|^{-1} 10^{(k+k_0)^5}\rho^{(k+k_0)^2}$. 

 Now by Proposition \ref{prop:sizes} and Condition * 
$$\|M\|\leq 10^{2(k+k_0)}u_k\leq 10^{2(k+k_0)}2^k 10^{\sum_{i=3}^{2k+1} (i+k_0)^6+2\sum_{i=2}^k(i+k_0)^2+\sum_{i=2}^k(i+k_0)^{2.3}}.$$  
This gives us that the second smallest singular value of   $M$ is at least
\begin{equation}\label{eq:ratio sec sing}
 2^{-k}10^{-\sum_{i=3}^{2k+1}(i+k_0)^6-2\sum_{i=2}^k(i+k_0)^2-\sum_{i=2}^k(i+k_0)^{2.3}}10^{-2(k+k_0)+(k+k_0)^5}.
\end{equation} 
 
 At the end of freedom on the right hand side by Lemma~\ref{lem:output} we have that the second smallest singular value  of $MB$ is at most  $\frac{C}{|C_{\min}(MB)|}$ for some $C$, which by  the lower bound for $V_k$ in  Proposition \ref{prop:sizes}  is bounded by $$C  (2\zeta)^{-k}10^{-\sum_{i=3}^{2k+1}(i+k_0)^6}10^{-(k+k_0)^4}$$ (because $C_d$ and $C_{d-1}$ have ratio at most $\zeta$). Comparing this with Inequality (\ref{eq:ratio sec sing}), 
we see that for $k_0$ large enough, the second smallest singular value of $MB$ is at most $10^{-\frac 23  (k+k_0)^5}$ multiplied by the second smallest singular value of $M$. 
If $P$ is a plane so that  $\lambda_2(P \cap M\Delta)\geq 10^{-3(k+k_0)^2}\lambda_{d-1}(M\Delta)$, then since the smallest singular value is  nonincreasing,   the area has decayed proportionally to at least the decay in the second smallest  singular valued (that is, $10^{-\frac 23  (k+k_0)^5}$) multiplied by 
$10^{3(k+k_0)^2}$ giving a decay of at least $$10^{-\frac 23  (k+k_0)^5}
10^{3(k+k_0)^2}\geq  10^{-\frac 1 2 (k+k_0)^5},$$ for $k_0$ large enough.
 By Proposition \ref{prop:concave} off of a set of planes of proportion $\rho^{(k+k_0)^2}$ of the planes intersecting $M\Delta$  we have $\lambda_2(P \cap M\Delta)\geq 10^{-3(k+k_0)^2}\lambda_{d-1}(M\Delta)$, \color{black}  establishing the proposition.  
\end{proof}

\subsection{End of restriction}

Let $B\in \mathcal{B}_k$ and $M=A_1'B_1...A'_k$ be its ancestor. Consider the subinterval $W_{MB}\subset [0,1]$ defined by   $$W_{MB}= 
[\frac {s_kC_{d-1}(MB)+C_d(MB)}{|s_kC_{d-1}(MB)+C_d(MB)|},\frac{2s_kC_{d-1}(MB)+C_d(MB)}{|2s_kC_{d-1}(MB)+C_d(MB)|}].$$

Now fix $k$ and for  $\ell$ such that $s_k\leq l\leq 2s_k$, let $B'_{\ell}$ be the matrix in restriction given by $d-1$ beating $d$ exactly  $\ell$ times and then $d$ beating $d-1$ (to return  to $\pi_s$).
The following statements follow immediately from the definition.

\begin{multline}\label{eq:RHS restriction survive}
\cup_{\ell \in [s_k,2s_k]}MBB'_\ell\Delta=
\color{black}\{y\in MB\Delta:s_k<\frac{(R^ny)_{d-1}}{(R^ny)_d}<2\cdot s_k+1\}.
\end{multline}

At the end of restriction, the set that is left is  $\spanD(C_1(MB),\ldots,C_{d-2}(MB), W_{MB})$.  
\color{black}

\begin{lem}\label{lem:restrict size}  Suppose  $B$ is a matrix after freedom on RHS such that 
 $\frac 1 \zeta <\frac{|C_{d-1}(MB)|}{|C_d(MB)|}<\zeta$. 
   Then 

$$\frac{1}{9\zeta^2 s_k}\lambda_{d-1}(MB\Delta)\leq 
\lambda_{d-1}(\spanD(C_1(MB),...,C_{d-2}(MB), W_{MB}))
\leq \frac{\zeta^2}{s_k}\lambda_{d-1}(MB\Delta)$$
\end{lem}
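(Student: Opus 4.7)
The plan is to identify $\spanD(C_1(MB),\dots,C_{d-2}(MB),W_{MB})$ with a union of Rauzy sub-simplices obtained by appending restriction-on-RHS matrices to $MB$, and then compute the volume ratio directly using Veech's formula (Lemma \ref{lem:volume}) together with a telescoping identity.

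First, unpack the span. A general point of $\spanD(C_1(MB),\dots,C_{d-2}(MB),W_{MB})$ has the form $\sum_{i=1}^{d-2}a_iC_i(MB)+b\,(cC_{d-1}(MB)+C_d(MB))/|cC_{d-1}(MB)+C_d(MB)|$ with $a_i,b\ge 0$ and $c\in[s_k,2s_k]$. Absorbing the normalization, this is precisely the subset of $MB\Delta$ consisting of points $y=MBv/|MBv|$ with $v_{d-1}/v_d\in[s_k,2s_k]$. Working out the Rauzy induction rules, the matrix $B'_\ell$ corresponding to ``$d-1$ beats $d$ exactly $\ell$ times then $d$ beats $d-1$ once'' has $C_i(B'_\ell)=e_i$ for $i\le d-2$, $C_{d-1}(B'_\ell)=(\ell+1)e_{d-1}+e_d$, and $C_d(B'_\ell)=\ell e_{d-1}+e_d$; and the sub-simplex $MBB'_\ell\Delta$ is exactly the part of $MB\Delta$ where $v_{d-1}/v_d\in(\ell,\ell+1)$. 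Therefore, up to a set of measure zero,
\[
\spanD(C_1(MB),\dots,C_{d-2}(MB),W_{MB})=\bigsqcup_{\ell=s_k}^{2s_k-1}MBB'_\ell\Delta.
\]

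Write $a=|C_{d-1}(MB)|$ and $b=|C_d(MB)|$ and apply Lemma \ref{lem:volume} to each tile to get
\[
\frac{\lambda_{d-1}(MBB'_\ell\Delta)}{\lambda_{d-1}(MB\Delta)}=\frac{ab}{((\ell+1)a+b)(\ell a+b)}.
\]
Using the partial-fraction identity $\frac{a}{(\ell a+b)((\ell+1)a+b)}=\frac{1}{\ell a+b}-\frac{1}{(\ell+1)a+b}$, the sum over $\ell\in\{s_k,\dots,2s_k-1\}$ telescopes to
\[
\sum_{\ell=s_k}^{2s_k-1}\frac{ab}{((\ell+1)a+b)(\ell a+b)}=\frac{s_kab}{(s_ka+b)(2s_ka+b)}.
\]

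Finally, invoke the balance hypothesis $1/\zeta\le a/b\le\zeta$ and take $k_0$ large enough that $s_k\gg\zeta$. For the upper bound, each denominator factor is at least $s_ka$ and $2s_ka$ respectively, so the ratio is at most $s_kab/(2s_k^2a^2)=b/(2s_ka)\le\zeta/(2s_k)\le\zeta^2/s_k$. For the lower bound, using $b\le\zeta a$ and $s_k\ge\zeta$ each denominator factor is at most $2s_ka$ and $3s_ka$ respectively, so the denominator is at most $6s_k^2a^2$; meanwhile $b\ge a/\zeta$ forces the numerator to be at least $s_ka^2/\zeta$, giving a ratio at least $1/(6\zeta s_k)\ge 1/(9\zeta^2s_k)$. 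The only mildly subtle step is the identification of $\spanD(\cdots,W_{MB})$ as a disjoint union of Rauzy sub-simplices; everything else is elementary algebra.
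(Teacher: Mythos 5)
Your proof is correct and follows essentially the same route as the paper: identify $\spanD(C_1(MB),\dots,C_{d-2}(MB),W_{MB})$ with the disjoint union of the sub-simplices $MBB'_\ell\Delta$ and apply Veech's volume formula (Lemma~\ref{lem:volume}). The telescoping sum you use to get a closed form is a pleasant refinement, but the paper obtains the same bounds by estimating each $\lambda_{d-1}(MBB'_\ell\Delta)$ individually and multiplying by the number of tiles, so the underlying method is the same.
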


\begin{proof} By the fourth condition on matrices in $\mathcal{B}_k$ we have that if $B \in \mathcal{B}_k$ then for each matrix in $B' \in \mathcal{B}'_k$ we have 
$$|C_{j}(MBB')|\in [\frac 1 {\zeta s_k}|C_j(MB)|,(2s_k+1)\zeta |C_j(MB)|]$$ for $j\geq d-1$.
 Moreover, $C_i(MBB')=C_i(MB)$ for $i\leq d-2$. So by Lemma \ref{lem:volume} we have that 
$$(3s_k\zeta)^{-2}\lambda_{d-1}(MB\Delta) \leq \lambda_{d-1}(MBB'\Delta)\leq (\frac{\zeta}{s_k})^{2} \lambda_{d-1}(MB\Delta).$$
There are $s_k$ such disjoint simplices, giving the lemma.
\end{proof}
\color{black}
\begin{lem}\label{lem:restriction survive} There exists $\rho<1$ so that off of a set of planes of measure at most $C\rho^{(k+k_0)^2}$ we  have 
\color{black}
$$\lambda_2(\cup_{B\in \mathcal{B}_k,\, B' \in \mathcal{B}'_k}MBB'\Delta \cap P)$$ is proportional to $\frac 1 {s_k}\lambda_2(\cup_{B \in \mathcal{B}_k}MB\Delta\cap P\color{black}).$ 
\end{lem}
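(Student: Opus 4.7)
The plan is to reduce the 2-dimensional slice statement to the $(d-1)$-dimensional volume comparison of Lemma \ref{lem:restrict size}, by exploiting that planes in $\mathcal{P}$ contain the direction $v$ that sits essentially along the $W$-direction. For each $B \in \mathcal{B}_k$, equation \eqref{eq:RHS restriction survive} identifies $\bigcup_{B' \in \mathcal{B}_k'} MBB'\Delta$ with the sub-simplex $\spanD(C_1(MB), \ldots, C_{d-2}(MB), W_{MB})$, i.e.\ with $MB\Delta$ whose $W$-edge has been shrunk to the sub-interval $W_{MB}$ of relative length $\asymp 1/s_k$. Lemma \ref{lem:restrict size} then gives the $(d-1)$-volume ratio $\lambda_{d-1}\bigl(\bigcup_{B'} MBB'\Delta\bigr) \asymp \lambda_{d-1}(MB\Delta)/s_k$.

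Next I would foliate $MB\Delta \cap P$ by lines in $P$ parallel to the $v$-direction. By Theorem \ref{thm:nue} and the construction of $P_0$ in Section \ref{sec:plane choice}, together with the symplectic identity \eqref{eq:Viana}, for $k_0$ large the direction $v$ makes a uniformly bounded angle with $W(MB)$ and a uniformly bounded angle away from $V(MB)$. Consequently each generic fiber $\ell$ meets $MB\Delta$ in a segment that crosses from a neighborhood of $V(MB)$ to a neighborhood of $W(MB)$, and on such a fiber the length of $\ell \cap \bigcup_{B'} MBB'\Delta$ is comparable to $1/s_k$ times the length of $\ell \cap MB\Delta$, the constants depending only on the fixed angles. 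Integrating in the orthogonal direction within $P$ yields the per-$B$ comparison $\lambda_2\bigl(\bigcup_{B'} MBB'\Delta \cap P\bigr) \asymp (1/s_k)\, \lambda_2(MB\Delta \cap P)$ whenever $P$ lies outside a bad set.

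The bad planes are of two types: those where $\lambda_2(MB\Delta \cap P)$ is itself exponentially small because $P$ grazes the boundary of $MB\Delta$, and those where $P \cap MB\Delta$ is concentrated so close to $V(MB)$ that its $v$-fibers miss $W_{MB}$. Proposition \ref{prop:concave}, combined with Fubini on the $(d-3)$-dimensional parametrization of $\mathcal{P}$ applied as in the proof of Lemma \ref{lem:remaining RHS}, bounds the total $\mathcal{P}$-measure of these bad sets, summed over $B \in \mathcal{B}_k$, by $C\rho^{(k+k_0)^2}$. Because the simplices $\{MB\Delta\}_{B \in \mathcal{B}_k}$ are disjoint, summing the per-$B$ comparison over the remaining good planes yields the stated proportionality between $\lambda_2\bigl(\bigcup_{B,B'} MBB'\Delta \cap P\bigr)$ and $\lambda_2\bigl(\bigcup_B MB\Delta \cap P\bigr)$.

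The main obstacle is making the per-fiber edge-ratio bound genuinely uniform in $B \in \mathcal{B}_k$ and in $k$: one must show that $v$, defined from $A_1'B_1$ in Section \ref{sec:plane choice} via a fixed symplectic construction, continues to make a definite angle with $W(MB)$ for every $B$ produced by freedom on RHS at stage $k$. This is where the angle control of Theorem \ref{thm:nue} and the column-length balance of Proposition \ref{prop:sizes}, transported along the cocycle via \eqref{eq:Viana}, are essential.
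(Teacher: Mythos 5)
Your plan diverges from the paper's proof in the per-fiber step, and there is a genuine gap there. You foliate $P$ by lines in the $v$-direction and claim that on a generic fiber crossing from near $V(MB)$ to near $W(MB)$, the fraction of length lying in $\bigcup_{B'} MBB'\Delta$ is $\asymp 1/s_k$. This is not correct for that geometry. By \eqref{eq:RHS restriction survive}, the surviving set inside $MB\Delta$ is the ``wedge'' where the M\"obius parameter $(R^ny)_{d-1}/(R^ny)_d$ lies in $[s_k,\,2s_k+1]$, a region bounded by two hyperplanes through $V(MB)$ and opening out to the small subarc $W_{MB}\subset W(MB)$. A segment that runs from (a neighborhood of) $V(MB)$ to (a neighborhood of) $W(MB)$ is essentially radial with respect to this wedge: along such a segment $(R^ny)_{d-1}$ and $(R^ny)_d$ scale nearly proportionally, so the ratio $(R^ny)_{d-1}/(R^ny)_d$ is close to constant. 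In the exact radial limit it is constant, and the segment is therefore either entirely inside or entirely outside the wedge; the intermediate length proportion is not $\asymp 1/s_k$. To get the $\asymp 1/s_k$ ratio on a fiber, the fiber has to traverse $MB\Delta$ from the face $F_{d-1}(MB)$ to the face $F_d(MB)$, so that the ratio sweeps the full range $[0,\infty]$ and spends a $\asymp 1/s_k$ fraction of the fiber in the window $[s_k,2s_k+1]$.

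This is exactly what the paper's mechanism supplies and what your argument is missing: the notion of $P$ being \emph{standard} for $MB$ (Lemma~\ref{lem:2 bad} and Corollary~\ref{cor:2 bad}) guarantees that $P$ meets both $F_{d-1}(MB)$ and $F_d(MB)$ for all but at most two of the $B\in\mathcal{B}_k$ met by any line $\ell\subset P$. Combined with Lemma~\ref{lem:all d}, this forces (all but at most two of) the segments $\ell\cap MB\Delta$ to be bounded by $W$-faces of $MB\Delta$, which is precisely the configuration on which the $\geq\frac{1}{3\zeta s_k}$ survival bound holds. The paper also chooses the foliation direction carefully (orthogonal, inside $P$, to the smallest singular direction of $M$) and uses Proposition~\ref{prop:RHS decay} together with the concavity bound of Proposition~\ref{prop:concave} to control the contribution of the at most two non-standard segments and the grazing planes. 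Your argument replaces all of this with an angle claim for $v$ relative to $V(MB)$ and $W(MB)$, which neither produces segments of the right transversal type nor controls the exceptional $B$'s. Lemma~\ref{lem:restrict size} gives the $(d-1)$-volume comparison, but, as your write-up already hints, it does not pass to $2$-dimensional slices without the standardness input; without Lemma~\ref{lem:2 bad} / Corollary~\ref{cor:2 bad} the per-fiber estimate breaks down.
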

\begin{proof}Consider the direction orthogonal to the smallest singular direction of $M$  that is in the direction $P$. Because $P\cap MB\Delta$ is convex, for all but an exponentially small proportion of these lines $\ell$ we have that $$\lambda_1(\ell \cap M\Delta)>10^{-(k+k_0)^3}diam(M\Delta \cap P)$$(analogously)to Proposition \ref{prop:concave}). By Proposition \ref{prop:RHS decay} we have that all but an exponentially small proportion of these lines, have that at least half of its length is  in segments of size at most $10^{-\frac 1 2 (k+k_0)^5}diam(M\Delta \cap P)$. By Corollary \ref{cor:2 bad} all but at most 2 such segments are cut by $F_{j}(MB_1)$ and $F_{j'}(MB_2)$ where $j,j'\geq d-1.$ We have that on any such segment the part that survives restriction has length at least $\frac 1 {3 \zeta s_k}$ times the length of the segment. The lemma follows. 
\end{proof}

\subsection{Proof of Theorem \ref{thm:RHS}}
 The Theorem follows by combining the estimates in Lemmas \ref{lem:remaining RHS} and \ref{lem:restriction survive}.

\section{Proof of Theorem~\ref{thm:planes}.}

Our simplices are measured while just starting  restriction \color{black} on left.
Note that \ref{A:nest} is satisfied by construction and \ref{A:nue} is satisfied by Theorem \ref{thm:nue}.

The goal now  is to verify \ref{A:close size} of Theorem~\ref{thm:planes}. Suppose we are on freedom left side at stage $k+1$, just having finished restriction on right hand side.  
We apply  Theorem~\ref{thm:diameter}.  Since the second smallest singular value  $\omega'(M)\geq \frac{1}{|C_{max}|}$ 
we have  constants $c_1,c_2>0$  and a plane $P$ such that the diameter $r_k^j(P)$  of 
$M\Delta\cap P$  satisfies
$$\frac{c_1}{|C_d(M)|^2}\leq r_k^j(P)\color{black}\leq \frac{c_2}{|C_1(M)|^2}.$$ 
  Now by our choice of sizes of matrices (Proposition \ref{prop:sizes}) \color{black} we have 
  \begin{equation}\label{eq:diam upper}\bar r_k\leq \frac{c_2}{|C_1(M)|^2}\leq \frac{c_2}{\zeta^k}10^{-2\sum_{j=3}^{2k+1}(j+k_0)^6+(j+k_0)^4+\sum_{j=1}^k4(j+k_0)^2}.
  \end{equation}
  Also, except for an exponentially small proportion  of the planes intersecting the simplex (Proposition \ref{prop:concave}) we have, 
  
   \begin{equation}\label{eq:diam lower}\hat r_k\geq \frac{c_1}{|C_d(M)|^2}\radlower\geq c_12^{-2k}10^{-2\sum_{j=3}^{2k+2}(j+k_0)^6}10^{-(k+k_0)^2},
   \end{equation}
   
where recall $\bar r_k$ is the maximum value for the diameters at stage $k$, and $\hat r_k$ is the minimum at stage $k$.
\color{black}

 \color{black}
  From these bounds it is easy to see that \ref{A:close size}  holds:
 for all $\epsilon>0$ that   
 $$\underset{k \to \infty}{\lim}\, \frac{\bar r_k^{1+\epsilon}}{\hat r_{k+1}}=0.$$  

\subsection{Proof of \ref{A:small loss}}

Let $\mathcal{M}_{k-1}$ be the set of all matrices we keep at  the end of restriction on RHS at the end of stage 
$k-1$. After that, let $\mathcal{M}_k'$ the set of matrices we keep after freedom on LHS at stage $k$ and $\mathcal{M}_k''$ after restriction on  LHS. 
Let $\mathcal{S}_{k}$ denote $\cup_{M\in\mathcal{M}_k} S(M)$ the union of  the illuminated sets at the end of freedom on LHS at stage $k$. Note that by Section \ref{sec:repeat} we may assume that it is a union of simplices.

Recall the definition  $$t_k=10^{-[(2k+1+k_0)^6+(2k+1+k_0)^4+\frac 1 2(k+k_0)^2]}$$
given by \eqref{eq:tk def} and $$s_k=10^{(2k+2+k_0)^6+(k+k_0)^4}$$
 given by (\ref{eq:defs_k}).

Inductive Assumption: There exists a constant $c>0$ so at the end of freedom on LHS at stage  $k$ at least half of the  planes $P \in \mathcal{P}$ satisfy 
\begin{equation}
\label{assumption:1}\tag{$Y_{k-1}$}
\lambda_2(P\cap \mathcal{S}_k)\geq c^{k-1} \prod_{i=1}^{k-1}t_i\prod_{i=1}^{k-1}\frac{1}{s_i}. 
\end{equation}
 Call the planes that satisfy this inequality $\mathcal{P}_{k-1}$. Also, 
there is a constant $C$ such that

\begin{equation}\label{assumption:2}\tag{$Z_{k-1}$}\lambda_{d-1}(\mathcal{S}_{k})\leq 
C^{k-1} \prod_{i=1}^{k-1}t_i\prod_{i=1}^{k-1}\frac 1 {s_i}.
\end{equation}





Now \ref{A:small loss}
follows from the next theorem.

\begin{thm}\label{thm:plane estimates} There exists $\rho<1$ and $c>0$  so that for each plane $P\in\mathcal{P}_{k-1}$ except for a subset of $\mathcal{P}_{k-1}$ of  measure at most $\rho^{k+k_0}$,  
 has the property that for a set of connected components $J\subset P\cap \mathcal{S}_k$  whose union has measure at least $(1-\frac 1 {9^k})\lambda_2(\mathcal{S}_k\cap P)$,
\begin{equation}\label{eq:plane est}
 \lambda_2( \mathcal{M}_P \cap J)\geq c \frac{t_k}{s_k}\lambda_2(J)
 \end{equation}
 where $\mathcal{M}_P$ are the matrices at the end of restriction on LHS at stage $k$  so that ${diam(P\cap M\Delta)>\hat{r}_k}$.
Moreover the inductive assumptions $(Y_k)$ and $(Z_k)$ hold. That is, 
\begin{equation*}
\lambda_2(P\cap \mathcal{S}_{k+1}(\Delta))\geq c^{k}\prod_{j=1}^{k}t_j
\prod_{j=1}^{k}\frac{1}{s_j}
\end{equation*}
and 
\begin{equation*}
\lambda_{d-1}( \mathcal{S}_{k+1}(\Delta))\leq C^{k}\prod_{j=1}^{k}t_j
\prod_{j=1}^{k}\frac{1}{s_j}.
\end{equation*}
\end{thm}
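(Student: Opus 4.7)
The plan is to cascade measure estimates from Theorems \ref{thm:big shadow}, \ref{thm:neighborhood}, \ref{thm:RHS}, and \ref{thm:diameter} across the phases taking us from $\mathcal{S}_k$ (end of freedom on LHS at stage $k$) to $\mathcal{S}_{k+1}$: restriction on LHS at stage $k$ supplies the factor $t_k$; transition plus freedom on RHS at stage $k$ contribute only constants; restriction on RHS at stage $k$ supplies the factor $1/s_k$; and freedom on LHS at stage $k+1$ contributes only a constant via Theorem \ref{thm:big shadow}. Each phase carries an exponentially small exceptional set, either of planes or of simplicial components of $P \cap \mathcal{S}_k$, and the central bookkeeping is to check that their union at stage $k$ is of plane-measure at most $\rho^{k+k_0}$ and of component-measure at most $\lambda_2(\mathcal{S}_k \cap P)/9^k$.

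First I would fix $P \in \mathcal{P}_{k-1}$ and a connected component $J \subset P \cap \mathcal{S}_k$ whose ancestor matrix $M$ is $\zeta$-balanced and whose $P$-diameter exceeds $\hat{r}_k$. By construction $J$ lies in an illuminated set $\mathcal{S}_\phi(\mathcal{M}_{\hat{A}})$, and since the direction $\phi$ was chosen in Section \ref{sec:plane choice} to lie in $P$, every point of $J$ is on a $\phi$-segment through $M\Delta$ meeting $F_1$. Theorem \ref{thm:neighborhood} then says that the restriction-LHS descendants $MA'\Delta$ fill all but an $\alpha^{(k+k_0)^2}$-fraction of the neighborhood $\mathcal{N}(M)$ of width $t_k d_M$ around $U_{d-2}$, and the $\phi$-illumination together with the diameter bound forces $J \cap \mathcal{N}(M)$ to have $\lambda_2$-measure proportional to $t_k \cdot \lambda_2(J)$. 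Applying Theorem \ref{thm:RHS} to each such $MA'\Delta$ furnishes, for all but an exponentially small proportion of planes, the further factor $c/s_k$. Combined, these yield \eqref{eq:plane est} on the good components; components whose ancestor is unbalanced or whose $P$-intersection is too small are discarded using Theorem \ref{thm:big shadow}, Theorem \ref{thm:diameter}, and Proposition \ref{prop:sizes}, their total $\lambda_2$-measure being at most $\rho^{(k+k_0)^{1.1}}\lambda_2(\mathcal{S}_k \cap P) \ll \lambda_2(\mathcal{S}_k \cap P)/9^k$.

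The inductive statements $(Y_k)$ and $(Z_k)$ then follow by integrating \eqref{eq:plane est} and its upper-bound counterpart: $(Y_k)$ inherits an additional multiplicative factor $c \cdot t_k/s_k$ from $(Y_{k-1})$, while $(Z_k)$ uses the upper bounds in Theorem \ref{thm:neighborhood} (the ratio $\lambda_{d-1}(\mathcal{N}(M))/\lambda_{d-1}(M\Delta) \sim t_k$) together with Lemmas \ref{lem:restrict size} and \ref{lem:remaining RHS} to extract the same $t_k/s_k$ recursion with $C$ in place of $c$. The final freedom-on-LHS step at stage $k+1$ contributes only a constant by Theorem \ref{thm:big shadow}, and the cumulative exceptional plane set is the union of exclusions from Theorem \ref{thm:RHS}, Theorem \ref{thm:diameter}, and the illumination stages, each exponentially small in $(k+k_0)^a$ with $a > 1$, hence bounded by $\rho^{k+k_0}$ for $k_0$ large. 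The main obstacle will be the geometric identification of $J \cap \mathcal{N}(M)$ with relative area $\sim t_k$: this requires $J$ to cross the slab $\mathcal{N}(M)$ transversally, which is where the diameter bound $\hat{r}_k$ and the singular-direction analysis of Propositions \ref{prop:closeangle} and \ref{prop:big face} are critical, ensuring $P$ meets $V(M)$ in a direction definitely separated from $F_1$. A secondary technical point is propagating the balance conditions of Corollary \ref{cor:ready} and Conditions $*$ through restriction on LHS, which I would handle by applying Theorem \ref{thm:most lhs} to the descendant family at each stage.
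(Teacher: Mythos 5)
Your proposal follows essentially the same route as the paper. The paper organizes the argument into three intermediate propositions—one for restriction on LHS (Proposition \ref{prop:through restriction}, giving the $t_k$ factor via Lemma \ref{lem:neigh} and Theorem \ref{thm:neighborhood}), one for freedom/restriction on RHS (giving $1/s_k$ via Theorem \ref{thm:RHS}), and one for freedom on LHS at stage $k+1$ (giving a constant via Theorem \ref{thm:big shadow})—each losing at most a $\tfrac14\cdot 9^{-k}$ fraction of components and a $\tfrac14\rho^{k+k_0}$ fraction of planes, then combines them and finishes with the diameter exclusion via Proposition \ref{prop:concave}. Your sketch covers the same mechanism: $\phi$-illumination for the $t_k$ slab estimate, Theorem \ref{thm:RHS} for the $1/s_k$ factor, and Theorem \ref{thm:big shadow} for the final constant. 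Two small imprecisions worth noting: for the diameter restriction defining $\mathcal{M}_P$, the controlling tool is Proposition \ref{prop:concave} (not Proposition \ref{prop:sizes}, which only supplies the numerical size of $u_k$); and the Fubini conversion from $(d-1)$-volume estimates to 2-plane estimates—passing through $(Y_{k-1})$ and $(Z_{k-1})$—is the technically delicate step that your outline compresses, though the intent is clear. For $(Z_k)$ the paper's citation is Lemma \ref{lem:meas decay} together with Lemma \ref{lem:restrict size}, which is what you describe informally as the $t_k$-ratio from the neighborhood estimate. Overall, same approach, with slightly looser bookkeeping.
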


We begin the proof.  
In proving Theorem~\ref{thm:plane estimates} in going from $\mathcal{S}_k$ to $\mathcal{S}_{k+1}$ we proceed in $3$ steps going  through restriction on LHS at stage $k$ where we apply Theorem~\ref{thm:neighborhood},
to treat freedom and restriction on RHS where we will apply Theorem~\ref{thm:RHS} at stage $k$ and finally to treat freedom on LHS (at stage $k+1$)  we use Theorem \ref{thm:big shadow}.

\begin{prop}\label{prop:through restriction} There exists $\rho<1$ and $c'>0$ so that except  for a set of $P \in \mathcal{P}_{k-1}$ of measure at most $\frac{1}{4}\rho^{(k+k_0)}$ we have that 
 for a set of connected components   $J\subset P\cap \mathcal{S}_k$ of total measure  at least $(1-\frac 1 4 \frac 1 {9^k})\lambda_2( P\cap\mathcal{S}_k)$ satisfies:
$$\lambda_2\big(\cup_{M\in\mathcal{M}_k''}(M\Delta\cap J )\big)\geq c't_k \lambda_2(J).$$
\end{prop}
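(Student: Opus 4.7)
The plan is to apply Theorem~\ref{thm:neighborhood} simplex-by-simplex to each $M\in\mathcal{M}_k'$, then transfer the resulting $(d-2)$-dimensional estimate on $\mathcal{N}(M)\cap\Delta_c$ to a $2$-dimensional estimate on $\mathcal{N}(M)\cap P$ via Fubini and Chebyshev, and finally use the illumination hypothesis to compare $\lambda_2(J\cap\mathcal{N}(M))$ with $\lambda_2(J)$ for the relevant components. Since Corollary~\ref{cor:ready} ensures every $M\in\mathcal{M}_k'$ is $\zeta$-balanced, Theorem~\ref{thm:neighborhood} gives, for every $c\in(.1,.9)$,
\[
\lambda_{d-2}\big((\mathcal{N}(M)\setminus M\mathcal{A}(B,\tfrac1{16}(k+k_0)^2)\Delta)\cap\Delta_c\big)\le\alpha^{(k+k_0)^2}\lambda_{d-2}(\mathcal{N}(M)\cap\Delta_c).
\]
Integrating over $c$ and applying Chebyshev in the $(d-3)$-dimensional parameter space of planes parallel to $P$ then gives, for all but an $\alpha^{(k+k_0)^2/2}$ proportion of planes $P$, the analogous $2$-dimensional estimate with $\alpha^{(k+k_0)^2}$ replaced by $\alpha^{(k+k_0)^2/2}$. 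A union bound over $\mathcal{M}_k'$ (whose cardinality is at most exponential in $(2k+k_0)^6$, dwarfed by $\alpha^{-(k+k_0)^2/4}$) leaves an exceptional set of $P\in\mathcal{P}_{k-1}$ of measure at most $\tfrac14\rho^{k+k_0}$.

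The core geometric input is the lower bound
\[
\lambda_2\bigl(J\cap\mathcal{N}(M)\bigr)\ge c' t_k\,\lambda_2(J)
\]
for components $J$ of $P\cap\mathcal{S}_k$ lying inside $M\Delta$. This uses illumination: from every $y\in J$ the line through $y$ in direction $\phi=u$ lies in $P$ and reaches $F_1(M)$, and $\phi$ makes angle at least $\theta_0>0$ with $F_1(M)$ by the third bullet of Definition~\ref{def:ready}. Fibering $J$ by segments parallel to $\phi$, each fiber runs from $F_1(M)\cap P$ into $J$ with definite slope, and since $\mathcal{N}(M)$ is precisely the $t_k$-rescaling of $M\Delta$ in the direction from $F_1(M)$ toward $C_1(M)$, the portion of each fiber inside $\mathcal{N}(M)$ has length at least $c't_k$ times the full fiber length. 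Transversal integration in $P$ then yields the claim, except on atypical planes whose slice of $M\Delta$ is extremely thin transverse to $F_1(M)$; these are bounded by Proposition~\ref{prop:concave} and absorbed into the exceptional set.

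Combining the two steps yields
\[
\lambda_2\big(M\mathcal{A}(B,\tfrac1{16}(k+k_0)^2)\Delta\cap J\big)\ge c't_k\lambda_2(J)
\]
for all but a $\tfrac{1}{4\cdot 9^k}$-proportion of components on $(1-\tfrac14\rho^{k+k_0})$-most of $\mathcal{P}_{k-1}$; since $MA'$ for $A'\in\mathcal{A}(B,\tfrac{1}{16}(k+k_0)^2)$ lies in $\mathcal{M}_k''$, this is the proposition. The main obstacle is the geometric lower bound: $J$ is not the full convex slice $P\cap M\Delta$ but an illuminated sub-polygon cut out by a bounded (in $d$) number of half-spaces (the lemma preceding Proposition~\ref{prop:shadow technical}), so one must verify that the $\phi$-fibration still covers most of $J$ and that the fibers are not cut so short by the illumination half-planes as to destroy the $t_k$ factor. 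This is precisely why the conflicted simplices $\mathcal{W}_c$ were already deleted in the construction of $\mathcal{S}_k$ in Section~\ref{sec:repeat}.
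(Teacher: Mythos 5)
Your geometric core — fibering the illuminated polygon $J$ by lines in direction $\phi$ and noting that the portion of each fiber inside $\mathcal{N}(M)$ is a $c't_k$-fraction of its total length — is exactly the content of Lemma~\ref{lem:neigh}, and your application of Theorem~\ref{thm:neighborhood} on $\Delta_c$-slices of each $\mathcal{N}(M)$ is the right starting point. However, your transfer to planes has a genuine gap.

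You propose per-simplex Chebyshev (an exceptional proportion $\alpha^{(k+k_0)^2/2}$ of planes for each $M$) followed by a union bound over $\mathcal{M}_k'$, and you justify this with the parenthetical that the cardinality of $\mathcal{M}_k'$, being at most exponential in $(2k+k_0)^6$, is ``dwarfed by $\alpha^{-(k+k_0)^2/4}$.'' This inequality is backwards: $\exp\bigl(O((2k+k_0)^6)\bigr)$ is astronomically larger than $\alpha^{-(k+k_0)^2/4}=\exp\bigl(O((k+k_0)^2)\bigr)$. The number of simplices at stage $k$ grows like a product of matrix norms, roughly $10^{\sum (i+k_0)^6}$, so a union bound over $\mathcal{M}_k'$ costs far more than any gain from the per-simplex Chebyshev exponent $(k+k_0)^2$, and the argument collapses.

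The paper avoids this entirely by never doing a per-simplex union bound. It applies Theorem~\ref{thm:neighborhood} to each $M$ to get a per-simplex $\lambda_{d-1}$-bound, sums these to bound $\lambda_{d-1}$ of the \emph{aggregate} bad set $\cup_M\mathcal{N}(M)\setminus\cup_{M\in\mathcal{M}_k''}M\Delta$ by $\alpha^{(k+k_0)^2}C't_k\lambda_{d-1}(\mathcal{S}_k)$, and only then applies Fubini/Markov once, to that single quantity. Crucially, the paper then invokes the inductive hypotheses $(Y_{k-1})$ and $(Z_{k-1})$ to convert the $\lambda_{d-1}(\mathcal{S}_k)$ appearing in the Markov threshold into $\lambda_2(P\cap\mathcal{S}_k)$, at a cost of only $(C/c)^{k-1}$, which is absorbed by halving the exponent of $\alpha$. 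Your proposal omits $(Y_{k-1})$ and $(Z_{k-1})$ entirely, but these are precisely what let one state the bad proportion relative to the surviving area on $P$ rather than relative to the ambient simplex, and hence what make the Markov step close. You should replace your per-simplex Chebyshev plus union bound with a single Markov estimate on the aggregate bad set, feeding in the inductive $L^\infty$-vs-$L^1$ comparison for $\mathcal{S}_k$.
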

To prove this Proposition we will need the following lemmas.

 \begin{lem}
 \label{lem:neigh}
 Let $\mathcal{N}(M)$ be as defined just before Theorem \ref{thm:neighborhood}.  There exists $c'>0$ so that if $\ell$ is a line in direction $\phi$ intersecting $\mathcal{S}(M)$ then 
$$\lambda_1(\ell \cap \mathcal{N}(M))\geq c'{t_k}\lambda_1(\ell \cap \mathcal{S}(M)).$$
\end{lem}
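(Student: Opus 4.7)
The picture is that $\mathcal{N}(M)$ is a thin collar of the face $F_1(M) = \spanD(C_2(M),\ldots,C_d(M))$ whose ``thickness'' (measured in the $\hat C_1$-direction) is proportional to $t_k$, while the direction $\phi$ is transverse to $F_1$. My plan is to pass to barycentric coordinates on $M\Delta$, use the illumination hypothesis to force one endpoint of $\ell \cap M\Delta$ onto $F_1$, and then obtain the length estimate from a one-dimensional calculation along $\ell$.

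First I will express $\mathcal{N}$ in barycentric coordinates. Every $p \in M\Delta$ has a unique decomposition $p = \lambda y + (1-\lambda)z$ with $y \in V(M)$, $z \in \spanD(C_{d-1}(M),C_d(M))$ and $\lambda \in [0,1]$; writing $p = \sum_i a_i(p)\hat C_i$ with $\hat C_i = C_i(M)/|C_i(M)|$ and $\sum a_i = 1$, this forces $\lambda = a_1(p) + \cdots + a_{d-2}(p)$ and $a_1(y) = a_1(p)/\lambda$. Orthogonally projecting $V(M)$ onto the affine hull of $U_{d-2}$ gives $d(y,U_{d-2}) \leq a_1(y)\,d_M$, hence $\mathcal{N}' \supseteq \{y \in V(M) : a_1(y) \leq t_k\}$, and unwinding the $\spanD$ definition yields
\[
\mathcal{N}(M) \supseteq \bigl\{p \in M\Delta : a_1(p) \leq t_k\bigl(a_1(p) + a_2(p) + \cdots + a_{d-2}(p)\bigr)\bigr\}.
\]

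Second, I use illumination. Since $\phi$ makes a definite angle with $F_1(M)$ (Proposition~\ref{prop:illumination}), the affine hull of $F_1$ meets $\ell$ in a unique point $q$; if $\ell$ meets $\mathcal{S}(M)$ then $q \in F_1(M)$, and every point of $\ell \cap M\Delta$ is illuminated via the same $q$. Thus $\ell \cap \mathcal{S}(M) = \ell \cap M\Delta$, and $q$ is an endpoint of this segment with $a_1(q) = 0$. Third, by construction (Section~\ref{sec:plane choice}) $P_0 \subset \Delta_c$, so $\phi$ is tangent to the foliation $\{\Delta_{c'}\}$ and $x_{d-1}+x_d$ is constant along $\ell$. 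Using Theorem~\ref{thm:nue} for $k_0$ large, $a_{d-1}(p) + a_d(p) = x_{d-1}(p) + x_d(p) + O(\epsilon)$, so $S(t) := a_1(t) + \cdots + a_{d-2}(t)$ stays within a factor $1 \pm \epsilon$ of some $S_0 > 0$ along $\ell$. Parametrising $\ell$ by arc length from $q$ gives $a_1(t) = r_1 t$ for some $r_1 > 0$, and the exit time $T$ satisfies $a_1(T) \leq S(T) \leq (1+\epsilon)S_0$, hence $T \leq (1+\epsilon)S_0/r_1$. The inclusion above then gives
\[
\ell \cap \mathcal{N}(M) \supseteq \bigl\{t \in [0,T] : r_1 t \leq t_k(1-\epsilon)S_0\bigr\},
\]
of length at least $\min\bigl(T,\ t_k(1-\epsilon)S_0/r_1\bigr) \geq \tfrac{1-\epsilon}{1+\epsilon}\,t_k\, T$, proving the lemma with $c' = (1-\epsilon)/(1+\epsilon)$.

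The main obstacle I anticipate is the first step: the definition of $\mathcal{N}$ via $\spanD(\mathcal{N}', C_{d-1}, C_d)$ does not translate directly into a clean inequality on the $a_i$'s, and the estimate $d(y,U_{d-2}) \leq a_1(y)\,d_M$ needs a small argument about the orthogonal projection of $\hat C_1$ onto the affine hull of $U_{d-2}$ (fortunately only the easier containment direction is needed). Everything else is a one-dimensional linear calculation, with the tangency of $\phi$ to the $\Delta_c$ slices keeping the denominator $S(t)$ essentially constant so that the ratio $a_1/S$ behaves linearly along $\ell$.
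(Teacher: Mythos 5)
Your proof is correct and captures the same geometric picture as the paper's (much terser) argument: $\mathcal{N}(M)$ is a collar of $F_1(M)$ whose transverse thickness is a $t_k$-fraction of the transverse thickness of $M\Delta$, the illumination hypothesis places one endpoint of $\ell\cap M\Delta$ on $F_1(M)$, and comparing the two lengths along $\ell$ gives the factor $t_k$. The paper's proof is a two-sentence sketch invoking ``slab width divided by $\sin\gamma$''; your barycentric-coordinate computation unpacks exactly that.

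One small correction to the step you flagged as a gap. Your plan is to prove $d(y,U_{d-2})\leq a_1(y)\,d_M$ by orthogonally projecting $\hat C_1$ onto the affine hull of $U_{d-2}$; that projection need not lie in the simplex $U_{d-2}$ itself, so the resulting point may fall outside $U_{d-2}$ and the bound does not follow directly. The clean argument is instead to take the nearest point $y'_*\in U_{d-2}$ to $\hat C_1$ (so $|\hat C_1-y'_*|=d_M$ by definition), and observe that writing $y=a_1(y)\hat C_1+(1-a_1(y))y'$ with $y'\in U_{d-2}$, the point $\tilde y=a_1(y)y'_*+(1-a_1(y))y'$ lies in $U_{d-2}$ by convexity and satisfies $|y-\tilde y|=a_1(y)\,d_M$. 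Similarly, the identity $a_{d-1}(p)+a_d(p)=x_{d-1}(p)+x_d(p)+O(\epsilon)$ is not literally correct (there is a bounded multiplicative factor coming from $(\hat C_j)_{d-1}+(\hat C_j)_d$, $j\in\{d-1,d\}$), but since that factor is a fixed quantity depending only on $M$, your real conclusion --- that $S(t)$ varies by at most a small multiplicative factor along $\ell$ --- still holds, and the rest of the estimate goes through.
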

\begin{proof} $\mathcal{N}(M)$ is a slab about $F_1(M)$ with width $t_kd_M$. If the line segment crosses $\mathcal{N}(M)$ the length of the intersection with $\mathcal{N}(M)$ 
is $\frac{t_kd_M}{\sin(\gamma)}$ where $\gamma$ is the angle $\ell$ makes with $F_1(M)$. 
\end{proof}
From the definition of $\mathcal{N}(M)$ we have: 
\begin{lem}\label{lem:meas decay}There exists a constant $C'$ so that $\lambda_{d-1}(\mathcal{N}(M))<C't_k\lambda_{d-1}(\mathcal{S}(M))$. 
\end{lem}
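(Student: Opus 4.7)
\medskip

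\textbf{Proof plan.} The lemma is a direct volume estimate: $\mathcal{N}(M)$ is a thin slab around the face $F_1(M)$ of the simplex $M\Delta$, of relative thickness $t_k$, and the illuminated set $\mathcal{S}(M)$ occupies a definite proportion of $M\Delta$.

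First I would identify $\mathcal{N}(M)$ geometrically. By construction $\mathcal{N}' = \mathcal{N}_{t_k d_M}(U_{d-2}) \cap V(M)$ is the slab in $V(M)$ of Euclidean width $t_k d_M$ adjacent to the face $U_{d-2} = \spanD(C_2(M),\ldots,C_{d-2}(M))$, while $d_M = d(C_1(M), U_{d-2})$ is the full height of $V(M)$ measured transversally to $U_{d-2}$. Taking the $\spanD$ with $C_{d-1}(M)$ and $C_d(M)$ then gives exactly the set of points in $M\Delta$ whose first barycentric coordinate (relative to the vertices $C_i(M)/|C_i(M)|$) is at most a fixed multiple of $t_k$. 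In particular $\mathcal{N}(M)$ is the $t_k d_M$-neighborhood in the $C_1$-transversal direction of the face $F_1(M) = U_d = \spanD(C_2(M),\ldots,C_d(M))$.

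Next I would carry out the slicing computation. Parameterize $M\Delta$ by affine hyperplanes parallel to $F_1(M)$ indexed by distance $\alpha \in [0, d_M]$ from $F_1(M)$, and let $A(\alpha)$ be the $(d{-}2)$-dimensional volume of the slice at height $\alpha$. Since $M\Delta$ is a simplex with apex $C_1(M)/|C_1(M)|$ at height $d_M$, the slices scale linearly, so $A(\alpha) = (1 - \alpha/d_M)^{d-2} A(0)$. Hence
\begin{equation*}
\lambda_{d-1}(\mathcal{N}(M)) = \int_0^{t_k d_M} A(\alpha)\, d\alpha \leq A(0)\, t_k d_M
\end{equation*}
while
\begin{equation*}
\lambda_{d-1}(M\Delta) = \int_0^{d_M} A(\alpha)\, d\alpha = \frac{A(0)\, d_M}{d-1}.
\end{equation*}
This yields $\lambda_{d-1}(\mathcal{N}(M)) \leq (d-1)\, t_k \, \lambda_{d-1}(M\Delta)$.

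Finally I would replace $\lambda_{d-1}(M\Delta)$ by $\lambda_{d-1}(\mathcal{S}(M))$. Since $\mathcal{S}(M)$ is the illuminated portion of the simplex (or union of simplices) $\cup_{M \in \mathcal{M}} M\Delta$ associated to a family ready for illumination, Proposition~\ref{prop:ready} gives $\lambda_{d-1}(\mathcal{S}(M)) \geq c\, \lambda_{d-1}(M\Delta)$ for a uniform constant $c>0$. Combining this with the slab estimate yields the desired bound with $C' = (d-1)/c$. There is no real obstacle here; the only point requiring a touch of care is the identification of $\mathcal{N}(M)$ with a Euclidean neighborhood of $F_1(M)$ of proportional thickness $t_k$, which uses that the direction transverse to $U_{d-2}$ inside $V(M)$ agrees, up to a bounded factor depending on angles (controlled by Condition \textbf{**} and Theorem~\ref{thm:nue}), with the direction transverse to $U_d$ inside $M\Delta$.
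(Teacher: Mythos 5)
Your proof is correct, and it spells out what the paper treats as immediate (the paper introduces this lemma with only the phrase ``From the definition of $\mathcal{N}(M)$ we have:'' and gives no written proof). The slab estimate together with the lower bound $\lambda_{d-1}(\mathcal{S}(M)) \geq c\,\lambda_{d-1}(M\Delta)$ is exactly the intended argument. Two small points worth noting: in the paper $d_M = d(C_1(M), U_{d-2})$ is the height of the subsimplex $V(M)$ over its face $U_{d-2}$, \emph{not} the height of $M\Delta$ over $F_1(M)$, so your slicing identity $A(\alpha) = (1-\alpha/d_M)^{d-2}A(0)$ holds only up to a bounded multiplicative error (which you do flag at the end, correctly attributing the control of this error to Condition \textbf{**} and Theorem~\ref{thm:nue}); and in the context of Theorem~\ref{thm:planes} the illuminated set $\mathcal{S}_k$ has been replaced, via Section~\ref{sec:repeat}, by a union of whole simplices, so for the relevant $M$ one has $\mathcal{S}(M) = M\Delta$ and the appeal to Proposition~\ref{prop:ready} is not strictly needed, though it is a valid alternative route.
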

\begin{proof}[Proof of Proposition \ref{prop:through restriction}] 
 By Lemma~\ref{lem:neigh},
  and the induction hypothesis,  for all of the planes  in $\mathcal{P}_{k-1}$ we have that 
\begin{equation}\label{eq:plane nbhd}
\lambda_2(\cup_{M\in \mathcal{M}'_k}\mathcal{N}(M) \cap P)
\geq {c'}{t_k} c^{k-1}\prod_{j=1}^{k-1}t_j
\prod_{j=1}^{k-1}\frac{1}{s_j}.
\end{equation}
 By applying Theorem \ref{thm:neighborhood} to each simplex in $\mathcal{M}_k'(\Delta)$ we have that 
$$\lambda_{d-1}\Big(\cup_{M \in \mathcal{M}'_k}\mathcal{N}(M)\setminus(\cup_{M\in \mathcal{M}_k''}M\Delta\Big)\leq \alpha^{(k+k_0)^2}\lambda_{d-1}(\cup_{M \in \mathcal{M}_k'}\mathcal{N}(M))\leq \alpha^{(k+k_0)^2} C't_k\lambda_{d-1}(\mathcal{S}_k).$$
Lemma \ref{lem:meas decay} gives the second inequality. 
By a straightforward estimate using Fubini  the measure of planes $P\in \mathcal{P}_k$ so that 
\begin{equation}\label{eq:too bad}
\lambda_2\Big(P \cap \big(\cup_{M \in \mathcal{M}_k'}\mathcal{N}(M)\setminus(\cup_{M\in \mathcal{M}_k''}M\Delta)\big)\Big)>\alpha^{\frac 1 2 (k+k_0)^2}C't_k\lambda_{d-1}(\mathcal{S}_k)
\end{equation}
is at most $C''\alpha^{\frac{1}{2}(k+k_0)^2}$ for some $C''$.  
Appealing to Assumption $(Y_{k-1})$ and $(Z_{k-1})$ which say that $\lambda_{d-1}(\mathcal{S}_k)<(\frac C {c})^{k-1}\lambda_2(P\cap \mathcal{S}_k))$ we have
\begin{equation}\label{eq:too bad}
\lambda_2\Big(P \cap \big(\cup_{M \in \mathcal{M}_k'}\mathcal{N}(M)\setminus(\cup_{M\in \mathcal{M}_k''}M\Delta)\big)\Big)>\alpha^{\frac 1 4 (k+k_0)^2}\lambda_2(\cup_{M \in \mathcal{M}_k'}\mathcal{N}(M)\cap P)
\end{equation}
is at most $C''\alpha^{\frac 1 2 (k+k_0)^2}$ (if $k_0$ is large enough). 
\end{proof}

Analogously to the  previous proposition we have the following:

\begin{prop} There exists $\rho<1, c>0$ so that  except  for a set of $P \in \mathcal{P}_{k-1}$ of measure at most $\frac{1}{4}\rho^{(k+k_0)}$, 
for  
 a set of connected components   $J=\mathcal{M}_k''\Delta\cap P$  whose union has measure at least $(1-\frac 1 4 \frac 1 {9^k})\lambda_2(P\cap\mathcal{S}_k)$ 
 we have  
$$\lambda_2(\cup_{M\in \mathcal{M}_k}M(\Delta)\cap J)\geq \frac{c'}{s_k}\lambda_2(J).$$
\end{prop}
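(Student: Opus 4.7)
The plan is to mirror the proof of the immediately preceding Proposition, substituting Theorem~\ref{thm:RHS} for Theorem~\ref{thm:neighborhood}. Theorem~\ref{thm:RHS} is already stated relative to a single simplex $M\Delta$: for each $M\in\mathcal{M}_k''$ it produces a set $\mathcal{P}_{\mathrm{bad}}(M)\subset\mathcal{P}$ of measure $\leq C\rho^{(k+k_0)^2}$ such that whenever $P\notin\mathcal{P}_{\mathrm{bad}}(M)$,
$$\lambda_2\!\left(\textstyle\bigcup_{B\in\mathcal{B}_k,\,B'\in\mathcal{B}_k'}MBB'\Delta\cap P\right)\geq \frac{c}{s_k}\lambda_2(M\Delta\cap P).$$
Since $\mathcal{M}_k\supset\bigcup_{M\in\mathcal{M}_k''}\bigcup_{B,B'}MBB'$, it suffices to pass from this per-simplex estimate to one that holds for most planes simultaneously over all $M\in\mathcal{M}_k''$, and then to convert the small $\mathcal{P}$-measure of bad planes to a small total $\lambda_2$-measure of the corresponding bad connected components.

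The first step is the usual Fubini/Markov argument, identical in form to the one used in the proof of Proposition~\ref{prop:through restriction} just above. Weighting pairs $(M,P)$ by $\lambda_2(M\Delta\cap P)$ and integrating over $\mathcal{P}$ gives
$$\int_{\mathcal{P}}\sum_{M\in\mathcal{M}_k''}\lambda_2(M\Delta\cap P)\mathbf{1}_{P\in\mathcal{P}_{\mathrm{bad}}(M)}\,dP\leq C\rho^{(k+k_0)^2}\lambda_{d-1}(\mathcal{M}_k''\Delta).$$
Markov's inequality then shows that the set of planes $P\in\mathcal{P}_{k-1}$ for which the "bad" components (those $J=M\Delta\cap P$ with $P\in\mathcal{P}_{\mathrm{bad}}(M)$) contribute more than $\tfrac14\cdot 9^{-k}\lambda_2(P\cap\mathcal{S}_k)$ has measure bounded by a constant multiple of
$$4\cdot 9^{k}\,\rho^{(k+k_0)^2}\cdot\frac{\lambda_{d-1}(\mathcal{S}_k)}{\inf_{P\in\mathcal{P}_{k-1}}\lambda_2(P\cap\mathcal{S}_k)}.$$
By the inductive assumptions $(Y_{k-1})$ and $(Z_{k-1})$ the ratio on the right is at most $(C/c)^{k-1}$, and since the exponent $(k+k_0)^2$ in $\rho^{(k+k_0)^2}$ dominates the factors $9^k$ and $(C/c)^{k-1}$, the whole expression is at most $\tfrac14\rho^{(k+k_0)}$ once $k_0$ is taken large enough.

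For every remaining good plane $P$, the family $\{J=M\Delta\cap P:M\in\mathcal{M}_k''\}$ consists of disjoint connected components of $\mathcal{M}_k''\Delta\cap P$ (by disjointness of the simplices $M\Delta$ by construction), whose union accounts for all but $\tfrac14\cdot 9^{-k}$ of $\lambda_2(P\cap\mathcal{S}_k)$. On each such $J$ the per-simplex conclusion of Theorem~\ref{thm:RHS} yields exactly $\lambda_2(\bigcup_{M\in\mathcal{M}_k}M\Delta\cap J)\geq (c/s_k)\lambda_2(J)$. The only technical point worth watching is the balance of constants in the Fubini/Markov step, namely ensuring the $9^k$ amplification from the Markov threshold and the $(C/c)^{k-1}$ from the induction hypotheses are absorbed by the $\rho^{(k+k_0)^2}$ decay from Theorem~\ref{thm:RHS}; this is where the super-polynomial exponent $(k+k_0)^2$ of the RHS estimate (as opposed to the linear $(k+k_0)$ exponent in the conclusion we want) gives us the required slack.
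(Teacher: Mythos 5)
Your proposal is correct and follows the same Fubini/Markov route as the paper's sketch, which converts Theorem~\ref{thm:RHS} into exceptional sets $\mathcal{E}(M)\subset M\Delta$ of measure at most $\rho^{(k+k_0)^2}\lambda_{d-1}(M\Delta)$ and then reruns the Fubini argument from Proposition~\ref{prop:through restriction}, invoking $(Y_{k-1})$ and $(Z_{k-1})$ to absorb the $9^k$ and $(C/c)^{k-1}$ losses into the $(k+k_0)^2$ exponent. The only difference is cosmetic: you integrate directly over $\mathcal{P}_{\mathrm{bad}}(M)$ rather than first packaging the bad planes into subsets $\mathcal{E}(M)$ of each simplex; in either phrasing the passage from the plane-proportion bound of Theorem~\ref{thm:RHS} to the weighted integral bound implicitly uses Proposition~\ref{prop:concave}, a step both you and the paper leave tacit.
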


We sketch the proof. By Theorem \ref{thm:RHS} there exists $\rho<1, c'>0$ so that for each simplex $M\Delta$ (with $M \in \mathcal{M}_k''$)
 there exists a subset  $\mathcal{E}(M)\subset M\Delta$  of measure at most $\rho^{(k+k_0)^2}\lambda_{d-1}(M\Delta)$, so that if $P \cap M\Delta \setminus \mathcal{E}(M) \neq \emptyset$ then 
 $$\lambda_2(P \cap M\Delta \cap \mathcal{M}_k)> \frac {c'} {s_k}\lambda_2(P \cap M\Delta).$$ 
 So we want to show that for all planes $P$, except for a set of planes of measure at most $\frac{1}{4}\rho^{k+k_0}$,  we have  
 $$\lambda_2\Big(P \cap (\cup_{M\in\mathcal{M}''_k} \mathcal{E}(M))\Big)<\frac 1 4 \frac 1 {9^k}\lambda(P \cap \mathcal{M}''_k \Delta).$$
 Now by Lemma \ref{lem:meas decay} and 
 Assumptions $(Z_{k-1})$ we have that $\lambda_{d-1}(\mathcal{M}''_k\Delta)\leq C' C^{k-1} t_k\prod_{i=1}^{k-1}\frac{t_i}{s_i}$ and so 
 $$\lambda_{d-1}(\cup_{M \in\mathcal{M}''_k}\mathcal{E}(M))<\rho^{(k+k_0)^2}C' C^{k-1} t_k\prod_{i=1}^{k-1}\frac{t_i}{s_i}.$$
The remainder of the proof is as in Proposition \ref{prop:through restriction}.

 Theorem \ref{thm:big shadow} together with a proof analogous to Proposition \ref{prop:through restriction} implies: 
\begin{prop}There exists $\rho<1$ and $c'>0$, so that except for  a set of planes $P \in \mathcal{P}_{k-1}$ of measure at most $\frac{1}{4}\rho^{(k+k_0)}$,  for a  set of connected components $J\subset P \cap \mathcal{M}_{k}(\Delta)$ of total measure at least $(1-\frac 1 4 \frac 1 {9^k})\lambda_2(P\cap \mathcal{S}_k)$ we have  
$$\lambda_2( J\cap\mathcal{S}_{k+1})\geq c'\lambda_2(J).$$
\end{prop}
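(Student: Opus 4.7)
The plan is to follow the same template as the two preceding propositions in Section~12, using Theorem~\ref{thm:big shadow} as the geometric input (in place of Theorem~\ref{thm:neighborhood} or Theorem~\ref{thm:RHS}) and converting the resulting full-dimensional measure estimate into a statement about $2$-plane slices via Fubini together with the inductive hypotheses $(Y_{k-1})$ and $(Z_{k-1})$.

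First I apply Theorem~\ref{thm:big shadow} to each simplex $M\Delta$ with $M\in\mathcal{M}_k$. For each $c\in[.1,.9]$ the theorem produces matrices $A_1,\ldots,A_\ell$ of freedom on LHS at stage $k+1$ with
$$\lambda_{d-2}\Big(M\Delta\cap\Delta_c\setminus\bigcup_{j=1}^\ell\mathcal{S}_\phi(\mathcal{M}_{MA_j})\Big)\leq\rho_1^{(k+k_0)^{1.1}}\lambda_{d-2}(M\Delta\cap\Delta_c),$$
and by construction each $\mathcal{S}_\phi(\mathcal{M}_{MA_j})\subset\mathcal{S}_{k+1}$. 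By Theorem~\ref{thm:nue}, for $k_0$ large a definite fraction of $\lambda_{d-1}(M\Delta)$ is carried by $\bigcup_{c\in[.1,.9]}\Delta_c$, so integrating in $c$ gives $\lambda_{d-1}(M\Delta\setminus\mathcal{S}_{k+1})\leq C\rho_1^{(k+k_0)^{1.1}}\lambda_{d-1}(M\Delta)$. Summing over $M\in\mathcal{M}_k$ and using $(Z_{k-1})$ together with the shrinkage estimates of Lemma~\ref{lem:meas decay} and Lemma~\ref{lem:restrict size} (which yield $\lambda_{d-1}(\mathcal{M}_k\Delta)\leq C^k\prod_{i=1}^k t_i/s_i$) produces
$$\lambda_{d-1}(\mathcal{M}_k\Delta\setminus\mathcal{S}_{k+1})\leq C^{k+1}\rho_1^{(k+k_0)^{1.1}}\prod_{i=1}^k\frac{t_i}{s_i}.$$

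Next I pass to plane slices. If $J$ is a component of $P\cap\mathcal{M}_k\Delta$ failing $\lambda_2(J\cap\mathcal{S}_{k+1})\geq c'\lambda_2(J)$, then $\lambda_2(J\setminus\mathcal{S}_{k+1})>(1-c')\lambda_2(J)$, and summing over all such components gives
$$\sum_{J\text{ bad}}\lambda_2(J)\leq\tfrac{1}{1-c'}\lambda_2\bigl(P\cap(\mathcal{M}_k\Delta\setminus\mathcal{S}_{k+1})\bigr).$$
Fubini over the family $\mathcal{P}$ yields $\int_{\mathcal{P}}\lambda_2(P\cap(\mathcal{M}_k\Delta\setminus\mathcal{S}_{k+1}))\,dP\leq c_0\lambda_{d-1}(\mathcal{M}_k\Delta\setminus\mathcal{S}_{k+1})$. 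Combining this upper bound with the lower bound $\lambda_2(P\cap\mathcal{S}_k)\geq c^{k-1}\prod_{i=1}^{k-1}t_i/s_i$ furnished by $(Y_{k-1})$ and applying Markov, the measure of planes $P\in\mathcal{P}_{k-1}$ for which $\sum_{J\text{ bad}}\lambda_2(J)>\tfrac{1}{4\cdot 9^k}\lambda_2(P\cap\mathcal{S}_k)$ is at most a constant multiple of $9^k(C/c)^{k-1}(t_k/s_k)\rho_1^{(k+k_0)^{1.1}}$, which is $\leq\tfrac14\rho^{k+k_0}$ for some $\rho<1$ provided $k_0$ is large.

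The main obstacle is the routine but careful bookkeeping needed to make sure the superexponential decay $\rho_1^{(k+k_0)^{1.1}}$ from Theorem~\ref{thm:big shadow} dominates the cumulative exponential factors $C^k,\,9^k,\,(c')^{-k}$ produced by Fubini, Markov, and the inductive hypotheses. This is achieved, as elsewhere in the paper, by taking $k_0$ sufficiently large.
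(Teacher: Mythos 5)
Your proposal is correct and is precisely the argument the paper intends: the paper gives no proof here, saying only that Theorem~\ref{thm:big shadow} together with a proof analogous to Proposition~\ref{prop:through restriction} implies the statement, and your write-up is exactly that--use Theorem~\ref{thm:big shadow} as the geometric input, convert the $\lambda_{d-2}$ slice estimates to a $\lambda_{d-1}$ estimate, and then run Fubini + Markov against the inductive hypotheses $(Y_{k-1})$, $(Z_{k-1})$ and the $9^{-k}$ budget. The only minor point worth tightening is that you do not actually need a lower bound on the proportion of $\lambda_{d-1}(M\Delta)$ carried by $\cup_{c\in[.1,.9]}\Delta_c$: since every plane in $\mathcal{P}$ sits inside some $\Delta_c$ and you can restrict to $c\in[.1,.9]$, it suffices to integrate the $\Delta_c$-estimate over that range, which $(Z_{k-1})$ and the shrinkage lemmas already cover.
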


\begin{proof}[Proof of Theorem \ref{thm:plane estimates}] 
We first prove \eqref{eq:plane est}.
The last three  propositions establish the inequality except for the condition on diameters and the measure on planes.  By Theorem \ref{thm:diameter} each matrix is cut by a plane whose intersection is at least proportional to $\frac{\omega'}{|C_{\max}(M)|}$ where by Proposition \ref{prop:RHS sing}, $\omega'$ is at least proportional to ${|C_{\min}(M)|}^{-1}$. This is at least proportional to $u_k^{-1}$ in Proposition \ref{prop:sizes}, which is at least  $$
 (2^k 10^{\sum_{i=3}^{2k+1} (i+k_0)^6+2\sum_{i=2}^k(i+k_0)^2+\sum_{i=1}^k(i+k_0)^{2.3}})^{-1}$$
 By Proposition \ref{prop:concave}, the proportion of the measure of a simplex $\Delta_k$ that is cut with diameter smaller that $10^{-(k+k_0)^2}u_k^{-1}$ is at most $10^{-c(k+k_0)^2}\lambda_{d-1}(\Delta_k)$. 
 By the above discussion, the complement of these intersections has diameter at least $\hat{r}_k$. 
 Analogously to the proof of Proposition \ref{prop:through restriction} we obtain the theorem off of a subset of planes in $\mathcal{P}_{k-1}$ of measure at most $\frac{1}{4}\rho^{k+k_0}$. The total loss of measures of planes using this comment and the three propositions is then at most $\rho^{k+k_0}$. Then we take $\mathcal{P}_k$ to be $\mathcal{P}_{k-1}$  where we have thrown out this exponentially small set of planes. The total removed has measure at most  $\sum_{k=1}^\infty \rho^{k+k_0}$.  We  choose $k_0$ large enough, so this sum is at most $1/2$ the total measure of the set of planes, which establishes our condition on the measure of $\mathcal{P}_k$.

 Inequality \eqref{eq:plane est} implies Assumption $(Y_{k})$. Assumption $(Z_k)$ follows from Lemmas \ref{lem:meas decay} and \ref{lem:restrict size}. Indeed, we apply these estimates to each simplex. Summing over the simplices we obtain $(Z_k)$. 
\end{proof}

Note that $\hat{\mathcal{P}}$ in the statement of Theorem \ref{thm:planes} is $\cap_{k=1}^\infty\mathcal{P}_k.$  We call $P\in\mathcal{P}$ {\em good}

\begin{proof}[Proof of (e)]  We need to show that for every $\epsilon>0$ 
\begin{equation}\label{eq:cond 4 suff}
\underset{k \to \infty}{\lim} \bar{r}_k^{\frac 1 2 +\epsilon}\prod_{j=1}^k (\frac{ t_j}{s_j})^{-1}=0.
\end{equation}
By Theorem \ref{thm:plane estimates},  we have that if $P$ is a good plane then 
$\lambda_2(P \cap S_{k+1})\geq c^k\prod_{j=1}^k\frac{t_j}{s_j}$. Now, \eqref{eq:cond 4 suff} implies that for any $\delta>0$, for all large enough $k$ we have that 
$\bar{r}_k\leq\prod_{j=1}^k (\frac{ t_j}{s_j})^{1-\delta}$. So each polygon making up $S_k \cap P$ has area at most $\prod_{j=1}^k (\frac{ t_j}{s_j})^{2-2\delta}$. With the previous Theorem \ref{thm:plane estimates} estimate, this implies that there are at least 
$\prod_{j=1}^k (\frac{ t_j}{s_j})^{1-2\delta}$ (for all $k$ large enough) polygons.

We now prove \eqref{eq:cond 4 suff}. \color{black} There exists $q$ a degree 5 polynomial so that both $t_j$ and $\frac 1 {s_j}$ are at least $10^{-(2j)^6+q(j)}$. 
From this it follows that there exists $p$ a degree $6$ polynomial so that 
$$\prod_{j=1}^k(\frac{t_j}{s_j})^{-1}>10^{\frac 1 7 (2k)^7+p(k)}.$$ 
Now by \eqref{eq:diam upper} we have that there exists a degree at most 6 polynomial $\tilde{p}$ so that 
$\bar{r}_k<10^{-\frac 2 7(2k)^7+\tilde{p}(k)}$. This establishes \eqref{eq:cond 4 suff} and so (e).
\end{proof}

\subsection{Proof of \ref{A:sep}} 
For any $j$ let $F_j=span (e_1,\ldots e_{j-1},e_{j+1},\dots, e_d)$  and  for any matrix $M$ recall  $F_j(M)=M(F_j)$ the $j$ face.  We compute before projectivizing.  For any $i,j$ and matrix $A$ after finishing freedom on LHS  we have 
$$d(C_i(A),F_j)=d(A(e_i),F_j)\geq \frac{1}{\|A\|}.$$  Then
$$d(C_i(MA), F_j(M))=d(MA(e_i),F_j(M))\geq \frac{1}{\|M\|}d(A(e_i),F_j)\geq \frac{1}{\|M\|\cdot\|A\|}.$$
After  projectivizing this says vertices are bounded away from faces 
by an amount which is at least \color{black} $ r_{k+1}.$

\section{Appendix}

 We denote by $A$ matrices of freedom on LHS.
Recall the paths $\gamma(x,m)$ of Rauzy induction defined in Section \ref{sec:repeat} that are $\pi_s$ via $\pi'$ isolated.  We wish to prove \begin{lem}
 \label{lem:avoid hyp}
 There exists $\rho>0$ and $m_0$ so that for any hyperplane $H$ 
$$\lambda_{d-1}(\{x \in \Delta \times \pi_L: \exists \gamma(x,m)\ \pi_s\ \text{via}\ \pi' \text{isolated}\  m\leq m_0\  \text{with}\  A(x,m) \Delta\cap H 
=\emptyset\})>\rho.$$   
\end{lem}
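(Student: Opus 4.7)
The plan is a covering argument: I will cover a definite proportion of $\Delta\times\{\pi_L\}$ by small simplices $A(x,m)\Delta$ coming from $\pi_s$ via $\pi'$ isolated paths of uniformly bounded length, and observe that only a controllable proportion of such simplices can intersect a given hyperplane $H$.

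First I would fix parameters. Choose $\delta>0$ small, to be specified later depending only on the dimension. By iterating Proposition~\ref{prop:positive} together with Proposition~\ref{prop:contraction}, there exist $m_1$ and a set $E\subset\Delta\times\{\pi_L\}$ of measure at least $1-\tfrac{1}{100}$ on which one can choose $n(x)\le m_1$ so that $A(x,n(x))$ is positive, $\zeta$-balanced, and $\mathrm{diam}(A(x,n(x))\Delta)<\delta/2$. Now append to each such path a \emph{fixed} short path $\gamma_0$ which leads, without returning to $\pi_s$, from the permutation of $R^{n(x)}x$ into $\pi'$ and then, via $1$ beating $2$, into $\pi_s$ (the existence of such a $\gamma_0$, of length bounded by a constant depending only on the Rauzy class, is immediate from connectedness of the Rauzy graph). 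Denote the resulting matrix by $\tilde A(x)$. By construction the corresponding path from $x$ is $\pi_s$ via $\pi'$ isolated, its length is bounded by $m_0:=m_1+|\gamma_0|$, and since $\|\gamma_0\|$ is a fixed constant the simplex $\tilde A(x)\Delta$ still has diameter $<\delta$ (the post-multiplication by a bounded matrix can only dilate diameters by a bounded factor, and we absorb this factor into $\delta$ by shrinking at the outset).

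Next I would quantify how much of $\Delta\times\{\pi_L\}$ is covered by the family $\{\tilde A(x)\Delta:x\in E\}$. Since each matrix $A(x,n(x))$ is $\zeta$-balanced, Lemma~\ref{lem:bal distort} together with Lemma~\ref{lem:jacobian} gives a uniform lower bound on the Jacobian of these matrices on $\Delta$. Partitioning $E$ into countably many pieces on which the pair $(A(x,n(x)),\gamma_0)$ is constant, one sees that the corresponding simplices $\tilde A\Delta$ are disjoint and their union has measure at least a constant $\kappa>0$ depending only on the Rauzy class.

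The final step is the measure comparison. Any hyperplane $H$ in $\mathbb R^d$ has the property that the $\delta$-neighborhood $\mathcal N(H,\delta)\cap\Delta$ has $\lambda_{d-1}$-measure at most $C_d\delta$. If $\tilde A\Delta$ meets $H$ then, since $\mathrm{diam}(\tilde A\Delta)<\delta$, we have $\tilde A\Delta\subset\mathcal N(H,\delta)$; hence the total measure of the simplices in our family that hit $H$ is at most $C_d\delta$. Choosing $\delta$ so small that $C_d\delta<\kappa/2$ (this is where $\delta$ gets fixed, which in turn fixes $m_0$), the complement inside the cover has measure at least $\kappa/2$, yielding the lemma with $\rho:=\kappa/2$.

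The main obstacle I foresee is verifying cleanly that the simplex diameters of the composite matrices $\tilde A(x)=A(x,n(x))\cdot M(\gamma_0)$ really are controlled by $\delta$ uniformly in $x$. Proposition~\ref{prop:contraction} controls only the face $V$, so I would need either to replace it by an analogous statement for the full simplex (this is standard given positivity and $\zeta$-balance via Lemma~\ref{lem:volume} and the singular-value bounds of Section~\ref{sec:symplectic}) or to argue directly from the fact that a positive, $\zeta$-balanced matrix mapping $\Delta$ into a ball of diameter $\delta/2$ stays so after post-multiplication by a matrix of bounded norm.
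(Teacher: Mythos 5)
Your proposal takes a genuinely different route from the paper, and unfortunately it has a real gap at the point where you need the constructed paths to be \emph{$\pi_s$ via $\pi'$ isolated}.

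The difficulty is this: Propositions~\ref{prop:positive} and~\ref{prop:contraction} produce matrices coming from \emph{unrestricted} Rauzy paths starting at $\pi_L$. Such a path is free to wander anywhere in $\mathcal{R}_d$, and in particular it may pass through $\pi_s$ (including via the $\pi'\to\pi_s$ edge) many times, and may visit the right-hand side of the Rauzy graph where $d-1$ and $d$ are compared. The isolation condition, however, constrains the \emph{entire} path from $x$, not just its terminal segment: the path must reach $\pi_s$ from $\pi'$ exactly once, and (this is the very point of the definition in Section~\ref{sec:repeat}) the letters $d-1,d$ must never lose so that the last two columns are untouched. Appending a fixed tail $\gamma_0$ does nothing to repair an initial segment that already took $\pi'\to\pi_s$ or entered the RHS. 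Moreover, the existence of $\gamma_0$ leading from an arbitrary permutation in $\mathcal{R}_d$ to $\pi'$ while \emph{avoiding} $\pi_s$ is not a consequence of mere connectedness of the Rauzy graph — from a permutation like $\pi_R$ there is no such short detour — so even the existence of $\gamma_0$ is doubtful. These two problems mean your family $\tilde A(x)$ is not a family of isolated paths at all, and so the cover you build does not consist of admissible simplices for the lemma. (The diameter worry you flag at the end, by contrast, is not a problem: $A\gamma_0\Delta\subset A\Delta$, so appending a tail can only shrink the simplex.)

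For comparison, the paper's proof sidesteps contraction and small-diameter covers entirely. It first shows a purely geometric fact: there is $\epsilon_0$ such that a hyperplane in $\Delta$ that meets $B(e_1,\tfrac12)$ must miss $B(e_j,\epsilon_0)$ for some $j>1$. It then exhibits a finite list of \emph{explicit, hand-built} $\pi_s$ via $\pi'$ isolated paths (so in particular they stay on the LHS and never compare $d-1,d$): one whose simplex sits in $B(e_1,\tfrac12)$ (namely $1$ beating $d-2,\dots,2$) and, for each $i>1$, one whose simplex sits in $B(e_i,\epsilon_0)$. Given any $H$, the geometric fact guarantees one of these balls is disjoint from $H$, and the corresponding simplex, of measure bounded below by a constant, gives the claimed $\rho$. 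If you want to keep your $\delta$-neighborhood idea, you would need a covering argument by isolated paths \emph{only} — for instance by running a positivity/contraction argument intrinsically within the LHS subgraph — but that would require new estimates not available from Propositions~\ref{prop:positive}/\ref{prop:contraction} as stated, which concern the full Rauzy class.
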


With the same notations as above, via usual balanced estimates we have:
\begin{cor}
For any $\zeta$ large enough, there exists $\rho$ so that for any hyperplane $H$ and $M=M(x,r)$ a $\zeta$-balanced matrix of Rauzy induction, we have 
$$\lambda(\{y \in M\Delta \times \pi_L: \exists m\leq m_0 \text{ with }\ \gamma(R^ry,m)\  \pi_s\ \text{via}\ \pi'\ \text{isolated  and}\ MA(R^ry,m) \cap H =\emptyset\})>\rho\lambda(M\Delta).$$
\end{cor}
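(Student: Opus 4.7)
The plan is to reduce directly to Lemma~\ref{lem:avoid hyp} via the standard bounded-distortion estimate for $\zeta$-balanced matrices (Lemma~\ref{lem:bal distort}). First I observe that the event ``$MA(R^ry,m)\Delta \cap H = \emptyset$'' on points $y\in M\Delta$ pulls back under the projective action of $M$: if $A$ is any matrix of Rauzy induction, then
\[
MA\Delta \cap H = \emptyset \iff A\Delta \cap M^{-1}H = \emptyset.
\]
Since $M$ acts linearly on $\mathbb{R}^d$ and projectively on $\Delta$, the preimage $H' := M^{-1}H$ is again (the trace on $\Delta$ of) a hyperplane, and the property of being $\pi_s$ via $\pi'$ isolated along a path starting at $R^ry$ depends only on $R^ry$, not on $M$.

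Next I apply Lemma~\ref{lem:avoid hyp} to the hyperplane $H'$. This produces a set
\[
S := \{x \in \Delta \times \pi_L : \exists\, m \le m_0,\ \gamma(x,m)\ \text{is }\pi_s\text{ via }\pi'\text{ isolated},\ A(x,m)\Delta \cap H' = \emptyset\}
\]
with $\lambda_{d-1}(S) > \rho_0$, where $\rho_0$ is the constant from Lemma~\ref{lem:avoid hyp} (independent of $H'$). The set of $y \in M\Delta$ witnessing the corollary is precisely $(R^r)^{-1}(S) \cap M\Delta$.

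Finally I invoke Lemma~\ref{lem:bal distort}: since $M$ is $\zeta$-balanced, the projective action satisfies
\[
\frac{\lambda_{d-1}\bigl((R^r)^{-1}(S)\cap M\Delta\bigr)}{\lambda_{d-1}(M\Delta)} \ge \zeta^{-d}\,\lambda_{d-1}(S) \ge \zeta^{-d}\rho_0.
\]
Setting $\rho := \zeta^{-d}\rho_0$ gives the corollary. The only step that requires any care is checking that the pullback $M^{-1}H$ is indeed a hyperplane in the relevant sense and that the conditions on $\gamma(R^ry,m)$ and the bound $m \le m_0$ transfer verbatim from Lemma~\ref{lem:avoid hyp}; both are immediate because those conditions are intrinsic to the IET determined by $R^ry$ and do not see $M$ at all. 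The main (already-handled) input is Lemma~\ref{lem:avoid hyp} itself; everything in this reduction is routine.
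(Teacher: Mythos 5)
Your proof is correct and takes exactly the route the paper intends by its parenthetical ``via usual balanced estimates'': pull the hyperplane back through the projective (equivalently, linear) action of $M$ to reduce to Lemma~\ref{lem:avoid hyp} applied to $H' = M^{-1}H$, check that the isolation condition and the bound $m\le m_0$ are intrinsic to $R^ry$, and then transfer the measure lower bound through $M$ using the bounded-distortion estimate of Lemma~\ref{lem:bal distort}. The reduction is sound in every detail; in particular $M^{-1}H$ is again a codimension-one linear subspace, Lemma~\ref{lem:avoid hyp} holds uniformly over all such hyperplanes with the same $\rho_0$ and $m_0$, and the final bound $\rho=\zeta^{-d}\rho_0$ is as intended.
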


  \begin{cor} \label{cor:avoid hyp} Given $\zeta$ there exists $\rho_1$ such that if   $M=M(x,r)$ a matrix of Rauzy induction so that
\begin{itemize}
\item $\frac{|C_i(M)}{|C_j(M|}<\zeta$ for all $1\leq i,j\leq d-2$,
\item $\pi(R^ry)=\pi_L$.
\end{itemize}
and if $H$ is any hyperplane contained in $\spanD(C_1(M),...,C_{d-2}(M))$ then 
\begin{multline*}\lambda_{d-3}(\{y\in \spanD(C_1(M),...,C_{d-2}(M)): \exists m\leq m_0 \text{ with } \gamma(R^ry,m)\  \pi_s\ \text{via}\ \pi'\ \text{isolated and } \\
 MA(R^ry,m)\Delta \cap H =\emptyset\})>\rho_1 \lambda_{d-3}(\spanD(C_1(M),...,C_{d-2}(M))). 
 \end{multline*}
\end{cor}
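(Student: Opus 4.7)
The plan is to reduce the statement to Lemma \ref{lem:avoid hyp} applied in the smaller Rauzy class $\mathcal R_{d-2}$ on the first $d-2$ symbols, and then transfer the resulting measure estimate back to the face $V(M)=\spanD(C_1(M),\ldots,C_{d-2}(M))$ via a Jacobian comparison powered by the balance hypothesis.

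First, I would exploit the defining feature of a $\pi_s$-via-$\pi'$-isolated path: during any such $\gamma(R^ry,m)$, no column is added to $C_{d-1}$ or $C_d$, and neither $d-1$ nor $d$ ever wins. Consequently $C_{d-1}(A(R^ry,m))=e_{d-1}$, $C_d(A(R^ry,m))=e_d$, and the first $d-2$ columns of $A(R^ry,m)$ lie in $\spanD(e_1,\ldots,e_{d-2})$. The resulting restriction $\tilde A$ is a bona fide matrix of Rauzy induction in $\mathcal R_{d-2}$, and the family of $\pi_s$-via-$\pi'$-isolated paths in $\mathcal R_d$ starting at $\pi_L$ is in bijection with an analogous family of isolated paths in $\mathcal R_{d-2}$ starting at the induced permutation. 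Let $\Sigma$ denote the standard $(d-3)$-dimensional simplex on the alphabet $\{1,\ldots,d-2\}$ and write $\tilde M:\Sigma\to V(M)$ for the projective restriction of $M$. Pulling back $H\subset V(M)$ by $\tilde M$ produces a hyperplane $H''\subset\Sigma$, and because the only portion of $MA(R^ry,m)\Delta$ meeting $V(M)$ is its face $\tilde M\tilde A\Sigma$, we obtain the crucial equivalence
\[
MA(R^ry,m)\Delta\cap H=\emptyset \iff \tilde A\Sigma\cap H''=\emptyset.
\]

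Second, I would apply Lemma \ref{lem:avoid hyp} verbatim in $\mathcal R_{d-2}$, whose proof depends only on the Rauzy-class structure and not on the ambient dimension. This yields constants $\rho>0$ and $m_0\in\mathbb N$ (depending only on $d-2$) such that for any hyperplane $H''\subset\mathbb R^{d-2}$,
\[
\lambda_{d-3}\bigl(\{z\in\Sigma:\exists m\le m_0,\ \tilde\gamma(z,m)\ \text{isolated and }\tilde A(z,m)\Sigma\cap H''=\emptyset\}\bigr)>\rho\,\lambda_{d-3}(\Sigma).
\]
Finally, I would transfer from $\Sigma$ to $V(M)$ using Lemma \ref{lem:face jacobian}: the Jacobian of $\tilde M$ at $u\in\Sigma$ equals $J(u)=c_M\bigl(\sum_{i=1}^{d-2}u_i|C_i(M)|\bigr)^{-(d-2)}$, and the hypothesis $|C_i(M)|/|C_j(M)|<\zeta$ for $1\le i,j\le d-2$ gives $\sup J/\inf J\le\zeta^{d-2}$. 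Consequently any subset of $\Sigma$ of relative $\lambda_{d-3}$-measure $\rho$ maps onto a subset of $V(M)$ of relative $\lambda_{d-3}$-measure at least $\rho\zeta^{-2(d-2)}$, so one may take $\rho_1=\rho\zeta^{-2(d-2)}$.

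The principal subtlety is the combinatorial identification in the first paragraph, namely verifying that $\pi_s$-via-$\pi'$-isolated paths in $\mathcal R_d$ restrict cleanly, via $A\mapsto\tilde A$, to a family of paths in $\mathcal R_{d-2}$ rich enough that Lemma \ref{lem:avoid hyp} delivers a uniform $\rho$ and $m_0$. Once this reduction is set up, the Jacobian transfer and measure-theoretic bookkeeping are routine.
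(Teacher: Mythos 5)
Your argument is essentially the one the paper intends (the paper leaves this corollary without a written proof, implicitly deferring to ``the same balanced estimates'' used for the preceding unnumbered corollary). The two pillars you identify are exactly right: (1) every $\pi_s$-via-$\pi'$-isolated path starting from $\pi_L$ only ever compares letters in $\{1,\ldots,d-2\}$, so $C_{d-1}(A)=e_{d-1}$, $C_d(A)=e_d$, the subsimplex $MA\Delta$ meets $V(M)$ exactly in its face $V(MA)$, and the constructions in the proof of Lemma~\ref{lem:avoid hyp} land a definite proportion of $\Sigma=\spanD(e_1,\ldots,e_{d-2})$ in balls about the vertices $e_1,\ldots,e_{d-2}$; (2) Lemma~\ref{lem:face jacobian} together with the $\zeta$-balance of $C_1(M),\ldots,C_{d-2}(M)$ bounds the distortion of $\tilde M\colon\Sigma\to V(M)$, transferring the fixed-proportion conclusion to $\lambda_{d-3}$ on $V(M)$.

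Two small remarks. First, you frame step (1) as an application of Lemma~\ref{lem:avoid hyp} ``verbatim in $\mathcal R_{d-2}$,'' which overstates matters slightly: there is no such lemma stated in the paper, and what you really use is that the explicit paths in the proof of Lemma~\ref{lem:avoid hyp} (together with the ball sub-lemma, which is dimension-free) already control the face $\Sigma$. It is cleaner to say this directly rather than formally change Rauzy classes. Second, the distortion bookkeeping is off by a factor: $\sup J/\inf J\le\zeta^{d-2}$ gives a relative-measure loss of $\zeta^{-(d-2)}$, not $\zeta^{-2(d-2)}$; this only affects the value of $\rho_1$, which is immaterial.
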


In order to prove Lemma~\ref{lem:avoid hyp} we need the following simple lemma first.

\begin{lem}
There is constant $\epsilon_0$ such that 
if a hyperplane in $\Delta$ intersects the radius $\frac{1}{2}$ ball about the  $e_1$ vertex it cannot intersect the $\epsilon_0$  ball about every other vertex.
\end{lem}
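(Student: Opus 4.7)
The assertion is a purely affine-geometric fact about the standard simplex $\Delta \subset \mathbb{R}^d$, whose vertices $e_1,\dots,e_d$ are affinely independent and whose opposite face to $e_1$ is $F_1=\operatorname{conv}(e_2,\dots,e_d)$. Let $d_0:=d(e_1,F_1)>0$, a positive constant depending only on $d$ (in fact $d_0=\sqrt{(d-1)/d}$ for the standard simplex). The point of the lemma is that a hyperplane $H$ (i.e.\ a codimension-$1$ affine subspace of the affine hull of $\Delta$) cannot be simultaneously close to all of $e_2,\dots,e_d$ and also within $1/2$ of $e_1$, because closeness to all of $e_2,\dots,e_d$ forces $H$ to lie close to the face $F_1$, which is at definite distance $d_0$ from $e_1$.

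I would argue by contradiction. Assume $H$ meets $B(e_1,1/2)$ and also meets each of $B(e_2,\epsilon_0),\dots,B(e_d,\epsilon_0)$. For each $j\geq 2$ choose $p_j\in H$ with $|p_j-e_j|<\epsilon_0$. The points $p_2,\dots,p_d$ lie in the affine subspace $H$, and by continuity of affine barycentric coordinates (equivalently: affine independence is an open condition), for $\epsilon_0$ small enough depending only on $d$, the points $p_2,\dots,p_d$ are themselves affinely independent and their affine hull $H':=\operatorname{aff}(p_2,\dots,p_d)\subset H$ has Hausdorff distance at most $C\epsilon_0$ from $F_1=\operatorname{aff}(e_2,\dots,e_d)\cap\Delta$ on any bounded region, where $C=C(d)$ is an explicit constant coming from the inverse of the relevant Gram matrix.

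Since $H$ has the same dimension as the affine hull of $F_1$ inside $\operatorname{aff}(\Delta)$, we in fact have $H=H'$, so every point of $H\cap\Delta$ lies within $C\epsilon_0$ of $F_1$. In particular, any $q\in H\cap B(e_1,1/2)$ satisfies $d(q,F_1)\leq C\epsilon_0$. But by the triangle inequality
\[
d(e_1,F_1)\leq |e_1-q|+d(q,F_1)\leq \tfrac{1}{2}+C\epsilon_0,
\]
contradicting $d(e_1,F_1)=d_0>1/2$ (which holds for $d\geq 3$, and in any case $d_0$ is a fixed constant) as soon as $\epsilon_0<(d_0-1/2)/C$.

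The only nontrivial step is the quantitative claim that a hyperplane passing $\epsilon_0$-close to $d-1$ affinely independent points is globally $C\epsilon_0$-close to their affine hull on any bounded set; this is a standard linear-algebra estimate via the Gram determinant of $\{e_j-e_2\}_{j=3}^{d}$, and since $\Delta$ is a fixed bounded set the constant $C$ depends only on $d$. Choosing $\epsilon_0=\epsilon_0(d)$ accordingly finishes the proof.
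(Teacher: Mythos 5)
Your proof is correct, and it takes a genuinely different route from the paper's. The paper argues by compactness and contradiction: it assumes a sequence of hyperplanes $H_n$ (written in the form $\sum_i a_{i,n}x_i=1$) meeting $B(e_1,1/2)$ and $B(e_j,1/n)$ for each $j>1$, observes that the coefficients $a_{i,n}$ are then bounded, extracts a subsequential limit $H$ that must contain $e_2,\dots,e_d$ while still meeting $B(e_1,1/2)$, and shows no such hyperplane exists. Your argument replaces this soft limit with a direct quantitative estimate: once $H$ passes within $\epsilon_0$ of $d-1$ affinely independent vertices, it is forced (via a perturbation bound on the inverse of the relevant linear system, which is where uniform affine independence of $p_2,\dots,p_d$ is used) to be $C\epsilon_0$-close to $\operatorname{aff}(e_2,\dots,e_d)$ on the bounded set $\Delta$, hence cannot reach within $1/2$ of $e_1$. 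Your version would in principle yield an effective $\epsilon_0(d)$, whereas the paper's is shorter but non-constructive. Two small notational points: the distance from $e_1$ to $\operatorname{aff}(F_1)$ for the standard simplex $\{x\geq 0:\sum x_i=1\}$ is $\sqrt{d/(d-1)}$, not $\sqrt{(d-1)/d}$ (harmless, since both exceed $1/2$); and where you write $d(q,F_1)\leq C\epsilon_0$ you are really estimating $d(q,\operatorname{aff}(F_1))$ — since $d(e_1,F_1)=d(e_1,\operatorname{aff}(F_1))$ the triangle inequality step is unaffected, but the distinction is worth keeping.
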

\begin{proof}
The proof is by contradiction.  If the lemma is false there is a sequence of hyperplanes $H_n$ defined by $a_{1,n}x_1+a_{2,n}x_2+\ldots + a_{d,n}x_n=1$ which intersect the  $\frac{1}{n}$ neighborhoods of $e_j$ for $j>1$ and the $\frac{1}{2}$  neighborhood of $e_1$. This forces the coefficients $a_{j,n}$ to be bounded. Passing to a subsequence and taking a limit we find  that the limiting hyperplane $H$ would contain  $e_j$ for $j>1$, and intersect the  $\frac{1}{2}$ neighborhood of $e_1$. Then $H$ must be of form $$a_1x_1+x_2+\ldots +x_d=1.$$ But this is not a hyperplane subset of $\Delta$. 
\end{proof}

\begin{proof}[Proof of Lemma \ref{lem:avoid hyp}]
Let $\epsilon_0$ from the last lemma.  It  suffices to show that for any $i>1$ there exists  a  bounded length path of Rauzy induction whose corresponding subsimplices  are contained in $B(e_i,\epsilon_0)$, and there exists a path whose corresponding subsimplex of Rauzy induction is contained in $B(e_1,\frac 1 2)$. 

In the second case  consider the path where $1$ wins $d-3$ consecutive times.  It reaches  $\pi'$ and then after $1$ beats $2$ it returns to  $\pi_s$.  So the first interval is longer than the sum of the other intervals, so its length is at least $\frac 1 2 $. 

 In the first case $(i>1)$ starting at $\pi_L$ have $d-2$ beat 1 then $2, \ldots, i-1$. Then have $i$ beat $d-2, d-3,\ldots i+1$ for $\frac{4}{\epsilon_0}$ consecutive times. Then have $d-2$ beat $i$, $i+1\ldots 2$ then have it beat $1, \ldots, d-3$. 
 This implies that  
 $x_{d-2}>\frac{1}{2}(x_1+\ldots x_{i-1}+x_{i+1}\ldots+x_{d-3})$. So, $$x_i>\frac 4 {\epsilon_0}(x_{d-2}-(x_1+\dots+x_{i-1})>\frac 4 {\epsilon_0}\frac 1 2 x_{d-2}>\frac 4 {\epsilon_0}\frac 1 4(x_1+\ldots+x_{i-1}+x_{i+1}+\ldots+x_{d-2}),$$ establishing that the hyperplane intersects $B(e_i, {\epsilon_0})$. 
\end{proof}

\subsection{Measure of parallel planes}

Let $\mathcal{D}$ be a compact, convex set in $\mathbb{R}^k$. Let $P_0$ be a plane and $\mathcal{P}$ be the set of planes parallel to $P_0$ that intersect $\mathcal{D}$. The orthocomplement of $P_0$ is a copy of $\mathbb{R}^{k-1}$ and has Lebesgue measure. We identify $\mathcal{P}$ with a (convex) subset of $E\subset \mathbb{R}^k$, by identifying a point in $\mathcal{P}$ with the point in the orthocomplement it intersects.  This induces a measure $\nu$ on $\mathcal{P}$. Let $$A=\max\{\lambda_2(P \cap \mathcal{D}):P \in \mathcal{P}\}.$$ 
\begin{prop}
\label{prop:concave}
There exists a constant $C$ such that for  all $\epsilon>0$, $$\nu(\{P\in \mathcal{P}:\lambda_2(P \cap \mathcal{D})<\epsilon A\})\leq C\sqrt{\epsilon}\nu(\mathcal{P}).$$  
\end{prop}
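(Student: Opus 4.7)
The plan is to apply the Brunn--Minkowski inequality to reduce the claim to a one-dimensional concavity argument along radial rays, then integrate in polar coordinates.

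First I would parametrize $\mathcal{P}$ by points in the $(k-2)$-dimensional orthocomplement of $P_0$, identifying $\mathcal{P}$ with a set $E$ there (the set is in fact convex, since convex combinations of planes meeting the convex set $\mathcal{D}$ still meet $\mathcal{D}$). Define $g\colon E\to[0,\infty)$ by $g(t)=\lambda_2(P_t\cap \mathcal{D})$, where $P_t$ is the translate of $P_0$ by $t$. The classical Brunn--Minkowski inequality, applied to the $2$-dimensional parallel slices of the convex body $\mathcal{D}$, says exactly that $g^{1/2}$ is concave on its support $E$.

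Next, fix $t^*\in E$ with $g(t^*)=A$, and for each unit vector $u$ in the orthocomplement let $L(u)$ be the length of the intersection of the ray $\{t^*+su:s\geq 0\}$ with $E$. Along this ray $g^{1/2}$ is a concave function of $s$ on $[0,L(u)]$, with $g^{1/2}(0)=A^{1/2}$ and $g^{1/2}(L(u))=0$, so the chord inequality for concave functions gives
\[
g^{1/2}(t^*+su)\;\geq\; A^{1/2}\bigl(1-s/L(u)\bigr).
\]
Consequently $\{g<\varepsilon A\}$ meets the ray in a subset of $\bigl(L(u)(1-\sqrt{\varepsilon}),\,L(u)\bigr]$, an interval of length $\sqrt{\varepsilon}\,L(u)$.

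Finally, I would integrate in polar coordinates centred at $t^*$ on $E$. Since $E$ is convex (in particular star-shaped about $t^*$),
\[
\nu(\mathcal{P})=\int_{S^{k-3}}\frac{L(u)^{k-2}}{k-2}\,du,
\]
and the bad set satisfies
\[
\nu(\{g<\varepsilon A\})
\leq \int_{S^{k-3}}\int_{L(u)(1-\sqrt{\varepsilon})}^{L(u)} r^{k-3}\,dr\,du
=\bigl(1-(1-\sqrt{\varepsilon})^{k-2}\bigr)\nu(\mathcal{P}).
\]
Using $1-(1-x)^{k-2}\leq (k-2)x$ for $x\in[0,1]$ yields the claim with $C=k-2$.

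The only step with any real content is the concavity of $g^{1/2}$, which is precisely Brunn--Minkowski; everything else is a radial integration. A minor technical point is handling small $k$ (when $k=3$ the "sphere" $S^{k-3}$ is two points, and the integral degenerates to a pair of one-dimensional estimates, which works identically), and verifying that when $g$ does not actually vanish exactly at the boundary of its support the chord bound still gives the needed lower bound on $g^{1/2}$.
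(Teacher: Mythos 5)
Your proof is correct, and it takes a genuinely different (and arguably cleaner) route than the paper. The paper's Lemma~\ref{lem:concave} shows by an explicit geometric dissection that on each line in $E$ the area function $f(t)=\lambda_2(P_t\cap\mathcal{D})$ decomposes as $u+v$ with $u$ concave and $v$ the square of a concave function, then feeds the two summands into Lemma~\ref{lem:concave:bound} (which contributes the $\sqrt{\epsilon}$ via the $v$-term), and finally integrates radially. You instead observe that the stronger statement holds: $(1-\lambda)(P_s\cap\mathcal{D})+\lambda(P_t\cap\mathcal{D})\subset P_{(1-\lambda)s+\lambda t}\cap\mathcal{D}$ by convexity, so the planar Brunn--Minkowski inequality makes $g^{1/2}$ concave on all of $E$ in one stroke, and the chord inequality on radial rays immediately localizes the bad set to an annular shell of relative thickness $\sqrt{\epsilon}$. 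This is more economical, avoids the somewhat delicate choice of the splitting point $c$ in the paper's lemma, and subsumes the paper's two cases into a single statement. Your radial integration is also written more carefully than the paper's (you retain the $r^{k-3}$ Jacobian factor, which is why your constant comes out as $k-2$, whereas the paper's sketch suppresses it). One small observation: you do not actually need global concavity of $g^{1/2}$ on $E$, only concavity along rays from the maximizer $t^*$, but invoking the global statement via Brunn--Minkowski is harmless and natural, and your closing remarks about small $k$ and the boundary value of $g$ dispose of the only genuine edge cases.
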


We first prove
\begin{lem} 
\label{lem:concave}
Let $f:E\to [0,\infty)$ by $f(e)=\lambda_2(P_e\cap \mathcal{D})$ where $p_e$ is the element of $\mathcal{P}$ that contains $e$.   Then on each line, $f$ can be represented as  $u+v$ where $u$ is  a concave function and $v$ is the square of a  concave function.
\end{lem}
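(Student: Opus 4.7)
The plan is to reduce the statement to a single application of Brunn's theorem (the classical one-dimensional consequence of the Brunn--Minkowski inequality) applied to a well-chosen $3$-dimensional slice of $\mathcal{D}$.

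First I would fix an arbitrary affine line $\ell\subset E$ and parametrize it as $e(t)=e_0+tw$ with $w$ a unit vector in the direction of $\ell$. The crucial geometric observation is that the union
\[
\Pi_\ell \;:=\; \bigcup_{t\in\mathbb{R}} P_{e(t)}
\]
is a $3$-dimensional affine subspace of $\mathbb{R}^k$, namely the affine span of a single plane $P_{e_0}$ together with the extra direction $w$. Since $\mathcal{D}$ is compact and convex, $K:=\mathcal{D}\cap \Pi_\ell$ is a compact convex body in the $3$-dimensional space $\Pi_\ell$, and $P_{e(t)}\cap\mathcal{D}=P_{e(t)}\cap K$ for every $t$. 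The planes $P_{e(t)}$ are parallel affine hyperplanes inside $\Pi_\ell$, so $f(e(t))$ is exactly the function that records the $2$-dimensional area of the parallel hyperplane sections of $K\subset \Pi_\ell\cong\mathbb{R}^3$.

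Next I invoke Brunn's theorem: for any convex body $K\subset\mathbb{R}^n$ with parallel hyperplane sections $K_t$, the map $t\mapsto \lambda_{n-1}(K_t)^{1/(n-1)}$ is concave on the interval on which it is positive. Applying this with $n=3$ inside $\Pi_\ell$ gives that
\[
g(t)\;:=\;f(e(t))^{1/2}
\]
is concave on $I:=\{t:f(e(t))>0\}$, and $g$ vanishes continuously at the endpoints of $I$ because $K$ is bounded.

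Therefore, on the line $\ell$ one may take $u\equiv 0$ (which is trivially concave) and $v(t):=f(e(t))$; then $f|_\ell=u+v$ with $v=g^2$ the square of the concave function $g$ on its support. This yields the required decomposition. The whole argument is essentially a direct invocation of Brunn's inequality on the appropriate $3$-dimensional slice, so no serious obstacle arises; the only mild point to handle cleanly is the convention that $v$ and $g$ are extended by zero off $I$ and the concavity of $g$ is understood on the closed interval $\bar I$ (with one-sided derivatives at its endpoints), which is standard.
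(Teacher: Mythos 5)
Your argument is correct, and in fact it yields a stronger conclusion than the lemma asserts: one may take $u\equiv 0$, so that $f|_\ell$ is simply the square of a concave function on its support interval. The route is genuinely different from the paper's. The paper slices the $3$-dimensional body $K=\mathcal{D}\cap\Pi_\ell$ once more, in an auxiliary direction $w$ inside $P_0$, to get segment lengths $h(t,x)$, and then splits $f(e(t))=\int h(t,x)\,dx$ into the range of $x$ where the sections at both $t=0$ and $t=1$ are nonempty (this piece is concave in $t$, giving $u$) and the complementary range where only one endpoint contributes (treated via a convex-cone comparison, giving $v$ as the square of a concave function). You instead observe that $\Pi_\ell=\bigcup_t P_{e(t)}$ is a $3$-dimensional affine subspace, so that the $P_{e(t)}$ are parallel hyperplanes in $\Pi_\ell\cong\mathbb{R}^3$, and Brunn's theorem then immediately gives that $t\mapsto \lambda_2(K\cap P_{e(t)})^{1/2}$ is concave on the interval of $t$ for which $K\cap P_{e(t)}\neq\emptyset$. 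This bypasses the paper's auxiliary slicing direction $w$ and the normalization of the $x$-parametrization that it requires (which is the delicate point of the paper's write-up), at the modest cost of invoking the Brunn--Minkowski inequality rather than staying entirely elementary. Either version feeds into Lemma \ref{lem:concave:bound} and yields the same $\sqrt{\epsilon}$ bound in Proposition \ref{prop:concave}.
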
 
\begin{proof} Let $e_0,e_1 \in E$, identify $e_0$ with $0$, $e_1$ with $1$ and $(1-t)e_0+te_1$ with $t$.
Consider a direction $w$ in $P_0$ and parametrize the lines parallel to $w$ in each $P \in \mathcal{P}$ by the orthogonal direction to $w$ in $P_0$. 
So on each $P$ we have a family of lines  $\ell_x$  parallel to $v$,   for $x\in \mathbb{R}$. 
  Let $h(t,x)$ be the length of the line $\ell_x$ on $P_{(1-t)e_0+te_1}$ intersected with $\mathcal{D}$. 
We assume our parametrization of orthogonal direction to $w$ is chosen so that $$\inf\{x:h(1,x)>0\}=\inf\{x:h(0,x)>0\}$$ and that these infimum are $0$.   
Let us assume that $$b=\sup\{x:h(1,x)>0\}\geq \sup\{x:h(0,x)>0\}=c.$$
Now since the simplex is convex,  for all $x$ with $h(0,x)>0$ and  for all $0\leq t\leq 1$ $$h(t,x)\geq (1-t)h(0,x)+th(1,x).$$ So we have that $$u(t)=\int_0^ch(t,x)dx$$ is a concave function. 
Now we wish to show that if $$v(t)=\int_c^bh(x,t)dx$$ then
$\sqrt v$ is concave.  To do that it suffices to show that for $0\leq s\leq 1$ 
\begin{equation}\label{eq:concave suff} v(s)\geq s^2 v(1)
\end{equation}
 (because $v(0)=0$). Let $\ell_x(0)$ be the line $\ell_x$ on $P_{e_0}$ and $\ell_x(1)$ be the line $\ell_x$ on $P_{e_1}$. Let $\mathcal{E}$ be the convex hull of 
 $\cup_{x\in[c,b]}\ell_x(1) \cap \mathcal{D}$ and any  point in $\ell_c(0)\cap \mathcal{D}$. Since $\mathcal{D}$ is convex, $\mathcal{E}\subset \mathcal{D}$ and so 
\begin{equation}\label{eq:concave 1} v(s)\geq \lambda_2(P_{(1-s)e_0+se_1}\cap \mathcal{E}).
\end{equation} Now $\mathcal{E}$  is a convex cone so 
\begin{equation}\label{eq:concave 2}\lambda_2(P_{(1-a)e_0+se_1}\cap \mathcal{E})=s^2\lambda_2(\mathcal{E}\cap P_{e_1}).
\end{equation} 
Combining \eqref{eq:concave 1} and \eqref{eq:concave 2} verifies the sufficient condition \eqref{eq:concave suff}, completing the proof.
\end{proof}
\begin{lem}
\label{lem:concave:bound}Let $\Omega\subset \mathbb{R}^k$ be convex and compact. If $g:\Omega\to [0,\infty)$ is concave then for all $\epsilon>0$,  $\lambda_k(\{x:g(x)<\epsilon \max g\})<\epsilon\lambda_k(\Omega)$. 
\end{lem}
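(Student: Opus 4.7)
The plan is to run a Brunn--Minkowski style dilation argument centered at a maximizer. Let $M=\max_\Omega g$ (finite since $\Omega$ is compact and concavity makes $g$ continuous on the interior of $\Omega$) and fix $x_0\in\Omega$ with $g(x_0)=M$. The key one-line step I would establish is that for every $x\in\Omega$ and every $s\in[0,1]$,
\begin{equation*}
g\bigl((1-s)x+s x_0\bigr)\ge (1-s)g(x)+sM\ge sM,
\end{equation*}
which is just concavity combined with $g(x)\ge 0$. Specializing to $s=\epsilon$ exhibits the affine image $E_\epsilon:=(1-\epsilon)\Omega+\epsilon x_0$ inside the superlevel set $\{g\ge\epsilon M\}$.

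Convexity of $\Omega$ guarantees $E_\epsilon\subset\Omega$, and $E_\epsilon$ is a scaled, translated copy of $\Omega$, so $\lambda_k(E_\epsilon)=(1-\epsilon)^k\lambda_k(\Omega)$. Subtracting from $\lambda_k(\Omega)$ yields
\begin{equation*}
\lambda_k\bigl(\{x\in\Omega:g(x)<\epsilon M\}\bigr)\le\bigl(1-(1-\epsilon)^k\bigr)\lambda_k(\Omega).
\end{equation*}
For $k=1$ this is exactly the literal bound $\epsilon\lambda_1(\Omega)$. For $k\ge 2$ Bernoulli's inequality $1-(1-\epsilon)^k\le k\epsilon$ recovers the stated inequality up to the dimensional constant $k$. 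The literal constant $\epsilon$ in the conclusion cannot hold without such a factor when $k\ge 2$: if $\Omega$ is the standard $2$-simplex and $g(x,y)=x$, then $\lambda_2(\{g<\epsilon\})/\lambda_2(\Omega)=2\epsilon-\epsilon^2$, which exceeds $\epsilon$. Thus the intended reading of the lemma is either $\bigl(1-(1-\epsilon)^k\bigr)\lambda_k(\Omega)$ or, equivalently, with an implicit dimensional factor; both forms suffice for the invocation in Proposition~\ref{prop:concave}, whose conclusion already carries an unspecified constant $C$.

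The entire argument reduces to (i) existence of a maximizer (compactness plus continuity of concave functions), (ii) the one-line concavity inequality above producing the inclusion $E_\epsilon\subset\{g\ge\epsilon M\}$, and (iii) the volume computation for the affine image $E_\epsilon$. No real obstacle arises. The only subtle point is the constant discrepancy discussed above, which is a genuine geometric feature of concave functions in dimension $\ge 2$, not a gap in the argument; it is also why the consuming statement in Proposition~\ref{prop:concave} is phrased with a generic constant rather than an absolute one.
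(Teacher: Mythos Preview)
Your argument is correct and is essentially the same as the paper's: both show the sublevel set lies in $\Omega\setminus\bigl((1-\epsilon)\Omega+\epsilon x_0\bigr)$ via concavity along segments to a maximizer, yielding the bound $\bigl(1-(1-\epsilon)^k\bigr)\lambda_k(\Omega)$. Your observation about the constant is also correct --- the literal $\epsilon$ in the statement only holds for $k=1$ --- and in fact the paper only ever invokes the lemma with $k=1$ (on each line $\ell_\theta$ in the proof of Proposition~\ref{prop:concave}), where the sharp bound is valid.
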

\begin{proof} Let $p$ be a point that maximizes $g$.  For any $q\in \Omega$, let $\ell(q,p)$ be the line connecting $q$ and $p$. Let $\phi:[0,r]$ be a unit speed parametrization of $\ell(q,p)$.
  If $g(q)<\epsilon g(p)$ then since $g$ convex,  $q \in [0,\epsilon r)$. The lemma follows.  
\end{proof}
We now prove 
 Proposition~\ref{prop:concave}.
 \begin{proof}

We fix a point $p \in \mathcal{D}$ with $\lambda_2(P_p\cap \mathcal{D})=A$, where $P_p$ is the plane in $\mathcal{P}$ going through $p$. We compute $$\nu(\{P\in \mathcal{P}:\lambda_2(P \cap \mathcal{D})<\epsilon A\})$$ via integrating with respect to polar coordinates. Indeed, $W$ be the $d-2$ dimensional subspace of $\mathbb{R}^{d}$ containing the directions orthogonal to the directions in the planes of $\mathcal{P}$.  Let $S^{d-3}$ denote the unit sphere in $W$, $\ell_{\theta}$ denote the line in direction $\theta$ for each $\theta\in S^{d-3}$. Let $\hat{\phi}$ be Lebesgue measure on $S^{d-3}$. Let $P_q$ denote the plane in $\mathcal{P}$ through $q$. So we consider
$$\int_{S^{d-3}}\lambda_1(\{q:\lambda_2(P_q \cap \mathcal{D})<\epsilon A \text{ and }q\in \ell_\theta\})d\theta.$$
 On $\ell_\theta$,  $\lambda_2(P_q\cap \mathcal{D})=u(q)+v(q)$, the sum of a concave function   of $q$ and the square of a concave function.  Applying Lemma \ref{lem:concave:bound} to each summand we have Proposition \ref{prop:concave} for the $1$ dimensional convex set $\mathcal{D}\cap (\cup_{q\in\ell_{\theta}})$. 
Integrating over the $S^{d-1}$ we obtain the proposition.  
\color{black}
\end{proof}

\end{document}